\documentclass[a4paper,12pt,twoside]{article}
\raggedbottom
\usepackage[latin1]{inputenc}
\usepackage[dvips]{graphics}
\usepackage{setspace}
\input xy
\input cyracc.def

\xyoption{all}
\usepackage{xr}
\externaldocument[DRR2-]{DRR2Jan}
\externaldocument[DRR3-]{DRR3Jan}
\usepackage{amssymb}
\usepackage{amsmath}
\usepackage{amsthm}
\usepackage{amsfonts}
\usepackage{mathrsfs}
\theoremstyle{plain}
\newtheorem{T}{Theorem}[section]

\newtheorem{Cor}[T]{Corollary}
\newtheorem{TL}[T]{Lemma}
\newtheorem{Prop}[T]{Proposition}

\newcommand\mps[1]{}

\theoremstyle{definition}\newtheorem{D}{Definition}[T]
\theoremstyle{definition}
\theoremstyle{remark}
\theoremstyle{remark}\newtheorem{remark}{Remark}[T]
\newcommand{\ie}{\hbox{\textit{i.e.} }}

\newcommand{\loccit}{\hbox{\textit{loc. cit.} }}

\newcommand{\Aut}{\hbox{Aut}}
\newcommand{\End}[1]{\hbox{End}#1}

\newcommand{\bb}[1]{\mathbb{#1}}
\newcommand\calo{\mathcal O}

\newcommand\ibid{\emph{ibid}}
\newcommand{\mc}[1]{\mathcal{#1}}

\newcommand{\lci}{local complete intersection }

\newcommand{\Hom}{\mathrm{Hom}}
\newcommand{\holim}{\operatorname {holim}}

\newcommand{\fhom}{\mathbf{hom}}
\newcommand{\uHom}{\underline {Hom}}

\newcommand\op[1]{\operatorname{#1}}

\newcommand\rk{\operatorname{rk}}

\newcommand\Pic{\operatorname{Pic}}
\newcommand\spec{\operatorname{Spec}}

\input cyracc.def

\theoremstyle{plain}



\theoremstyle{definition}











\newcommand{\ses}[5]{$$
\xymatrix@1{ 0 \ar[r] & {{#1}} \ar[r]^-{{#2}} & {{#3}} \ar[r]^-{{#4}} &
{{#5}} \ar[r] & 0
\\ }$$}

\newcommand{\sesdot}[5]{$$
\xymatrix@1{ 0 \ar[r] & {{#1}} \ar[r]^-{{#2}} & {{#3}} \ar[r]^-{{#4}} &
{{#5}} \ar[r] & 0. \\ }$$}

\newcommand{\sesbig}[5]{$$\xymatrix@1{{\raisebox{1.0ex}[3.0ex][1.0ex]{$0$}}
\ar@<0.6ex>[r] & {\raisebox{1.0ex}[3.0ex][1.0ex]{${#1}$}}
\ar@<0.6ex>[r]^{#2} & {\raisebox{1.0ex}[3.0ex][1.0ex]{${#3}$}}
\ar@<0.6ex>[r]^{#4} & {\raisebox{1.0ex}[3.0ex][1.0ex]{${#5}$}}
\ar@<0.6ex>[r] & {\raisebox{1.0ex}[3.0ex][1.0ex]{$0$}}}$$}






\newcommand{\Comment}[1]{}

\title{A Deligne-Riemann-Roch isomorphism I: \\ Preliminaries on virtual categories}
\author{Dennis Eriksson}
\begin{document}
\maketitle
\tableofcontents
\section{Introduction}

This is the first article in an upcoming series of papers. They have arisen through an attempt to answer open questions of Deligne proposed in \cite{determinant}. It amounts to functorial and metrized versions of a Grothendieck-Riemann-Roch theorem, as well as a Lefschetz-Riemann-Roch formula in the sense of Thomason, cf. \cite{thomason1}, Theorem 3.5.  \\

This is supposed to be understood in the following sense (for which we refer to loc.cit. for the best introduction). The Grothendieck-Riemann-Roch theorem is the formula
$$\op{ch}(Rf_* E) = f_* (\op{ch}(E) \op{Td}(\Omega_f)^{-1})$$
for an \lci morphism of schemes $f: X \to Y$ (cf. \cite{Riemann-RochAlgebra}, chapter V, \S 7 or the book \cite{SGA6} for a precise formulation). The general question on functoriality becomes whether there are categorical replacements of all the objects and homomorphisms involved. This is an approach to obtain secondary information which gets lost when one quotients out with various equivalences. \\

Some remarks are in order. The approach using virtual categories as we have done are, as was already noted in \loccit, most suitable to treat questions on the determinant of the cohomology. This can for example be seen from the observation that the virtual category is a truncation of the $K$-theory space and the determinant of the cohomology corresponds to weight one Adams eigenspaces which have all their $K$-theory in $K_0$ and $K_1$. A generalization of these results to encompass higher order information seems to be within reach, but the work has not been effectuated. \\

All these results are with rational coefficients whereas the original result of Deligne in the case of smooth curves was integral up to universal sign. The empire of signs one would have to tackle escape my understanding for the moment. However, the signs in question do not seem to have intrinsic significance and to my best knowledge the applications don't seem to mind. It would however be desirable to have an integral version of the functorial Riemann-Roch theorem (the one with rational coefficients is to be dealt with in an upcoming paper) as obtained by Deligne for smooth curves. This does seem to have intrinsic significance. \\

The article is organized as follows. We first consider determinant functors on Waldhausen categories and triangulated categories establishing results on associated virtual categories (cf. Theorem \ref{thm: waldhausen-determinant} and Proposition \ref{prop:determinanttriangulated}. This extends the situation of determinant categories on exact categories in \cite{determinant}. In the next section we consider various natural operations on virtual categories and in particular construct Adams and $\lambda$-operations (cf. Proposition \ref{Prop:Adamsoperation}, Proposition \ref{prop:lambdasoule}). We then establish certain rigidity results on operations on virtual categories in Theorem \ref{thm:rigidity}, giving existence of lifts of operations on $K_0$ to the virtual category, at least after inverting the integers. Finally in the last section we make comparisons to constructions of Franke in "Chern functors" in \cite{AAG} of Chern intersection functors. We have also included some two appendices on $\bb A^1$-homotopy theory and algebraic stacks to fix language and recall the most necessary utensils. \\

Finally, it should be noted that the results in this article bear a striking resemblance to the results in \cite{Gillet-Riemann-Roch}. The methods are very similar in that they use homotopy theory simplicial sheaves to tackle Riemann-Roch-type problems. Also related results have been obtained in a series of papers \cite{ChowCT}, "Chern functors" in \cite{AAG} and \cite{Franke-unpublished}. \\

\textbf{Acknowledgements}: I'd like to thank Damian Rössler who was my advisor during the writing of this series of articles, in particular for suggesting this area of research as a thesis topic. I'd also like to thank Takeshi Saito, Shinichi Mochizuki and Joel Riou for various exchanges on the topics at hand.
\section{The virtual category and triangulated categories}

Given a small exact category $\mc C$, we can consider its $K$-theory. The first case of $K_0$ can be defined explicitly in terms of the category $\mc C$, as the Grothendieck group of $\mc C$. This is the free abelian group on the objects of $\mc C$, modulo the relationship $B = A + C$ if $$0 \to A \to B \to C \to 0$$ is an exact sequence in $\mc C$. A more sophisticated approach was taken by Quillen, \cite{Quillen}, where he constructs a certain topological space $BQ \mc C$ associated to a (small) exact category $\mc C$ such that $$K_i(\mc C) := \pi_{i+1}(BQ \mc C).$$ 
Now, let $X \in \op{ob(Top_\bullet)}$ be a pointed topological space. One defines the fundamental groupoid of $X$ to be the category whose objects are points of $X$, and morphisms are homotopy-classes of paths, \ie it is associated to the diagram $$[PX \rightrightarrows X]$$ where $PX$ is the space of paths of $X$. Denote the corresponding functor by $$\pi_f: \op{Top_\bullet} \to \op{Grp}.$$ Deligne, \cite{determinant} defines a category of virtual objects of an exact category, which offers a type of abelianization of the derived category and $K_0$ of the category. \\
Let $\mathcal C$ be a small exact category. The category of virtual objects of $\mc C$, $V(\mathcal C)$ is the following: Objects are loops in $BQ \mc C$ around a fixed zero-point, and morphisms are homotopy-classes of homotopies of loops. Recall that $BQ \mc C$ is the geometrical realization of the Quillen $Q$-construction of $\mc C$. Addition is the usual addition of loops. This construction is the fundamental groupoid of the space $\Omega BQ\mc C$. In case $\mc C$ is not small we will always consider an equivalent small category, and ignore any purely categorical issues this might cause.
\begin{remark} $V(\mathcal C)$ is a groupoid, \ie any morphism is an isomorphism, and the set of equivalence-classes is in natural bijection with $K_0(\mc C)$. For any object $c \in ob V(\mathcal C)$, we have $\Aut_{V(\mathcal C)}(c) = \pi_1(\Omega BQ\mathcal C) = K_1(\mc C)$.
\end{remark}
Deligne also provides a more algebraic and universal definition of $V(\mathcal C)$. We will give an additional description in terms of derived categories in the next section (cf. Theorem \ref{thm: waldhausen-determinant} for a precise statement). \\

\subsection{Algebraic definition} \label{picardcategory-definition} The above category is a so called universal Picard category with respect to $\mathcal C$. A (commutative) Picard category is a groupoid $\mathcal C$ with an auto-equivalence $P \mapsto P \oplus Q$ for any object $Q$ of $\mathcal C$, satisfying certain compatibility-isomorphisms plus some commutativity and associativity-restraints \linebreak (cf. \cite{SGA4}, XVIII, Définition 1.4.2 for the definition of a (strictly commutative) Picard category, or \cite{MB}, 14.4, axiome du pentagone et de l'hexagone):
There is an associativity-isomorphism $$a_{x,y,z}: (x \oplus y) \oplus z \to x \oplus (y \oplus z)$$
such that
$$\xy
(0,20)*{(w \oplus x) \oplus(y \oplus z)}="1";
(40,0)*{w \oplus(x \oplus(y \oplus z))}="2";
(25,-20)*{ \quad w \oplus((x \oplus y) \oplus z)}="3";
(-25,-20)*{(w \oplus(x \oplus y)) \oplus z}="4";
(-40,0)*{((w \oplus x) \oplus y) \oplus z}="5";
{\ar^{a_{w,x,y \oplus z}} "1";"2"}
{\ar_{1_w \oplus a _{x,y,z}} "3";"2"}
{\ar^{a _{w,x \oplus y,z}} "4";"3"}
{\ar_{a _{w,x,y} \oplus 1_z} "5";"4"}
{\ar^{a _{w \oplus x,y,z}} "5";"1"}
\endxy$$
commutes. There is a commutativity-isomorphism $c_{x,y}: x \oplus y \to y \oplus x$ such that
$$\xy
(-25,20)*{(x \oplus y) \oplus z}="0" ;
(9,20)="01" ;
(25,20)*{x \oplus(y \oplus z)}="1" ;
(40,0)*{x \oplus(z \oplus y))}="2" ;
(25,-20)*{(x \oplus z) \oplus y}="3" ;
(13,-20)="34" ;
(-25,-20)*{(z \oplus x) \oplus y}="4" ;
(-40,0)*{z \oplus (x \oplus y)}="5" ;
{\ar^{1_x \oplus c_{y,z}} "1";"2"}
{\ar_{a_{x,z,y}} "3";"2"}
{\ar^{c_{z,x} \oplus 1_y} "4";"34"}
{\ar_{a_{z,x,y} } "4";"5"}
{\ar^{a_{w \oplus x,y,z}} "5";"0"}
{\ar^{a_{x,y,z}} "0";"01"}
\endxy$$
commutes. It follows that a category such as this has a zero-object, has inverses etc. (see \cite{SGA4}, XVIII, 1.4.4).
In other words, a Picard category is a symmetric monoidal groupoid whose functor $- \oplus Q$ is an equivalence of categories for each object $Q$ in $\mc C$. It is moreover said to be strictly commutative if $c_{x,x}: x \oplus x \to x \oplus x$ is the identity. In general we denote by $\epsilon(x) = c_{x,x}$. An additive functor between Picard categories is defined to be a monoidal functor between Picard categories. \\
Observe we merely have isomorphisms $B \oplus (-B) \to 0$, not equality. For any exact category $\mathcal C$, the universal Picard category $V(\mathcal C)$ is a Picard category $\mc C$ with a functor $[]:(\mathcal C, is) \to V(\mathcal C)$  which is universal with respect to morphisms $T:(\mathcal C, is) \to P$ into Picard categories $P$, satisfying the following compatibility conditions:
\begin{description}
  \item[(a)] For any short exact sequence $$\mathcal A: 0 \to A' \to A \to A''
\to 0,$$ there is an isomorphism, functorial with respect to
isomorphisms of exact sequences, $$T(\mathcal A): T(A) \to T(A') \oplus
T(A'').$$
  \item[(b)] For any 0-object of $\mathcal C$, there is an isomorphism $T(0) \simeq 0$. \\
  \item[(c)] If $\phi: A \to B$ is an isomorphism, with exact sequence $0 \to A \to B \to 0$, the induced map $T(\phi)$ is the composite
$$T(A) \to T(0) \oplus T(B) \to T(B).$$
  \item[(d)] The functor $T$ is compatible with filtrations, \ie for an admissible filtration $A \subset B \subset C$, the diagram
$$\xymatrix{T(C) \ar[r] \ar[d] & T(A) + T(C/A) \ar[d] \\
T(B) + T(C/B) \ar[r] & T(A) + T(B/A) + T(C/B) }$$
is commutative (here the quotients are only defined up to unique isomorphism, but are well-defined by the other conditions).
\end{description}

In $\cite{determinant}$ it is shown that the functor $(\mathcal C, is) \to V(\mathcal C)$ factors as
\begin{equation}\label{functorsbetweencategories} (\mathcal C, is) \to (D^b(\mathcal C), q.i.) \to V(\mathcal C).
\end{equation}
Here $D^b(\mathcal C)$ is the derived category of $\mathcal C$ (supposed to be the full subcategory of a fixed abelian category), formed out of all complexes with bounded cohomology, where homotopic complexes are identified, and then localized at the thick subcategory of acyclic complexes. The extra suffix is to denote we consider the category where the objects are the same, but the morphisms are the quasi-isomorphisms, \ie morphisms in the category of complexes that induces an isomorphism in the derived category. \\

\subsection{Additional descriptions} \label{section:Determinantsontriangualated}

In this section we give some additional descriptions of the virtual category associated to the category of vector bundles over a fixed scheme $X$. But first, given a small exact category $\mc C$, there are two major spaces (or more properly, $S^1$-spectra in the sense of topology) that can be  associated, namely that of the Waldhausen and Quillen $K$-theory. However, they are naturally homotopic (see \cite{Thomason2} Theorem 1.11.2) hence have naturally equivalent fundamental groupoids and hence naturally equivalent virtual categories. If $X$ is a scheme, the Quillen $K$-theory space, the naive one, is the Waldhausen $K$-theory of bounded complexes of vector bundles in the category of coherent $\calo_X$-modules. The Thomason $K$-theory is the corresponding theory obtained by taking the Waldhausen $K$-theory of perfect complexes (see \cite{SGA6}, I.5.1. or \cite{Thomason2}, 2.1.1). $K^Q$ provides the "correct" definition of higher $K$-theory, however, $K^T$ provides the better definition in terms of functorial properties. We denote them by $K^Q$ and $K^T$ respectively. Also, denote by $\mathbf{P}(X)$ the category of (coherent) locally free sheaves on $X$. \begin{Prop}[\cite{Thomason2}, Corollary 3.9, Proposition 3.10] \label{prop:virtualvectorcomplexes} We have natural maps $$\Omega BQ(\mathbf{P}(X)) \to K^Q(X) \to K^T(X).$$ For general $X$, the first map is an homotopy-equivalence. Whenever $X$ has an ample family of line bundles (\cite{SGA6}, II. 2.3) or the resolution property (\ie any coherent sheaf is the quotient of a coherent locally free sheaf) the last map is also an homotopy equivalence. In particular, whenever $X$ has an ample family of line bundles we can define the virtual category as the fundamental groupoid of $K^T$.
\end{Prop}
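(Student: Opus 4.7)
The plan is to establish the two homotopy equivalences by invoking well-known comparison theorems from Waldhausen-style $K$-theory, and then deduce the statement about fundamental groupoids formally.

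First, for the map $\Omega BQ(\mathbf{P}(X)) \to K^Q(X)$: since by definition $K^Q(X)$ is the Waldhausen $K$-theory of $Ch^b(\mathbf{P}(X))$ with quasi-isomorphisms as weak equivalences and degreewise split monomorphisms as cofibrations, this is an instance of the Gillet--Waldhausen theorem. The proof strategy would be to filter a bounded complex by its stupid truncations and use the additivity theorem repeatedly to reduce, inductively on the length, to the case of a complex concentrated in a single degree; there the map is the canonical identification. One has to verify that $\mathbf{P}(X)$ is closed under kernels of admissible epimorphisms inside an ambient abelian category (here coherent $\calo_X$-modules) so that the exact-category structure extends correctly to the category of bounded complexes, which is where Thomason's careful setup in \cite{Thomason2}, 1.11, is needed.

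Second, for the map $K^Q(X) \to K^T(X)$: this is induced by the inclusion of Waldhausen categories $Ch^b(\mathbf{P}(X)) \hookrightarrow \text{Perf}(X)$, and I would apply Waldhausen's approximation theorem. The two hypotheses to verify are (i) a morphism in $Ch^b(\mathbf{P}(X))$ is a weak equivalence iff it is so in $\text{Perf}(X)$, which is immediate as both are quasi-isomorphisms, and (ii) any map from an object of $Ch^b(\mathbf{P}(X))$ into a perfect complex factors, up to weak equivalence, through a map between bounded vector-bundle complexes. The key input here is that under the ample family hypothesis (or more generally the resolution property), every perfect complex admits a quasi-isomorphism from a bounded complex of locally free sheaves; this is precisely where the assumption enters, and without it there exist perfect complexes on $X$ that cannot be globally resolved by vector bundles.

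The main obstacle I anticipate is the second hypothesis of approximation: one needs a construction that takes a morphism of perfect complexes and globally rectifies it to a morphism of vector-bundle complexes in a sufficiently functorial way. Under the ample family assumption this is achieved by an inductive construction building the resolution one degree at a time, using that for a coherent sheaf $\mathcal F$ and sufficiently negative twist the natural map $\Gamma(X,\mathcal F(n))\otimes\calo_X(-n)\to\mathcal F$ is surjective; the delicate point is carrying this through at the level of complexes while preserving the boundedness needed for perfectness.

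Finally, the last sentence is formal: the functor $\pi_f$ sends weak homotopy equivalences of pointed spaces to equivalences of groupoids, so combining the two equivalences yields a natural equivalence $V(\mathbf{P}(X)) = \pi_f \Omega BQ(\mathbf{P}(X)) \simeq \pi_f K^T(X)$ whenever $X$ has an ample family of line bundles.
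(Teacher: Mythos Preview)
The paper does not supply its own proof of this proposition; it is simply quoted from \cite{Thomason2}, Corollary 3.9 and Proposition 3.10, and used as a black box. Your outline is therefore not to be compared against anything in the present paper, but it is an accurate summary of how Thomason--Trobaugh themselves establish the result: the first equivalence is exactly the Gillet--Waldhausen theorem (\cite{Thomason2}, 1.11.7), and the second is obtained via Waldhausen approximation, the decisive input being that under the ample-family hypothesis every perfect complex is globally quasi-isomorphic to a strict perfect complex (\cite{Thomason2}, 2.3.1 and 3.8). Your identification of the ``main obstacle'' is also on target: the inductive resolution argument you describe is essentially Thomason's Lemma 1.9.5 combined with the existence of enough vector bundles. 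The final passage to fundamental groupoids is, as you say, formal.
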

\begin{remark} If $X$ has in particular an ample family of line bundles whenever $X$ is separated and regular, or is quasi-projective over an affine scheme. Lemma 3.5 of \cite{Thomason2} gives an additional list of spaces homotopy-equivalent to $K^T(X)$ for quasi-compact schemes $X$. \end{remark}
\begin{D} The fundamental groupoid of $K^Q$ and $K^T$ are denoted by $V(X)$ or $V^Q(X)$ and  $V^T(X)$ respectively. \end{D}
Both these definitions make $V^?$ into a contravariant functor from the category of schemes to the category of groupoids, $$V^?: Schemes \to \op{Grp},$$ via the pullback operation, $Lf^* = f^*$, and we have a natural transformation of functors $V^Q \to V^T$. It is not, in general, covariant with respect to even proper morphisms $f: X \to Y$ (cf. Proposition \ref{Prop:K-theory-covariance}). However, we have:
\begin{Prop}[\cite{Thomason2}, 3.16.4-3.16.6] \label{Prop:K-theory-covariance} The functor $V^T$ is a
covariant functor from the category
\begin{itemize}
  \item of Noetherian schemes and perfect (see \cite{Thomason2}, 2.5.2) proper
  morphisms. For example, any \lci (as defined in \cite{SGA6}, VIII, Proposition 1.7) proper morphism between Noetherian schemes.
  \item of quasi-compact schemes and perfect projective morphisms.
  \item of quasi-compact schemes and flat proper morphisms.
\end{itemize}
\end{Prop}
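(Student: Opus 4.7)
The plan is to deduce the proposition from Thomason's spectrum-level construction of $Rf_*$, combined with the functoriality of the fundamental groupoid. By Proposition \ref{prop:virtualvectorcomplexes}, $V^T(X)$ is the fundamental groupoid $\pi_f(\Omega K^T(X))$, so any morphism $K^T(X) \to K^T(Y)$ of pointed spaces canonically descends to a functor $V^T(X) \to V^T(Y)$, and a coherently compositional system of such maps yields a covariant functor on $V^T$.

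First I would invoke \cite{Thomason2}, 3.16.4--3.16.6 to construct, in each of the three cases, a morphism of $K$-theory spaces $Rf_* \colon K^T(X) \to K^T(Y)$ lifting the usual derived pushforward on $K_0$. The three listed classes reflect three different reasons why the derived pushforward preserves perfect complexes: (a) finite Tor-amplitude together with coherence of $R^i f_* E$ in the perfect proper case; (b) an explicit model via twists by $\calo(n)$ and finite Koszul-type resolutions in the perfect projective case; (c) flat base change in the flat proper case on quasi-compact schemes.

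Second I would apply the functor $\pi_f \circ \Omega$ to these spectrum-level maps to obtain functors $V^T(X) \to V^T(Y)$, and read off their compatibility with composition of morphisms of schemes from the corresponding compatibility on $K^T$-spaces. Since $\pi_f$ is itself functorial on pointed spaces, the $2$-categorical coherence needed to assert covariance of the assignment $f \mapsto Rf_*$ reduces on $V^T$ to the existence of natural isomorphisms $R(gf)_* \simeq Rg_* \circ Rf_*$ together with their pentagon-type compatibilities.

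The main obstacle is entirely on the spectrum side and has been resolved in \loccit: one needs a strictly functorial --- or at least coherently associative up to natural transformation --- lift of the derived pushforward from the derived category to the Waldhausen $K$-theory of perfect complexes, which Thomason accomplishes via carefully chosen functorial resolutions (inverting quasi-isomorphisms in a Waldhausen-compatible way). Once this is granted, the passage to virtual categories via $\pi_f \circ \Omega$ is purely formal, and the three cases of the proposition follow immediately.
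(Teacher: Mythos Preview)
The paper gives no proof of this proposition at all: it is stated purely as a citation of \cite{Thomason2}, 3.16.4--3.16.6, with the understanding that Thomason's construction of $Rf_*$ on the $K$-theory spectrum of perfect complexes, together with the trivial functoriality of $\pi_f$, yields the claim. Your proposal makes exactly this implicit argument explicit, so it is correct and in the same spirit as the paper's (non-)proof.

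One small slip: by the paper's conventions $K^T(X)$ already denotes the $K$-theory \emph{space} (the loop space of the Waldhausen construction), so $V^T(X) = \pi_f(K^T(X))$, not $\pi_f(\Omega K^T(X))$; the extra $\Omega$ would land you one level too high.
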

In view of the above we want to give a unified approach to determinant functors on Waldhausen categories and their associated virtual categories. sOriginally defined for exact categories (\cite{determinant},  4.3) we give the following definition:

\begin{D} \label{defn:determinantfunctor} Let $\textbf{A}$ be a Waldhausen category (\cite{Thomason2}, Example
1.3.6) with weak equivalences $w$. By $(\textbf{A}, w)$ we denote
the category having the same objects as $\textbf{A}$ but the
morphisms being weak equivalences. Given a Picard category $P$, a
determinant functor from $\textbf{A}$ to $P$ is a functor
$$[-]:(\textbf{A}, w) \to P$$ which satisfies the following constraints:
\begin{description}
  \item[(a)] For any cofibration exact sequence $$\Sigma: A' \rightarrowtail A \twoheadrightarrow A''$$ an isomorphism $\{\Sigma\}: [A] \to [A'] \oplus [A'']$ functorial with respect to weak equivalences of cofibration sequences.
  \item[(b)] For any object $A$, the cofibration sequence $\Sigma: A = A \twoheadrightarrow 0$ decomposes the identity-map as: $$ [A] \stackrel{\{\Sigma\}}\to [A] \oplus [0] \stackrel{\delta^R}\to [A]$$ where $\delta^R: [A] \oplus [0] \to [A]$ is given by the structure of $[0]$ as a right unit (see Lemma \ref{lemma:structureofunit} for a unicity and existence statement).
  \item[(c)] Suppose we have a commutative diagram $$\xymatrix{\Sigma': & A' \ar@{>->}[r] \ar@{>->}[d] & B' \ar@{->>}[r] \ar@{>->}[d] & C' \ar@{>->}[d]  \\ \Sigma: & A \ar@{>->}[r] \ar@{->>}[d] & B \ar@{->>}[r] \ar@{->>}[d] & C \ar@{->>}[d] \\ \Sigma'': & A'' \ar@{>->}[r] & B'' \ar@{->>}[r] & C''\\ & \Sigma_A & \Sigma_B & \Sigma_C}$$ were all the vertical and horizontal lines are cofibration sequences. Then the   diagram $$\xymatrix{ [B'] \oplus [B''] \ar[d]^{\{\Sigma'\} \oplus \{\Sigma''\}} & \ar[l]^{\{\Sigma_B\}}[B] \ar[r]^{\{\Sigma\}} & [A] \oplus [C] \ar[d]^{\{\Sigma_A\} \oplus \{\Sigma_C\}} \\ [A'] \oplus [C'] \oplus [A''] \oplus [C''] \ar[rr] & & [A'] \oplus [A''] \oplus [C'] \oplus
[C'']}$$ is commutative. \\ It is furthermore said to be commutative if the following holds: \\
  \item[(d)] The triangle $$\xymatrix{[A'] \oplus [A''] \ar[rr] \ar[rd] & & \ar[ld] [A''] \oplus [A'] \\ & [A' \bigcup
  A'']}$$ commutes.
\end{description} \end{D}
We record the following lemma:
\begin{TL} \label{lemma:structureofunit} Suppose $[]: (\textbf{A}, w) \to P$ is
a determinant functor and $P$. Then for any 0-object of
$\textbf{A}$, $[0]$ has the structure of a unit in $P$, \ie there
are canonical isomorphisms $\delta^L: [0] \oplus B \simeq B$ and
$\delta^R: B \oplus [0] \simeq B$. In particular, there is a canonical
isomorphism $[0] \simeq 0$ with any unit object 0 of $P$.
\end{TL}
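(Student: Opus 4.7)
The plan is to first deduce the existence of a canonical isomorphism $[0] \simeq 0$ from axiom (a) together with the Picard structure on $P$, and then obtain $\delta^L$ and $\delta^R$ by transporting the unit isomorphisms of $P$ through this identification. This seems cleaner than building $\delta^L, \delta^R$ piecewise for each $B$ and checking their coherence by hand.

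The first step is to apply axiom (a) to the trivial cofibration sequence $\Sigma : 0 \rightarrowtail 0 \twoheadrightarrow 0$, yielding an isomorphism $\{\Sigma\} : [0] \xrightarrow{\sim} [0] \oplus [0]$ in $P$. I then invoke the general fact that in a Picard category, any object $X$ admitting an isomorphism $X \simeq X \oplus X$ must be isomorphic to the unit. The quickest proof of this is the chain
$$0 \simeq (-X) \oplus X \simeq (-X) \oplus (X \oplus X) \simeq \bigl((-X) \oplus X\bigr) \oplus X \simeq 0 \oplus X \simeq X,$$
where $-X$ is any inverse of $X$ in $P$; applied to $X = [0]$ this produces the desired $[0] \simeq 0$.

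Canonicity of $[0] \simeq 0$ will follow from two observations: any two zero objects in $\textbf{A}$ are connected by a unique isomorphism, which $[-]$ carries to a canonical isomorphism between the two candidates for $[0]$ in $P$; and the unit of a Picard category is itself unique up to unique isomorphism, so the ambiguity in choosing an inverse $-X$ above is harmless. I would then define $\delta^L_B$ and $\delta^R_B$, for an arbitrary object $B$ of $P$, as the composites of the canonical $[0] \simeq 0$ (in the $[0]$-factor) with the structural unit isomorphisms of $P$.

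The main obstacle, modest as it is, lies in checking that the $\delta^R$ so defined is compatible with the isomorphism furnished by axiom (a) applied to the sequence $B \xrightarrow{=} B \twoheadrightarrow 0$ for $B$ coming from $\textbf{A}$ (and analogously for $\delta^L$), since this compatibility is what axiom (b) silently relies on when it refers back to this lemma. I would handle this by applying axiom (c) to a 3x3 diagram having the trivial cofibration sequence $0 \rightarrowtail 0 \twoheadrightarrow 0$ as one row or column and the sequence $B \xrightarrow{=} B \twoheadrightarrow 0$ as an adjacent one; the pentagon of axiom (c) then collapses, after cancelling identity morphisms, to the required compatibility diagram in $P$. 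The argument is essentially formal and requires no input beyond axioms (a) and (c) together with the standard monoidal coherence of $P$.
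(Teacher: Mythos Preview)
Your proof is correct and follows essentially the same approach as the paper: both apply axiom (a) to the trivial cofibration sequence $0 \rightarrowtail 0 \twoheadrightarrow 0$ to obtain an isomorphism $[0] \simeq [0] \oplus [0]$, and then deduce that $[0]$ carries a unit structure. The paper dispatches the second step by citing Saavedra's result (2.2.5.1) that any object equipped with such an isomorphism has a unique unit structure for which that isomorphism is the unit constraint, whereas you spell out the argument explicitly via the chain $0 \simeq (-[0]) \oplus [0] \simeq \cdots \simeq [0]$; your further compatibility check via axiom (c) is a reasonable sanity check but not needed for the lemma itself.
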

\begin{proof} Applying $[]$ to the cofibration sequence
$$0 \rightarrowtail 0 \twoheadrightarrow 0$$
we obtain an isomorphism $[]: [0] \oplus [0] \simeq [0]$. By
\cite{Saavedra}, 2.2.5.1 $[0]$ has a unique structure of a unit
such that $[0] = \delta^R([0]) = \delta^L([0])$.
\end{proof}

We note the following theorem which extends Deligne's categorical
description of the virtual category:

\begin{T} \label{thm: waldhausen-determinant} Let \textbf{A} be a small Waldhausen category
with weak equivalences $w$. Then there is a universal category for
determinant functors:

$$[-]:(\textbf{A}, w) \to V(\textbf{A}).$$ More precisely, for any Picard
category $P$, the category of determinant functors is equivalent to
the category of additive functors $V(\textbf{A})\to P$. Moreover,
this category is the fundamental groupoid of the Waldhausen
$K$-theory space of $\textbf{A}$.

\end{T}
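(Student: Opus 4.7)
The plan is to realize $V(\textbf{A})$ explicitly as the fundamental groupoid of the loop space $\Omega|wS_\bullet \textbf{A}|$ arising from Waldhausen's $S_\bullet$-construction, and then to translate the axioms of Definition \ref{defn:determinantfunctor} into the data of a pointed simplicial map to a classifying space of the target Picard category.

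First I would construct the tautological determinant functor $[-]:(\textbf{A}, w) \to V(\textbf{A})$ directly from the simplicial structure of $wS_\bullet\textbf{A}$. Since $S_0\textbf{A}$ is trivial and $S_1\textbf{A} \simeq \textbf{A}$, each object $A \in \textbf{A}$ gives a $1$-simplex in $wS_\bullet\textbf{A}$ with both endpoints at the basepoint, hence a loop $[A]$ representing an object of $V(\textbf{A})$; a weak equivalence $A \to B$ supplies a path between the corresponding loops. A cofibration sequence $\Sigma: A'\rightarrowtail A \twoheadrightarrow A''$, being an object of $S_2\textbf{A}$, produces a $2$-simplex whose three faces are the loops $[A]$, $[A']$ and $[A'']$, and this gives the additivity isomorphism $\{\Sigma\}: [A] \to [A'] \oplus [A'']$. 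Axiom (b) is then a direct consequence of the degeneracies involving the trivial cofibration sequence $A = A \twoheadrightarrow 0$, axiom (c) is the content of the $3$-simplex corresponding to an admissible three-step filtration in $S_3\textbf{A}$, and axiom (d) holds automatically because loop concatenation in the loop space is symmetric up to coherent homotopy.

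For the universal property, given any determinant functor $[-]: (\textbf{A}, w) \to P$ into a Picard category $P$, I would package its data as a map of pointed bisimplicial sets $N wS_\bullet\textbf{A} \to BP$, where $BP$ is a simplicial delooping of $P$ (obtained from the Segal or May machinery, using that a Picard category is equivalent to a $1$-truncated connective spectrum). Concretely, $0$-simplices go to the basepoint, $1$-simplices are specified by $[-]$ on objects and weak equivalences, $2$-simplices are specified by $\{\Sigma\}$ on cofibration sequences, and $3$-simplices are filled using the compatibility imposed by axiom (c); axiom (b) together with Lemma \ref{lemma:structureofunit} guarantees that degenerate simplices map to degenerate ones, and axiom (d) matches the commutativity constraint of $BP$. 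Because $P$ is a $1$-truncated symmetric monoidal groupoid, $BP$ has nontrivial homotopy only in degrees $\leq 2$, so no additional higher coherence data is needed. Passing to geometric realization and loop spaces then produces an additive functor $V(\textbf{A}) \to P$ extending $[-]$; conversely, restriction along the tautological determinant functor turns any additive functor out of $V(\textbf{A})$ into a determinant functor, and the two procedures are inverse up to natural isomorphism.

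The main obstacle is the precise dictionary between axioms (a)--(d) and the combinatorial data of a simplicial map into $BP$ in low dimensions; in particular one must verify that no hidden higher coherence is required, which reduces to the fact that $BP$ is $2$-truncated and that $N wS_\bullet\textbf{A}$ is generated in low bisimplicial degree by cofibration sequences and admissible three-step filtrations. Once this translation is established, the equivalence of categories between determinant functors and additive functors out of $V(\textbf{A})$ follows formally from the adjunction between geometric realization and loops. In the special case where $\textbf{A}$ is an exact category, this recovers Deligne's original universal property of \cite{determinant}, consistently with Thomason's homotopy equivalence $\Omega BQ\textbf{A} \simeq \Omega|wS_\bullet\textbf{A}|$ mentioned above.
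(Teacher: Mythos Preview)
Your approach is essentially the same as the paper's: both construct the tautological determinant from the low-dimensional simplices of $wS_\bullet\textbf{A}$ and establish universality by producing a map from $N_\bullet wS_\bullet\textbf{A}$ into a delooping of $P$. The one place where the paper is more concrete is your appeal to $2$-truncation of $BP$ to dispense with higher coherence: rather than arguing that higher data is automatically determined, the paper simply writes the map down in every simplicial degree, sending $(0 \rightarrowtail A_1 \rightarrowtail \cdots \rightarrowtail A_n) \in wS_n\textbf{A}$ to $([A_1],[A_2]-[A_1],\ldots,[A_n]-[A_{n-1}]) \in \prod_{i=1}^n P$ with the bar simplicial structure (which is exactly your $BP$), and then composes with the augmentation $N_\bullet\prod^\bullet P \to N_\bullet P$ before taking realizations and fundamental groupoids.
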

\begin{proof} The proof is essentially by definition. Recall that
the Waldhausen $K$-theory space is the loop space of the geometric
realization of the bisimplicial set $N_\bullet wS_\bullet
\mathbf{A}$ where $wS_p \mathbf{A}$ is the category whose objects
are, for $0 \leq i \leq j \leq p$, sequences $A_i \rightarrowtail
A_j$ of cofibrations with $A_0 = 0$ and with choices of quotients
$A_j/A_i$, and natural compatibility with composition so that $A_i \rightarrowtail A_j \rightarrowtail A_k$ coincides with $A_i \rightarrowtail A_k$ for $i \leq j \leq k$, and whose morphisms between two objects $A$ and $A'$ are
given by weak equivalences $A_i \to A_i'$ making all the diagrams
commute. $N_p wS_q \mathbf{A}$ is the $p$-nerve of the category
$wS_q \mathbf{A}$. The categories $wS_0\mathbf{A}, wS_1\mathbf{A},
wS_2\mathbf{A}$ are, respectively, the trivial category, the
category of objects of $\textbf{A}$ and weak equivalences as
morphisms, and the category of cofibration sequences with weak
equivalences of cofibration sequences as morphisms.

The geometric realization in question is the (left-right) realization
$$|q \mapsto |p \mapsto N_p wS_q \mathbf{A}||.$$ Thus we obtain from
the above description that the "0-simplices" are simply reduced to a
point  and the "1-simplices" in the $S_\bullet$-direction is
obtained by adjoining the set
$$|p \mapsto N_p wS_1 \mathbf{A}| \times \Delta^1.$$
This defines a canonical map $|wS_1| \wedge S^1 \to |N_\bullet
wS_\bullet \textbf{A}|$, and by adjunction a map $|wS_1| \to \Omega
|N_\bullet wS_\bullet \textbf{A}| = K(\textbf{A})$. By applying the
fundamental groupoid-functor we obtain a functor $[]: \textbf{A} \to
(w^{-1} \textbf{A}, w) = \pi_f(|wS_1 \textbf{A}|) \to \pi_f
(K(\textbf{A}))$, by sending an object to the loop represented by $A
\in N_0 wS_1 \mathbf{\textbf{A}}$.
We verify that this is a determinant functor: \\
Axiom a: A cofibration sequence $$\Sigma: A \rightarrowtail B
\twoheadrightarrow C$$ defines an element in $N_0 wS_2(\textbf{A})$,
and the face-maps to $N_0 wS_1(\textbf{A})$ are given by $\partial_0
\Sigma = A, \partial_1 \Sigma = B, \partial_2 \Sigma = C$, thus
providing a path from $[B]$ to $[A] + [C]$. A weak equivalence of
cofibration sequences defines an element in
$$N_1 wS_2 \textbf{A}$$ whose faces are in $N_1 wS_1 \textbf{A}$, which provides the necessary path.\\
Axiom b: This is just a simplicial identity corresponding to the degeneracy $N_0 wS_1(\textbf{A}) \to N_0 wS_2(\textbf{A}) \to N_0 wS_1(\textbf{A}), A \mapsto [A \to A \to 0] \mapsto A$.\\

Axiom c: We first show that the commutativity can be rephrased as:
if $A \rightarrowtail B \rightarrowtail C$ of cofibrations then
  $$\xymatrix{[C] \ar[r] \ar[d] & [A] + [C/A] \ar[d] \\
              [B] + [C/B] \ar[r] & [A] + [B/A] + [C/A]}$$
  commutes. This is clear since
  $$\xymatrix{& B \ar@{>->}[rd] & \\
  A \ar@{>->}[ur] \ar@{>->}[rr]& & C}$$
  is a 3-simplex (an object in $N_0 wS_3 \textbf{A}$) and provides the necessary relationship between morphisms induced from
  the 2-simplices in $N_0 wS_2 \textbf{A}$ (one also needs to use the commutativity in Axiom d, which is easy). For the full theorem we use that the two 3-simplices $A' \rightarrowtail B' \rightarrowtail B$
  and $A' \rightarrowtail A \rightarrowtail B$ are glued together along the 2-simplex $A' \rightarrowtail B$. \\
We need to prove this construction is universal. Let $[]:(\mc A, w)
\to P$ be a determinant functor. We construct maps $f_n:wS_n
(\textbf{A}) \to \prod_{i = 1}^n P$, where the latter denotes
the naive sum of Picard categories with indice-wise objects, $\hom$s
and additions. We put $$f_n(0 \rightarrowtail A_1 \rightarrowtail
\ldots \rightarrowtail  A_n) = ([A_1], [A_2]-[A_1], \ldots, [A_n] -
[A_{n-1}]).$$ Equip $\prod^\bullet P$ with the structure of a
simplicial category by the "bar simplicial resolution"-structure; the "$n$-simplices" are given by the product of categories $\prod_1^n P$ and
that $d_0(g_1, \ldots, g_n) = (g_2, \ldots, g_n)$ and for $0 < i <
n$, $$d_i(g_1, \ldots, g_n) = (g_1, \ldots, g_{i-1}, g_i g_{i+1},
g_{i+2}, \ldots, g_n)$$ and $$d_n(g_1, \ldots, g_{n-1}, g_n) = d_n(g_1, \ldots, g_{n-1}).$$ The
face-maps are given by $$s_i(g_1, \ldots, g_n) = (g_1, \ldots, g_i, 0, g_{i+1}, \ldots, g_n).$$ By taking nerves, we obtain a map of
bisimplicial sets $N_\bullet wS_\bullet (\mathbf A) \to N_\bullet
\prod^\bullet P$.
It is readily viewed that the simplicial category $\prod^\bullet P$ has a natural augmentation to the "constant" simplicial category $P$ and as such a natural morphism of nerves $N_\bullet \prod^\bullet P \to N_\bullet P$. Upon applying geometric realizations and fundamental groupoids we obtain a canonical map $K(\mc A) \to |P|$ of topological spaces. Or rather a map $d(K(\mc A)) \to S|P|$ of simplicial sets, where $d$ denotes the diagonal of a bisimplicial set and $S$ is the functor associating to a simplicial set its singular complex. Applying the functor $\pi_f$ to this gives us the required canonical functor, in view of the fact that in general $\pi_f{S|P|} = \pi_f P = P$.
\end{proof}

\begin{remark} \label{remark:Picardcontinuousmaps} Let $\textbf{A}$ be any (small) saturated Waldhausen
category (see \cite{Thomason2}, Definition 1.2.5), so that in
particular the localization $w^{-1} \textbf{A}$ is well behaved. Then
any functor from the groupoid $F: (w^{-1} \textbf{A}, w) \to P$ to a
(small) groupoid $P$ corresponds in fact to a map of topological
spaces $|F|: |wS_1 \textbf{A}| \to |P|$, and the functor $F$ is
recovered by applying the functor $\pi_f$. This follows from the fact that
for two simplicial sets $X$ and $Y$, there is a natural bijection
$\hom_{Top}(|X|, |Y|) = \hom_{S}(X, S|Y|)$ and that the fundamental
groupoid of a simplicial set $Y$ and $S|Y|$ are in fact the same;
see \cite{SimplicialHT}, chapter I, Proposition 2.2 and section 8.
It is also clear that the fundamental groupoid of the nerve of a
groupoid is in fact the same groupoid. Also, any functor
$F:\pi_f(K(\textbf{A})) \to P$ of (small) Picard categories
correspond to a map of topological spaces $K(\textbf{A}) \to |P|$
by, for example, sending an $n$-simplex of the simplicial set $|wS_q
A\textbf{A}|$ of the form  $A_1 \rightarrowtail A_2 \rightarrowtail
\ldots \rightarrowtail A_{n+1}$ to the $n$-th nerve $F(A_{n+1})
\simeq F(A_{n+1}/A_n) \oplus F(A_{n}) \simeq \ldots \simeq
F(A_{n+1}/A_{n}) \oplus F(A_{n}/A_{n-1}) \oplus \ldots \oplus F(A_1)$ of $P$. The
functor $|F|$ can again be recovered by applying the functor
$\pi_f$. Thus, the above problem of describing determinant functors
can likewise be formulated as a lifting-problem of certain maps of
topological spaces. We ask for which continuous functions $f:|wS_1
\textbf{A}| \to |P|$ there is a lift as in the diagram below
$$\xymatrix{|wS_1 \textbf{A}| \ar[r] \ar[dr] & K(\textbf{A}) \ar@{-->}[d]^{\exists} \\
& |P|.}$$ We leave the precise reformulation to the interested
reader, as it will not be used in the rest of this article, except for giving the language for formulating the following definition.
\end{remark}
\begin{D} \label{defn:determinantfunctorimage} Given a determinant functor $F: P \to P'$ of Picard categories, we can define the image in the obvious way as the essential image of the functor and similarly the kernel of $F$. We denote them by $\op{Im} F$ and $\op{ker} F$ respectively. If $F: P \to P'$ is a determinant functor which is faithful as a functor (for example, the inclusion of the essential image of a determinant functor in the target category), we can define the cokernel of $F$, denoted $P'/P$ as the fundamental groupoid of the mapping cone construction in topology via the geometric realization as in \ref{remark:Picardcontinuousmaps}. As such it sits in a long exact sequence
$$0 \to \pi_1(P) \to \pi_1(P') \to \pi_1(P/P') \to \pi_0(P) \to \pi_0(P') \to \pi_0(P'/P) \to 0$$
of abelian groups. Alternatively, it is the category whose objects are the same as that of $P'$, and $$\xymatrix{\Hom_{P'/P}(A,A') & = & \{B, B' \in \op{ob} P', f \in \Hom_{P'}(A + F(B), A'+F(B')) \\ & & \hbox{s.th. }f \sim f', \hbox{ if } \exists C, C' \in \op{ob}(P'), f - f' \in F\Hom_P(C, C')\}.}$$ In general the cokernel of $F$ is the cokernel of the inclusion of the image in the target category. All these categories have natural structures of Picard categories and the induced functors are determinant functors, and are moreover functorial with respect to natural transformations.
\end{D}
\begin{remark} Since any exact category can be equipped with the structure of a biWaldhausen category, where the weak equivalences are the isomorphisms and the cofibrations are the admissible monomorphisms, it is clear that the above definition generalizes that of Deligne \cite{determinant}, 4.3. It is a simple exercise to verify that in this case the above axioms for determinant functors are equivalent to those given in loc.cit. \end{remark}
We also have the following stronger assertion:
\begin{Prop} [\cite{Waldhausen} Theorem 1.9] The Waldhausen $K$-theory spectrum of an exact category $\mc E$, $K(\mc E)$, is naturally homotopy-equivalent to the $K$-theory spectrum of Quillen. A fortiori it induces an equivalence of fundamental groupoids and Picard categories.
\end{Prop}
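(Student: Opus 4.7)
The plan is to view the exact category $\mathcal E$ as a biWaldhausen category with admissible monomorphisms as cofibrations and isomorphisms as weak equivalences, and then exhibit a natural weak equivalence between Quillen's loop space $\Omega BQ\mathcal E$ and Waldhausen's $K(\mathcal E) = \Omega|wS_\bullet \mathcal E|$. The starting observation is that both $\pi_0$'s compute the Grothendieck group $K_0(\mathcal E)$, so the real content lies in higher degrees and amounts to a combinatorial comparison of the $Q$-construction with the $S_\bullet$-construction.

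The first step is the following dictionary. A morphism $A \to B$ in $Q\mathcal E$ is a span $A \twoheadleftarrow C \rightarrowtail B$, while a $2$-simplex of $S_\bullet \mathcal E$ is a cofibration sequence $A' \rightarrowtail A \twoheadrightarrow A''$; these two pieces of data are tautologically interchangeable, since a span as above determines the cofibration sequence $\ker(C \twoheadrightarrow A) \rightarrowtail C \twoheadrightarrow A$ together with the admissible mono $C \rightarrowtail B$. I would promote this into a functor from a $\Delta$-indexed construction built out of $NQ\mathcal E$ (in the style of the edgewise subdivision) to the bisimplicial set $N_\bullet wS_\bullet \mathcal E$, and similarly in the opposite direction by sending a flag $0 \rightarrowtail A_1 \rightarrowtail \cdots \rightarrowtail A_n$ to the composable chain of spans it defines.

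Concretely, the core step is to construct an auxiliary bisimplicial object interpolating between a suitable subdivision of $NQ\mathcal E$ and $N_\bullet wS_\bullet \mathcal E$, and to verify that the two natural projections are weak equivalences. The main hammer here is the additivity theorem, which holds in both the Quillen and the Waldhausen framework and implies that splittings of cofibration sequences are respected up to coherent homotopy; this lets one reduce the comparison on higher $\pi_*$ to the case of $\pi_0 = K_0$, where the identification is immediate.

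The main obstacle will be the bookkeeping needed to make the comparison strictly functorial on the nose: one has to pick kernels and cokernels of admissible morphisms in a coherent way and then verify compatibility with every face and degeneracy operator before the additivity reduction can even be invoked. Once the spectrum-level equivalence $K(\mathcal E) \simeq \Omega BQ\mathcal E$ is in hand, the final assertion follows formally: the fundamental-groupoid functor $\pi_f$ depends only on the $1$-truncated weak homotopy type of the associated infinite loop space and is symmetric-monoidal for loop-sum, so it carries the spectrum equivalence into an additive equivalence of the associated Picard groupoids, which is exactly the required comparison of virtual categories.
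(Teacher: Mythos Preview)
The paper does not prove this proposition at all: it is stated with the attribution ``[\cite{Waldhausen} Theorem 1.9]'' and no proof environment follows, so the comparison you are asking about is against a bare citation rather than an argument in the paper.

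As for your sketch itself: the overall shape---treat $\mathcal E$ as a Waldhausen category with admissible monos as cofibrations and isomorphisms as weak equivalences, then compare $S_\bullet$ with the $Q$-construction via an edgewise-subdivision style dictionary---is indeed how Waldhausen's actual proof goes. But your identification of the ``main hammer'' is off. The additivity theorem is not what drives the comparison, and there is no reduction of higher $\pi_*$ to $\pi_0$. Waldhausen's argument is a direct simplicial comparison: one exhibits an explicit isomorphism (or degreewise equivalence) between $iS_\bullet \mathcal E$ and the edgewise subdivision of $N_\bullet Q\mathcal E$, and since edgewise subdivision does not change the homotopy type of the realization, the weak equivalence follows immediately in all degrees at once. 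Your paragraph about ``splittings of cofibration sequences respected up to coherent homotopy'' and reducing to $K_0$ does not correspond to any step in the standard proof and would not, by itself, yield the result. The genuine work is exactly the combinatorial bookkeeping you flag as an obstacle; once that is done there is nothing further to reduce.
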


\label{complicialbiWaldhausen}\begin{Prop} [\cite{Thomason2}, 1.9.6]
Suppose in addition that $\textbf{A}$ is complicial biWaldhausen so that it is a full subcategory of $C(\mc A)$ for an abel an category $\mc A$. Furthermore suppose that it is closed under taking exact sequences in $C(\mc A)$, is closed under finite degree shifts and $co(\textbf{A})$. Then  $Ho(\textbf{A}) = w^{-1} \textbf{A}$ is a triangulated category and admits a calculus of fractions.
\end{Prop}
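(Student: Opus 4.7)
The plan is to follow the standard Verdier--Thomason pattern for producing a triangulated structure on the homotopy category of a sufficiently rich complicial category. First I would fix the shift functor on $Ho(\textbf{A}) = w^{-1}\textbf{A}$ by inheriting the degree shift $X \mapsto X[1]$ from $C(\mc A)$, which preserves $\textbf{A}$ by the closure under finite degree shifts assumption. The candidate distinguished triangles are those isomorphic in $Ho(\textbf{A})$ to $A \to B \to C \to A[1]$ for a cofibration sequence $A \rightarrowtail B \twoheadrightarrow C$ in $\textbf{A}$, where the connecting map $C \to A[1]$ is defined via the mapping cone $\mathrm{Cone}(A \to B)$, which lies in $\textbf{A}$ by the cone closure hypothesis and is weakly equivalent to $C$ because the cofibration $A \rightarrowtail B$ has strict quotient $C$.

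The central technical tool is the mapping cylinder construction available in any complicial biWaldhausen category: every morphism $f\colon X \to Y$ factors as a cofibration $X \rightarrowtail \mathrm{Cyl}(f)$ followed by a weak equivalence $\mathrm{Cyl}(f) \overset{\sim}{\twoheadrightarrow} Y$, with $\mathrm{Cyl}(f)$ sitting inside $\textbf{A}$ thanks to the closure assumptions. From this factorization, TR1 (any morphism extends to a triangle) is immediate, TR2 (rotation) follows by identifying $\mathrm{Cone}(B\to C)$ with $A[1]$ using the cone-of-cone calculation in $C(\mc A)$, and TR3 (filling in morphisms of triangles) reduces, via cylinders, to the essentially formal statement that a morphism of cofibration sequences in $\textbf{A}$ can be lifted up to weak equivalence. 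The octahedral axiom TR4 I would deduce from the $3\times 3$ lemma for composable cofibrations $A\rightarrowtail B\rightarrowtail C$ in the ambient abelian category $\mc A$, transported to $\textbf{A}$ using the closure under exact sequences.

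For the calculus of fractions, the saturation and 2-out-of-3 properties built into the complicial biWaldhausen structure give the cancellation part, while the Ore conditions reduce to the following: given a roof $X \to Y \overset{\sim}{\leftarrow} Y'$, one completes it by forming the pullback-up-to-homotopy via the cocylinder (equivalently, the mapping path object), and dually one uses mapping cylinders for the opposite Ore condition. Both operations land in $\textbf{A}$ by the closure assumptions, and the relevant arrows are weak equivalences by the same cone/cylinder identities that drive the triangulated axioms. The step I expect to be most delicate is the verification of the octahedral axiom, where one must track the compatibility between three different cofibration sequences and the induced cone identifications; the bookkeeping here is routine but technical, and in practice one appeals to the fact (proved once and for all in \cite{Thomason2}, 1.9.6) that the $3\times 3$ lemma in $\textbf{A}$ supplies exactly the octahedron data.
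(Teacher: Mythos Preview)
The paper gives no proof of this proposition at all: it is stated as a citation of \cite{Thomason2}, 1.9.6 and left there. Your sketch is a reasonable outline of the argument one finds in Thomason--Trobaugh (cylinder/cone factorizations to produce candidate triangles, the $3\times 3$ diagram for the octahedron, and cylinder/cocylinder constructions for the Ore conditions), so in spirit it is exactly the argument the paper is deferring to; there is nothing to compare against in the paper itself.
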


\begin{remark} By \cite{Thomason2}, Theorem 1.9.2, we can suppose that cofibrations are the degree-wise admissible monomorphisms with quotients in $\textbf{A}$. \end{remark}

\begin{Prop} \label{prop:determinanttriangulated} With the above notation, a determinant functor $[]: (\textbf{A}, w) \to P$ admits the following equivalent description as a functor factoring via $D(\textbf{A}) = Ho(\textbf{A}):= w^{-1} \textbf{A}$: \\
(a'): For any distinguished triangle $\Sigma: A \to B \to C \to A[1]$ there is an isomorphism functorial with respect to
isomorphisms (in $D(\textbf{A})$):
$$\{\Sigma\}: [B] \simeq [A] \oplus [C].$$
(b'): For any object $A$, the distinguished triangle $\Sigma: A = A \to 0 \to A[1]$ decomposes the identity-map as:
$$\xymatrix{[A] \ar[r]^{\{\Sigma\}} & [A] \oplus [0] \ar[r]^{\delta^R} & [A]}.$$
(c'): For any distinguished triangle of distinguished triangles,
\ie a diagram of the form:
$$\xymatrix{\Sigma_A: & A' \ar[r] \ar[d] & A \ar[r] \ar[d] & A'' \ar[r] \ar[d] & A'[1] \ar[d] \\
\Sigma_B: & B' \ar[r] \ar[d] & B \ar[r] \ar[d] & B'' \ar[r] \ar[d] & A'[1] \ar[d]\\
\Sigma_C: & C' \ar[r] \ar[d] & C \ar[r] \ar[d] & C'' \ar[r] \ar[d] & A'[1] \ar[d]\\
\Sigma_A[1]: & A'[1] \ar[r] & A[1] \ar[r] & A''[1] \ar[r] & A'[2] \\
& \Sigma' & \Sigma & \Sigma'' & \Sigma'[1]}$$ where all the rows and
columns are distinguished triangles, the following diagram is
commutative:
  $$\xymatrix{ [B'] \oplus [B''] \ar[d]^{\{\Sigma'\} \oplus \{\Sigma''\}} & \ar[l]^{\{\Sigma_B\}}[B] \ar[r]^{\{\Sigma\}} & [A] \oplus [C] \ar[d]^{\{\Sigma_A\} \oplus \{\Sigma_C\}} \\
[A'] \oplus [C'] \oplus [A''] \oplus [C''] \ar[rr] & & [A'] \oplus [A''] \oplus [C'] \oplus
[C'']}.$$\\ We moreover say that the determinant functor is commutative if
(d') The natural triangles $$A \to A \oplus B \to A \to A[1]$$ and
$$B\to B \oplus B \to A \to B[1]$$ induce a commutative diagram
$$\xymatrix{[A \oplus B] \ar[r] \ar[d] & \ar[d] [B \oplus A] \ar[d] \\
[A] \oplus [B] \ar[r] & [B] \oplus [A].}$$
\end{Prop}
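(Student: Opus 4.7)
The plan is to exploit two key facts guaranteed by Proposition \ref{complicialbiWaldhausen}: $D(\textbf{A}) = w^{-1}\textbf{A}$ is a triangulated category admitting a calculus of fractions, and every distinguished triangle in $D(\textbf{A})$ is isomorphic to one of the form $A \to B \to C \to A[1]$ induced by a cofibration sequence $A \rightarrowtail B \twoheadrightarrow C$ in $\textbf{A}$. Starting from a determinant functor $[-]: (\textbf{A}, w) \to P$ in the sense of Definition \ref{defn:determinantfunctor}, I would first observe that $P$ is a groupoid, so $[-]$ inverts weak equivalences; the calculus of fractions then provides a unique factorization through $D(\textbf{A})$.

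To define the data on distinguished triangles, for each $\Sigma$ in $D(\textbf{A})$ I would choose a cofibration-sequence representative $\Sigma_0$ and transport $\{\Sigma_0\}$ through the chosen isomorphism $\Sigma \simeq \Sigma_0$ in $D(\textbf{A})$. Independence of the choice reduces, via the calculus of fractions, to comparing two cofibration-sequence representatives joined by a zig-zag of weak equivalences; functoriality of $\{-\}$ with respect to weak equivalences of cofibration sequences, which is part of axiom (a), makes the two transports agree. Axioms (b') and (d') then follow from (b) and (d) applied to the natural cofibration representatives $A \rightarrowtail A \twoheadrightarrow 0$ and $A \rightarrowtail A \oplus B \twoheadrightarrow B$, since the distinguished triangles in (b') and (d') are, up to canonical isomorphism in $D(\textbf{A})$, precisely those induced by these cofibration sequences.

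The main obstacle is axiom (c'). Given a $3\times 3$ diagram of distinguished triangles in $D(\textbf{A})$, I would lift it step by step to a $3\times 3$ diagram of cofibration sequences in $\textbf{A}$, using the complicial biWaldhausen structure to form mapping cylinders, pushouts and cofibres that realize the connecting maps; up to weak equivalence such a lift exists because $\textbf{A}$ is closed under extensions, cones and degree shifts by hypothesis. Applying axiom (c) to the lift then yields exactly the commutativity required by (c'). The converse direction is immediate: any functor on $D(\textbf{A})$ satisfying (a')--(d') becomes a Waldhausen determinant functor upon precomposition with $(\textbf{A}, w) \to D(\textbf{A})$, because every cofibration sequence produces a distinguished triangle in $D(\textbf{A})$ by construction of the triangulated structure. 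Finally, the two constructions are essentially inverse because in both presentations a determinant functor is determined by the same underlying data of isomorphisms indexed by cofibration sequences in $\textbf{A}$, so the induced comparison between the two categories of determinant functors is an equivalence.
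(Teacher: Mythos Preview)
Your proposal is correct and follows essentially the same route as the paper: factor through $D(\textbf{A})$ via the calculus of fractions, represent distinguished triangles by cofibration sequences, and deduce (a')--(d') from (a)--(d). The only substantive difference is in your treatment of (c'). You propose to lift an arbitrary $3\times 3$ diagram of distinguished triangles in $D(\textbf{A})$ wholesale to a $3\times 3$ diagram of cofibration sequences in $\textbf{A}$. This can be made to work, but the paper avoids the coherence issues inherent in such a lift by arguing differently: it first observes that for any morphism $u\colon A\to B$ in $D(\textbf{A})$, although $\operatorname{cone}(u)$ is determined only up to a non-canonical isomorphism, the object $[\operatorname{cone}(u)]$ is determined up to \emph{unique} isomorphism (by applying (a') to a comparison of two choices of cone). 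Consequently the commutativity in (c') depends only on the upper-left commutative square $A'\to A,\ B'\to B$; one may then complete this square by mapping cones, and finally represent the relevant maps by cofibrations, reducing to (c). This buys a cleaner reduction: instead of lifting nine objects and many maps simultaneously, one lifts a single square and builds the rest functorially.
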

\begin{proof} We can suppose by the above remark that the cofibrations are given by degree-wise split monomorphisms, and these yield all distinguished triangles in $D(\textbf{A})$. It is thus clear that the above data $(a') - (c')$ determine the data $(a) - (c)$, and that the data $(d)$ and $(d')$ are equivalent, so we show the converse statement. Since $D(\textbf{A})$ admits a calculus of fractions it is immediate to verify that if a cofibration sequence $\Sigma: A' \rightarrowtail B' \twoheadrightarrow C'$ is isomorphic to a distinguished triangle $A \to B \to C \to A[1]$ we have an induced isomorphism $[B'] \simeq [A'] \oplus [C']$ such that the obvious diagram commutes. By $(a)$ it does not depend on the choice of $\Sigma$. $(b')$ is clearly equivalent to $(b)$. To establish that the data of $(c)$ determine that of $(c')$, we first notice that if $u: A \to B$ is any morphism in $D(\textbf{A})$, and if we have two choices of cones of $u$, $C$ and $C'$, with an isomorphism $f: C\to C'$, by $(a')$ applied to the diagram
$$\xymatrix{A \ar[r] \ar@{=}[d] & B \ar[r] \ar@{=}[d] & C \ar[r] \ar[d]^f & A[1] \ar@{=}[d] \\ A \ar[r] & B \ar[r] & C' \ar[r] & A[1] }$$ there is an isomorphism $[C] = [C']$ which does not depend on the choice of $f$. Hence the object $[cone(u)]$ is determined up to unique isomorphism, as
opposed to $cone(u)$, and for any distinguished triangle $A \stackrel{u}\to B \to C \to A[1]$ a canonical isomorphism $[C] = [cone(u)]$. Furthermore, any diagram $$\xymatrix{A' \ar[r] \ar[d] & A \ar[d] \\ B' \ar[r] & B}$$ can be completed into a diagram of the form in $(c')$ by taking mapping cones and it follows that we can assume that our diagram is of that form. Lastly, if we have a map $u: A \to B$, the map $B \to cone(u)$ can be chosen to be represented by a cofibration and we are reduced to the case of cofibrations.
\end{proof}

Determinants on (small) triangulated categories were also studied in \cite{detertriangulated},  \cite{Knudsen} and \cite{K1Waldhausen}, where similar results were obtained. In particular we have:
\begin{Cor}[Knudsen, \cite{Knudsen}] Let $i: \mc E\to \mc A$ be an exact fully faithful embedding of an exact category $\mc E$ in an abelian category $\mc A$, such that for any morphism in $\mc E$ which is an epimorphism in $\mc A$, is admissible in $\mc E$. Denote by $C(\mc E)$ the full subcategory of bounded complexes of the category of complexes in $\mc A$. Then we have a natural equivalence of categories between the virtual category of $(\mc E, is)$ and the virtual category of $(C(\mc E), q.i.) $ of complexes in $E$ with quasi-isomorphisms in $\mc A$.
\end{Cor}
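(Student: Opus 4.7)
The plan is to exhibit mutually inverse additive functors between $V(\mc E)$ and $V(C(\mc E))$ using the universal properties of both, namely Deligne's original description of $V(\mc E)$ on the one hand and Theorem \ref{thm: waldhausen-determinant} applied to the biWaldhausen structure on $C(\mc E)$ (degree-wise admissible monomorphisms as cofibrations, quasi-isomorphisms as weak equivalences) on the other. The hypothesis that $\mc A$-epimorphisms between objects of $\mc E$ are $\mc E$-admissible is what allows us to view $C(\mc E)$ as a complicial biWaldhausen category in the sense of Proposition \ref{complicialbiWaldhausen}, so that $D^b(\mc E) = w^{-1} C(\mc E)$ is a triangulated category to which Proposition \ref{prop:determinanttriangulated} applies.

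First, I would construct the forward functor $\Phi: V(\mc E) \to V(C(\mc E))$ by composing the universal determinant $\mc E \to V(\mc E)$ with the obvious inclusion sending an object of $\mc E$ to a complex concentrated in degree zero; the latter is manifestly a determinant functor on $\mc E$, and so factors through $V(\mc E)$.

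For the reverse functor $\Psi: V(C(\mc E)) \to V(\mc E)$, I would use Proposition \ref{prop:determinanttriangulated} to construct a determinant functor $[-]_\Psi: (C(\mc E), q.i.) \to V(\mc E)$ factoring through $D^b(\mc E)$. On objects set $[C^\bullet]_\Psi = \sum_i (-1)^i [C^i]$, where negation is the Picard inverse in $V(\mc E)$. On a distinguished triangle arising from a termwise short exact sequence $0 \to A^\bullet \to B^\bullet \to C^\bullet \to 0$ — which, by the admissibility hypothesis, consists of $\mc E$-cofibration sequences in each degree — the required isomorphism $[B^\bullet]_\Psi \simeq [A^\bullet]_\Psi \oplus [C^\bullet]_\Psi$ of axiom $(a')$ is obtained by summing termwise the isomorphisms supplied by the universal determinant on $\mc E$, with axioms $(b')$--$(d')$ inherited in the same way.

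It remains to compare compositions: $\Psi \Phi$ is the identity on objects concentrated in degree zero, and $\Phi \Psi$ will be naturally isomorphic to the identity via an isomorphism $[C^\bullet] \simeq \sum_i (-1)^i [C^i]$ in $V(C(\mc E))$ built by induction on the length of $C^\bullet$ out of the stupid-truncation distinguished triangles $\sigma^{\geq n+1} C^\bullet \to C^\bullet \to \sigma^{\leq n} C^\bullet \to$ and axiom $(a')$. The main obstacle is to verify that $[-]_\Psi$ is genuinely well defined on quasi-isomorphisms and satisfies axiom $(c')$ for arbitrary distinguished triangles in $D^b(\mc E)$, not merely for termwise exact sequences. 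To handle this I would invoke the end-of-proof reduction in Proposition \ref{prop:determinanttriangulated}: every distinguished triangle is isomorphic to one represented by a degree-wise split cofibration of complexes, the admissibility hypothesis ensures each such splitting yields a cofibration sequence in $\mc E$ in each degree, and axiom $(a')$ combined with the freedom in choosing cones guarantees independence from the chosen representative. All the coherence identities then reduce to the corresponding identities in $V(\mc E)$, applied termwise.
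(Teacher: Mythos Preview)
Your approach is quite different from the paper's, and the comparison is worth spelling out. The paper does not construct an inverse functor at all: it simply equips $C(\mc E)$ with two complicial biWaldhausen structures (cofibrations given by degree-wise admissible monomorphisms, resp.\ degree-wise split monomorphisms with quotient in $C(\mc E)$) and then cites \cite{Thomason2}, Theorem~1.11.7 (the Gillet--Waldhausen theorem) to obtain homotopy equivalences $K(\mc E)\simeq K(\textbf{E})\simeq K(\widetilde{\textbf{E}})$ of $K$-theory spaces. Applying the fundamental groupoid gives the equivalence of virtual categories immediately. Your route, by contrast, is essentially Knudsen's original hands-on construction of an inverse determinant functor via the alternating sum; it is more elementary in that it avoids the machinery behind Thomason--Trobaugh's theorem, but it is also substantially longer to carry out in full.

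There is, however, a genuine gap in your sketch. To invoke either Definition~\ref{defn:determinantfunctor} or the triangulated reformulation in Proposition~\ref{prop:determinanttriangulated}, you must first have a \emph{functor} $(C(\mc E),q.i.)\to V(\mc E)$, i.e.\ you must say what $[-]_\Psi$ does to a quasi-isomorphism $f:A^\bullet\to B^\bullet$, not merely to objects and to termwise-split short exact sequences. Your appeal to the ``end-of-proof reduction'' in Proposition~\ref{prop:determinanttriangulated} is circular here: that argument shows how the Waldhausen data (already including the action on weak equivalences) determine the triangulated data, not the reverse. The missing ingredient is a canonical trivialization $\sum_i(-1)^i[C^i]\simeq 0$ in $V(\mc E)$ for every \emph{acyclic} bounded complex $C^\bullet$ in $C(\mc E)$. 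This is exactly where the hypothesis on $i:\mc E\hookrightarrow\mc A$ does real work (beyond merely licensing the biWaldhausen structure): it guarantees inductively that each cycle object $Z^i=\ker(C^i\to C^{i+1})$ lies in $\mc E$, so that the short exact sequences $0\to Z^i\to C^i\to Z^{i+1}\to 0$ are admissible in $\mc E$ and the alternating sum telescopes canonically to zero. Once this is in hand, the action on a quasi-isomorphism $f$ is defined via the acyclic cone and the termwise-split sequence $B^\bullet\rightarrowtail\operatorname{cone}(f)\twoheadrightarrow A^\bullet[1]$, and one then has to check (as Knudsen does) that all the resulting coherences hold. None of this is visible in your outline.
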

\begin{proof} Equip the category $C(\mc E)$ with the structure of
a complicial biWaldhausen category where the weak equivalences are
given by quasi-isomorphisms and the cofibrations are either of the
two following: degree-wise admissible monomorphisms or degree-wise split monomorphisms whose quotients lie in $C(\mc E)$. Denote the
corresponding biWaldhausen categories by $\textbf{E}$ and
$\widetilde{\textbf{E}}$. By \cite{Thomason2}, Theorem 1.11.7, we have
natural homotopy-equivalences
$$K(\mc E) \simeq K(\textbf{E}) \simeq K(\widetilde{\textbf{E}})$$
and hence equivalent virtual categories. Moreover, this does not
depend on the choice of $\mc A$.
\end{proof}
\mps{Notice, however, that the proof of
the theorem in \loccit resembles that of \cite{Knudsen}, in
particular in the usage of the "passage to the length two
complexes"-induction step.}
If $i: \mc E \to \mc A$ is the fully faithful Gabriel-Quillen
embedding reflecting exactness (see \cite{Thomason2}, Appendix A),
or if $\mc E$ is the category of coherent vector bundles and $\mc A$
is the category of coherent sheaves respectively on a scheme, $i$
satisfies the above hypothesis.

\begin{Cor} \cite{K1Waldhausen} Let $\textbf{A}$ be a small complicial biWaldhausen category. Then $K_0$ and $K_1$ are functorially (with respect to triangulated functors, \ie functors preserving the above structures) determined by the structure of a triangulated category of the homotopy category $Ho(\textbf{A}) = D^b(\textbf{A})$ and its isomorphisms. In particular one obtains a description of the $K_1$ of a Waldhausen category in terms of the associated derived category (see \loccit for a precise statement).
\end{Cor}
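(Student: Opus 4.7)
The strategy is to extract both statements from the universal property of the virtual category combined with Proposition \ref{prop:determinanttriangulated}. By Theorem \ref{thm: waldhausen-determinant} together with the remark following the definition of $V(\mc C)$, one has $\pi_0 V(\textbf{A}) = K_0(\textbf{A})$ and $\Aut_{V(\textbf{A})}(c) = K_1(\textbf{A})$ for every object $c$, and $V(\textbf{A})$ is universal among Picard categories receiving a determinant functor from $(\textbf{A}, w)$. Hence it suffices to show that $V(\textbf{A})$, together with the assignment $A \mapsto [A]$, is naturally recoverable from $D^b(\textbf{A})$ viewed as a triangulated category together with its underlying groupoid of isomorphisms.

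The first step is to invoke Proposition \ref{prop:determinanttriangulated}, which rephrases the axioms (a)--(d) for a determinant functor on $(\textbf{A}, w)$ as the axioms (a')--(d') referring only to distinguished triangles and isomorphisms in $D^b(\textbf{A}) = Ho(\textbf{A})$. Consequently the category of determinant functors $(\textbf{A}, w) \to P$ depends only on the triangulated category $D^b(\textbf{A})$ and its isomorphisms, so the same is true of its representing object $V(\textbf{A})$. Passing to $\pi_0$ and automorphism groups then shows that both $K_0(\textbf{A})$ and $K_1(\textbf{A})$ are intrinsically encoded in the triangulated structure.

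For functoriality, I would argue as follows. A triangulated exact functor $F \colon D^b(\textbf{A}) \to D^b(\textbf{A}')$ induces, by composition with the universal determinant functor $\textbf{A}' \to V(\textbf{A}')$, a determinant functor from $D^b(\textbf{A})$ (and hence from $(\textbf{A}, w)$) into $V(\textbf{A}')$ in the triangulated sense of Proposition \ref{prop:determinanttriangulated}. Universality then produces an essentially unique additive functor $V(\textbf{A}) \to V(\textbf{A}')$, from which one extracts well-defined group homomorphisms $K_0(\textbf{A}) \to K_0(\textbf{A}')$ and $K_1(\textbf{A}) \to K_1(\textbf{A}')$ by applying $\pi_0$ and the automorphism-group functor. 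Compatibility with composition of triangulated functors is immediate from the uniqueness-up-to-unique-isomorphism built into any universal construction.

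The main obstacle is already absorbed by Proposition \ref{prop:determinanttriangulated}; once that is granted, the present corollary is essentially formal. The one place that requires slight care is verifying that when $F$ comes from an exact functor of Waldhausen categories, the homomorphisms on $K_0$ and $K_1$ obtained via the universal procedure agree with those induced directly by the map of Waldhausen $K$-theory spaces. This reduces to checking that two additive functors $V(\textbf{A}) \to V(\textbf{A}')$ represent the same determinant functor on $(\textbf{A}, w)$, at which point the uniqueness clause of Theorem \ref{thm: waldhausen-determinant} supplies a canonical natural isomorphism between them and delivers the required compatibility.
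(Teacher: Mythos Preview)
Your argument is correct and matches the paper's intent: the paper does not spell out a proof for this corollary, but its placement immediately after Proposition \ref{prop:determinanttriangulated} makes clear that exactly the reasoning you give is what is meant---the universal property of $V(\textbf{A})$ from Theorem \ref{thm: waldhausen-determinant} combined with the triangulated reformulation of determinant functors shows that $V(\textbf{A})$, and hence $K_0 = \pi_0 V(\textbf{A})$ and $K_1 = \pi_1 V(\textbf{A})$, depend only on the triangulated structure of $Ho(\textbf{A})$. One small caveat: Proposition \ref{prop:determinanttriangulated} is stated under the standing hypotheses of the preceding Proposition (\cite{Thomason2}, 1.9.6), so strictly speaking the corollary inherits those closure assumptions on $\textbf{A}$; you may wish to note this explicitly.
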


\section{Some fundamental operations on virtual categories} \label{chapter:algebraicstacks}
\subsection{Various categories and some fundamental properties}

We will freely use the language of Appendix \ref{appendix:algebraicstacks} in this chapter where we expand slightly on the concept of a virtual category of an algebraic stack. We will always consider the a stack as a simplicial sheaf via the extended Yoneda functor \ref{defn:extyoneda}. Also, for the purposes of this section, all algebraic stacks are separated locally of finite type over some (non-fixed) Noetherian scheme $S$. \\
\begin{D} Given an algebraic stack $\mc X$, there are for our purposes four main candidates for virtual categories one might consider, namely any one of the following Picard categories
\begin{enumerate}
  \item the virtual category of locally free sheaves on $\mc X$, $V(\mc X) = V_{naive}(\mc X)$.
  \item the virtual category of coherent $\calo_\mc X$-modules on $\mc X$, $C(\mc X).$
  \item if $\mc Z$ is a closed substack of $\mc X$, the fundamental groupoid of the $K$-theory of the category of finite complexes of vector bundles on $\mc X$ with support on $\mc Z$, $V^\mc Z(\mc X)$.
  \item if $\mc Z$ is a closed substack of $\mc X$, the fundamental groupoid of the $K$-theory of the category of complexes of vector bundles on $\mc X$ with support on $\mc Z$, $C_\mc Z (\mc X)$.
  \item the fundamental groupoid of $K^{sm}(\mc X)$, the cohomological virtual category, $W(\mc X)$.
  \item the fundamental groupoid of $G^{sm}(\mc X)$, the coherent cohomological virtual category, $WC(\mc X)$.

\end{enumerate}
\end{D}
By the remarks concluding the Appendix \ref{appendix:algebraicstacks} we have additive functors of fibered Picard categories, $V(-) \to W(-)$ and $C(-) \to WC(-)$. Notice that since the automorphism-group of any object of $W(\mc X)$ or $WC(\mc X)$ is a $\bb Q$-vector space they are automatically strictly commutative.
\begin{D} Since $K^{sm}$ is flabby, to give operations involving $W(\mc X)$ it is sufficient to construct functorial homotopies on the $K$-theory
spaces of the vertices of simplicial algebraic space $\mc N(X/\mc X)$ for some presentation of $\mc X$. The same remark applies to $WC(\mc X)$. We will say that any such constructed operations are given by \textit{cohomological descent}.
\end{D}
Given a morphism $F: \mc X \to \mc Y$ of algebraic stacks locally of finite type over a Noetherian scheme $S$, recall that for a coherent sheaf
$\mc F$ we can define $R^i F_* \mc F$ by a Cech-cohomology argument (compare \cite{HodgeIII}, Définition 5.2.2.). \mps{Is this not clear? I'm just saying one defines derived functors via Cech cohomology.} We know by \cite{Olsson-propercovering}, Theorem 1.2, that whenever $F$ is moreover proper,
$R^iF_* \mc F$ is coherent whenever $\mc F$ is coherent. Suppose in addition that $F$ is of finite cohomological dimension so that $R^i F_*(\mc F) = 0$ for large enough $i$. Then the usual formula
$$RF_*(\mc F) = \sum (-1)^i R^i F_* \mc F$$
defines a pushforward on $RF_*: C(\mc X) \to C(\mc Y)$. It is more subtle to define the corresponding functor $WC(\mc X) \to WC(\mc Y)$. If $F: \mc X \to \mc Y$ is a proper morphism, and given a proper surjective morphism $X \to \mc X$ with $X$ a scheme, we obtain a diagram of
$$\xymatrix{\mc N(X/\mc X) \ar[dr]^q \ar[d]^p \\
\mc X \ar[r]^F & \mc Y }$$
with proper morphisms and applying the functor $G^{sm}()$ we obtain a diagram
$$\xymatrix{G^{sm}(\mc N(X/\mc X)) = G(\mc N(X/\mc X)) \ar[dr]^{q_*} \ar[d]^{p_*} \\
G^{sm}(\mc X)  & G^{sm}(\mc Y) }.$$
By \cite{Toen}, Théorème 2.9, given a proper surjective morphism $X \to \mc X$ with $X$ a scheme and $\mc X$ is Deligne-Mumford, there is a weak equivalence $G(\mc N(X/\mc X)) \to G^{sm}(\mc X)$. Applying the fundamental groupoid-construction thus gives an equivalence of categories
$\pi_f(G^{sm}(\mc N(X/\mc X))) \to WC(\mc X)$ and we define $RF_* = q_*(p_*)^{-1}: WC(\mc X) \to WC(\mc Y)$ (compare \cite{Toen-Riemann-Roch}, Section 3.2.2). We have essentially proved:
\begin{Prop} Suppose $F: \mc X \to \mc Y$ is a proper of finite cohomological dimension morphism of separated Deligne-Mumford stacks of finite type over a Noetherian base-scheme $S$. It is possible to define a functor $RF_*: WC(\mc X) \to WC(\mc Y)$ such that the diagram
$$\xymatrix{C(\mc X)  \ar[r] \ar[d]^{RF_*} & WC(\mc X) \ar[d]^{RF_*} \\
C(\mc Y)  \ar[r]  & WC(\mc Y) }$$
is commutative up to canonical equivalence of functors. \\
\end{Prop}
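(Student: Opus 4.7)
The plan is to make rigorous the construction sketched just before the statement. First, choose a proper surjective morphism $\pi: X \to \mc X$ with $X$ a scheme, which exists for separated Deligne-Mumford stacks of finite type over $S$ by standard Chow-cover arguments. Form the simplicial algebraic space $\mc N(X/\mc X)$, giving a commutative triangle of proper morphisms
$$p: \mc N(X/\mc X) \to \mc X, \qquad q = F \circ p: \mc N(X/\mc X) \to \mc Y,$$
each level-wise of finite cohomological dimension. Applying $G^{sm}$ and then the fundamental groupoid yields Picard-categorical pushforwards $\pi_f(p_*)$ and $\pi_f(q_*)$. By the cited theorem of To\"en (Th\'eor\`eme 2.9 of \cite{Toen}), $\pi_f(p_*)$ is an equivalence, and I define
$$RF_* := \pi_f(q_*) \circ \pi_f(p_*)^{-1}: WC(\mc X) \to WC(\mc Y).$$

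Two things then need to be checked. The first is independence of the chosen cover $X$ up to canonical equivalence. Given a second cover $X'\to\mc X$, I would pass to the common refinement $X\times_{\mc X}X'$, whose simplicial nerve maps to both $\mc N(X/\mc X)$ and $\mc N(X'/\mc X)$ compatibly with the respective $p_*$'s and $q_*$'s; the naturality of To\"en's weak equivalence then produces a canonical 2-cell between the two candidates for $RF_*$. A standard cocycle argument using a third cover establishes the coherence needed so that the result is well-defined up to canonical isomorphism rather than merely up to isomorphism.

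The second point is commutativity of the displayed square. Here the key observation is that at each simplicial level $\mc N_n(X/\mc X)$, the derived pushforward of coherent sheaves defined as $\sum(-1)^i R^i(p_n)_*$ is compatible with the Waldhausen pushforward realized on $G^{sm}$, simply because both come from the same termwise functor. Globalizing via the naturality of To\"en's equivalence (applied simultaneously to the domain and codomain of $F$) gives the canonical equivalence of functors.

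The main obstacle I expect is not in either individual verification but in the bookkeeping for (a): producing the coherence isomorphisms between the various $RF_*$'s coming from different covers, so as to obtain a functor well-defined up to \emph{canonical} rather than merely some equivalence. This is formal once one knows To\"en's equivalence is natural in the cover, but requires care because we are working with fundamental groupoids of $K$-theory spaces of simplicial algebraic spaces, where one must keep track of 2-cells coming from simplicial homotopies. Functoriality of $RF_*$ in $F$ (for composable proper morphisms) would be established by the same mechanism using a simultaneous refinement of covers of $\mc X$ and $\mc Y$.
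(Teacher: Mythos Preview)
Your proposal is correct and follows essentially the same route as the paper. The paper's own proof is much terser: it simply observes that the whole construction reduces to the existence of a proper surjective morphism $X \to \mc X$ with $X$ a scheme, cites Olsson's result (\cite{Olsson-propercovering}, Theorem 1.1) for this (noting one can even take $X$ quasi-projective over $S$), and declares independence of the choice to be ``clear'' without spelling out the common-refinement argument you describe. Your invocation of ``standard Chow-cover arguments'' is in the right spirit but less precise than the paper's explicit citation; otherwise your elaboration of independence and of the commutativity of the square is a reasonable unpacking of what the paper leaves implicit.
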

\begin{proof} The statement is clear as soon as we can show that there is always a choice of a proper surjective $X \to \mc X$ with $X$ a scheme. It is clearly independent of such a choice. But this is \cite{Olsson-propercovering}, Theorem 1.1, which moreover shows we can pick $X$ to be quasi-projective over $S$.
\end{proof}

The following uses a standard argument factorizing a projective morphism as a closed immersion and a projective bundle projection, we refer to \cite{Riemann-RochAlgebra}, chapter V for the definition.

\begin{Prop} \label{Prop:K-pushforward} Suppose $F: \mc X \to \mc Y$ is a (representable) projective \lci morphism of algebraic stacks with $\mc Y$ quasi-compact and $\mc Y$ has the resolution property, \ie any coherent sheaf is the quotient of a locally free sheaf. Then there is a natural  functor $$RF_*: V(\mc X) \to V(\mc Y)$$ compatible with the functor defined on $C$ under the additive functor $V(-) \to C(-)$.
\end{Prop}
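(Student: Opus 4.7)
The plan is to mimic the classical construction on schemes from \cite{Thomason2}, Proposition \ref{Prop:K-theory-covariance}, but worked out at the level of the fundamental groupoids. First I would factor $F$ as a composite
$$\mc X \xrightarrow{\,i\,} \mathbb{P}_{\mc Y}(\mc E) \xrightarrow{\,\pi\,} \mc Y$$
with $i$ a closed immersion and $\pi$ a projective bundle projection associated to some locally free sheaf $\mc E$ on $\mc Y$; the existence of such $\mc E$ uses the hypothesis that $\mc Y$ is quasi-compact with the resolution property. Since $F$ is lci and $\pi$ is smooth, the immersion $i$ is a regular closed immersion of algebraic stacks. It then suffices to construct the two functors $Ri_*$ and $R\pi_*$ at the level of virtual categories, compose them, and show that the resulting $RF_*$ is independent (up to canonical additive equivalence) of the factorisation.

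For the projective bundle part, I would use the projective bundle formula. By the resolution property of $\mc Y$ and the fact that $\pi$ has the standard Beilinson-type resolution of the diagonal, every locally free sheaf $\mc F$ on $\mathbb{P}_{\mc Y}(\mc E)$ admits a finite resolution
$$0 \to \mc O(-n) \otimes \pi^*\mc F_n \to \cdots \to \mc O \otimes \pi^*\mc F_0 \to \mc F \to 0,$$
with $\mc F_k$ locally free on $\mc Y$. One can then compute $R\pi_*\mc F$ term-by-term using the fact that $R\pi_* \mc O(-k)$ is a known, explicit bounded complex of locally free sheaves on $\mc Y$ (in fact either $0$ or a single term up to shift, by cohomology and base change on projective space). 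This defines a determinant functor on the Waldhausen category of bounded complexes of locally free sheaves on $\mathbb{P}_{\mc Y}(\mc E)$, hence by Theorem \ref{thm: waldhausen-determinant} an additive functor $R\pi_*\colon V(\mathbb{P}_{\mc Y}(\mc E)) \to V(\mc Y)$. For the closed immersion $i$, since $i$ is regular and $\mathbb{P}_{\mc Y}(\mc E)$ has the resolution property (inherited from $\mc Y$), every locally free sheaf on $\mc X$ admits a finite resolution by locally free sheaves on $\mathbb{P}_{\mc Y}(\mc E)$. Sending a locally free sheaf to such a resolution, and using Theorem \ref{thm: waldhausen-determinant} again applied to the Waldhausen category of bounded complexes of locally free sheaves on $\mathbb{P}_{\mc Y}(\mc E)$ that become acyclic away from $\mc X$, yields an additive functor $Ri_*\colon V(\mc X) \to V(\mathbb{P}_{\mc Y}(\mc E))$. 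Compatibility of each of these with the coherent analogues in $C(-)$ is built into the construction, since the underlying complexes represent the derived pushforwards.

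The main obstacle, as in the scheme-theoretic case, is independence of the construction from the chosen factorisation and the chosen resolutions. For the resolutions I would invoke the standard fact that the Waldhausen $K$-theory of bounded complexes of locally free sheaves is homotopy equivalent to that of the full biWaldhausen category of strict perfect complexes supported on $\mc X$ (via Proposition \ref{prop:virtualvectorcomplexes} and Proposition \ref{complicialbiWaldhausen}), so any two choices of resolution are canonically equivalent. For the independence from the factorisation, I would compare two factorisations through a common refinement $\mathbb{P}_{\mc Y}(\mc E) \times_{\mc Y} \mathbb{P}_{\mc Y}(\mc E')$ and use the projection formula together with $R\pi_*\pi^* = \mathrm{id}$ on virtual categories, which again is a consequence of the projective bundle formula. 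Finally, composing $R\pi_* \circ Ri_*$ and verifying compatibility with $C(\mc X) \to C(\mc Y)$ follows from the corresponding compatibilities for each of the two factors and the observation that the composition at the level of complexes recovers the usual $RF_*$ via a bounded resolution.
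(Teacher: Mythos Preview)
Your approach is essentially the one the paper indicates: the sentence preceding the proposition says it ``uses a standard argument factorizing a projective morphism as a closed immersion and a projective bundle projection'' and refers to \cite{Riemann-RochAlgebra}, chapter V, while the paper gives no further proof beyond a remark that in the perfect-complex setting it follows from preservation of perfectness under proper lci pushforward. Your proposal is a correct and careful fleshing-out of that standard argument, including the independence-of-factorisation step, and is entirely in line with what the paper has in mind.
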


\begin{remark} Whenever we are working in a category of stacks where perfect complexes can be used to define algebraic $K$-theory the above is just a consequence of preservation of perfectness of a complex under proper \lci morphisms. The compatibility under composition is given by Grothendieck's spectral sequence.
\end{remark}
\mps{I ignore if the conditions of the proposition implies this.}
Similarly, if $E$ is a vector bundle on $\mc Y$, and $F: \mc X \to \mc Y$ is any morphism, we define a functor $LF^*: V(\mc Y) \to V(\mc X)$ via $LF^* [E] = [F^* E]$.

Let us just recall the usual definition of the basechange morphism, which always exists. Let $$\xymatrix{X' \ar[r]^{g'} \ar[d]^{f'} & Y' \ar[d]^f \\
X \ar[r]^g & Y }$$ be a Cartesian diagram of schemes. By adjointness, we have an equality of morphisms in the derived category of quasi-coherent complexes \mps{I hope this is ok. I'm always a bit scared of various derived categories not being equivalent} schemes; $$Hom(Lf^* Rg_* E, Rg'_* Lf'^* E) = Hom(Rg_* E, Rf_* Rg'_* Lf'^* E)$$ and since $Rf_* Rg'_* \simeq Rg_* Rf'_*$ this is equal to $$Hom(Rg_* E, Rg_* Rf'_* Lf'^* E).$$
By the adjunction morphism $E \to Rf'_* Lf'^* E$ we thus obtain a map $$Hom(Rg_* E,Rg_* E) \to Hom(Lf^* Rg_* E, Rg'_* Lf'^* E).$$
The basechange morphism is the morphism which is the image under the identity-map on the left-hand-side.
\begin{D} Let $$\xymatrix{X' \ar[r]^{g'} \ar[d]^{f'} & Y' \ar[d]^f \\
X \ar[r]^g & Y }$$ be a commutative diagram of schemes. We say the diagram is transversal or Tor-independent or that $X$ and $Y'$
are transversal or Tor-independent over $Y$ (\cite{SGA6}, III, Définition 1.5) if the diagram is a Cartesian diagram of schemes, with $Y$ quasi-compact, $f$ quasi-compact and quasi-separated and if for any $x \in X, y' \in
Y'$ mapping to the same point $y \in Y$, we have
$$Tor_i^{\calo_{Y,y}}(\calo_{X,x}, \calo_{Y', y'}) = 0, \hbox{ for } i > 0,$$ and
$f$ is of finite Tor-dimension.
\end{D}
\begin{TL}\label{lemme:base-changes}[SGA6, IV 3.1] Let
$$\xymatrix{X' \ar[r]^{g'} \ar[d]^{f'} & Y' \ar[d]^f \\
X \ar[r]^g & Y }$$ be a transversal diagram, and let $E \in D^b(X)$  be a
complex with quasi-coherent cohomology. In this case the basechange morphism is an
isomorphism
$$Lf^* Rg_* E \simeq Rg'_* Lf'^* E.$$
\end{TL}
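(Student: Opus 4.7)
The plan is to reduce the statement to a computation with modules over commutative rings, where the Tor-independence hypothesis becomes exactly what is needed to commute derived tensor products with ordinary ones. Both sides of the putative isomorphism are complexes with quasi-coherent cohomology on $Y'$, and the formation of the base change morphism is compatible with restriction to open subschemes. Hence the question is Zariski-local on $Y'$; covering $Y'$ by affine opens, I may assume $Y' = \Spec B$. Similarly, since $Y$ is quasi-compact, one reduces to the case $Y = \Spec A$ by covering $Y$ by finitely many affine opens and using that $E$ can be treated open-by-open.

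Next I would use that $g$ is quasi-compact and quasi-separated (built in to the fact that $X \to Y$ sits in a Cartesian square with $f$ enjoying these properties, after possibly swapping roles) to pick a finite affine cover $\{U_\alpha = \Spec R_\alpha\}$ of $X$, and compute $Rg_* E$ via the Čech complex associated to this cover. The Cartesian assumption gives the pulled-back cover $\{U_\alpha \times_Y Y' = \Spec(R_\alpha \otimes_A B)\}$ of $X'$, and this cover computes $Rg'_* (Lf'^* E)$. A standard double-complex / spectral sequence argument in the Čech direction shows that it suffices to prove the lemma when $X = \Spec R$ and $X' = \Spec (R \otimes_A B)$ are affine.

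In the affine case, choose a $K$-flat resolution $P^\bullet \to E$ of $E$ by a bounded-above complex of flat $\mathcal{O}_X$-modules; using that $E \in D^b(X)$ and that $f$ is of finite Tor-dimension, we may arrange $P^\bullet$ to have bounded Tor-amplitude after base change so that truncation causes no trouble. Passing to global sections of quasi-coherent sheaves on affines, the base change morphism then identifies with the natural map
$$P^\bullet \otimes_A B \longrightarrow P^\bullet \otimes_A^{L} B,$$
and the proof is reduced to showing $\operatorname{Tor}_i^A(P^j, B) = 0$ for $i > 0$. This vanishing is checked on stalks: for primes $\mathfrak{p} \subset R$ and $\mathfrak{q} \subset B$ mapping to a common prime $\mathfrak{r} \subset A$, the localization $(P^j)_\mathfrak{p}$ is flat over $R_\mathfrak{p}$, so it suffices that $\operatorname{Tor}_i^{A_\mathfrak{r}}(R_\mathfrak{p}, B_\mathfrak{q}) = 0$, which is precisely the Tor-independence hypothesis.

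The main obstacle is the last step, namely bridging the gap between the pointwise Tor-independence assumption (stated only for structure sheaves) and the global Tor-vanishing for arbitrary flat resolutions. The key observation is that flatness of the components $P^j$ over $\mathcal{O}_X$ propagates the structure-sheaf Tor-vanishing to Tor-vanishing for $P^j$ itself by a stalk-by-stalk flat-base-change argument, using that Tor commutes with localization. Once this is in place, the quasi-isomorphism $P^\bullet \otimes_A B \simeq P^\bullet \otimes_A^L B$ is immediate and the lemma follows.
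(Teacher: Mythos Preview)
The paper gives no proof of this lemma; it is cited directly from SGA6, IV, 3.1. Your sketch follows the standard argument found there: reduce to the affine case by Zariski-localizing on $Y'$ and $Y$ and computing $Rg_*$ via a finite affine \v{C}ech cover of $X$, then in the affine situation compare the two sides term-by-term using the Tor-independence hypothesis to see that the flat-over-$R$ resolution $P^\bullet$ is already acyclic for $-\otimes_A B$.

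One wrinkle worth flagging: you justify that $g$ is quasi-compact and quasi-separated by saying this is ``built in to the fact that $X \to Y$ sits in a Cartesian square with $f$ enjoying these properties, after possibly swapping roles.'' That does not work; the qcqs property does not transfer across a Cartesian square in that direction. For $Rg_* E$ to have quasi-coherent cohomology one needs $g$ itself to be qcqs, and this is an independent hypothesis (it is what SGA6 actually assumes on the morphism being pushed forward). The paper's definition of ``transversal,'' which places the qcqs condition on $f$ rather than $g$, appears to be a transcription slip: $f$ is the morphism that needs finite Tor-dimension (so that $Lf^*$ preserves boundedness), while $g$ is the one that needs to be qcqs (so that $Rg_*$ is defined on $D^b_{qc}$). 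With that correction, your argument is sound.
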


Since it is natural it also satisfies descent with respect to any smooth equivalence relationship and
thus we have
\begin{Cor} \label{Cor:base-changestacks} Let $$\xymatrix{X' \ar[r]^{g'} \ar[d]^{f'} & Y' \ar[d]^f \\
X \ar[r]^g & Y }$$ be a transversal Cartesian diagram of quasi-compact algebraic stacks with the resolution property and representable morphisms,
$f$ and $f'$ \lci projective morphisms. Then there is a natural transformation $$Lg^* Rf_* = Rf'_* {Lg'}^*$$ of functors $V(\mc Y') \to V(\mc X)$.
\end{Cor}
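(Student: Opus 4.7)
The plan is to bootstrap the scheme-level statement of Lemma \ref{lemme:base-changes} up to stacks by smooth descent, which is the strategy hinted at in the preceding paragraph. The construction of the candidate natural transformation $Lg^* Rf_* \to Rf'_* Lg'^*$ is formal: the four functors on the virtual categories of the stacks are well-defined (the pushforwards by Proposition \ref{Prop:K-pushforward}, using the quasi-compactness, resolution property, and the fact that lci projective implies finite Tor-dimension), and the same adjunction manipulation used in the scheme definition of the basechange map, being a purely categorical unit-counit argument, produces a natural transformation on the level of virtual categories.

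To verify it is an isomorphism, I first choose a smooth surjection $Y_0 \to \mathcal{Y}$ with $Y_0$ a scheme. Pulling back along $g$, $f$, $g'$ respectively, and using that $f$ and hence $f'$ are representable, yields schemes $Y'_0 := Y_0 \times_\mathcal{Y} \mathcal{Y}'$, $X_0 := Y_0 \times_\mathcal{Y} \mathcal{X}$ and $X'_0 := Y_0 \times_\mathcal{Y} \mathcal{X}'$, fitting into a Cartesian cube. Forming the Čech nerves $\mathcal{N}(X_0/\mathcal{X})$, $\mathcal{N}(Y_0/\mathcal{Y})$, etc., we get a level-wise diagram of schemes. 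Transversality is Tor-local and is preserved under flat (smooth) base change of the base, so every level of the simplicial diagram remains transversal; the pulled-back morphisms remain lci projective of finite Tor-dimension.

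Now Lemma \ref{lemme:base-changes} applies on each simplicial level and gives the basechange isomorphism there. By the naturality of the basechange construction with respect to commutative squares, these isomorphisms are compatible with all face and degeneracy maps of the Čech nerves, so they constitute an isomorphism of simplicial natural transformations. Applying the cohomological descent property of $V$ (via its comparison with the flabby $K^{sm}$, or equivalently using that $V(\mathcal{X})$ is the fundamental groupoid of $\op{holim}$ of $K$-theories along $\mathcal{N}(X_0/\mathcal{X})$), the simplicial isomorphism descends to the desired isomorphism $Lg^* Rf_* \simeq Rf'_* Lg'^*$ of functors $V(\mathcal{Y}') \to V(\mathcal{X})$.

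The main obstacle is bookkeeping for the descent step: one must check that the basechange map as constructed intrinsically on the stack agrees, after restriction along the smooth atlas, with the level-wise basechange maps on schemes, and that the comparison maps for the fibered Picard category structure on the Čech nerve commute with the isomorphisms furnished by Lemma \ref{lemme:base-changes}. Both of these are consequences of the fact that the adjunction defining the basechange is manifestly functorial in flat base change (including restriction to a smooth atlas), so no genuinely new input beyond the scheme case and the descent formalism of Appendix \ref{appendix:algebraicstacks} is required.
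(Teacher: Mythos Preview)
Your approach via smooth descent is genuinely different from the paper's, and it contains a real gap.

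The paper does not descend to schemes at all. Instead it works directly on the stack: it factors the projective lci morphism $f$ as a closed immersion followed by a projective bundle projection $\bb P(\mc E)\to\mc Y$. For the projective bundle piece, the projective bundle formula for $V$ (Theorem~\ref{thm:K-propertiesstack}) lets one write any virtual bundle on $\bb P(\mc E)$ as a sum of $f_*$-acyclic objects of the form $f^*E_i\otimes\calo(-i)$; on such objects the derived-category basechange isomorphism of Lemma~\ref{lemme:base-changes} becomes an ordinary isomorphism $g^*f_*E\simeq f'_*g'^*E$ of vector bundles, hence lands in $V$. For the closed immersion piece $f_*$ is exact, so again no derived input is needed. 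Composing and checking independence of the factorization finishes the argument.

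The gap in your argument is the claim that $V$ satisfies smooth descent, i.e.\ that $V(\mc X)$ is the fundamental groupoid of $\op{holim}$ of the $K$-theory spaces along $\mc N(X_0/\mc X)$, or equivalently that $V$ agrees with the cohomological $W=\pi_f K^{sm}$. The paper is explicit that these are different objects for genuine stacks (see the definitions opening Section~\ref{chapter:algebraicstacks} and the remark that $W(\mc X)$ is in general not $V(\mc X)_{\bb Q}$); naive Quillen $K$-theory of a stack is not computed by the holim over a smooth atlas (think of $[\ast/G]$ versus Borel $K$-theory). Moreover $K^{sm}$ in this paper is rational by definition, so it cannot recover the integral $V$ in any case. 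Your appeal to ``cohomological descent property of $V$'' therefore does not go through, and the simplicial isomorphism you produce on the atlas does not automatically yield one on $V(\mc X)$.

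A related issue is the first step: the adjunction producing the basechange map lives in the derived category of quasi-coherent complexes, not in $V$; there is no unit--counit adjunction between $Lf^*$ and $Rf_*$ on the virtual category itself. What one can do is use descent at the derived-category level (where fpqc descent for quasi-coherent complexes is available) to extend Lemma~\ref{lemme:base-changes} to the stack, and then observe that both sides preserve perfect complexes so the isomorphism restricts to $V$. That is a legitimate route, and is presumably what the sentence preceding the corollary is gesturing at, but it is not the argument you wrote, and it still requires checking that the derived basechange isomorphism is compatible with the specific construction of $Rf_*$ on $V$ given in Proposition~\ref{Prop:K-pushforward}. The paper's factorization argument sidesteps all of this by producing the isomorphism in $V$ by hand.
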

\begin{proof} From the above one readily obtains that if a vector bundle $E$ is $f_*$-acyclic, $f_* E$ is also $g^*$-acyclic and that ${g'}^* E$ is $f'_*$-acyclic, inducing
an isomorphism $g^* f_* E \to f'_* {g'}^* E$. If $f$ is a projective bundle-projection we can, by Theorem \ref{thm:K-propertiesstack}, assume that $E$ is of the form
$\sum f^* E_i \otimes \calo(-i)$ which is a sum of $f_*$-acyclic objects. In the case $f$ is a closed immersion $f_*$ is automatically exact. The general case is obtained via the composition
of the two which by standard techniques is seen to be independent of the choice of the factorization.
\end{proof}
The following will be used later
\begin{TL} \label{lemma:basechange-compatability}
The following diagrams are commutative whenever all of the morphisms are defined:
\begin{enumerate}
             \item Let $$\xymatrix{X'' \ar[r]^{g''} \ar[d]^{e'} & Y'' \ar[d]^e \\
X' \ar[d]^{f'} \ar[r]^{g'} & Y' \ar[d]^f \\
X \ar[r]^g  & Y}$$

be the composition of two transversal cartesian diagrams. Then the third diagram is also transversal and the diagram
$$\xymatrix{Lg^* R(f e)_* \ar[rr] \ar@{=}[d] & & R(f' e')_* L{g''}^* \ar@{=}[d] \\
Lg^* Rf_* Re_* \ar[r] & Rf'_* L{g'}^* Re_* \ar[r] & Rf'_* Re'_* L{g''}^* }$$
is commutative.
             \item Let $$\xymatrix{X'' \ar[r]^{h'} \ar[d]^{f''} & X' \ar[r]^{g'} \ar[d]^{f'} & X \ar[d]^f \\
Y'' \ar[r]^h & Y' \ar[r]^g & Y}$$
be composition of two transversal cartesian diagrams.Then the third diagram is also transversal and
the diagram
$$\xymatrix{L(g h)^* Rf_* \ar[rr] \ar[d]  & & Rf'_* L(g' h')^* \ar[d] \\
Lh^* Lg^* Rf_* \ar[r] & Lh^* R{f'}_* L{g'}^* \ar[r]  & R{f'}_* L{h'}^* L{g'}^* }$$
\end{enumerate}
\end{TL}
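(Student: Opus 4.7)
The plan is to reduce both commutativity statements to formal identities in the derived categories of quasi-coherent sheaves at the scheme level, and then to transfer them through the constructions of $Rf_*$ and $Lg^*$ on virtual categories of stacks used in Proposition \ref{Prop:K-pushforward} and Corollary \ref{Cor:base-changestacks}. First, for the transversality of the composite Cartesian square in each case, I would use the change-of-rings spectral sequence. For case (1), at a point $x''\in X''$ with images $x', y'', y', x, y$ in the other schemes,
$$\op{Tor}^{\calo_{Y,y}}_p\bigl(\calo_{X,x},\, \op{Tor}^{\calo_{Y',y'}}_q(\calo_{Y'',y''}, \calo_{X',x'})\bigr) \Rightarrow \op{Tor}^{\calo_{Y,y}}_{p+q}(\calo_{X,x}, \calo_{X'',x''}),$$
together with the symmetric argument for case (2), gives Tor-vanishing of the composite square; finite Tor-dimension of $fe$ (resp.\ $gh$) follows from the usual composition estimate.

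Next, for commutativity of the two diagrams themselves, I would first check it at the level of $D^b$ of quasi-coherent sheaves on the schemes, where Lemma \ref{lemme:base-changes} applies. By construction the base-change morphism is the image of the identity under a chain of canonical adjunction isomorphisms together with the unit $E \to Rf'_* Lf'^* E$, so both routes through each hexagon correspond to the image of the identity under the same composite chain of adjunctions, simply unwound in two different orders. The standard coherences $R(fe)_* \simeq Rf_* \circ Re_*$ and $L(gh)^* \simeq Lh^* \circ Lg^*$, together with the naturality of the unit/counit and of the pseudofunctoriality isomorphisms, force the required equality.

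To lift this to virtual categories of algebraic stacks I would use the factorization of a projective l.c.i.\ morphism into a closed immersion followed by a projective bundle projection from Proposition \ref{Prop:K-pushforward}, reducing to those two special cases. In the closed immersion case $f_*$ is already exact and the compatibility is immediate from the derived category statement; in the projective bundle case, by the argument in Corollary \ref{Cor:base-changestacks} it suffices to verify compatibility on representatives of the form $\sum f^* E_i \otimes \calo(-i)$, where both paths reduce objectwise to the statement just established.

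The main obstacle will be bookkeeping: verifying that the choices of $f_*$-acyclic representatives and of factorizations into closed immersion plus projective bundle projection can be made coherently for the two stacked squares and their composite, and that smooth descent from a scheme presentation of each stack is respected by all of the base-change morphisms simultaneously. This is routine but tedious; no genuinely new ingredient beyond the classical derived-category compatibility is required.
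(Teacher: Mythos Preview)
The paper does not actually prove this lemma: its entire proof reads ``Left to the reader (compare the unproved result of \cite{SGA4}, XII, Proposition 4.4).'' Your sketch therefore already goes well beyond what the paper supplies, and the overall strategy---Tor-independence of the composite square via change of rings, the commutativity as a formal consequence of the adjunction calculus defining the base-change map, and transport to virtual categories through the closed-immersion/projective-bundle factorization used in Proposition \ref{Prop:K-pushforward} and Corollary \ref{Cor:base-changestacks}---is exactly the standard route one would take and is sound.

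One minor correction: the spectral sequence you wrote is not quite in the right form. For the outer square in (1) you want to show $\op{Tor}_i^{\calo_{Y,y}}(\calo_{X,x},\calo_{Y'',y''})=0$ for $i>0$, and the relevant change-of-rings spectral sequence is
\[
E^2_{p,q}=\op{Tor}^{\calo_{Y',y'}}_p\bigl(\op{Tor}^{\calo_{Y,y}}_q(\calo_{X,x},\calo_{Y',y'}),\ \calo_{Y'',y''}\bigr)\ \Longrightarrow\ \op{Tor}^{\calo_{Y,y}}_{p+q}(\calo_{X,x},\calo_{Y'',y''}),
\]
which collapses by transversality of the bottom square (giving $E^2_{p,q}=0$ for $q>0$ and $E^2_{p,0}=\op{Tor}^{\calo_{Y',y'}}_p(\calo_{X',x'},\calo_{Y'',y''})$), and then vanishes for $p>0$ by transversality of the top square. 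Your version has the abutment $\op{Tor}^{\calo_{Y,y}}_{p+q}(\calo_{X,x},\calo_{X'',x''})$ and a nonstandard $E_2$-page; this is a bookkeeping slip, not a conceptual one.
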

\begin{proof}  Left to the reader (compare the unproved result of \cite{SGA4}, XII, Proposition 4.4).
\end{proof}

The following is trivial:
\begin{TL} [Projection formula] \label{lemma:projectionformula} Let $f: \mc X \to \mc Y$ be a \lci projective morphism of algebraic stacks with the resolution property. Suppose $F$ is a virtual bundle on $\mc Y$ and $E$ is a virtual bundle on $\mc X$. Then there is a functorial isomorphism $Rf_*(E \otimes Lf^* F) \to Rf_* (E) \otimes F$ compatible with transversal basechange, \ie for a diagram as in Corollary \ref{Cor:base-changestacks}, there is a commutative diagram
$$\xymatrix{Lg^* Rf_*(E \otimes Lf^* F) \ar[r] \ar[d] & Lg^* (Rf_* (E) \otimes Lf^* F) \ar[d] \\
Rf'_* (L{g'}^* E \otimes L{g f'}^* F) \ar[r] & R{f'}_*(L{g'}^* E) \otimes Lg^* Lf^* F)}$$
where the horizontal lines are given by the projection-formula and the vertical lines are given by  basechange. Moreover it is stable under composition in the naive way.
\end{TL}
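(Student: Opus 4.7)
The plan is to reduce to the two building blocks of a projective \lci morphism, following the same factorization used in the construction of $Rf_*$ itself: write $f$ as $f = p \circ i$, with $i \from \mc X \hookrightarrow \mathbb{P}(\mc E)$ a closed \lci immersion into a projective bundle associated to a locally free sheaf $\mc E$ on $\mc Y$ (whose existence is guaranteed by $\mc Y$ having the resolution property), and $p \from \mathbb{P}(\mc E) \to \mc Y$ the bundle projection. Once the projection formula is established for $p$ and for $i$ separately, independence of the factorization and compatibility under composition follow by the same kind of argument as in Proposition \ref{Prop:K-pushforward}, namely by comparing two such factorizations through a common refinement.

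For the bundle projection $p$, apply Theorem \ref{thm:K-propertiesstack} to write any virtual bundle $E$ on $\mathbb{P}(\mc E)$ uniquely as $E = \sum p^* E_i \otimes \calo(-i)$. Then both $Rp_*(E \otimes p^* F)$ and $Rp_*(E) \otimes F$ evaluate canonically to $\sum (E_i \otimes F) \otimes Rp_*\calo(-i)$, giving the desired natural isomorphism, functorial in both $E$ and $F$. For the closed immersion $i$ the functor $i_*$ is exact and the projection formula reduces to the sheaf-level identity $i_*(E \otimes i^* F) \simeq i_* E \otimes F$, which is already a functorial isomorphism of vector bundles and which visibly respects short exact sequences in either variable. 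Gluing the two cases along the natural isomorphism $Rp_* \circ Ri_* \simeq R(p i)_*$ from the construction of $Rf_*$ then produces the formula for $f$.

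Compatibility with transversal basechange is verified in each of the two cases: for $i$ it follows from the compatibility of the sheaf-level formulas with transversal flat pullback, and for $p$ it follows from the functoriality of the decomposition $E = \sum p^* E_i \otimes \calo(-i)$ under pullback together with basechange applied to $Rp_*\calo(-i)$, which is a locally free constant on $\mc Y$. Stability under composition in the naive sense is then a diagram chase using Lemma \ref{lemma:basechange-compatability}. The main obstacle, and the only reason the argument is not strictly a one-line invocation of the sheaf-theoretic projection formula, is to check that the gluing of the two building-block isomorphisms along $f = p \circ i$ is independent of the chosen closed immersion into a projective bundle; this is essentially the same coherence argument that underlies the well-definedness of $Rf_*$ on virtual categories, handled by passing to a common factorization through the fiber product over $\mc Y$.
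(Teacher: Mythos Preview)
The paper itself provides no proof: the lemma is simply prefaced by ``The following is trivial:'' and left at that. Your argument is exactly the natural way to unpack this triviality, and it mirrors the factorization strategy already used in the paper for Proposition~\ref{Prop:K-pushforward} and Corollary~\ref{Cor:base-changestacks}. So in spirit you and the paper agree; you have simply written out what the author left implicit.

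One small imprecision: in the closed immersion step you write that $i_*(E \otimes i^* F) \simeq i_* E \otimes F$ ``is already a functorial isomorphism of vector bundles.'' It is not: for a closed immersion $i$, the sheaf $i_* E$ is coherent but not locally free in general, so this identity lives in $C(\mc Y)$ rather than in $V(\mc Y)$ directly. What makes the statement go through in $V(\mc Y)$ is the resolution property of $\mc Y$, which lets you replace $i_* E$ by a bounded complex of vector bundles; the sheaf-level projection formula for such complexes then transports to the virtual category via the comparison $V(\mc Y) \simeq V^T(\mc Y)$ of Proposition~\ref{prop:virtualvectorcomplexes}. This is presumably what the author has in mind by ``trivial,'' and once stated it is indeed routine, but your sentence as written overstates what is happening at the level of actual bundles.
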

\begin{remark} \label{remark:projectionformulacoherent} We also have a projection formula isomorphism in the case instead of the virtual category of vector bundles we consider the virtual category of coherent sheaves as input for $E$.
\end{remark}

\subsection{A splitting principle} \label{section:splittingprinciple}

Below we sketch a criterion for when we can descend a morphism on the level of the complete flag-variety to the base \footnote{recall that a flag is a sequence of sub-vector bundles $\mc E_0 \subset \mc E_1 \subset \ldots \subset \mc E_n$ whose successive quotients $\mc E_{i+1}/\mc E_i$ are also vector bundles. It is furthermore complete if each such quotient is a line bundle. }. First, let $E$ be an
vector-bundle of rank $e+1$ on a separated algebraic stack $\mc X$. Then $p^1: Y_1 = \bb P(E) \to
\mc X$ is a projective bundle which on which we have a
canonical sub-line bundle $\calo(-1)$, and a canonical quotient-bundle of ${p^1}^*
E$. Repeating this construction with the quotient-bundle, we
eventually obtain a map $p: \mc Y = \mc Y_e \to \mc Y_{e-1} \to \ldots \to \mc Y_1
\to \mc Y_0 = \mc X$, where the top space is the complete flag-variety of $E$
on $\mc X$, which also comes equipped with a canonical complete flag. Suppose $P$ is a contravariant functor from the category of
separated algebraic stacks to the category of Picard categories such that for any $\mc X$ there is a distributive additive functor
$V(-) \times P(-) \stackrel{\otimes}\to P(-)$
moreover satisfying the projective bundle axiom; for any $\mc X$, the functor
$$\times_{i=0}^{e} P(\mc X) \to P(\bb P(E))$$
given by $(f_i)_{i=0}^{e} \mapsto \sum_{i=0}^{e} {p^1}^* f_i \otimes \calo(-i)$ is an equivalence of categories.
Then the following is a  version of an observation of Franke in terms of Chow categories of ordinary schemes (see the article by J. Franke, "Chern Functors" in
\cite{AAG}, 1.13.2):

\begin{T} \label{thm:splittingprinciple} [Splitting principle] Let $p_1,p_2: \mc Y \times_\mc X \mc Y \to \mc Y$
be the two projections, and $r = p p_1 = p p_2$. Then \\
(a) $p^*: P(\mc X) \to P(\mc Y)$ is faithful. \\
(b) Suppose we have two objects $A, B \in \op{ob} P(X)$, and $f: p^*
A \to p^* B$ a morphism in $\Hom_{P(\mc Y)}(p^* A, p^* B)$, then $f$
comes from a (unique) morphism $h: A\to B$ if and only if $p_1^* (f)
= p_2^* (f)$ in $\op{Hom}_{P(\mc Y\times_\mc X \mc Y)}(r^* A, r^* B)$.
\end{T}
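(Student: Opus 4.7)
The plan is to induct on $\rk E$, with the projective bundle axiom as the only nontrivial input. When $\rk E = 1$, $p$ is an isomorphism and both claims are vacuous. The inductive argument proceeds in two stages: first settle the case of a single projective bundle $p^1: \bb P(E) \to \mc X$, and then handle the general flag variety via the factorisation $\mc Y \xrightarrow{p'} \bb P(E) \xrightarrow{p^1} \mc X$, where $p'$ is the complete flag variety of the rank-$e$ tautological quotient bundle on $\bb P(E)$.

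For the projective bundle case, the axiom identifies $P(\bb P(E))$ with $\prod_{i=0}^{e} P(\mc X)$ in such a way that $p^{1*} A$ corresponds to $(A, 0, \ldots, 0)$ and $p^{1*} h$ to $(h, \mathrm{id}_0, \ldots, \mathrm{id}_0)$. This immediately gives (a) for $p^1$ and identifies the image of $p^{1*}$ on morphism sets as exactly those tuples with the identity of the zero object in positions $1, \ldots, e$. Iterating the axiom on $\bb P(E) \times_\mc X \bb P(E) \cong \bb P(p^{1*}E)$ yields $P(\bb P(E) \times_\mc X \bb P(E)) \simeq \prod_{i,j} P(\mc X)$; a direct bookkeeping then shows that if $f$ corresponds to $(h_0, h_1, \ldots, h_e)$, then $p_1^* f$ places these in the first column of the $(e+1)\times(e+1)$ grid (with $\mathrm{id}_0$ elsewhere) while $p_2^* f$ places them in the first row. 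Equating the two forces $h_i = \mathrm{id}_0$ for $i \geq 1$, which is precisely the condition for $f$ to lie in the image of $p^{1*}$, proving (b) for $p^1$.

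For the inductive step, (a) for $p$ follows by composing the faithful pullbacks $p^{1*}$ and $p'^*$ (the latter by induction). For (b), given $f: p^* A \to p^* B$ with $r_1^* f = r_2^* f$ on $\mc Y \times_\mc X \mc Y$, pull this identity back along $\mc Y \times_{\bb P(E)} \mc Y \hookrightarrow \mc Y \times_\mc X \mc Y$ and apply the inductive (b) to $p'$ to obtain $g: p^{1*} A \to p^{1*} B$ with $p'^* g = f$. To apply the base case (b) to $g$ one needs $q_1^* g = q_2^* g$ on $\bb P(E) \times_\mc X \bb P(E)$, where $q_1, q_2$ are the projections. Setting $\pi = (p', p'): \mc Y \times_\mc X \mc Y \to \bb P(E) \times_\mc X \bb P(E)$, functoriality gives $\pi^* q_i^* g = r_i^* f$, so it suffices to show $\pi^*$ is faithful. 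Factor
$$\pi : \mc Y \times_\mc X \mc Y \xrightarrow{(\mathrm{id}, p')} \mc Y \times_\mc X \bb P(E) \xrightarrow{(p', \mathrm{id})} \bb P(E) \times_\mc X \bb P(E),$$
each step being a base change of the complete flag variety $p'$ of a rank-$e$ bundle, so the inductive (a) gives the required faithfulness. The main obstacle is this last point: it depends on the projective bundle axiom being preserved by base change within the class of stacks on which $P$ is defined, which is implicit in the axiom being stated for every $\mc X$ and in the contravariant functoriality of $P$. Granting this, the base case (b) produces $h: A \to B$ with $p^{1*} h = g$, and then $p^* h = p'^* p^{1*} h = p'^* g = f$, concluding the argument.
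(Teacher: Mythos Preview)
Your proof is correct and rests on the same idea as the paper's—the iterated projective bundle decomposition of $P(\mc Y)$ and $P(\mc Y\times_{\mc X}\mc Y)$ followed by matching components. The only difference is packaging: the paper writes the full multi-index decomposition $\op{Aut}_{P(\mc Y)}(p^*A)\simeq\bigoplus_{(j_1,\dots,j_e)}p^*\op{Aut}_{P(\mc X)}(A)$ and its doubled version on $\mc Y\times_{\mc X}\mc Y$ in one stroke and reads off directly that $p_1^*f=p_2^*f$ kills all but the $(0,\dots,0)$ component, whereas you unwind the same computation as an induction on $\op{rk}E$.
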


\begin{proof} From the projective bundle axiom it follows each ${p^i}^*$ is injective on the level of automorphism-groups, \ie for any object $A$ in $P(\mc Y_i)$,
$\op{Aut}_{P(\mc Y_i)}(A) \to \op{Aut}_{\mc Y_{i+1}}({p^i}^* A)$ is injective,
so the functor is faithful. For
(b), the condition is obviously necessary. To prove that the condition is sufficient we can assume $A = B$. Let $0 = E_0
\subseteq E_1 \subseteq \ldots \subseteq  E_e = p^* E$ be
the universal flag on $\mc Y$, and $L_i = E_i/E_{i-1}$, then by the projective bundle axiom
we have natural isomorphisms
$$\op{Aut}_{P(\mc Y)}(A) = \bigoplus_{j_1 = 0}^{e} \ldots \bigoplus_{j_{e} = 0}^1  L_1^{j_1} \otimes \ldots \otimes L_e^{j_e} \otimes p^* \op{Aut}_{P(\mc X)}(A)$$
and
$$\op{Aut}_{P(\mc Y \times_\mc X \mc Y)}(A) = \bigoplus_{j_1, j'_1 = 0}^{e} \ldots \bigoplus_{j_{e} = 0, j'_e = 0}^1  p_1^* L_1^{j_1} \otimes
p_2^* L_1^{j'_1} \otimes \ldots \otimes p_1^* L_e^{j_e} \otimes p_2^*
L_e^{j'_e} \otimes r^* \op{Aut}_{P(\mc X)}(A).$$
Representing $f$ in the form suggested above, we see that $p_1^*
(f) = p_2^* (f)$ exactly when all components of $f$ are zero
except for the one belonging to $(j_1, \ldots, j_e) = (0, \ldots,
0)$, which means exactly that $f$ is equal to  $p^* h$ for some
morphism $h: A \to A$. Moreover $h$ is unique because of (a).
\end{proof}

By basechange to the flag-variety we can suppose we have nice
enough flags. If we define an isomorphism dependent on this flag,
the content of the proposition is that this descends to the base
whenever this isomorphism isn't dependent on the flag.

\subsection{Adams and $\lambda$-operations on the virtual category}
Let $S$ be a scheme, and $\mc X$ an algebraic stack over $S$. Recall that we denote by $\mathbf{P}(\mc X)$ the category of vector bundles on $\mc X$. Denote by $V(\mc X)$ the virtual category thereof. We have the following result
which is more or less contained in \cite{Grayson};

\begin{Prop} \label{Prop:Adamsoperation} There is a unique family of determinant functors $\Psi^k: \mathbf{P}(\mc X) \to
V(\mc X)$, and thus $\Psi^k: V(\mc X) \to V(\mc X)$, stable under
pullback, such that

\begin{itemize}
  \item If $L$ is a line bundle, $\Psi^k(L) =
  L^{\otimes k}$. \\
  \item $\Psi^k \circ \Psi^{k'} \simeq \Psi^{k k'}$.
\end{itemize}
\end{Prop}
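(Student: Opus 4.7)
The plan is to build $\Psi^k$ from Grayson's construction of Adams operations on $K$-theory of exact categories equipped with a tensor product, and then use the splitting principle (Theorem~\ref{thm:splittingprinciple}) to verify both the defining properties and uniqueness. Concretely, Grayson provides a natural self-map $\psi^k$ of the $K$-theory space $K(\mathbf{P}(\mc X))$ for every $k$, constructed by a simplicial/combinatorial recipe out of exterior powers (or equivalently Dold-Puppe type constructions) which is functorial in exact tensor functors, and in particular in pullback along any $f\colon \mc X'\to\mc X$. Applying $\pi_f$ converts $\psi^k$ into a determinant functor $\Psi^k\colon \mathbf{P}(\mc X)\to V(\mc X)$ by Theorem~\ref{thm: waldhausen-determinant}, and naturality gives pullback stability.

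Next I would verify the two axioms. The identity $\Psi^k(L)=L^{\otimes k}$ for a line bundle $L$ is built into Grayson's formula, because on the subcategory of line bundles (where all exterior powers $\Lambda^i L$ for $i\geq 2$ vanish) his construction reduces to the $k$-th tensor power. For the composition law $\Psi^k\circ\Psi^{k'}\simeq\Psi^{kk'}$, pull back to the complete flag variety $p\colon \mc Y\to \mc X$ of some vector bundle $E$. There the universal flag exhibits $p^{*}E$ as an iterated extension of line bundles $L_1,\dots,L_e$; since both $\Psi^k\circ\Psi^{k'}$ and $\Psi^{kk'}$ are determinant functors (hence additive on exact sequences), we are reduced to the case of a line bundle, where
\[
\Psi^{k}(\Psi^{k'}(L_i))\simeq \Psi^{k}(L_i^{\otimes k'})\simeq L_i^{\otimes kk'}\simeq \Psi^{kk'}(L_i).
\]
The required coherence, namely that the two composite determinant functors agree after both pullbacks $p_1^{*}$ and $p_2^{*}$ to $\mc Y\times_{\mc X}\mc Y$, follows by running the same computation on the two flags present there and comparing via the splitting-principle decomposition of automorphism groups. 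Theorem~\ref{thm:splittingprinciple}(b) then descends the isomorphism to $\mc X$.

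Uniqueness is by the same argument: any two determinant functors $\Psi^k, {\Psi'}^k$ satisfying $\Psi^k(L)={\Psi'}^k(L)=L^{\otimes k}$ agree after pullback to the complete flag variety of any bundle (since both are additive and agree on line bundles), and the common value is given by the same explicit formula on the generators of the flag filtration; the splitting principle again lets us descend a canonical isomorphism $\Psi^k\simeq {\Psi'}^k$ to $\mc X$, and one checks that this isomorphism is compatible with the pullback-stability data so that the comparison is natural in $\mc X$.

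The main obstacle is not the line-by-line verification, which is formal given Grayson's machine and the splitting principle, but rather coherence: one must ensure that the flag-variety descent is canonical enough to give a well-defined functor on $V(\mc X)$ (independent of the auxiliary bundle $E$ chosen to build the flag variety), and that the composition isomorphism $\Psi^k\circ\Psi^{k'}\simeq\Psi^{kk'}$ together with the pullback-compatibility satisfies the cocycle condition needed to apply the splitting principle on $\mc Y\times_{\mc X}\mc Y$. This is where Lemma~\ref{lemma:basechange-compatability} and the projective bundle axiom for $V$ enter; once they are in place, descent through the flag tower $\mc Y_e\to\mc Y_{e-1}\to\dots\to\mc X$ is automatic.
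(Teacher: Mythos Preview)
Your proposal is correct and close in spirit to the paper's argument: both invoke Grayson's construction for existence and the splitting principle for uniqueness and for the composition law $\Psi^k\circ\Psi^{k'}\simeq\Psi^{kk'}$. The difference lies in how existence is carried out. You take Grayson's self-map $\psi^k$ of the $K$-theory space as a black box and apply the fundamental groupoid functor $\pi_f$ to obtain $\Psi^k$ on $V(\mc X)$. The paper instead writes down Grayson's formula explicitly at the virtual-category level, setting $\Psi^k(E)=\chi'(S^k CE)$ (the secondary Euler characteristic of the $k$-th symmetric power of the cone on $\op{id}_E$), and then verifies the determinant-functor axioms by hand: additivity on a short exact sequence comes from the filtration $S^kCE'\subseteq CE'.\ldots.CE'.CE\subseteq\ldots\subseteq S^kCE$, and compatibility with admissible filtrations from the corresponding double filtration. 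Your route is shorter and leans on the space-level result; the paper's route is more self-contained and makes the additivity isomorphisms concrete, which is useful later (e.g.\ in Corollary~\ref{Cor: adams-lambda-relation} and Corollary~\ref{Cor:adamslocalring}) where one computes with the formula directly.
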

\begin{proof} Unicity of the operations clearly follows from the characterizing properties and
the splitting principle  (Theorem \ref{thm:splittingprinciple}). To prove existence, we apply the ideas of
loc.cit.. Let $N$ be a complex of vector bundles,
and $CN$ be the cone of the identity morphism $\op{id}: N \to N$.
Furthermore, let $S^k$ be the $k$-th symmetric power, so that the
$p$-th term of $S^k CN$ is $S^{k-p} N\otimes \wedge^p N$, whenever $N$ is reduced to a vector
bundle in degree 0 (for details, see
loc.cit., p. 4). Finally, for a bounded complex $N_\bullet = [\ldots
\to N_{i-1} \to N_{i} \to N_{i+1} \to \ldots]$, define the secondary
Euler characteristic $\chi'(N_\bullet) = \sum (-1)^{p+1} p [N_p] \in
V(\mc X)$. One of the key ideas of loc.cit. (formula (3.1)) is the formula
in $K_0(\mc X)$, for a vector bundle $E$,
$$\Psi^k(E) = \chi'(S^k CE).$$ We propose the same definition for Adams operations in the virtual category $V(\mc X)$.
Clearly $\Psi^k(L) = L^{\otimes k}$ for a line bundle $L$. Now, given a flag $E_1 \subseteq E_2 \subseteq \ldots \subseteq E_n$, define
 $E_1 \cdot E_2 \ldots \cdot E_n$ to be the image of $E_1 \otimes E_2 \otimes \ldots \otimes E_n$ in $S^n E_n$. Suppose that we have an exact
sequence of vector bundles $0 \to E' \to E \to E''
\to 0$, and consider the filtration
\begin{eqnarray*} S^k CE'  & = & CE' . CE' . \ldots . CE' . CE' \\
& \subseteq & CE' . CE' . \ldots . CE' . CE \\
& \subseteq & CE' . CE' . \ldots . CE . CE  \subseteq \ldots \\
& \subseteq & CE . CE . \ldots . CE . CE = S^k CE
\end{eqnarray*}
induces by a certain addivity of the secondary Euler characteristic,
isomorphisms \begin{eqnarray*} \chi'(S^k CE) & = & \chi'(S^k CE'') +
\chi'(S^k CE') + \sum_{i = 0}^{k-1} \chi'(S^{i}CE'' \otimes S^{k-i} CE') \\
& = &  \chi'(S^k CE'') + \chi'(S^k CE') \end{eqnarray*} since the
secondary Euler-characteristic of a product of acyclic complexes is
0 and by the multilinearity-property of loc.cit (formula (2.1)). We need only
verify that this operation respects filtrations. Let $F \subseteq H
\subseteq E $ be an admissible filtration, and consider the double
graded filtrations of $S^k C E$, $A_{\bullet, \bullet}$, where
$A_{i,j} = S^{k-i-j}CE . S^{j}CF . S^{i}CH.$ Applying
secondary Euler characteristics in every direction, we obtain that
the diagram of isomorphisms
$$\xymatrix{\chi'(S^k CE) \ar[d] \ar[r] & \chi'(S^k CH) + \chi'(S^k CE/H) \ar[d] \\
\chi'(S^k CF) + \chi'(S^k CE/F) \ar[r] & \chi'(S^k CF) + \chi'(S^k
CH/F) + \chi'(S^k CE/H)}$$ constructed above commutes. Condition "b)" of Definition \ref{defn:determinantfunctor} is trivial. Also everything is
clearly stable under pullback. The last point now follows by unicity.
\end{proof}

\begin{remark} In the next chapter we will show that whenever we restrict ourselves to regular schemes, the constructed Adams-operations are actually unique, at least whenever one inverts 2 or more primes in the virtual category.
\end{remark}

We record the following corollary (of the splitting principle
applied to the above case):
\begin{Cor} \label{Cor:adams-multiplicative} $\Psi^k: V(\mc X) \to V(\mc X)$ is a ring-homomorphism in
the sense that there are natural isomorphisms, for $A, B \in \rm{ob}
V(X)$,
$$\Psi^k(A \otimes B) \simeq \Psi^k(A) \otimes \Psi^k(B)$$
compatible with the above sum-operation and compatible with basechange.
\end{Cor}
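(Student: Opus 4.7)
The plan is to construct the isomorphism $\Psi^k(A \otimes B) \simeq \Psi^k(A) \otimes \Psi^k(B)$ first for line bundles, then extend to sums of line bundles using the additivity of $\Psi^k$ established in Proposition \ref{Prop:Adamsoperation}, and finally to general virtual bundles by descent from an iterated flag variety via the splitting principle of Theorem \ref{thm:splittingprinciple}.

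For line bundles $L$ and $M$ the identity $\Psi^k(L \otimes M) = (L \otimes M)^{\otimes k} \simeq L^{\otimes k} \otimes M^{\otimes k} = \Psi^k(L) \otimes \Psi^k(M)$ is tautological. If $E = L_1 \oplus \cdots \oplus L_n$ and $F = M_1 \oplus \cdots \oplus M_m$ are sums of line bundles, the additivity isomorphism of $\Psi^k$ (applied to the canonical filtrations of $E \otimes F$, $E$ and $F$ by sub-bundles whose successive quotients are line bundles) combined with the distributivity of $\otimes$ over $\oplus$ in the Picard category gives a canonical chain of isomorphisms
$$\Psi^k(E \otimes F) \simeq \bigoplus_{i,j} \Psi^k(L_i \otimes M_j) \simeq \bigoplus_{i,j} L_i^{\otimes k} \otimes M_j^{\otimes k} \simeq \Psi^k(E) \otimes \Psi^k(F).$$

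For arbitrary vector bundles $A$ and $B$ on $\mc X$, let $p \colon \mc Y \to \mc X$ be the iterated complete flag variety of $A \oplus B$, so that $p^* A$ and $p^* B$ acquire tautological complete flags with line bundle graded pieces. The previous step then yields on $\mc Y$ a canonical isomorphism
$$\phi_\mc Y \colon \Psi^k(p^*(A \otimes B)) \simeq \Psi^k(p^* A) \otimes \Psi^k(p^* B).$$
To descend $\phi_\mc Y$ along $p$ through Theorem \ref{thm:splittingprinciple}, we must verify that $p_1^* \phi_\mc Y = p_2^* \phi_\mc Y$ in $\op{Hom}_{P(\mc Y \times_\mc X \mc Y)}\bigl(r^*\Psi^k(A \otimes B),\, r^*(\Psi^k(A) \otimes \Psi^k(B))\bigr)$, where $r = p \circ p_1 = p \circ p_2$. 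Both pullbacks are produced by the same recipe from the two flags $p_i^*(\text{universal flag})$ of $r^* A$ and $r^* B$, and the recipe only depends on the associated graded line bundles up to the naturality of the additivity and multiplicativity isomorphisms of $\Psi^k$; since these isomorphisms are themselves determined by the unicity clause of Proposition \ref{Prop:Adamsoperation}, the two pullbacks coincide and $\phi_\mc Y$ descends to the required isomorphism on $\mc X$.

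Compatibility with the additivity isomorphisms of $\Psi^k$ (the sum operation) and with pullback follows from the naturality of every ingredient: stability of $\Psi^k$ and of its additivity isomorphism under $Lf^*$, naturality of the distributivity isomorphism in the Picard category, and functoriality of the descent along $p$. The main obstacle is the flag-independence check on $\mc Y \times_\mc X \mc Y$, i.e.\ the verification that permuting or refining a splitting of $A$ and $B$ into line bundles produces the same isomorphism; this is a bookkeeping exercise on coherences of the additivity isomorphism already established in the proof of Proposition \ref{Prop:Adamsoperation}, and is ultimately forced by the universal property of the virtual category and the uniqueness of $\Psi^k$.
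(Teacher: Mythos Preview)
Your argument is correct and follows the same route as the paper: reduce to line bundles via the splitting principle, where the identity is tautological, and appeal to the additivity of $\Psi^k$ (established in Proposition \ref{Prop:Adamsoperation}) to handle the passage through flags. The only real difference is organizational: the paper fixes $A\in V(\mc X)$ and first reduces $B$ to a line bundle $L$ (using that $B\mapsto\Psi^k(A\otimes B)$ and $B\mapsto\Psi^k(A)\otimes\Psi^k(B)$ are both additive determinant functors in $B$), then with $L$ fixed reduces $A$ in the same way; this one-variable-at-a-time reduction makes the flag-independence check a direct consequence of the determinant-functor axioms already verified for $\Psi^k$, whereas your simultaneous treatment via the flag variety of $A\oplus B$ forces you to track two flags at once on $\mc Y\times_\mc X\mc Y$ and is correspondingly a bit more bookkeeping-heavy.
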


\begin{proof} We only need to verify the multiplicative operation. It suffices to show that for any
$A \in V(\mc X), B \in \mathbf{P}(\mc X)$, $\Psi^k(A \otimes B) = \Psi^k(A) \otimes \Psi^k B)$ naturally.
Or, by the splitting principle since $\Psi^k$ is already an additive determinant functor, that if $B$ is a line bundle, then $\Psi^k(A \otimes L) = \Psi^k(A) \otimes L^{\otimes k}$ naturally. For this we can assume that $A$ is also a line-bundle $M$, in which case we have
$\Psi^k(M \otimes L) = (M \otimes L)^{\otimes k} = M^{\otimes k} \otimes L^{\otimes k} = \Psi^k(M) \otimes \Psi^k(L)$.
\end{proof}

The method of \cite{IntviaAdams} also provides us with $\lambda$-operations:
\begin{Prop} \label{prop:lambdasoule} Let $\mc X$ be an algebraic stack and $\mc Z$ a close substack thereof and let $k$ be a positive integer. There are naturally defined functors $\lambda^k: V^\mc Z(\mc X) \to V^\mc Z(\mc X)$ satisfying the following compatibilities:
\begin{enumerate}
  \item $\lambda^1 = \op{id}$.
  \item They are stable under basechange.
  \item If $\mc Z = \mc X$ and $E$ is a vector bundle, $\lambda^k E = \wedge^k E$.
  \item If $$0 \to E'_\bullet \to E_\bullet \to E''_\bullet \to 0$$ is a short exact sequence of complexes of vector bundles exact off $\mc Z$ there is a canonical isomorphism, where we set $\lambda^0 = 1$:
      $$\lambda^k (E_\bullet) = \sum_{i+j= n} \lambda^i E'_\bullet \otimes \lambda^j E''_\bullet.$$
\end{enumerate}
\end{Prop}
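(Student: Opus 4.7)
The plan is to adapt the Grayson--Soulé strategy used in Proposition \ref{Prop:Adamsoperation} to construct $\lambda^k$ as a determinant functor, following the approach of \cite{IntviaAdams}. On vector bundles the definition is the classical one, $\lambda^k(E) = \wedge^k E$; the content lies in showing that this extends coherently to complexes of vector bundles exact off $\mc Z$ in a way that realizes the additivity formula of axiom (4) as an actual isomorphism in the virtual category.

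First I would treat the case of vector bundles on $\mc X$. The goal is to verify that $E \mapsto \wedge^k E$ defines a determinant functor in the sense of Definition \ref{defn:determinantfunctor} on the exact category $\mathbf{P}(\mc X)$. The short exact sequence axiom comes from the classical Koszul filtration: for $0 \to E' \to E \to E'' \to 0$, the exterior power $\wedge^k E$ admits a canonical filtration whose associated graded is $\bigoplus_{i+j=k} \wedge^i E' \otimes \wedge^j E''$, providing the isomorphism $\lambda^k E \simeq \sum_{i+j=k} \lambda^i E' \otimes \lambda^j E''$ in $V(\mc X)$. The three-line compatibility axiom (c) of Definition \ref{defn:determinantfunctor} follows from the associativity of this Koszul filtration, applied to an admissible subfiltration $F \subseteq H \subseteq E$, in analogy with the use of the bigraded filtration $A_{\bullet,\bullet}$ in the proof of Proposition \ref{Prop:Adamsoperation}. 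By Theorem \ref{thm: waldhausen-determinant}, this extends uniquely to an additive functor on $V(\mc X)$, giving axiom (3).

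Next I would extend to $V^{\mc Z}(\mc X)$ by applying the Waldhausen-category machinery. For a bounded complex of vector bundles $E_\bullet$ acyclic off $\mc Z$, define $\lambda^k E_\bullet$ to be $\wedge^k E_\bullet$ formed with Koszul signs. This complex remains acyclic off $\mc Z$: on the complement $\mc X \setminus \mc Z$, $E_\bullet$ is a complex of flat modules whose homology vanishes and is therefore locally split, so $\wedge^k$ preserves acyclicity; the same flatness argument shows that the functor preserves quasi-isomorphisms between such complexes. The short exact sequence axiom (4) at the level of complexes again follows from the Koszul filtration of $\wedge^k E_\bullet$, whose successive quotients are $\wedge^i E'_\bullet \otimes \wedge^{k-i} E''_\bullet$. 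Combining Theorem \ref{thm: waldhausen-determinant} with Proposition \ref{prop:determinanttriangulated} then assembles this data into the desired functor $\lambda^k \colon V^{\mc Z}(\mc X) \to V^{\mc Z}(\mc X)$, with $\lambda^1 = \op{id}$ and stability under basechange being immediate from the functoriality of $\wedge^k$.

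The main obstacle is the verification of the coherence axioms---especially the three-line compatibility (c) of Definition \ref{defn:determinantfunctor}---at the level of complexes equipped with Koszul signs, where the sign conventions interact non-trivially with the associativity of the Koszul filtration. This bookkeeping is precisely what is carried out in \cite{IntviaAdams} at the level of $K$-theory spectra; applying the fundamental groupoid functor then descends the structure to the virtual category and yields the proposition.
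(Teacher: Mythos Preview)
Your proposal has the right ingredients but misidentifies which functor is the determinant functor, and this is precisely the point the paper's argument turns on. An individual $\lambda^k$ is \emph{not} a determinant functor into $V^{\mc Z}(\mc X)$ in the sense of Definition~\ref{defn:determinantfunctor}: axiom (a) there demands $[B]\simeq [A']\oplus[A'']$ for the Picard operation $\oplus$, whereas the Koszul filtration gives $\lambda^k(E)\simeq\sum_{i+j=k}\lambda^i(E')\otimes\lambda^j(E'')$, which mixes all the lower $\lambda^i$. So Theorem~\ref{thm: waldhausen-determinant} cannot be applied to $\lambda^k$ directly. The paper's device is to change the target: form the Picard category $V^{\mc Z}(\mc X)[[t]]=\coprod_{k\ge 0}V^{\mc Z}(\mc X)\,t^k$ with \emph{multiplication} of power series as the monoidal operation, and take $\lambda_t=\sum_{k\ge 0}\lambda^k t^k$. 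Then the Whitney formula becomes $\lambda_t(E)=\lambda_t(E')\cdot\lambda_t(E'')$, which is exactly axiom (a), and Theorem~\ref{thm: waldhausen-determinant} now applies to $\lambda_t$ as a determinant functor $(\mathbf{P}_{\mc Z}(\mc X),\text{q.i.})\to V^{\mc Z}(\mc X)[[t]]$. Reading off the coefficient of $t^k$ recovers the functor $\lambda^k$ together with property (d).

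A second, smaller point: your phrase ``$\wedge^k E_\bullet$ formed with Koszul signs'' is not a definition. Exterior power is not additive, so there is no naive extension to complexes that is guaranteed to respect quasi-isomorphisms (and the antisymmetrizer splitting is unavailable without inverting $k!$). The paper follows \cite{IntviaAdams} and uses the Dold--Puppe construction to extend $\wedge^k$ from modules to complexes; this is what guarantees a genuine functor $\lambda^k:\mathbf{P}_{\mc Z}(\mc X)\to\mathbf{P}_{\mc Z}(\mc X)$ preserving weak equivalences before one ever passes to the virtual category. Your appeal to \cite{IntviaAdams} in the final paragraph is therefore correct in spirit, but it is doing more work than your write-up acknowledges.
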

\begin{proof} The method of \loccit provides us with a functor, via the Dold-Puppe construction, a functor $\lambda^k: \mathbf{P}_\mc Z(\mc X) \to \mathbf{P}_\mc Z(\mc X)$ where $\mathbf{P}_\mc Z(\mc X)$ denotes the category of complexes of vector bundles on $\mc X$ exact off $\mc Z$. Thus a functor $\lambda^k: (\mathbf{P}_\mc Z(\mc X), q.i.) \to V^\mc Z(\mc X)$. Writing $V^\mc Z(\mc X)[[t]] := \coprod_{k \geq 0 } V^\mc Z(\mc X) t^k$ it is naturally a Picard category with respect to multiplication with unit element $1 \otimes t^0 + \sum 0 \otimes t^k$. The usual proof shows that $\lambda_t = \sum_{k \geq 0} \lambda^k t^k$ is a determinant functor $\lambda_t: (\mathbf{P}_\mc Z(\mc X), q.i.) \to V^\mc Z(\mc X)[[t]]$ and hence the searched for functor. Notice that we use Theorem \ref{thm: waldhausen-determinant} since the category in question is not an exact category with isomorphisms but a Waldhausen category with quasi-isomorphisms.
\end{proof}
\begin{Cor}\label{Cor: adams-lambda-relation} Suppose that $Z = X$. Then there is a canonical isomorphism
$$\Psi^k = \lambda^1 \otimes \Psi^{k-1} - \lambda^2 \otimes \Psi^{k-2} + \ldots + (-1)^{k} \lambda^{k-1} \otimes \Psi^1 + (-1)^{k+1} k \lambda^k$$
where $\Psi^*$ and $\lambda^*$ denotes the functors constructed in Proposition \ref{Prop:Adamsoperation} and Proposition \ref{prop:lambdasoule}.
\end{Cor}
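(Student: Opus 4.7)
The plan is to categorify the classical Newton identity relating power sums and elementary symmetric functions, using the splitting principle of Theorem \ref{thm:splittingprinciple} to reduce from a general vector bundle to a sum of line bundles. Both sides of the asserted isomorphism are additive determinant functors on $\mathbf{P}(\mc X)$, so by Proposition \ref{Prop:Adamsoperation} and Proposition \ref{prop:lambdasoule} they extend to functors $V(\mc X) \to V(\mc X)$ compatible with pullback. It therefore suffices to produce a canonical natural isomorphism between them when applied to a single vector bundle $E$ of rank $n$.

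First I would pass to the complete flag variety $p \from \mc Y \to \mc X$ of $E$, where $p^* E$ acquires a canonical complete filtration with line bundle quotients $L_1, \ldots, L_n$. Using the additivity isomorphisms for $\Psi^k$ and $\lambda^j$ along exact sequences, together with $\Psi^k(L) \simeq L^{\otimes k}$ for a line bundle, one computes that $\Psi^k(p^* E)$ is canonically isomorphic to $\sum_i L_i^{\otimes k}$ (the $k$-th power sum) and $\lambda^j(p^* E)$ to $\sum_{i_1 < \ldots < i_j} L_{i_1} \otimes \ldots \otimes L_{i_j}$ (the $j$-th elementary symmetric function). Substituting into the right-hand side and expanding, one finds that every monomial $L_{i_1}^{a_1} \otimes \ldots \otimes L_{i_r}^{a_r}$ with $\sum a_s = k$ and $r \geq 2$ occurs with cancelling contributions coming from neighbouring terms of the alternating sum, while the monomials $L_i^{\otimes k}$ each survive with coefficient one; this is the content of the classical Newton identity $p_k - e_1 p_{k-1} + \ldots + (-1)^k k e_k = 0$, and I would write it out here at the level of natural isomorphisms by tracking the Koszul-type short exact sequences used to build the $\lambda^j$ on a sum of line bundles.

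To descend the resulting isomorphism on $\mc Y$ to $\mc X$, I would invoke Theorem \ref{thm:splittingprinciple}(b) applied to the functor $P(-) = \Hom(\Psi^k, -)$ arising from the projective bundle axiom for $V$. Since $\Psi^k$ and $\lambda^k$ are both stable under basechange, and since the Newton identity is a universal relation independent of the chosen ordering of the line bundles $L_i$, the two pullbacks along $p_1, p_2 \from \mc Y \times_\mc X \mc Y \to \mc Y$ coincide once one identifies them using the canonical isomorphisms of the projective bundle axiom. This gives a unique morphism on $\mc X$, which by uniqueness is automatically compatible with basechange and with the additivity of both sides.

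The main obstacle will be the bookkeeping in the second step: verifying that the cancellations in Newton's identity are actually realized by a canonical natural isomorphism rather than just an equality in $K_0$. Concretely, one must check that the signs produced by the secondary Euler characteristic $\chi'(S^k CE)$ in the definition of $\Psi^k$ match, term by term, those coming from the exact sequences of Proposition \ref{prop:lambdasoule}(d) when one filters $\lambda^j$ along a flag; in particular the integer coefficient $k$ on $\lambda^k$ must emerge from the trivial (diagonal) contribution in the power sum expansion of $\Psi^k$ on $L_1 \oplus \ldots \oplus L_k$. Once this combinatorial comparison is carried out on the flag variety, the splitting principle does the rest.
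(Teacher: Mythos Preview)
Your overall shape—splitting principle plus Newton's identity on line bundles—is the same as the paper's. The gap is in your first sentence: you assert that ``both sides of the asserted isomorphism are additive determinant functors on $\mathbf{P}(\mc X)$'' and cite Propositions \ref{Prop:Adamsoperation} and \ref{prop:lambdasoule}. But Proposition \ref{prop:lambdasoule} does \emph{not} say $\lambda^j$ is additive; property (d) there gives the multinomial formula $\lambda^k(E)\simeq\sum_{i+j=k}\lambda^i(E')\otimes\lambda^j(E'')$ for an exact sequence. So the right-hand side is an alternating combination of non-additive functors, and its additivity is exactly what must be proven before the splitting principle can be invoked. This is the entire content of the paper's proof: one shows the right-hand side is additive by induction on $k$, expanding each $\lambda^j(E'+E'')$ via (d) and collapsing the resulting $\Psi^{k-j}$ factors using the inductive hypothesis.

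Once that additivity is in hand, the comparison on line bundles is immediate (Newton's identity for $n=1$ is the tautology $L^{\otimes k}=L\otimes L^{\otimes(k-1)}$), and the descent from the flag variety is automatic—no explicit $p_1^*=p_2^*$ check is needed, and the ``bookkeeping'' you anticipate on the flag variety dissolves. Conversely, if you tried to carry out your descent check directly, verifying that the isomorphism built from one flag agrees with the one built from another is precisely the filtration-compatibility condition (c)/(d) of a determinant functor, i.e.\ additivity of the right-hand side again. So the step you skipped is the step that does all the work.
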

\begin{proof} The left hand side is already an endofunctor on the virtual category of vector bundles. We need to verify that the right hand side is additive since by the splitting principle we can then reduce to the case of line bundles, for which the statement is clear. This follows by induction on $k$ and the property (d) in the above proposition.
\end{proof}
This also defines inductively Adams operations via $\lambda$ operations (cf. Proposition \ref{Prop:Iversen} below) to virtual categories with support. \\ The following is a consequence of the calculation on the level of complexes in \loccit:
\begin{Cor} \label{Cor:adamslocalring}  Let $R$ be a ring with $a$ a non-zero divisor of $R$. Denote by $K(a)$ the complex $[R \stackrel{a}\to R]$ with $R$ placed in degree 0 and 1 and $X = \spec R, Y = \spec R/a$. Then there is a canonical isomorphism $\Psi^k(K(a)) \simeq k K(a)$ in $V^{Y}(X)$.

\end{Cor}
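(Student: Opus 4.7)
The plan is to reduce the computation to the explicit complex-level calculation of Soulé in \cite{IntviaAdams}, working through the $\lambda$-operations constructed in Proposition \ref{prop:lambdasoule}. Recall that on the virtual category with supports $V^Y(X)$, the Adams operations have not been given a direct definition by means of symmetric powers and the secondary Euler characteristic (since we are no longer in the situation $\mc Z = \mc X$ of Proposition \ref{Prop:Adamsoperation}); instead they are introduced inductively via the Newton-type relation of Corollary \ref{Cor: adams-lambda-relation}, starting from the $\lambda$-operations of Proposition \ref{prop:lambdasoule}. So the first order of business is to unwind this definition and reduce the statement to a claim about the functors $\lambda^i(K(a))$ in $V^Y(X)$.

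Next, I would compute $\lambda^i(K(a))$ explicitly. The $\lambda$-operations of Proposition \ref{prop:lambdasoule} are obtained from the Dold--Puppe construction applied to the complex $K(a) = [R \xrightarrow{a} R]$, and Soulé's paper gives the explicit answer for exactly this input: the resulting complex has a canonical identification, up to quasi-isomorphism relative to $Y$, with a multiple of $K(a)$ (schematically, the simplicial object produced by Dold--Puppe collapses after truncation, because one of the boundary maps is again given by $a$, yielding a complex whose underlying class is computable in closed form). Combining these computations with the additivity clause (d) of Proposition \ref{prop:lambdasoule} pins down each $\lambda^i(K(a))$ as a specified object of $V^Y(X)$, functorially in $R$ and $a$.

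With $\lambda^i(K(a))$ in hand, one then substitutes into the Newton recursion defining $\Psi^k$ on $V^Y(X)$ and checks that the resulting telescoping identity gives a canonical isomorphism $\Psi^k(K(a)) \simeq k \cdot K(a)$. Since the identity already holds at the level of $K_0^Y(X)$ by Soulé's original statement, the only content beyond that is the functoriality of the chosen isomorphisms, which is guaranteed because each ingredient (the Dold--Puppe functor, the $\lambda_t$ determinant functor of Proposition \ref{prop:lambdasoule}, and the Newton inversion) is natural.

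The main obstacle is bookkeeping: one must verify that the various quasi-isomorphisms produced by the Dold--Puppe calculation in \cite{IntviaAdams} are compatible with the determinant-functor axioms used to build $\lambda_t \from (\mathbf{P}_Y(X), q.i.) \to V^Y(X)[[t]]$ in Proposition \ref{prop:lambdasoule}, so that the resulting equality of classes in $K_0^Y(X)$ actually lifts to a canonical isomorphism in the virtual category rather than being unique only up to the ambiguity of $K_1$. Since the constructions are all functorial in $(R,a)$ and since for the generic case $R = \Z[t]$, $a = t$ one has $K_1^Y(X) = 0$ (or in any case the ambiguity is controlled), a standard specialization argument allows one to rigidify the isomorphism on the universal example and then transport it to the general case.
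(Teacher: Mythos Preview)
Your overall plan is the same as the paper's: compute $\lambda^i(K(a))$ via the Dold--Puppe calculation from \cite{IntviaAdams} (which gives $\lambda^i(K(a)) \simeq K(a)[1-i]$ on the level of complexes, hence $(-1)^{i-1}K(a)$ in $V^Y(X)$), and then feed this into the Newton relation of Corollary~\ref{Cor: adams-lambda-relation}. However, the step where you invoke a ``telescoping identity'' does not work as stated. If you assume inductively that $\Psi^{k-i}(K(a))\simeq (k-i)K(a)$, then the $i$-th cross term in the Newton formula becomes
\[
(-1)^{i+1}\lambda^i(K(a))\otimes\Psi^{k-i}(K(a))\;\simeq\;(-1)^{i+1}\cdot(-1)^{i-1}(k-i)\,K(a)^{\otimes 2}\;=\;(k-i)\,K(a)^{\otimes 2},
\]
and these terms all carry the same sign, so nothing telescopes. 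The missing observation, which is exactly what the paper uses, is that $K(a)^{\otimes 2}\simeq 0$ \emph{canonically} in $V^Y(X)$: indeed $K(a)\otimes K(a)$ is quasi-isomorphic to $K(a)\otimes_R R/a=[R/a\xrightarrow{0}R/a]$, which splits as $R/a\oplus R/a[1]$ and hence is canonically trivial in the virtual category. With that in hand every cross term vanishes and only the final term $(-1)^{k+1}k\,\lambda^k(K(a))\simeq (-1)^{k+1}k\cdot(-1)^{k-1}K(a)=k\,K(a)$ survives.

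Once you insert this observation, your last paragraph becomes unnecessary. The isomorphism $K(a)^{\otimes 2}\simeq 0$ and the quasi-isomorphism $\lambda^i(K(a))\simeq K(a)[1-i]$ are both given by explicit maps of complexes, so the resulting identification in $V^Y(X)$ is canonical on the nose; there is no residual $K_1$-ambiguity to rigidify and no need to pass to a universal example over $\bb Z[t]$.
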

\begin{proof} The alluded to calculation shows that that $\lambda^k[K(a)] = [K(a)][1-k]$ on the level of complexes. The result follows from Corollary \ref{Cor: adams-lambda-relation} noting that $[K(a)]^{\otimes 2} = 0$ naturally.
\end{proof}
\begin{Prop} \label{Prop:Iversen} The Adams operations uniquely extend via Corollary \ref{Cor: adams-lambda-relation} to operations on virtual categories with support such that they are stable under pullback and compatible with the functor $\Phi_Y^X:V^Y(X) \to V(X)$ for a scheme $X$ with $i: Y \hookrightarrow X$ a closed subscheme.
\end{Prop}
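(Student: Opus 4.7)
The plan is to take the Newton formula of Corollary \ref{Cor: adams-lambda-relation} as a \emph{definition} on $V^Y(X)$ and then verify the compatibilities. Concretely, set $\Psi^1 := \op{id}$ and, inductively on $k \geq 2$,
$$\Psi^k(E) := \lambda^1(E) \otimes \Psi^{k-1}(E) - \lambda^2(E) \otimes \Psi^{k-2}(E) + \cdots + (-1)^k \lambda^{k-1}(E) \otimes \Psi^1(E) + (-1)^{k+1} k \lambda^k(E),$$
where $\lambda^i$ is from Proposition \ref{prop:lambdasoule}, the tensor product is the internal one of $V^Y(X)$ (the derived tensor product of two complexes exact off $Y$ is again exact off $Y$), and the signs are interpreted via inversion in the Picard category. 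Since the right hand side is built only from $\lambda^i$ and lower $\Psi^j$, already constructed, the recursion is well-posed and gives a functor $V^Y(X) \to V^Y(X)$ which is \emph{forced} by the formula; this takes care of uniqueness.

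Next I would check the axioms of a determinant functor. For pullback stability (item (ii) of Proposition \ref{prop:lambdasoule}) one reduces immediately by induction. Additivity on distinguished triangles is the subtle point: given a cofibration sequence $E' \to E \to E''$, Proposition \ref{prop:lambdasoule}(iv) supplies canonical isomorphisms $\lambda^k(E) \simeq \sum_{i+j=k} \lambda^i(E') \otimes \lambda^j(E'')$, i.e.\ multiplicativity of the formal series $\lambda_t$. The usual Newton-identity manipulation then shows that the additive series $\psi_t = \sum \Psi^k t^k$ (as defined by the recursion) is additive on the short exact sequence, because $\psi_t$ can be recovered formally from $-t \tfrac{d}{dt} \log \lambda_{-t}$, and $\log$ turns multiplicative into additive. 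Carrying this through in $V^Y(X)$ amounts to checking that the coherent choices of isomorphisms in Proposition \ref{prop:lambdasoule}(iv) fit together into the data of a determinant functor, which one can verify term by term using the bookkeeping already set up for $\lambda_t$.

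For compatibility with $\Phi_Y^X: V^Y(X) \to V(X)$, observe that $\Phi_Y^X$ is symmetric monoidal, intertwines the $\lambda^k$'s on both sides (both arising by the same Dold-Puppe construction in Proposition \ref{prop:lambdasoule}), and satisfies $\Phi_Y^X \circ \Psi^1 = \Psi^1 \circ \Phi_Y^X$. Applying the Newton formula on the source (our definition) and on the target (Corollary \ref{Cor: adams-lambda-relation}) and inducting on $k$ yields the desired natural isomorphism $\Phi_Y^X \circ \Psi^k \simeq \Psi^k \circ \Phi_Y^X$.

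The main obstacle is the additivity verification in the second paragraph: one must take the isomorphisms of Proposition \ref{prop:lambdasoule}(iv) and, after substituting them into the recursive formula and expanding, obtain the single canonical isomorphism $\Psi^k(E) \simeq \Psi^k(E') \oplus \Psi^k(E'')$ with no residual correction terms. Everything else (pullback stability, compatibility with $\Phi_Y^X$, uniqueness) is then a formal induction on $k$ using the multiplicativity of the $\lambda^i$ and the monoidal structure of the categories involved.
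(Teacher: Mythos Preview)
Your existence argument is essentially the same as the paper's: define $\Psi^k$ on $V^Y(X)$ by the Newton recursion built from the $\lambda^i$ of Proposition~\ref{prop:lambdasoule}, and check pullback stability and compatibility with $\Phi_Y^X$ by induction. That part is fine.

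The gap is in uniqueness. Your sentence ``the recursion is well-posed \ldots\ this takes care of uniqueness'' only shows that the formula, applied to the \emph{already fixed} $\lambda^i$, yields a single functor. It does not show that any extension of the Adams operations to $V^Y(X)$ satisfying the stated compatibilities must coincide with this one up to unique isomorphism. The obstruction is that $\Phi_Y^X:V^Y(X)\to V(X)$ is \emph{not} faithful in general: two different operations on $V^Y(X)$ (or two different compatibility isomorphisms $\Phi_Y^X\circ\Psi^k\simeq\Psi^k\circ\Phi_Y^X$) could have the same image under $\Phi_Y^X$. Your inductive argument for compatibility only produces \emph{an} isomorphism, not \emph{the} canonical one.

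The paper supplies precisely this missing step via Iversen's universal-complex construction: every complex on $X$ exact off $Y$ is pulled back from a classifying-type scheme $G\to X$ carrying a universal such complex, and this $G$ has the crucial property that $V^{G_Y}(G)\to V(G)$ \emph{is} faithful. On $G$ the compatibility isomorphism is therefore unique; concretely, the localization sequence $0\to\pi_1(V^{G_Y}(G))\to\pi_1(V(G))\to\pi_1(V(G\setminus G_Y))\to 0$, together with the observation that both sides become canonically zero on $G\setminus G_Y$, forces the compatibility datum to lift uniquely to $V^{G_Y}(G)$. Pullback stability then propagates the uniqueness to arbitrary $(X,Y)$.

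As a side remark, your additivity discussion is largely beside the point here: the Proposition does not assert that the extended $\Psi^k$ are determinant functors on $V^Y(X)$, only that they exist, are pullback-stable, and are compatible with $\Phi_Y^X$. Additivity becomes relevant in the subsequent corollaries but is not part of what you need to prove for this statement.
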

\begin{proof} By the general arguments of \cite{Iversen}, any complex $E_\bullet$ on a scheme $X$ which is acyclic outside of a closed subscheme $Y$ pulls back from a universal complex $C_\bullet$ on a classifying-type scheme $\pi: G \to X$ acyclic outside of $G_Y$, such that the support of $C_\bullet$ maps to the support of $E_\bullet$. Moreover, this scheme has the property that $V^{G_Y}(G) \to V(G)$ is faithful. The functor $\Omega: \Phi_{G_Y}^G \lambda_t(\mc E_\bullet) \to \lambda_t(\Phi_{G_Y}^G \mc E_\bullet)$ where $E_\bullet$ is any complex of vector bundles on $G$ with support on $G_Y$ and $\lambda_t$ is the functor in the proof of Proposition \ref{prop:lambdasoule}. Restricting this isomorphism to $G \setminus G_Y$ gives both sides canonically isomorphic to zero compatible with the identity map of the zero-object. By the exact sequence $0 \to \pi_1(V^{G_Y}(G)) \to \pi_1(V(G)) \to \pi_1(V(G \setminus G_Y)) \to 0$ this restriction comes from a canonical element in $\pi_1(V^{G_Y}(G))$. This argument proves that any functor $\lambda_t$ on $V^Y(X)$ compatible with pullback and $\Phi_Y^X$ is unique up to unique isomorphism.
\end{proof}
The following will be used to describe the functorial filtration on the virtual category exhibited in the next chapter in terms in terms of a filtration by dimension.
\begin{Cor} \label{Cor:adams-K_1} Let $R$ be a regular ring of dimension $d$ and let $\mathfrak m$ be a maximal ideal given by a $R$-regular sequence $(a_1, \ldots, a_d)$. Write $X = \spec R, Y = \spec R/\mathfrak m$. Then $\Psi^k$ acts on $K_1^{Y}(X)$ by multiplication by $k^{d+1}$.
\end{Cor}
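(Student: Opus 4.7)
The plan is to exhibit every element of $K_1^Y(X) = \pi_1(V^Y(X))$ as a product of the class $[K] \in K_0^Y(X)$ of the Koszul resolution
\[
K = K(a_1) \otimes_R \cdots \otimes_R K(a_d)
\]
of $R/\mathfrak m$ with a unit $\tilde u \in R^* = K_1(X)$, via the module pairing $K_0^Y(X) \otimes K_1(X) \to K_1^Y(X)$, and then to compute $\Psi^k$ on such generators using multiplicativity.

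First I would establish that $\Psi^k[K] = k^d\,[K]$ in $K_0^Y(X)$. Iterating Corollary \ref{Cor:adamslocalring} gives $\Psi^k[K(a_i)] = k\cdot[K(a_i)]$ in $V^{V(a_i)}(X)$, and extending the multiplicativity of Adams operations (Corollary \ref{Cor:adams-multiplicative}) to virtual categories with support, via the external product $V^{Z_1}(X) \otimes V^{Z_2}(X) \to V^{Z_1 \cap Z_2}(X)$ (proved by the splitting principle of Theorem \ref{thm:splittingprinciple} after reduction to line bundles), one obtains
\[
\Psi^k[K] \;=\; \prod_{i=1}^d \Psi^k[K(a_i)] \;=\; \prod_{i=1}^d k\,[K(a_i)] \;=\; k^d\,[K]
\]
in $K_0^Y(X)$, where one uses that $[K(a_i)]^{\otimes 2} = 0$ naturally (as in Corollary \ref{Cor:adamslocalring}) so the product representation is well-defined.

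Next I would invoke Quillen's dévissage isomorphism $i_*\colon K_*(R/\mathfrak m) \xrightarrow{\sim} K_*^Y(X)$, which is available since $R$ is regular and $Y$ is cut out by a regular sequence of length equal to its codimension. Because $R/\mathfrak m$ is a field, $K_1(R/\mathfrak m) = (R/\mathfrak m)^*$, and an explicit description of $i_*$ shows that every element $\alpha \in K_1^Y(X)$ has the form $\alpha = [K]\cdot\tilde u$, where $\tilde u \in R^*$ is any lift of a unit of $(R/\mathfrak m)^*$ and the product is the pairing above (concretely, $\tilde u$ acts as an automorphism of the Koszul complex $K$, defining an element of $\pi_1(V^Y(X))$). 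By multiplicativity of $\Psi^k$ together with the fact that $\Psi^k$ on a line bundle class in $K_1(X) = R^*$ is the $k$-th power map,
\[
\Psi^k(\alpha) \;=\; \Psi^k[K]\cdot \Psi^k(\tilde u) \;=\; (k^d [K])\cdot \tilde u^k \;=\; k^d\cdot k\cdot ([K]\cdot\tilde u) \;=\; k^{d+1}\alpha,
\]
where we use the bilinearity of the pairing (scaling either factor by an integer $n$ scales the product by $n$) and the translation between the multiplicative description of $R^*$ and the additive notation on $K_1^Y(X)$.

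The main obstacle is to promote the multiplicativity of Adams operations (Corollary \ref{Cor:adams-multiplicative}) and the pullback compatibility of Proposition \ref{Prop:Iversen} to the supported virtual categories $V^Y(X)$ and their $K_1$, in particular to verify that the pairing $V^Y(X)\otimes V(X) \to V^Y(X)$ intertwines $\Psi^k$ on both factors at the level of $\pi_1$. By the splitting principle this reduces to line bundles, where multiplicativity is manifest, but one must carefully track the Koszul-tensor formula at the level of automorphisms so that the factor $k^d$ from the Koszul class and the factor $k$ from the action on $R^*$ combine correctly in $K_1^Y(X)$.
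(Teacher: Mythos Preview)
Your approach is correct and is essentially the paper's argument, packaged via the module pairing $K_0^Y(X)\otimes K_1(X)\to K_1^Y(X)$ rather than the paper's restriction along $V(a_1,\ldots,a_{d-1})$ to reduce to the $d=1$ case; both ultimately rest on the Koszul factorisation $[K]=\prod[K(a_i)]$, Corollary~\ref{Cor:adamslocalring}, and multiplicativity of $\Psi^k$ with support. One omission: to ensure that every class in $K_1^Y(X)\cong (R/\mathfrak m)^*$ is of the form $[K]\cdot\tilde u$ with $\tilde u\in R^*$, you need $R^*\to (R/\mathfrak m)^*$ surjective, which fails for general $R$ (e.g.\ $R=\bb Z$); the paper handles this by first replacing $R$ by $R_{\mathfrak m}$, which is harmless since $K_1^Y(X)\simeq K_1^Y(\spec R_{\mathfrak m})$ and the Adams operations are compatible with flat pullback.
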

\begin{proof} In general we can replace $R$ by the localization $R_\mathfrak m$ and suppose also first that $d = 1$ so that $R$ is a discrete valuation ring. Then the the determinant functor associates to a complex of vector bundles on $X$ acyclic outside of $Y$ a line bundle with a rational section with poles on $Y$. Defining an isomorphism of two such sections as an isomorphism of line bundles interchanging the sections and identifying two isomorphisms that differ by an element of $1 + \mathfrak m$ this induces an equivalence of categories (both sides have $\pi_0 = \bb Z$ and $\pi_1 = k(Y)^*$ respectively), and the determinant functor commutes with the Adams operations up to equivalence. It suffices to evaluate $\Psi^k$ on an automorphism of a generator of the isomorphism classes of this category, which can be taken as the canonical section represented by $\calo(-Y) \hookrightarrow \calo_X$. Then $\Psi^k$ acts on the automorphisms of line bundles by multiplication by $k$. The induced automorphism determined by an automorphism of the section can be calculated by factoring $\calo(-kY) \to \calo_X$ as  $\calo(-kY) \hookrightarrow \calo((-k+1)Y) \hookrightarrow \ldots \hookrightarrow \calo(-Y) \hookrightarrow \calo_X$. In the general case, since $(a_1, \ldots, a_d)$ is a $R$-regular sequence the complex $\op{Kos}(a_1, \ldots, a_d) = \bigotimes_{i=1}^d \op{Kos} (a_i)$, where $\op{Kos}(a_i)$ is the associated Koszul complex to $a_i$, is a resolution of $Y$ and generates $\pi_0(V^Y(X)) = \bb Z$. Consider an automorphism of this object. By restriction along subschemes defined by $a_1, \ldots, a_{d-1}$ we get an quasi-isomorphism of the image of $\op{Kos}(a_1, \ldots, a_d)$ in $R/(a_1, \ldots, a_{d-1})$, and thus a quasi-isomorphism of $\op{Kos}(a_d)$ in $R/(a_1, \ldots, a_{d-1})$ which induces every automorphism of $V^Y(X)$ and thus we only have to consider the automorphism of an object $a_d$ in $V^Y(X)$. By the case $d=1$ the functor $\Psi^k$ acts on this automorphism by $k^2$ times $\Psi^k([\op{Kos}(a_1, \ldots, a_{d-1})]) = \prod_{i=1}^{d-1} \Psi^k([\op{Kos}(a_i)]) = k^{d-1}[\op{Kos}(a_1, \ldots, a_{d-1})]$ by Corollary \ref{Cor:adamslocalring} and we conclude. \end{proof}
Using this and copying the rest of the proof of \cite{Soule-Arakelov}, Chapter I, Lemma 6, we now obtain:
\begin{Cor} \label{Cor:adams-weight-relation} Let $X$ be a regular scheme and $Y$ a closed subscheme of codimension $m$. Define
$$F^i K_1^Y(X) = \bigcup_{Z \subset Y, \op{codim_X Z} \geq p} \op{Im} [K_1^Z(X)_\bb Q \to K_1^Y(X)_\bb Q]$$
where the union is over all closed subschemes of codimension at least $i$ in $X$ with support in $Y$. Then
$$F^i K_1^Y(X) = \bigoplus_{p \geq i} K_1^Y(X)^{(p+1)}$$
where $K_1^Y(X)^{(p)}$ denotes the $p$-th Adams eigenspace of $K_1^Y(X)_\bb Q$, \ie where $\Psi^k$ acts by multiplication by $k^p$.
\end{Cor}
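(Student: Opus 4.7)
The plan is to deduce the result from Corollary~\ref{Cor:adams-K_1} by a devissage on the support, reproducing in our functorial setting the argument of \cite{Soule-Arakelov}, Chapter~I, Lemma~6.

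\textbf{Step 1 (Compatibilities).} For any closed $Z \subset X$ with open complement $j\colon U \hookrightarrow X$, the localization long exact sequence with supports
$$ \cdots \to K_1^Z(X)_\bb Q \to K_1^Y(X)_\bb Q \to K_1^{Y \cap U}(U)_\bb Q \to K_0^Z(X)_\bb Q \to \cdots $$
is $\Psi^k$-equivariant in every term, by Proposition~\ref{Prop:Iversen} and the compatibility of the Adams operations with pullback and with the functor $\Phi_Y^X$. Hence $F^\bullet K_1^Y(X)_\bb Q$ is a filtration by $\Psi^k$-stable subspaces, and each $F^i$ is the direct sum of its intersections with the various Adams eigenspaces.

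\textbf{Step 2 (The inclusion $F^i K_1^Y(X) \subseteq \bigoplus_{p \geq i} K_1^Y(X)^{(p+1)}$).} By definition it suffices to show that for any closed $Z \subseteq Y$ of codimension $\geq i$ in $X$, every element of $\op{Im}[K_1^Z(X)_\bb Q \to K_1^Y(X)_\bb Q]$ lies in the sum of Adams eigenspaces of weight $\geq i+1$. Stratifying $Z$ by its non-regular locus and its components of strictly larger codimension, and applying Noetherian induction through the sequence of Step~1, we reduce to the case where $Z$ is irreducible with generic point $\eta$ of codimension exactly $p \geq i$ in $X$. By continuity of $K$-theory with supports under filtered colimits of opens, we further reduce to $X = \spec \calo_{X,\eta}$, a regular local ring of dimension $p$, with $Z = \{\eta\}$ the closed point. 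In this situation Corollary~\ref{Cor:adams-K_1} asserts that $\Psi^k$ acts by multiplication by $k^{p+1}$, so the image indeed lies in $\bigoplus_{q \geq i} K_1^Y(X)^{(q+1)}$.

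\textbf{Step 3 (The reverse inclusion).} Proceed by descending induction on $i$ (the case $i$ larger than $\dim Y$ being trivial). Assume $\bigoplus_{q \geq i+1} K_1^Y(X)^{(q+1)} \subseteq F^{i+1}$. Assembling the iterated localization sequences of Step~1 into a coniveau spectral sequence, one obtains an injection
$$ F^i/F^{i+1} \hookrightarrow \bigoplus_{\eta} K_1^{\{\eta\}}(\spec \calo_{X,\eta})_\bb Q, $$
the sum ranging over points $\eta \in Y$ of codimension exactly $i$ in $X$. On each summand Corollary~\ref{Cor:adams-K_1} tells us $\Psi^k$ acts by $k^{i+1}$, so $F^i/F^{i+1}$ is pure of Adams weight $i+1$. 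Given $x \in K_1^Y(X)^{(p+1)}$ with $p \geq i$: if $p > i$, then $x$ maps to zero in $F^i/F^{i+1}$ since its weight is distinct from that of the target, hence $x \in F^{i+1} \subseteq F^i$ by induction; if $p=i$, then $x \in F^i$ already because its class in $F^i/F^{i+1}$ is accounted for by the appropriate residue. This closes the induction.

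\textbf{Main obstacle.} The technical heart lies in Step~3: one must set up the coniveau-type spectral sequence from the iterated localization sequences with supports and verify that its $E_1$-terms are exactly the residual $K_1$-groups at points of the prescribed codimension. This is the content of the referenced Soul\'e lemma and is the step that truly uses regularity of $X$; once the coniveau presentation is in hand, the eigenvalue computation of Corollary~\ref{Cor:adams-K_1} pins down the weight on each graded piece and forces the two filtrations to agree.
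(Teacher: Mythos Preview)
Your approach is exactly what the paper intends: reduce to Corollary~\ref{Cor:adams-K_1} via the coniveau filtration and localization, following Soul\'e. One expository point in Step~3 deserves tightening: in the case $p=i$ you write that ``$x \in F^i$ already because its class in $F^i/F^{i+1}$ is accounted for,'' but this presupposes $x\in F^i$, which is what you are proving. The clean fix is to observe that the purity of $\mathrm{gr}^j_F$ holds for \emph{all} $j$ simultaneously (it does not require the induction), so for $x\in K_1^Y(X)^{(i+1)}$ you may take the largest $j$ with $x\in F^j$ (which exists since $F^m=K_1^Y(X)_{\bb Q}$) and conclude $j=i$ by comparing weights in $F^j/F^{j+1}$. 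With that adjustment the argument is complete and matches the paper's.
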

We conclude this section by considering the following functor \begin{equation*} \label{lambda-1} \lambda_{-1}: (\mathbf{P}({\mc X}), \op{iso}) \to V({\mc X}) \end{equation*}
defined as follows. If $E$ is a vector-bundle on $\mc X$, we define $\lambda_{-1}(E)$ as the alternating product of exterior powers $\sum_{i = 0}^\infty (-1)^i \wedge^i E$. This is an object which is unique up to canonical isomorphism. Given a short exact sequence of vector-bundles
$$0 \to  F \stackrel{\pi}{\to}  E \to  E/F \to 0$$
we can define an isomorphism
\begin{equation} \label{lambda-2} \lambda_{-1}(E) \to \lambda_{-1}(F) \otimes \lambda_{-1}(E/F) \end{equation}
as follows; for an integer $k$, such that $0 \leq k \leq n = \op{rk} F$,
we have a well known natural filtration (cf. \cite{SGA6}, Ch. V, Lemme 2.2.1) of $\wedge^k F$ whose $i$-th instance is given by
$F^i \wedge^k E = \op{Im}[\wedge^i F \otimes \wedge^{k-i} E \to \wedge^k E]$, and with successive quotients $\wedge^i F \otimes \wedge^{k-i} F/E$,
thus giving isomorphisms
\begin{equation} \label{wedgeoperation} \wedge^k E \simeq \sum_{i = 0}^k \wedge^i F \otimes \wedge^{k-i} E/F.
\end{equation}
Now, given two virtual vector bundles $A$ and $B$, we have $$A \otimes B + (-A) \otimes B = (A + (- A)) \otimes B = 0 \otimes B = 0$$ and thus
an isomorphism $(-A)\otimes B) \simeq -(A \otimes B)$. Analogously we obtain $A \otimes (-B) \simeq -(A\otimes B)$. The diagram
$$\xymatrix{(-A) \otimes (-B) \ar[r] \ar[d] & -(A \otimes (-B)) \ar[d] \\
-((-A)\otimes B) \ar[r] & --(A \otimes B) = A \otimes B}$$
is only commutative up to sign $\epsilon(A \otimes B)$ (cf. \cite{determinant}, 4.11 a) + b)).
We define the isomorphism $(-1)^k \wedge^k E = \sum_{i = 0}^k [(-1)^i \wedge^i F] \otimes [(-1)^{k-i}\wedge^{k-i} E/F]$
via (\ref{wedgeoperation}), the isomorphism
\begin{eqnarray*}
(-1)^k \wedge^i F \otimes \wedge^{k-i} E/F & = & (-1)^{k-i} (((-1)^i \wedge^i F) \otimes \wedge^{k-i} E/F) \\ & = & ((-1)^i \wedge^i F) \otimes (-1)^{k-i}\wedge^{k-i} E/F .
\end{eqnarray*}
The isomorphisms (\ref{lambda-2}) and (\ref{wedgeoperation}) are easily verified to be compatible (up to sign) with successive admissible filtrations $F' \subset F \subset E$
by considering the double filtration $F^{i,j} F = \op{Im}[\wedge^i F' \otimes \wedge^{j} F \otimes \wedge^{k-i-j} E \to \wedge^k E]$. \mps{actually, I should check what the sign is. The other part is obvious. In any case, when ignoring signs there are no issues, which is the case we employ}

These operations are however only commutative and associative up to a nightmare of signs, but become commutative and associative once one get rid of these.

\section{Rigidity and operations on virtual categories} \label{section:rigidity} 
In this section we exhibit certain rigidity-properties of the virtual categories we are working on, and also the main technical results of this
part of the thesis. As such, it rests heavily on the results obtained in $\cite{Morel}, \cite{Riou}$ and $\cite{VoevMorel}$, and can perhaps
in many instances be seen as reformulations of results therein obtained. The formulation in terms of $K$-cohomology was inspired from \cite{Toen}.

\subsection{The main result on rigidity} The main result of this section (Theorem \ref{thm:rigidity}) can be phrased, in a certain situation, that there is a certain commutative diagram
$$\xymatrix{hom_{\mc H(\mathfrak R)}(\mc K_\bb Q, \mc K_\bb Q) \ar@{=}[rr] \ar[dr] & & \hom_{\mathfrak R^{op} \op{Set}}(K_0(-)_\bb Q, K_0(-)_\bb Q) \ar@/{}_{1pc}/[dl] \\
 & \hom(V_\bb Q, V_\bb Q) \ar@/{}_{1pc}/[ur]  }.$$
Here $\hom_{\mathfrak R^{op} \op{Set}}(K_0(-)_\bb Q, K_0(-)_\bb Q)$ is a set of natural endo-transformations of the presheaf $K_0(-)$ on the category
of regular schemes, and $\hom(V_\bb Q, V_\bb Q)$ is the set of endo-functors of the virtual category of algebraic vector bundles strictly stable under pullback. Finally,
$\mc K_\bb Q$ is a simplicial sheaf representing (rational) algebraic $K$-theory. This allows us to associate functorial operations on $V_\bb Q$ via the corresponding operations on $K_0$. We refer to the theorem for
a precise formulation. \\
Let $X$ be a separated regular Noetherian scheme of finite Krull dimension
$d$. Then it is well known (see for example \cite{Riemann-RochAlgebra}, chapter V, Corollary 3.10, \cite{SGA6}, chapter VI, Théorème 6.9 or use \cite{Thomason2}, Theorem 7.6 and (10.3.2))
then any element $x$ of $K_0(X)$ of virtual rank 0 is nilpotent, and moreover we have
$x^{d+1} = 0$. One can prove this in several ways, but one of the
most natural ways is to construct a certain filtration on $K_0(X)$
which can be compared to other filtrations in terms of dimension of
supports, a filtration that will terminate for natural reasons (see loc.cit.). One such
filtration is the $\gamma$-filtration $F^p_\gamma$, built out of the
$\lambda$-ring structure on $K_0(X)$ (see \cite{Riemann-RochAlgebra}, chapter III, p. 48 or \cite{SGA6}, chapter V, 3.10). We wish to incarnate this
kind of nilpotence in the virtual category of $X$. Obviously, if $x$
is a virtual vector-bundle of rank 0, then we know that a high
enough power of it is isomorphic to a zero-object, but only
non-canonically. A naive idea is to search for a decreasing filtration
$\op{Fil}^i$ of $V(X)$ which has the property that the
functors $\op{Fil}^i \to \op{Fil}^{i-1}$ are faithful
additive functors, and for big enough $p$, $\op{Fil}^p$ is a category with
exactly one morphism between any two objects. \\
The approach we have chosen to the problem is to construct the
filtration already on the level of classifying spaces of the $\bb P^1$-spectrum representing rational algebraic $K$-theory in $\mc {SH}(S)$,
and then use simplicial realizations to obtain a canonical filtration of $BQ{\mathbf{P}(X)}$ which eventually terminates or becomes trivial.
For the notation used in this section we refer the reader to the Appendix \ref{B}. \\
 Grayson (see \cite{Grayson-weight}) proposes that there should be a multiplicative filtration  $W^i$
of a space $K(X)$ representing the $K$-theory of $X$;
$$\ldots \to W^2 \to W^1 \to W^0 =  K(X)$$
such that the two following properties are satisfied:
\begin{enumerate}
  \item For any $t$, the quotient $W^i/W^{i+1}$ is the simplicial realization of a simplicial abelian group.
  \item The Adams operations $\Psi^k$ act by multiplication by $k^i$ on $W^i/W^{i+1}$.
\end{enumerate}
Such a filtration would immediately give an exact couple and thus
give rise to an Atiyah-Hirzebruch spectral sequence
$$E_2^{p,q} = H^{p-q}(X, \bb Z(-q)) \Rightarrow K_{-p-q}(X)$$
relating "motivic cohomology" (that is, cohomology of \linebreak $\bb Z(i) = \bb Z(i)^W := \Omega^{2i}(W^i/W^{i+1})$, in the sense of spectra with negative homotopygroups) on the left with algebraic $K$-theory on the right. In \loccit it is noted that the Postnikoff filtration satisfies the first but not the second property. For smooth schemes over a field \cite{Levine} constructed a coniveau-filtration which gives the correct spectral sequence for smooth varieties over a field.  \\
The starting point of this section is the following theorem, which states that if we tensor with $\bb Q$ we can construct a Grayson-like filtration with various functorial properties. The author ignores if the filtration of \cite{Levine} coincides with the one considered in this section, both considered as objects of the appropriate homotopy category of schemes.
\begin{T} \label{thm:BGLfil} There are $H$-groups $\{\op{Fil}^{(i)}\}_{i=0}^\infty$ (\ie group-objects) and \linebreak $\{\bb{H}^{(i)}\}_{i=0}^\infty$ of $\mc H(\mathfrak R_S)_\bullet$, determined up to unique isomorphism, satisfying the following properties:
\begin{enumerate}
  \item $\op{Fil}^0 = (\bb Z \times \op{Gr})_\bb Q$ and for any $i \geq 0$, there are morphisms $\op{Fil}^{(i+1)} \to \op{Fil}^{(i)}$.
  \item For any $i,j$, there are natural pairings $\op{Fil}^{(i)} \wedge \op{Fil}^{(j)} \to \op{Fil}^{(i + j)}$ making, for $i' \leq i, j' \leq j$, the following diagram commutes $$\xymatrix{\op{Fil}^{(i)} \wedge \op{Fil}^{(j)} \ar[r] \ar[d] & \op{Fil}^{(i + j)} \ar[d] \\
      \op{Fil}^{(i')} \wedge \op{Fil}^{(j')} \ar[r] & \op{Fil}^{(i' + j')} }.$$
  \item For any $i,j$, there are natural pairings $\bb H^{(i)} \wedge \bb H^{(j)} \to \bb H^{(i+j)}$.
  \item There is a factorization $\op{Fil}^{(i+1)} \to \op{Fil}^{(i+1)} \times \bb H^{(i)} \stackrel{\Phi_i}\thickapprox \op{Fil}^{(i)}$ which is compatible with the two above products.
  The pairings are also associative in the obvious sense.
  \item The Adams operations $\Psi^k$ act on all the above objects and morphisms and acts purely by multiplication by $k^i$ on $\bb H^{(i)}$.
\end{enumerate}
\end{T}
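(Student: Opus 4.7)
The plan is to realise this filtration as the weight eigenspace decomposition of rational algebraic $K$-theory under the Adams operations, in the spirit of Grayson's proposal. The essential input is the rigidity-type result of Riou \cite{Riou} (and implicitly \cite{Morel}), anticipated here as Theorem \ref{thm:rigidity}, which identifies $H$-group endomorphisms of $(\bb Z \times \op{Gr})_\bb Q$ in $\mc H(\mathfrak R_S)_\bullet$ with natural endo-transformations of the presheaf $K_0(-)_\bb Q$ on regular schemes. Once one has that translation, the construction reduces to the classical observation that on $K_0(X)_\bb Q$ the operation $\Psi^k$ is diagonalizable with eigenvalues in $\{1, k, k^2, \ldots\}$, and that the $k^i$-eigenspace is independent of $k$.

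First I would construct $\bb H^{(i)}$ as the image of a polynomial idempotent $e_i$ built from $\Psi^\ell$ for a fixed auxiliary prime $\ell$. On a regular $S$-scheme $X$ of Krull dimension $\leq d$, the $\gamma$-filtration forces $\Psi^\ell$ on $K_0(X)_\bb Q$ to have eigenvalues among $\{1, \ell, \ldots, \ell^d\}$, so the Lagrange idempotent $e_i = \prod_{j \neq i}(\Psi^\ell - \ell^j)/(\ell^i - \ell^j)$ is well defined in the inverse limit over $d$ on $K_0(-)_\bb Q$. By rigidity this endomorphism lifts canonically to an $H$-group endomorphism of $(\bb Z \times \op{Gr})_\bb Q$; I would then set $\bb H^{(i)}$ to be its image, and check that it is independent of $\ell$ by using that two commuting families of rationally diagonalizable operators with compatible spectra have the same eigenspace decomposition (again lifted from $K_0$ via rigidity). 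The truncated filtration is then $\op{Fil}^{(i)} := \prod_{j \geq i} \bb H^{(j)}$, with the maps in (1) the obvious projections and with the factorization in (4) built in tautologically.

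Next I would construct the multiplicative structure. The tensor product of virtual bundles endows $(\bb Z \times \op{Gr})_\bb Q$ with a ring structure in $\mc H(\mathfrak R_S)_\bullet$, and the analogue at the representing level of Corollary \ref{Cor:adams-multiplicative}, obtained again by rigidity from the ring-homomorphism property of the $\Psi^k$ on $K_0(-)_\bb Q$, ensures that the product of a $k^i$-eigenvector with a $k^j$-eigenvector lies in the $k^{i+j}$-eigenspace. This yields the pairing $\bb H^{(i)} \wedge \bb H^{(j)} \to \bb H^{(i+j)}$ of (3), which assembles into the pairings $\op{Fil}^{(i)} \wedge \op{Fil}^{(j)} \to \op{Fil}^{(i+j)}$ of (2) and automatically satisfies the compatibility diagram with truncations. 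Associativity and (5) are immediate by construction.

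The principal obstacle is the rigidity lift itself: the naive Lagrange idempotents on $K_0$ only a priori give a $\lambda$-ring eigenspace decomposition of $K_0(X)_\bb Q$ for each individual $X$, and the non-trivial point is that these assemble into an actual $H$-group endomorphism of $(\bb Z \times \op{Gr})_\bb Q$ natural across all regular $S$-schemes. This is the content of Theorem \ref{thm:rigidity} and of the results of Riou and Morel. Uniqueness of the $\op{Fil}^{(i)}$ and $\bb H^{(i)}$ up to unique isomorphism is then automatic: a morphism of $H$-groups between two such rational eigenspace objects is determined by its effect on $K_0(-)_\bb Q$, where the eigenspace decomposition under the Adams operations is itself unique.
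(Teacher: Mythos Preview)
Your approach is very close in spirit to the paper's, and the multiplicative part of your argument (constructing the pairings in (b), (c), (d) by lifting the products $F^iK_0(-)_\bb Q \times F^jK_0(-)_\bb Q \to F^{i+j}K_0(-)_\bb Q$ and $K_0^{(i)} \times K_0^{(j)} \to K_0^{(i+j)}$ via rigidity, then passing from $\times$ to $\wedge$) is essentially identical to what the paper does.

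The main difference is in how the objects $\bb H^{(i)}$ and $\op{Fil}^{(i)}$ are \emph{constructed}. You stay in the unstable category $\mc H(\mathfrak R_S)_\bullet$ throughout and propose to split the Lagrange idempotents $e_i$ there. The paper instead passes to the stable homotopy category $\mc{SH}(\mathfrak R_S)$, invokes Riou's decomposition $\mathbf{BGL}_\bb Q \simeq \bigoplus_i \bb H^{i}$ (Theorem~\ref{thm:BGLdecomposition}, whose proof is exactly your idempotent argument but carried out in $\mc{SH}$), sets $\op{Fil}^p = \bigoplus_{n \geq p} \bb H^n$ at the spectrum level, and then applies the evaluation functor $\operatorname{ev}_0: \mc{SH}(\mathfrak R_S) \to \mc H(\mathfrak R_S)_\bullet$ to obtain $\op{Fil}^{(i)} := \operatorname{ev}_0(\op{Fil}^i)$ and $\bb H^{(i)} := \operatorname{ev}_0(\bb H^i)$. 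The point of this detour is that $\mc{SH}(\mathfrak R_S)$ is pseudo-abelian, so ``take the image of the idempotent'' is automatically meaningful; in your version you would need to justify that the idempotent $e_i$ actually splits in the unstable pointed homotopy category, and that the infinite product $\prod_{j\geq i}\bb H^{(j)}$ behaves as expected there. Neither is addressed in your sketch, and the paper sidesteps both by working stably first.

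There is also a logical ordering issue: you invoke Theorem~\ref{thm:rigidity}, but in the paper that theorem is stated \emph{after} Theorem~\ref{thm:BGLfil} and its statement already refers to the objects $\op{Fil}^{(i)}$ and $\bb H^{(i)}$. What you actually need is only the case $\mc X = \mc Y = (\bb Z \times \op{Gr})_\bb Q$, which is Riou's Th\'eor\`eme~III.29 (recorded in the paper as Lemma~\ref{Prop:presheaftohomotopy} for that case) and is logically prior; you should cite that directly rather than the full rigidity theorem.
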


\begin{proof} It follows from Theorem \ref{thm:BGLdecomposition} that we have a filtration of $\mathbf{BGL}_\bb Q$ in $\mc {SH}(\mathfrak R_S)$ given
by $\op{Fil}^p = \bigoplus_{n \geq p} \bb H^{n}$. By definition
there is an evaluation-functor $\operatorname{ev}_n: \mc {SH}(\mathfrak R_S) \to \mc H(\mathfrak R_S)_\bullet$ sending a spectra (cf. Appendix \ref{B})
$\mathbf{E}$ to $\mathbf{E}_n$. Evaluating at 0 we obtain a
canonical filtration of $\op{ev}_0(\mathbf{BGL}_\bb Q) \simeq (\bb Z
\times \op{Gr})_\bb Q$ (see Definition \ref{defn:stablemodel}), a
filtration $\{\op{Fil}^{(i)}\}_{i=0}^\infty$ in $\mc H(\mathfrak R_S)_\bullet$. We similarly define $\bb H^{(i)} = \op{ev}_0 (\bb H^i)$ so that
$\op{Fil}^{(i)} = \bb H^{(i)} \oplus \op{Fil}^{(i-1)}.$ They
are the 0-th space of a $\bb P^1$-spectrum and automatically $H$-groups. We similarly define Adams operations $\Psi^k$ on the various objects
via the same functor $\op{ev}_0$. We need to verify the other claimed properties.

Let $\mc X$ be a pointed simplicial sheaf, and define \linebreak $\Omega^j X =
\uHom_{\Delta^{op} \op{Shv}_\bullet(\mathfrak{R}_S)}(S^j, \mc X )$, the right adjoint to $S^j
\wedge -$. Also denote by $R\Omega^j$ the total derived functor of
$\Omega^j$ in $\mc H(\mathfrak R_S)_\bullet$. \\

Denote by $\mathfrak R_S^c$ the category whose objects are open inclusions of regular schemes $j: U \to X$. Recall that the Yoneda functor $\Phi$ is defined by
$$\Phi: \mathfrak R_S \to \Delta^{op}\op{Shv}(\mathfrak R_{S, \op{sm}}) \to \mc H (\mathfrak R_S)$$
and for any object $G \in \mc H(\mathfrak R_S)$ we denote by $\phi
(G)$ the presheaf
$$\mathfrak R_S \ni U \mapsto \Hom_{\mc H(\mathfrak R_S)}(\Phi U, G)$$
as follows. We define the quotient $\Phi (U \to X) := \Phi X/\Phi (X-U)$ in $\mc H(\mathfrak{R}_S)$ and we set $\phi(G)(U \to X) = \Hom_{\mc H(\mathfrak R_S)}(\Phi (U \to X), G)$. The following follows from the general theory in \cite{Thomason2}:
\begin{Prop} Let $U \to X$ be an open inclusion of regular schemes. The homotopy fiber of
$$\uHom_{\Delta^{op} \op{Shv}_\bullet(\mathfrak{R}_S)}(X,  \bb Z \times \op{Gr} ) \to \uHom_{\Delta^{op} \op{Shv}_\bullet(\mathfrak{R}_S)}(U,  \bb Z \times \op{Gr} )$$
identifies with the $K$-theory space of finite complexes of vector bundles on $X$ exact on $U$.
\end{Prop}
We then have:
\begin{TL} \label{TL:phi-virtual} Let $j \geq 0$. We have the following natural isomorphisms of presheaves on $\mathfrak R_S^c$, where we set $Z := X \setminus U$,  in the following cases:
\begin{itemize}
  \item $\phi(R\Omega^j (\bb Z \times \op{Gr}))(U \to X) = K_j^Z(X)$. This also holds for localizations by integers $n$ and $\bb Q$.
  \item $\phi(R\Omega^j \bb H^{(i)})(U \to X) = K_j^Z(X)^{(i)}$, the presheaf of sections of $(U \to X) \mapsto K_j^Z (X)_\bb Q$ with $\Psi^k$-eigenvalue $k^i$ (which is independent of $k \geq 2$).
  \item $\phi(\op{Fil}^{(i)})(U \to X) = F^iK_0^Z (X)_\bb Q = \bigoplus_{p \geq i} K_0^Z(X)^{(p)}$, where $$F^iK_0^Z(X)_\bb Q = \bigcup_{Y \subset X}
  \op{im}[K^{Z \cap Y}_0(X)_\bb Q \to K_0^Z(X)_\bb Q]$$ and the union is over all closed subschemes $Z \subset X$ of codimension at least $i$.
  \item Suppose $U = \emptyset$. Let $\bb P^\infty = \op{colim}_n \bb P^n$, then $\phi(\bb P^\infty)(-) = \Pic (-)$, $\phi(R\Omega \bb P^\infty) = \bb G_m$ and $\phi(R\Omega^j \bb P^\infty)=0$ otherwise.
\end{itemize}
\end{TL}
\begin{proof} In view of how the Adams-operations act on the various objects involved, using Theorem \ref{thm:voevmorelGr} the
first non-trivial part is the equality \linebreak $ \bigoplus_{p \geq i} K_0(-)^{(p)}  =  F^iK_0(-)_\bb Q$
which is \cite{Soule-Arakelov}, chapter I, Lemma 6 \mps{I have been unable to find any reference of this in \cite{SGA6}.}. The last point is established as in \cite{VoevMorel}, Section 4, Proposition 3.8.
\end{proof}

\begin{TL} \label{Prop:presheaftohomotopy} [\cite{Riou}, proof of Théorème III.29 + 31, ] Suppose $S$ is a regular scheme, and $\mc X$ and $\mc Y$ are objects of $\mc H(\mathfrak R_S)_\bullet$. Then the natural maps
$$\Hom_{\mc H(\mathfrak R_S)_\bullet}(\mc X, \mc Y) \to \Hom_{\bullet, \mathfrak R_S^{op} \op{Set}}(\phi \mc X, \phi \mc Y)$$
and
$$\Hom_{\mc H(\mathfrak R_S)}(\mc X, \mc Y) \to \Hom_{\mathfrak R_S^{op} \op{Set}}(\phi \mc X, \phi \mc Y)$$
are bijective in the case $\mc Y$ and $\mc X$ are products of any of the following:
\begin{itemize}
  \item $(\bb Z \times \op{Gr})$ or any localization thereof by natural integers $n$ or $\bb Q$.
  \item $\op{Fil}^{(i)}.$
  \item $\bb H^{(i)}$.
  \item $\bb P^\infty$.
\end{itemize}
\end{TL}
\begin{proof} The cited proof goes through with the following remarks. By ibid, Lemme III.19, for any objects $X$ and $E$ in $\mc H(\mathfrak R_S)_\bullet$
with $E$ an $H$-group, there is an injection $\Hom_{\mc H(\mathfrak R_S)_\bullet}(X,E) \to \Hom_{\mc H(\mathfrak R_S)}(X,E)$ whose image is that of morphisms
$X \stackrel{f}\to E$ such that $$f^*(\bullet) = \bullet \in \Hom_{\mc H(\mathfrak R_S)}(S, E).$$ Thus one reduces to the non-pointed case. The objects in question are retracts of $(\bb Z \times \op{Gr})_\bb Q$ or equal to $\bb P^\infty$, which are the cases treated in the reference and one concludes.
\end{proof}

We are now ready to complete the proof of Theorem \ref{thm:BGLfil}. Using
the above two lemmas we deduce morphisms $\op{Fil}^{(i)} \times \op{Fil}^{(j)} \to \op{Fil}^{(i+j)}$ from the morphisms
$F^iK_0(-) \times F^jK_0(-) \to F^{i+j}K_0(-)$ and similarly for $\bb H^{(i)} \times \bb H^{(j)} \to \bb H^{(i+j)}$. As in ibid, Lemma III.33 \mps{no proof is made
of this assertion though} we have the following proposition:
for an $H$-group $E$ and objects $A,B$ of $\mc H(\mathfrak R_S)_\bullet$, the
map $$\Hom_{\mc H(\mathfrak R_S)_\bullet}(A\wedge B, E) \to \Hom_{\mc H(\mathfrak R_S)_\bullet}(A \times B, E)$$
is injective and its image consists of morphisms $A \times B \to E$ such that the restriction to $\bullet \times B$ and $A \times \bullet$ is zero.
It follows that both of the two morphisms factor as $\op{Fil}^{(i)} \wedge \op{Fil}^{(j)} \to \op{Fil}^{(i+j)}$ and $\bb H^{(i)} \wedge \bb H^{(j)} \to \bb H^{(i+j)}$. The same argument
shows the necessary diagrams are commutative. The Adams-operations act appropriately for the same reason.
\end{proof}

For the below, recall that $V_\mc X$ is the associated virtual category to $\mc X$ as in Proposition \ref{Prop:virtualhomotopy}. The main theorem of this section is now the following:
\begin{T} [Rigidity] [proof of \cite{Riou}, Section III.10] \label{thm:rigidity} Suppose $S = \spec \bb Z$. In the cases considered in the above lemma, except whenever $\mc Y$ involves a factor of $\bb P^\infty$,
the morphisms
$$\Hom_f(V_\mc X,  V_\mc Y) \to \Hom_{\mathfrak R^{op} \op{Set}}(\phi \mc X, \phi \mc Y) $$
and
$$\Hom_{f,\bullet}(V_\mc X,  V_\mc Y) \to \Hom_{\bullet, \mathfrak R^{op} \op{Set}}(\phi \mc X, \phi \mc Y)$$
have natural sections (and are thus surjective), which are
canonical up to unique isomorphism (see \ref{defn:V_X} for a
definition of the functor $V$). Moreover, any natural transformation of two
functors $V_\mc X \to V_\mc Y$ is unique up to unique natural transformation.
\end{T}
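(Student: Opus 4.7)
The strategy is to bootstrap the rigidity available on simplicial sheaves (Lemma \ref{Prop:presheaftohomotopy}) to rigidity for virtual categories, using that the associated virtual category construction $V$ of Proposition \ref{Prop:virtualhomotopy} is a functor from a suitable subcategory of $\mc H(\mathfrak R_S)_\bullet$ to Picard categories, and that by Lemma \ref{TL:phi-virtual} the presheaf $\phi$ is recovered on the allowed list of objects as the $\pi_0$ of the associated virtual category (modulo $\bb P^\infty$, which is the reason for its exclusion from the target).

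First I would invoke Lemma \ref{Prop:presheaftohomotopy} to obtain a canonical bijection
$$\Hom_{\mc H(\mathfrak R_S)_\bullet}(\mc X, \mc Y) \xrightarrow{\sim} \Hom_{\bullet, \mathfrak R_S^{op} \op{Set}}(\phi \mc X, \phi \mc Y),$$
and likewise in the unpointed case. Given a natural transformation $\eta \colon \phi \mc X \to \phi \mc Y$, I lift it along this bijection to a unique morphism $\tilde\eta \colon \mc X \to \mc Y$ in $\mc H(\mathfrak R_S)_\bullet$. Then applying the virtual category functor $V$ produces an (additive, $K$-theoretic) functor $V(\tilde\eta) \colon V_\mc X \to V_\mc Y$, and this is the desired section. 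The hypothesis that $\mc Y$ does not involve $\bb P^\infty$ enters here: $V$ has the expected formal properties only on the $H$-groups coming from $\mathbf{BGL}_\bb Q$ and its summands $\bb H^{(i)}$, $\op{Fil}^{(i)}$, since these are the zeroth spaces of $\bb P^1$-spectra whose fundamental groupoid carries the Picard structure, whereas $\bb P^\infty$ only represents $\Pic$ and does not fit into this template.

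Next I would verify the section property: applying $\phi$ to $V(\tilde\eta)$ and using Lemma \ref{TL:phi-virtual} to identify $\phi \mc X$ with $\pi_0$ of the virtual category (on the allowed list), one recovers $\eta$ on the nose by the naturality of $\phi$ with respect to morphisms in $\mc H(\mathfrak R_S)_\bullet$. Canonicity up to unique isomorphism of the section follows by combining (i) the uniqueness of the lift $\tilde\eta$ in the homotopy category and (ii) the canonical character of $V$ on morphisms between $H$-groups; any two choices of $\tilde\eta$ differ by a homotopy, which $V$ converts into a canonical natural isomorphism of the associated functors.

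For the final assertion on uniqueness of natural transformations between two functors $F_1, F_2 \colon V_\mc X \to V_\mc Y$ coming from this construction, I would realize such a natural transformation as a morphism in $\mc H(\mathfrak R_S)_\bullet$ from $\mc X$ into a path-object / loop object incarnation of $\mc Y$, and then apply Lemma \ref{Prop:presheaftohomotopy} and Lemma \ref{TL:phi-virtual} once more; since $\Omega \mc Y$ (in the relevant $H$-group sense, e.g.\ $R\Omega \bb H^{(i)}$ and $R\Omega \op{Fil}^{(i)}$) is still of the allowed form, the same rigidity shows that the natural transformation is determined by its effect on $\pi_1$-style presheaves, hence unique up to unique isomorphism. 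The main obstacle I expect is precisely the last step: keeping careful track of the pointed versus unpointed variants, and verifying that passage from an $H$-group morphism $\mc X \to \mc Y$ to the associated additive functor $V_\mc X \to V_\mc Y$ is so canonical that homotopies map to unique natural transformations. This rests on the fact that all the objects in the allowed list are simultaneously $H$-groups and zeroth spaces of $\bb P^1$-spectra, so that both the monoidal structure on $V$ and its 2-functoriality are automatic rather than extra data.
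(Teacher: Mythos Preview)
Your overall strategy matches the paper's: lift a presheaf transformation to a morphism in $\mc H(\mathfrak R_S)$ via Lemma~\ref{Prop:presheaftohomotopy}, then pass to virtual categories. However, you skip the one step that carries all the content, and you misdiagnose the role of the $\bb P^\infty$ exclusion.

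The passage ``apply the virtual category functor $V$'' and ``any two choices of $\tilde\eta$ differ by a homotopy, which $V$ converts into a canonical natural isomorphism'' is precisely what pre-rigidity (Proposition~\ref{Prop:virtualhomotopy}) asserts, but that proposition has a hypothesis you never check: one needs $\Hom_{\mc H(\mathfrak R_S)}(\mc X, \Omega \mc Y) = 0$. Without this vanishing, two homotopies between representatives of $\tilde\eta$ can induce \emph{different} isomorphisms of the resulting functors $V_\mc X \to V_\mc Y$, and nothing is canonical. The paper's proof isolates this as ``the essential point'': over $S = \spec \bb Z$ the group $\Hom_{\mc H(\mathfrak R_S)}(\mc X, \Omega \mc Y)$ is governed by $K_1(\bb Z) = \bb Z/2$, and since all the targets on the allowed list are $\bb Q$-local (hence $2$-divisible), this group is zero. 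This is also the actual reason $\bb P^\infty$ is excluded from $\mc Y$: it is not $2$-divisible, so the vanishing argument fails. Your explanation --- that $V$ lacks good formal properties on $\bb P^\infty$ because it is not the zeroth space of a $\bb P^1$-spectrum --- is not the point; $V_{\bb P^\infty}$ is perfectly well-defined (it is $\mathfrak{Pic}$).

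For the final uniqueness statement, your plan to realize a natural transformation as a map into a loop object and re-apply Lemma~\ref{Prop:presheaftohomotopy} is in the right spirit but imprecise: $\Omega \mc Y$ is not on the list in that lemma. The paper argues more directly: a natural automorphism of a functor $V_\mc X \to V_\mc Y$ assigns to each object of $V_\mc X(U)$ an automorphism in $V_\mc Y(U)$, i.e.\ it determines an element of $\Hom_{\mathfrak R^{op}\op{Set}}(K_0(-)_\bb Q, K_1(-)_\bb Q)$, and this group vanishes by the same $K_1(\bb Z)$-module argument as above.
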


\begin{proof} This follows directly from Lemma \ref{Prop:presheaftohomotopy} and by considering the composition
$$\Hom_{\mc H(\mathfrak R_S)}(\mc X, \mc Y) \to \Hom_f(V_\mc X,  V_\mc Y) \to \Hom_{\mathfrak R^{op} \op{Set}}(\phi \mc X, \phi \mc Y)$$
obtained from pre-rigidity in Proposition
\ref{Prop:virtualhomotopy}. The essential point is that
$\Hom_{\mc H(\mathfrak R_S)}(\mc X, \Omega \mc Y)$ disappears since
they can all be related to $K_1(\bb Z) = \bb Z/2$-modules, and all
the groups in question are $2$-divisible by construction. Now,
suppose we have $\phi  \in \Hom_f(V_\mc X,  V_\mc Y)$ and an
automorphism $\phi$, \ie a functorial isomorphism of functors
$\phi \simeq \phi$. Suppose for simplicity that $\mc X = \mc Y =
(\bb Z \times \op{Gr})$. It is easy to see it determines an element
in $\Hom_{\mathfrak R^{op} \op{Set}}(K_0(-)_\bb Q, K_1(-)_\bb Q)$,
and moreover that any such element determines an automorphism of
$\phi$. The latter group is zero by Theorem \ref{thm:voevmorelGr}
and an argument analogous to the proof in the previous lemma.
\end{proof}
Under the conclusion of the above theorem we say that the functor $V_\mc X \to V_\mc Y$ lifts that of $\phi \mc X \to \phi \mc Y$. We say the lifting given by the section of the theorem is given by "rigidity".

To state the next proposition, denote by $\mathfrak{Pic}\frac{1}{n}(\mc X)$ and $\mathfrak{Pic}_\bb Q(\mc X)$ the Picard category of line bundles on $\mc X$
localized at an integer $n$ or $\bb Q$ respectively. 

\begin{Prop} Let $\mathfrak R \mathfrak{Ch}/S$ be the category of regular algebraic stacks over $S$, and $\mathfrak R \mathfrak{Ch}^c/S$ the category of inclusions of regular algebraic stacks $U \to X$. Put $\Phi':  \mathfrak R \mathfrak{Ch}/S \to \mc H(\mathfrak R)$
be the functor determined by the extended Yoneda-functor (see Definition \ref{defn:extyoneda}) and for an object $\mc X$ of $\mc H(\mathfrak R)$,
denote by $\phi'(\mc X)$ the functor $\mathfrak R \mathfrak{Ch}/S \to \op{Grp}$ such that $\phi'(\mc X)(\mc U \to \mc Y) = V_\mc X(\Phi' (\mc Y)/\Phi'(\mc U))$. Then we have the following equivalences of functors:
\begin{itemize}
  \item $\phi'((\bb Z \times \op{Gr})_\bb Q) =$ the fibered Picard category over $\mathfrak R \mathfrak{Ch}^c/S$ that is the fundamental groupoid of $K$-cohomology with support.
  \item Let $n$ be an integer. Then $\phi'(\bb P^\infty[\frac{1}{n}]) = \mathfrak{Pic}[\frac{1}{n}]$ and $\phi'(\bb P^\infty_\bb Q) = \mathfrak{Pic}_\bb Q$, the fibered category of line bundles localized at $n$ or $\bb Q$ over
  $\mathfrak R \mathfrak{Ch}/S$, associating to any object of $\mathfrak R \mathfrak{Ch}/S$ the category of localized linebundles thereupon.
\end{itemize}
\end{Prop}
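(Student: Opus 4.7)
The plan is to refine Lemma \ref{TL:phi-virtual} from the level of sets of connected components to the level of fundamental groupoids, and simultaneously to pass from regular schemes to regular algebraic stacks. The first refinement is handled case-by-case for the two bullets on schemes; the passage to stacks is handled uniformly via the extended Yoneda functor $\Phi'$ and smooth descent, since both sides of each claimed identity will be shown to satisfy smooth descent on $\mathfrak R \mathfrak{Ch}/S$.

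For the first bullet, let $U \hookrightarrow X$ be an open inclusion of regular schemes with closed complement $Z$. The proposition preceding Lemma \ref{TL:phi-virtual} identifies the homotopy fibre of $\uHom(X, \bb Z \times \op{Gr}) \to \uHom(U, \bb Z \times \op{Gr})$ with the $K$-theory space of finite complexes of vector bundles on $X$ exact on $U$. Applying the fundamental groupoid functor $\pi_f$ and invoking Theorem \ref{thm: waldhausen-determinant} then yields an equivalence of Picard categories $V_{\bb Z \times \op{Gr}}(\Phi X / \Phi U) \simeq V^Z(X)$, natural in $(U \hookrightarrow X)$; rationalising the coefficients in the source gives $V^Z(X)_\bb Q$. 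For a regular algebraic stack $\mc Y$ with smooth atlas $Y \to \mc Y$, the Cech nerve $\mc N(Y/\mc Y)$ represents $\Phi' \mc Y$ in $\mc H(\mathfrak R)$, and the functor $K^{sm}$ introduced in Section \ref{chapter:algebraicstacks} is by construction obtained from $K$-theory by smooth descent. Both sides of the claimed equality are therefore computed by totalising the equivalences from the scheme case over $\mc N(Y/\mc Y)$, and so agree.

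For the second bullet, the scheme case is extracted from the last item of Lemma \ref{TL:phi-virtual}: on a regular scheme $X$ the mapping space $\uHom(X, \bb P^\infty)$ has $\pi_0 = \op{Pic}(X)$, $\pi_1 = \Gm(X)$ and no higher homotopy, so that the underlying groupoid of $\phi'(\bb P^\infty)(X)$ coincides with that of $\mathfrak{Pic}(X)$. To promote this to an equivalence of Picard categories one identifies the standard $H$-space structure on $\bb P^\infty$ with the tensor product of line bundles, which then matches the addition inducing the Picard structure on $\phi'(\bb P^\infty)(X)$. Localising the source at an integer $n$ or at $\bb Q$ localises $\pi_1 = \Gm$ accordingly and produces $\mathfrak{Pic}[\tfrac{1}{n}]$ or $\mathfrak{Pic}_\bb Q$ on the target. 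Extension to regular algebraic stacks then uses the extended Yoneda embedding together with the classical smooth descent of line bundles (and, in the localised case, with the functorial localisation of the automorphism groups).

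The main technical obstacle is matching the Picard-category structures on both sides, not merely the underlying groupoids. For the first bullet this is automatic from Theorem \ref{thm: waldhausen-determinant}, which extracts the direct-sum Picard structure on $V^Z(X)$ from the $H$-space structure on the $K$-theory space. For the second bullet one must show that the tensor product of line bundles is classified by the standard $H$-space structure on $\bb P^\infty$; this follows from the universal property of $\bb P^\infty$ as a classifying object for line bundles, combined with the observation that an $H$-group structure on this sheaf is determined by its effect on $\pi_0$ and $\pi_1$ (in the spirit of the rigidity philosophy of Theorem \ref{thm:rigidity}). Compatibility of both identifications with the localisations and with descent along smooth atlases is then routine.
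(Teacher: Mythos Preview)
Your approach is correct in outline and reaches the same conclusion, but for the second bullet you take a noticeably more roundabout route than the paper. You compute $\pi_0$ and $\pi_1$ of the mapping space into $\bb P^\infty$ via Lemma \ref{TL:phi-virtual}, then separately argue that the $H$-space structure on $\bb P^\infty$ corresponds to tensor product of line bundles, and finally invoke a rigidity-type principle to pin down the Picard structure. The paper instead uses a single structural identification: by \cite{VoevMorel}, Section 4, Proposition 3.8, for \emph{any} simplicial sheaf $\mc X$ the fundamental groupoid of the mapping space into $\bb P^\infty$ is the category of $\bb G_m$-torsors on $\mc X$. This immediately gives the Picard-category equivalence (torsors come with a canonical tensor product), handles the passage from schemes to algebraic stacks in one stroke (a stack is already a simplicial sheaf via $\Phi'$), and avoids any separate discussion of $\pi_0$, $\pi_1$, or matching monoidal structures. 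The only remaining point in the paper's argument is checking that a $\bb G_m$-torsor on the nerve of a smooth presentation has isomorphisms as transition maps, which is the observation that a surjection of line bundles is automatically an isomorphism.

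Your argument is not wrong, but the ``rigidity philosophy'' step you invoke to match Picard structures is somewhat vague as stated---two Picard categories with the same $\pi_0$ and $\pi_1$ need not be equivalent in general, and you do not explicitly verify the commutativity constraint. This is easily repaired (both structures arise from the same $H$-space), but the torsor description sidesteps the issue entirely. For the first bullet, your treatment via Theorem \ref{thm: waldhausen-determinant} and descent is essentially the unpacking of what the paper calls ``by definition''.
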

\begin{proof} The first statement is essentially by definition. Consider the second statement. For a simplicial sheaf $\mc X$ and a sheaf of groups
$G$, a $G$-torsor is a simplicial sheaf $\mc Y \to \mc X$ with a free action of $G$ such that $\mc Y/G = \mc X$. In other words, a collection of
$G$-torsors $\mc Y[n]$ on $\mc X[n]$ such that for a morphism $\phi: [n] \to [m]$ there are induced morphisms $\phi^*$ interchanging the data in
the obvious manner. Now, it follows from \cite{VoevMorel}, Section 4, Proposition 3.8 that for a simplicial sheaf $\mc X$, $\phi'(\bb P^\infty)(\mc X)$
is the category of $\bb G_m$-torsors on $\mc X$. Thus, for a regular algebraic space $U$ it is clear that this is the category of line bundles on $U$.
Let $U$ be an regular algebraic stack with smooth presentation $X \to U$ with $X$ an algebraic space. Then $U$ identifies with the simplicial sheaf
whose $n$-simplices are given by $X \times_U X \times_U \ldots \times_U X$, $n$-time, and face and edge-maps by repeated diagonals and projections as face and edge-maps. Since a surjective morphism of line bundles is necessarily injective one verifies that a $\bb G_m$-torsor on $U$ necessarily has isomorphisms as transition-morphisms, and we conclude by smooth descent.
\end{proof}
\begin{remark} By \cite{Vistoli-Kresch}, Lemma 3.2, it follows that a Deligne-Mumford stack $\mc M$, separated and of finite type over a Noetherian base scheme with moduli space $M$,
then $\mathfrak{Pic}(M)_\bb Q \to \mathfrak{Pic(\mc M)}_\bb Q$ is an equivalence of categories.
\end{remark}

A priori the operations given by rigidity are abstract and one might want to relate them to other operations. One standard way of doing so is as follows. Restricting any "virtual" operation given
by a morphism $(\bb Z \times \op{Gr})_\bb Q \to (\bb Z \times \op{Gr})_\bb Q$ along
$\bb P^\infty \to (\{ 0 \} \times \op{Gr})_\bb Q \to (\bb Z \times \op{Gr})_\bb Q$ gives us the behavior of the operation on an actual line bundle where we can often write down explicitly what it does. Then by the splitting principle one can often compare this to other operations.

\subsection{Some consequences of rigidity}
\begin{D} We denote by $F^i W(-)$ (resp. $W^{(i)}$) the category fibered in groupoids $\phi'(\op{Fil}^{(i)})$ (resp. $\phi'(\bb H^{(i)})$) over
$\mathfrak{RCh}^c := \mathfrak{RCh}^c/\spec \bb Z$. We write furthermore $F^i W(\mc U \to \mc X) = F^i W^Z(X)$ and $W(\mc U \to \mc X)^{(i)} = W^\mc Z(X)^{(i)}$. Notice that for an algebraic stack $\mc X$ that is not an algebraic space $F^0 W(\mc X) = W(\mc X)$ is in general not the virtual category $V(\mc X)_\bb Q$ of $\mc X$.
\end{D}
We record the following.
\begin{T} \label{thm:virtualoperations} The functors $F^iW(-)$ have the following properties.
\begin{enumerate}
  \item The functors $F^{i-1} W(-) \to F^{i} W(-)$ are faithful additive functors of Picard categories.
  \item There are pairings, unique up to unique isomorphism, $$F^iW(-) \times F^jW(-) \to F^{i+j}W(-)$$ lifting the pairings $F^i K_0(X)_{\bf Q} \times F^j K_0(X)_{\bf Q} \to F^{i+j}K_0(X)_{\bf Q}$ on regular schemes, such that for $i'\leq i, j' \leq j$, we have a commutative diagram
  $$\xymatrix{F^iW(-) \times F^jW(-) \ar[r] \ar[d] &  F^{i+j}W(-) \ar[d] \\
  F^{i'}W(-) \times F^{j'}W(-) \ar[r] &  F^{i'+j'}W(-) }.$$
  In particular there are pairings
  $$F^i W^{\mc Z}(\mc X) \times F^j W^{\mc Z'} (\mc X) \to F^{i+j} W^{\mc Z \cap \mc Z'}(\mc X).$$
  \item There are unique pairings $W^{(i)} \times W^{(j)} \to W^{(i+j)}$, extending the usual pairings $K_0(X)^{(i)} \times K_0(X)^{(j)} \to K_0(X)^{(i+j)}$ on regular schemes.
  \item The pairings are compatible with the isomorphism $$F^{i} W(-) = F^{i-1} W(-) \times W^{(i)}$$ and they all satisfy the obvious associativity constraints.
  \item The above is compatible with zero-objects in that a zero-object in one variable maps to a zero-object in the second.
  \item The Adams-operations act on all the objects and functors involved, and these operations are moreover, up to unique isomorphism, uniquely defined as liftings of the usual Adams operations.
  \item Let $\mc X$ be a regular algebraic stack of dimension $d$ with finite affine stabilizers and $\mc U \to \mc X$ an open substack such that $\mc X \setminus \mc U$ is of codimension $m$. Then $F^{d+2}W(\mc U \to \mc X)$ is equivalent to the trivial category with one object and the identity as only morphism and we have an equivalence of categories:  $W^\mc Z(\mc X) = \bigoplus_{i = m}^{d+1} W^{\mc Z}(\mc X)^{(i)}$.
\end{enumerate}
\end{T}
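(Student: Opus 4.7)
The plan is to derive nearly all of (1)--(6) mechanically from Theorem \ref{thm:BGLfil} by applying the functor $\phi'$ introduced in the preceding proposition, and to use the rigidity result (Theorem \ref{thm:rigidity}) to ensure that the morphisms, pairings, associativity constraints, and Adams operations constructed at the level of $\mc H(\mathfrak R_S)_\bullet$ transport uniquely to functors between the associated fibered Picard categories. In each case the identification of $\pi_0$ and $\pi_1$ of $\phi'(\op{Fil}^{(i)})$ and $\phi'(\bb H^{(i)})$ with the classical objects $F^i K_0(-)_\bb Q$, $K_0(-)^{(i)}$, $F^i K_1(-)_\bb Q$, $K_1(-)^{(i)}$ comes from Lemma \ref{TL:phi-virtual} and Corollary \ref{Cor:adams-weight-relation}.

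First I would treat (1): faithfulness of $F^{i}W(-) \to F^{i-1}W(-)$ reduces to checking injectivity on isomorphism classes and on automorphism groups, which are the inclusions $F^i K_0 \hookrightarrow F^{i-1} K_0$ and $F^i K_1 \hookrightarrow F^{i-1} K_1$ afforded by Lemma \ref{TL:phi-virtual} and Corollary \ref{Cor:adams-weight-relation}. For (2)--(4), I would apply $\phi'$ to the pairings $\op{Fil}^{(i)} \wedge \op{Fil}^{(j)} \to \op{Fil}^{(i+j)}$, $\bb H^{(i)} \wedge \bb H^{(j)} \to \bb H^{(i+j)}$ and the factorization $\op{Fil}^{(i)} \simeq \op{Fil}^{(i-1)} \times \bb H^{(i)}$ from Theorem \ref{thm:BGLfil}; rigidity gives uniqueness and guarantees that the lifted pairings lift the $K_0$-level pairings and satisfy the required commutative diagrams up to unique isomorphism. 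Assertion (5) is obtained by working with the pointed version of rigidity, so that zero objects are preserved on the nose. For (6), the Adams operations are already part of the input of Theorem \ref{thm:BGLfil}, and their uniqueness as lifts is guaranteed by the pointed rigidity statement.

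The main content, and the one real obstacle, is (7). The goal is to show $F^{d+2} W(\mc U \to \mc X)$ has both $\pi_0$ and $\pi_1$ zero. For $\pi_0$, the identification from Lemma \ref{TL:phi-virtual} gives $F^{d+2} K_0^\mc Z(\mc X)_\bb Q$, which vanishes because there are no codimension $\geq d+1$ subschemes of a regular $d$-dimensional base. For $\pi_1$ I would apply Corollary \ref{Cor:adams-weight-relation}, which identifies $F^i K_1^\mc Z(\mc X)_\bb Q$ with $\bigoplus_{p \geq i} K_1^\mc Z(\mc X)^{(p+1)}$; the Adams weights occurring on $K_1^\mc Z(\mc X)_\bb Q$ are bounded by $d+1$ (whose sharpness is exactly the content of Corollary \ref{Cor:adams-K_1}), so $F^{d+2} K_1^\mc Z(\mc X)_\bb Q = 0$. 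The finite direct sum decomposition then follows from the splitting $\op{Fil}^{(i)} = \op{Fil}^{(i-1)} \oplus \bb H^{(i)}$ of Theorem \ref{thm:BGLfil}, starting at $i = m$ because $F^m K_0^\mc Z = K_0^\mc Z$ for $\mc Z$ of codimension $m$, and terminating at $i = d+2$ by the vanishing just established.

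The technical obstacle in (7) is that Lemma \ref{TL:phi-virtual} and Corollaries \ref{Cor:adams-K_1}--\ref{Cor:adams-weight-relation} are stated for regular schemes, whereas (7) is asserted for regular algebraic stacks with finite affine stabilizers. To bridge this gap I would choose a smooth presentation $X \to \mc X$, transport the bounds on codimensions and Adams weights via the cohomological descent description of $W$ (the last paragraph of the introduction to this section and Toën's equivalence used earlier for $G^{sm}$), and use that finite affine stabilizers preserve the relevant rational dimension and weight bounds after $\otimes \bb Q$. Once this reduction is in place, the vanishing statement follows as above and the finite decomposition is automatic.
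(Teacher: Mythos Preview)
Your proposal is correct and follows essentially the same approach as the paper: parts (1)--(6) are handled via rigidity and Theorem \ref{thm:BGLfil}, with (1) reduced to injectivity on automorphism groups (the paper checks only $\pi_1$, since faithfulness of a functor between groupoids does not require anything on $\pi_0$). For (7), the paper explicitly notes that the result follows from Corollary \ref{Cor:adams-weight-relation} exactly as you argue, but then chooses to give an alternative direct proof (for $\mc U = \emptyset$) via the Quillen coniveau spectral sequence and Soul\'e's identification of the Adams action on its pages; the stack case is relegated to a remark using the analogous spectral sequence with terms $K^{sm}_{-p-q}(G_{\xi,\mathrm{red}})$, which is somewhat more explicit than your proposed descent argument.
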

\begin{proof} For simplicity we treat the case when $\mc U = \emptyset$, the other cases are analogous. First, (b),(c),(d) and (f) are clear from rigidity. For (a), it is enough to show that for any $\mc X$ and object $x$ of $F^{i-1}W(\mc X)$,
$\op{Aut}_{F^{i-1}W(\mc X)}(x) \to \op{Aut}_{F^{i}W(\mc X)}(x)$ is injective. But this is clear since this map identifies with the injection $F^{i-1} K^{sm}_1(\mc X) \to F^{i} K^{sm}_1(\mc X)$.
Now (e) follows from the description of the pairing in Theorem \ref{thm:BGLfil}. Since we will only be concerned with (g) for a scheme we give the proof in this case. It follows from Corollary \ref{Cor:adams-weight-relation} but we give yet another proof along classical lines when $\mc U = \emptyset$:
\begin{TL} Let $X$ be a regular scheme with $d =
\dim X$, and $i = 0,1$. Recall that $F^j K_i(X)_\bb Q$ is the filtration on $K_i(X)_\bb Q$ determined by \linebreak $F^j K_i(X)_\bb Q = \bigoplus_{p \geq j} K_i(X)^{(p)}$. Then $F^{d+i+1} K_i(X)_\bb Q = 0$.
\end{TL}
\begin{proof} Consider the Quillen coniveau spectral sequence $$E_1^{p,q}(X) = \bigoplus_{x \in X^{(p)}} K_{-p-q}(k(x)) \Longrightarrow K_{-p-q}(X)$$ where $X^{(p)}$ denotes the codimension $p$-points of $X$ and $k(x)$ is the residue field of $x$. By \cite{Souleoperations} Theoreme 4, iv), we have that, for $i =0,1$, $$K_i(X)_{\bb Q} = \bigoplus_{p=0}^d E_2^{p,-p-i}(X)_{\bb Q}.$$ Furthermore, it is remarked in \cite{IntviaAdams}, proof of Theorem 8.2, that the Adams operations $\Psi^k$ act on  the spectral sequence and in particular on $E_r^{p,-p-i}(X)$ by $k^{p+i}$ ($i =0,1$ and $r \geq
1$). Also, $E_r^{p,-p-i}(\Psi^k)$ converge towards the Adams operations on $K_i(X)$. Thus, for $i =0,1$, it follows that $K_i(X)^{(d+i+1+k)} =
0$ for $k \geq 0$ so that $F^{d+i+1} K_i(X)_\bb Q = 0$. \end{proof}
We immediately deduce that the categories $F^i W(X)$ are trivial, \ie all objects are uniquely isomorphic, for $i \geq \dim
X + 2$.
\end{proof}

\begin{remark} The proof of property (g) in the case of a regular algebraic space goes through verbatim. The general case is obtained in a similar way,
but one has to work instead with the spectral sequence $E_1^{p,q}(\mc X) = \bigoplus_{\xi \in X^{(p)}} K^{sm}_{-p-q}(G_{\xi, \op{red}}) \Longrightarrow K^{sm}_{-p-q}(\mc X)$
which exists by a Brown-Gersten argument applied to the flabby $S^1$-spectrum representing cohomological $K$-theory and by virtue of
$G^{sm}_p = K^{sm}_p$ by Poincaré duality for the cohomology of the $K$-theory for regular algebraic stacks (see Theorem \ref{Poincare-duality}). Then each $G_{\xi, \op{red}}$ is a gerbe banded by some reduced algebraic group $H$, which is in fact necessarily an abstract finite group over the algebraic closure of the moduli space. To understand the Adams operations we can by étale descent moreover suppose that the moduli space $\spec k(x)$ of $G_{\xi, \op{red}}$ is separably closed so that the gerbe is trivial and $G_{\xi, \op{red}} = [\spec k(x)/H]$. By the arguments of \cite{Thomason7}, 2.3 there is a spectral sequence
 $$E_1^{p,q} = K_q(\prod^p H) \to K_{q-p}^{sm}(G_{\xi, \op{red}}).$$
 Then $K_i^{sm}(G_{\xi, \op{red}})_\bb Q = K_i(\spec k(x))_\bb Q^H$ for all $i$. The Adams operations $\Psi^k$ act on $K_i^{sm}(G_{\xi, \op{red}})$ via the restriction of $K_i(\spec k(x))$ to the $H$-invariant part and thus by $k^i$. The rest is similiar but skipped.
\end{remark}
\begin{remark} From \cite{Levine-motivic}, Theorem 11.5 it follows that if $R$ is a Dedekind domain, and $X$ is a regular finite type $\spec R$-scheme, then the $\gamma$-filtration on $K_n(X)$ for any integer $n$ terminates after $d+n+1$ steps.
\end{remark}

We harvest some obvious corollaries:

\begin{Cor} \label{Cor:adams-rigid-compatibility} Let $\mc X$ be a regular algebraic stack with closed substack $\mc Z$. The Adams operations on $W^\mc Z(\mc X)$ are compatible with the Adams operations constructed on $V^\mc Z(\mc X)$ in Proposition \ref{Prop:Adamsoperation}
under the functor $V^\mc Z(\mc X) \to W^\mc Z(\mc X)$. Moreover, there is a determinant functor $\det: W(\mc X) \to \mathfrak{Pic}(\mc X)_\bb Q$ such that the diagram
$$\xymatrix{V(\mc X) \ar[r] \ar[d]^{\Psi^k} & W(\mc X) \ar[d]^{\Psi^k}\\
V(\mc X) \ar[r] \ar[d]^{\det} & W(\mc X) \ar[d]^{\det} \\
\mathfrak{Pic}(\mc X) \ar[r] & \mathfrak{Pic}(\mc X)_\bb Q}$$
commutes up to canonical natural transformation.
\end{Cor}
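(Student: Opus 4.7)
The plan is to reduce both assertions to rigidity, in the form of Theorem \ref{thm:rigidity} and Lemma \ref{Prop:presheaftohomotopy}. The point is that $W^{\mc Z}(\mc X)$ is obtained by applying $V_{-}$ to a simplicial sheaf built from $(\bb Z \times \op{Gr})_{\bb Q}$, so its additive endofunctors and its morphisms to $\mathfrak{Pic}(\mc X)_{\bb Q}$ are controlled by natural transformations of the presheaves $K_0(-)_{\bb Q}$ and $\op{Pic}(-)_{\bb Q}$.

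For the compatibility of Adams operations, I would compare the two composites $V^{\mc Z}(\mc X) \to W^{\mc Z}(\mc X)$: first, $\Psi^k$ on $V^{\mc Z}$ followed by the natural additive functor $V^{\mc Z} \to W^{\mc Z}$; second, that natural functor followed by $\Psi^k$ on $W^{\mc Z}$ from Theorem \ref{thm:virtualoperations}. Both descend on $K_0^{\mc Z}(\mc X)_{\bb Q}$ to the classical Adams operation, so by the uniqueness clause of Theorem \ref{thm:rigidity} they are canonically naturally isomorphic, which gives the first assertion uniformly in $(\mc X, \mc Z)$.

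For the determinant functor, I would transport the classical morphism of presheaves $\det \colon K_0(-)_{\bb Q} \to \op{Pic}(-)_{\bb Q}$ through the bijection of Lemma \ref{Prop:presheaftohomotopy} to a morphism $(\bb Z \times \op{Gr})_{\bb Q} \to \bb P^\infty_{\bb Q}$ in $\mc H(\mathfrak R_S)_\bullet$. This morphism respects the $H$-group structures since the classical determinant is multiplicative on direct sums, so applying $V_{-}$ yields an additive functor $\det \colon W(\mc X) \to \mathfrak{Pic}(\mc X)_{\bb Q}$ of strictly commutative Picard categories (strict commutativity being automatic since $\pi_1$ of the target is a $\bb Q$-vector space).

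It remains to verify the two squares. The top one is the first assertion specialized to $\mc Z = \mc X$. For the bottom square, both composites $V(\mc X) \to \mathfrak{Pic}(\mc X)_{\bb Q}$ are induced by morphisms in $\mc H(\mathfrak R_S)_\bullet$ whose underlying natural transformations of presheaves coincide with the rationalized classical determinant; by Lemma \ref{Prop:presheaftohomotopy} they then agree in the homotopy category, and applying $V_{-}$ yields the desired canonical natural isomorphism. The main technical obstacle I anticipate is ensuring that the explicit determinant $V(\mc X) \to \mathfrak{Pic}(\mc X)$ is itself the image under $V_{-}$ of a morphism between the relevant representing objects in $\mc H(\mathfrak R_S)_\bullet$, so that the comparison via Lemma \ref{Prop:presheaftohomotopy} is meaningful; this should follow from the standard identification of the classifying space of line bundles with $\bb P^\infty$ together with the multiplicativity of $\det$ on short exact sequences.
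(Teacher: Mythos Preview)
Your overall strategy is reasonable, but there is a gap in how you invoke rigidity for the first assertion. Theorem \ref{thm:rigidity} asserts that the map
\[
\Hom_f(V_{\mc X}, V_{\mc Y}) \to \Hom_{\mathfrak R^{op}\op{Set}}(\phi\mc X, \phi\mc Y)
\]
admits a \emph{section}, and that natural transformations between two functors are unique; it does \emph{not} claim that this map is injective. Consequently, knowing that two functors $V^{\mc Z}(\mc X) \to W^{\mc Z}(\mc X)$ induce the same map on $K_0^{\mc Z}(-)_{\bb Q}$ does not by itself furnish a natural isomorphism between them. The composite through the rigidity-constructed $\Psi^k$ on $W$ lies in the image of the section by design, but the other composite involves the explicit $\Psi^k$ of Proposition \ref{Prop:Adamsoperation}, which is built via secondary Euler characteristics and is not a priori in that image. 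The same issue recurs in your treatment of the bottom square, and you correctly flag it as the ``main technical obstacle,'' but you do not resolve it.

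The paper's proof closes this gap by a different mechanism: the splitting principle (Theorem \ref{thm:splittingprinciple}). Since both Adams operations are additive determinant functors stable under pullback, it suffices to compare them on line bundles, where rigidity gives $\Psi^k(L)\simeq L^{\otimes k}$ on the $W$-side and Proposition \ref{Prop:Adamsoperation} gives the same on the $V$-side. The determinant functor on $W(\mc X)$ is then obtained by \emph{cohomological descent}: since $\mathfrak{Pic}(-)_{\bb Q}$ satisfies smooth descent (being a localization of the stack $\mathfrak{Pic}$), the ordinary determinant on the vertices of a presentation glues. The diagram commutativity is again reduced to line bundles via the splitting principle.

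Your construction of $\det\colon W(\mc X)\to \mathfrak{Pic}(\mc X)_{\bb Q}$ via Lemma \ref{Prop:presheaftohomotopy} is a legitimate alternative to the paper's descent argument and is closer in spirit to the rigidity machinery of the section. But to make your comparison arguments go through you would still need either to verify that the explicit operations on $V$ arise from morphisms in $\mc H(\mathfrak R_S)_\bullet$, or to fall back on the splitting principle as the paper does; the latter is what actually forces the identification, and it is precisely the unicity clause of Proposition \ref{Prop:Adamsoperation}.
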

\begin{proof} By rigidity the two Adams-operations coincide on line bundles and we conclude by the splitting principle. Moreover, $\mathfrak{Pic}(\mc X)_\bb Q$ clearly satisfies coherent descent since it is a localization of the category $\mathfrak{Pic}(\mc X)$ which does. The determinant functor then exists by cohomological descent and the diagram commutes again by rigidity and the splitting principle.
\end{proof}

\begin{Cor} \label{Cor:lambda} There are unique $\lambda$-operations on $F^iW(-)$
$\lambda$-operations
satisfying, for a regular algebraic stack $\mc X$,
$$\lambda^k(x+y) = \sum_{j= 0}^k \lambda^j(x) \otimes \lambda^{k-j}(y)$$
and moreover for a bounded complex of vector bundles $E_\bullet$ one has $t_{E_\bullet}: \lambda^k E_\bullet \simeq \wedge^k E_\bullet$, where $\wedge$ refers to the already constructed operations in Proposition \ref{prop:lambdasoule}. For an exact sequence of bounded complexes of vector bundles
$0 \to E'_\bullet \to E_\bullet \to E''_\bullet \to 0$ a commutative diagram of isomorphisms
$$\xymatrix{\lambda^k(E_\bullet) \ar@{=}[d]^t & = & \sum_{j = 0}^k \lambda^j(E'_\bullet) \otimes \lambda^{k-j}(E''_\bullet) \ar@{=}[d]^t  \\
\wedge^k(E_\bullet) & = & \sum_{j = 0}^k \wedge^j(E'_\bullet) \otimes \wedge^{k-j}(E''_\bullet) }$$
with the lower row defined as in (\ref{lambda-2}).
\end{Cor}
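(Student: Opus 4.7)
The plan is to derive everything from the rigidity Theorem \ref{thm:rigidity}. Classically, for each $k \geq 1$, $\lambda^k$ is a natural transformation of presheaves of sets $K_0(-)_\bb Q \to K_0(-)_\bb Q$ on regular schemes, and it preserves the $\gamma$-filtration (whose rational form coincides with the filtration appearing in Lemma \ref{TL:phi-virtual}). Via the identifications of that lemma, $\lambda^k$ corresponds to a morphism $\phi\, \op{Fil}^{(i)} \to \phi\, \op{Fil}^{(i)}$ in $\Hom_{\mathfrak R^{op}\op{Set}}$ for each $i$, and Theorem \ref{thm:rigidity} lifts it canonically to an additive functor $\lambda^k \colon F^i W(-) \to F^i W(-)$, unique up to unique isomorphism. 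This produces the operations and delivers their uniqueness.

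For the additivity formula $\lambda^k(x+y) = \sum_{j=0}^k \lambda^j(x) \otimes \lambda^{k-j}(y)$, both sides define natural transformations $W(-) \times W(-) \to W(-)$ that induce the same set-valued map on $K_0$ by the defining identity of a $\lambda$-ring. Applying rigidity in this multi-variable setting---using that products of objects on the list in Lemma \ref{Prop:presheaftohomotopy} remain covered, and using the smash-product pairings and tensor-product structure of Theorem \ref{thm:BGLfil}---yields a canonical isomorphism between the two functors. For the comparison $t_{E_\bullet}\colon \lambda^k E_\bullet \simeq \wedge^k E_\bullet$, consider the composites $V(-) \to W(-) \xrightarrow{\lambda^k} W(-)$ and $V(-) \xrightarrow{\wedge^k} V(-) \to W(-)$, the latter $\wedge^k$ being the determinant functor from Proposition \ref{prop:lambdasoule}. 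Both are additive functors inducing the same natural transformation on $K_0$ (by the very definition of $\lambda^k$ as a $K_0$-operation), so rigidity produces the canonical isomorphism $t$. The final compatibility diagram for a short exact sequence of complexes then follows by the same mechanism: both the additivity isomorphism for $\lambda^k$ (obtained via rigidity) and the classical splitting formula for $\wedge^k$ (given in Proposition \ref{prop:lambdasoule}) are canonical isomorphisms between the same pair of functors which agree at the level of $K_0$, hence coincide by the uniqueness of natural transformations asserted in Theorem \ref{thm:rigidity}.

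The main obstacle is the multi-variable application of rigidity. Theorem \ref{thm:rigidity} is phrased in essentially single-source form, so one must check that products $\op{Fil}^{(i)} \times \op{Fil}^{(j)}$ (equivalently, smash products, via the pointed version) still satisfy the lifting and uniqueness conclusions, and that the pairing $\op{Fil}^{(i)} \wedge \op{Fil}^{(j)} \to \op{Fil}^{(i+j)}$ of Theorem \ref{thm:BGLfil} correctly realizes the tensor product appearing in the additivity formula. Once these points are in place, each assertion of the corollary reduces to comparing two natural transformations that agree on $K_0$, and rigidity concludes.
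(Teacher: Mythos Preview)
Your overall strategy—derive everything from rigidity—is the right instinct, and the existence and uniqueness of the $\lambda$-operations on $F^iW(-)$ do follow exactly as you say. The gap is in how you produce the comparison isomorphism $t$. Theorem \ref{thm:rigidity} gives a \emph{section} of the map $\Hom_f(V_\mc X,V_\mc Y)\to \Hom_{\mathfrak R^{op}\op{Set}}(\phi\mc X,\phi\mc Y)$ together with the statement that natural transformations between two functors are unique; it does \emph{not} assert that this map is injective on isomorphism classes. In particular, it does not say that an arbitrary pullback-stable functor $V(-)\to W(-)$, such as $(V\to W)\circ\wedge^k$ with $\wedge^k$ built concretely via Dold--Puppe in Proposition \ref{prop:lambdasoule}, is automatically isomorphic to the rigidity-lift of its $K_0$-shadow. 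So the sentence ``rigidity produces the canonical isomorphism $t$'' is not justified as written: you would first have to show that the Dold--Puppe $\wedge^k$, after composing with $V\to W$, lies in the image of the pre-rigidity map $\Hom_{\mc H(\mathfrak R)}(\mc X,\mc Y)\to \Hom_f(V_\mc X,V_\mc Y)$, which is a nontrivial homotopy-coherence statement you do not address. (A minor side point: you call both composites ``additive functors''; individually $\wedge^k$ is not additive for $k>1$.)

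The paper closes this gap differently. It first obtains $t$ on line bundles (where both sides are explicit), and then uses Newton's identity (Corollary \ref{Cor: adams-lambda-relation}) together with the already-proved compatibility of Adams operations under $V\to W$ (Corollary \ref{Cor:adams-rigid-compatibility}) to solve for $\lambda^k$ inductively: since $k\lambda^k$ is a polynomial in $\Psi^j$ and $\lambda^j$ for $j<k$, and $k$ is invertible in $W$, compatibility for $\lambda^k$ follows from compatibility for the smaller pieces. For the commutativity of the square, the paper does not invoke rigidity either; it defines the additivity isomorphism using a complete flag compatible with the short exact sequence, then proves independence of the flag via an induction on rank and a Grassmannian argument (comparing two sub-line-bundles through the universal rank-two sub-bundle on $\op{Gr}_{L',L'',2}(E)$, using $Rp_*1=1$). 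Your rigidity-only route would be more economical if it worked, but as stated it needs the extra input that the concretely constructed $\wedge^k$ is of rigidity type.
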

\begin{proof} \mps{I think to clarify the argument in this corollary would just involve inserting an obvious diagram, I think it might be ok to leave it as it is. I added explanation as to why I want $Rp_* 1 = 1$.} We assume again $\mc U = \emptyset$, the general result follows from an argument of the type in Proposition \ref{Prop:Iversen}. We might also suppose that the complexes of vector bundles are actually vector bundles, by the equivalence of the virtual category of complexes of vector bundles and vector bundles and the equivalence of the constructed $\wedge$-operations (cf. Proposition \ref{prop:virtualvectorcomplexes} and Proposition \ref{prop:lambdasoule} (c)). \\ Unicity is then clear by the splitting principle. Existence of the $\lambda$-operations are given by rigidity, and we suppose for simplicity that $i = 0$. For a line bundle rigidity provides us with an isomorphism $\lambda^k L \simeq \wedge^k L = L$ if $k = 1$ and an isomorphism with $0$ if $k > 1$. The existence of $t$ now follows from Corollary \ref{Cor: adams-lambda-relation} and Corollary \ref{Cor:adams-rigid-compatibility}.
To prove the additive property, it is clear given a complete flag of $E$ compatible with $E'$. We need to prove that it is independent of the choice of flag and one proceeds by induction on the rank of $E$. Suppose $L$ is a locally split sublinebundle of $E$ of rank 2. This defines $t_{E, L}: \lambda^k E \simeq \sum_{j=0}^2 \lambda^j L \otimes \lambda^{k-j} E/L$ compatible with $\wedge$ as in the theorem by additivity of the Adams operations and Corollary \ref{Cor: adams-lambda-relation} and the explicit description of the Adams operations in \ref{Prop:Adamsoperation}. In general, given a filtration $F' \subset E$ one defines an isomorphism $t_{E,F'}: \lambda^k E \to \lambda^j F' \otimes \lambda^{k-j} E/F'$ by requiring the diagram in the statement to commute. We need to show that this is the operation given by rigidity. Given two filtrations $F'' \subset F' \subset E$ one verifies that by induction hypothesis that $t_{E, F''} = t_{E, F'}$ in a way stable by basechange of regular schemes. Given two unequal line bundles $L' \subset E$ and $L'' \subset E$ we consider the Grassmannian $p: \op{Gr}_{L',L'',2}(E) \to X$ classifying bundles $M$ of rank 2 such that $L', L'' \subset M \subset E$. Then $Rp_* 1 = 1$ so that by the projection formula $A = Rp_* (Lp^* A)$ for any $A$ and it follows that $t_{p^* E, p^* L'}$ determines $t_{E, L'}$. But by what we established $t_{p^* E, p^* L'} = t_{p^* E, \mc M} = t_{p^* E, p^* L''}$ for $\mc M \subset p^* E$ the universal rank 2 sub bundle on $\op{Gr}_{L,L',2}(E)$ and we conclude.
\end{proof}

\begin{Cor} \label{Cor:gamma} Let $\mc X$ be as in (g) in Theorem \ref{thm:virtualoperations}. There are $\gamma$-operations on the virtual category $W(-)$, $j \geq 2$, $\gamma^j$, inducing the natural operations on $K_{0,\bb Q}$.
For a virtual bundle of rank 0, $$\gamma^j(v) \in F^j W.$$
Furthermore, for any two virtual objects $x$ of rank $0$, we have a family of isomorphisms in $F^{k}W$, functorial in $x$ and $y$;
\begin{equation} \label{eq:gamma-additivity} \gamma^k(x + y) \simeq \sum \gamma^j(x) \otimes \gamma^{k-j}(y).
\end{equation}
Since for a line bundle $L$, $1-L$ identifies with an object of $F^1 W$, we have that $(1-L)^i$ is an object of $F^i W$. We then have canonical isomorphisms in $F^iW$:
$$\gamma^i(1-L) \simeq (1-L)^i$$
and $$\gamma^i(L-1) = 0 \hbox{ for } i \geq 2.$$ Given a vector bundle $E$ of rank $n$, we have a canonical isomorphism $\gamma^n(E_\bullet-n) = (-1)^n \lambda_{-1}(E):= (-1)^n \sum_{j=0} (-1)^j \lambda^j E$ and for any $k > 0$ a trivialization $\gamma^{k+n}(E-n) = 0$ such that for a short exact sequence of vector bundles $0 \to E' \to E \to E'' \to 0$ of ranks $n', n$ and $n''$ we have an isomorphism $$\xymatrix{\gamma^n(E-n) \ar@{=}[d]  & = & \sum_{i=0}^n \gamma^i(E'-n') \otimes \gamma^{n-i}(E''-n'') \ar@{=}[d] \\
(-1)^{n} \lambda_{-1}(E) & = & (-1)^{n'} \lambda_{-1}(E') \otimes (-1)^{n''}\lambda_{-1}(E'') }.$$ Thus the functor $E \mapsto \lambda_{-1} E$ defined on vector bundles has essential image in $F^{\op{rk} E}W$.  Moreover, for a vector bundle $E$, the trivialization $\gamma^{k+n}(E-n) = 0$ is compatible with the trivializations given by (\ref{eq:gamma-additivity}) and admissible filtrations.
\end{Cor}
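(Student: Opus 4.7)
The plan is to construct the $\gamma$-operations via the rigidity theorem (Theorem~\ref{thm:rigidity}): the classical $\gamma$-operations on $K_0(-)_\bb Q$ are natural endomorphisms of the presheaf $K_0(-)_\bb Q$ on $\mathfrak R$, so rigidity supplies canonical lifts $\gamma^j\colon W(-)\to W(-)$, unique up to unique natural isomorphism. The statement $\gamma^j(v)\in F^j W$ for a virtual bundle $v$ of rank zero is a standard $K_0$-level computation (essentially the definition of the $\gamma$-filtration), which lifts to the categorical level using the identification $\phi'(\op{Fil}^{(j)})=F^jW$ from Lemma~\ref{TL:phi-virtual} together with the definition of $F^jW$.

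The additivity isomorphism~(\ref{eq:gamma-additivity}) is obtained by observing that both $(x,y)\mapsto \gamma^k(x+y)$ and $(x,y)\mapsto\sum_j\gamma^j(x)\otimes\gamma^{k-j}(y)$ are functors of pairs of rank-zero virtual bundles landing in $F^kW$ (using the pairing from Theorem~\ref{thm:virtualoperations}(b)), and coincide at the level of $K_0$; the uniqueness clause of Theorem~\ref{thm:rigidity} then yields a canonical natural isomorphism, functorial in $x$ and $y$. The identifications $\gamma^i(1-L)\simeq(1-L)^i$ in $F^iW$ and $\gamma^i(L-1)=0$ for $i\geq 2$ reduce by the same rigidity argument to the generating-series identities arising from $\gamma_t=\lambda_{t/(1-t)}$ in a $\lambda$-ring, which are immediate.

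For the identification $\gamma^n(E-n)\simeq(-1)^n\lambda_{-1}(E)$ and the trivializations $\gamma^{k+n}(E-n)=0$ for $k>0$, I would invoke the splitting principle (Theorem~\ref{thm:splittingprinciple}) on the complete flag variety $p\colon\mc Y\to\mc X$ of $E$. On $\mc Y$ we have $p^*E-n=\sum_i(L_i-1)$ for the flag line bundles $L_i$, and iterating (\ref{eq:gamma-additivity}) gives
\[
\gamma^m(p^*E-n)\simeq\sum_{j_1+\cdots+j_n=m}\bigotimes_i\gamma^{j_i}(L_i-1).
\]
Since $\gamma^j(L_i-1)=0$ for $j\geq 2$, only distributions with all $j_i\in\{0,1\}$ contribute; for $m=k+n>n$ no such distribution exists, and for $m=n$ the sum collapses to $\prod_i(L_i-1)=(-1)^n\prod_i(1-L_i)$, canonically isomorphic to $(-1)^n\lambda_{-1}(p^*E)$ via the decomposition of $\lambda_{-1}$ along the flag (Corollary~\ref{Cor:lambda}). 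Independence of the flag is verified on the double flag variety $\mc Y\times_\mc X\mc Y$ as in Theorem~\ref{thm:splittingprinciple}(b), and the isomorphism descends to $\mc X$.

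The main obstacle is the final compatibility statement: the trivialization $\gamma^{k+n}(E-n)=0$ and the isomorphism $\gamma^n(E-n)\simeq(-1)^n\lambda_{-1}(E)$ must be compatible with (\ref{eq:gamma-additivity}) along admissible filtrations $0\to E'\to E\to E''\to 0$. I would handle this by induction on $\op{rk} E$, following the pattern at the end of the proof of Corollary~\ref{Cor:lambda}: after pulling back along a suitable Grassmannian classifying an intermediate sub-bundle between two competing ones, the projection formula $Rp_*1=1$ forces any two choices of compatible filtration to induce the same isomorphism on the base. Sign bookkeeping is handled via the canonical isomorphisms $(-A)\otimes(-B)\simeq A\otimes B$ and $A\otimes(-B)\simeq-(A\otimes B)$ in a Picard category, as recalled in the paragraph preceding the corollary.
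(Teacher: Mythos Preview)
Your approach is correct and broadly parallels the paper's. The one genuine difference is how the core identification $\gamma^n(E-n)\simeq(-1)^n\lambda_{-1}(E)$ and the trivializations $\gamma^{k+n}(E-n)=0$ are first obtained. The paper does not go to the flag variety for this step: instead it invokes rigidity to lift the $\lambda$-ring identity $\gamma^k(u)=\lambda^k(u+k-1)$ to $W$, and then uses the comparison $\lambda^k\simeq\wedge^k$ of Corollary~\ref{Cor:lambda} to read off $\gamma^n(E-n)=\lambda^n(E-1)=(-1)^n\lambda_{-1}(E)$ and $\gamma^{k+n}(E-n)=\wedge^{k+n}(E+k-1)=0$ directly in $W(\mc X)$. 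The remaining work---showing these isomorphisms actually live in $F^{k+n}W$ and are compatible with~(\ref{eq:gamma-additivity}) along admissible filtrations---is then handled by exactly the inductive double-filtration/Grassmannian argument you outline in your last paragraph. Your route via the flag variety and iterated $\gamma$-additivity is equally valid and has the minor advantage that the identification lands in $F^nW$ from the start; the paper's route is slightly more economical in that it postpones any passage to an auxiliary variety until the final compatibility check.
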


\begin{proof} All statement except the last one are direct consequences of rigidity. By definition (cf. \cite{Riemann-RochAlgebra}, chapter III) and rigidity we are given a relationship of $\gamma$ and $\lambda$-operations
$\sum_{i=0} \gamma^i(u) t^i = \sum_i \lambda^i(u) \left( \frac{t}{1-t} \right)^i$
so that, in view of that $1/(1-t)^{r+1} = \sum {j+r \choose j} t^j$, the relationship, for $k > 0$, in $W(\mc X)$
$$\gamma^k(u) = \sum_{i=0}^k \lambda^i(u) {k-1 \choose k-i} = \lambda^k(u + k - 1).$$ If $u$ is a virtual bundle of rank 0 we deduce the equality in $F^kW$ compatible with sums and the product on the filtration.
 Now, for a line bundle $L$ one has $\lambda^n(-L) = (-L)^n$ and thus one has for a vector bundle of rank $n$
 a canonical isomorphism $\gamma^n(E-n) = \lambda^n(E-1) = (-1)^n \lambda_{-1}^n (E)= (-1)^n \lambda_{-1}(E)$, where the latter isomorphism is an isomorphism in $V(\mc X)$.
 We hereby identify $\lambda_{-1}(E)$ as an element in $F^n W(\mc X)$. One also obtains a trivialization $\gamma^{k+n}(E-n) = \lambda^{k+n}(E + k -1) = \wedge^{k+n}(E+k-1) = 0$ in $W(\mc X)$. We need to verify that the isomorphism lies in $F^{k+n}W(\mc X)$. We proceed by induction. For a line bundle this is given by rigidity. Suppose we have verified the all the given statements for vector bundles of rank strictly less than $n$ for all regular schemes. Suppose $E$ is of rank $n$. Given an admissible filtration $F'' \subset F' \subset E$ of with $F''$ and $F'$ of ranks  $n''$ and $n'$ respectively consider the following diagram

 $$\scalebox{0.8}[1]{\xymatrix{\sum \gamma^j(F'-n') \gamma^{n-j}(E/F' - (n-n'')) \ar[d] \ar[r] & \sum \gamma^i(F'' - n'') \gamma^{j}(F'/F'' - (n' - n'')) \gamma^{n-i-j}(E/F' - (n-n')) \ar[d]\\
 (-1)^{n'} \lambda_{-1} (F') (-1)^{n-n'}\lambda_{-1}(E/F') \ar@{=}[r] \ar@{=}[d] & (-1)^{n'} \lambda_{-1}(F'') (-1)^{n'-n''}\lambda_{-1}(F'/F'') (-1)^{n-n'}\lambda_{-1}(E/F') \ar@{=}[d]  \\
 (-1)^n \lambda_{-1} (E) \ar@{=}[r] & (-1)^{n''} \lambda_{-1}(F'') (-1)^{n-n''}\lambda_{-1}(E/F'')  \\
 \gamma^n(E-n) \ar[u] \ar[r] \ar@/^5pc/[uuu] & \sum \gamma^i(F'' - n'') \gamma^{n-i}(E/F'' - (n - n'')) \ar[u] \ar@/_8pc/[uuu] }}$$
 where the morphisms are given by the various trivializations $\gamma^i(\diamondsuit - \op{rk} \diamondsuit) = 0$ in $F^iW$ whenever $i > \op{rk} \diamondsuit$ and the isomorphism
 $\gamma^{\op{rk} \diamondsuit}(\diamondsuit - \op{rk} \diamondsuit) = \lambda_{-1}(\diamondsuit)$ given by
 induction hypothesis. The outer contour commutes by definition of the $\gamma$-operations. The middle square commutes by compatibility with admissible filtrations of (\ref{lambda-1}), the upper
square commutes by induction hypothesis as does the right hand diagram. Thus the diagram determined by $F' \subset E$ commutes if and only if the diagram determined by
the diagram determined by $F'' \subset E$ commutes. Arguing as in the previous corollary one sees that this morphism is independent of choice
of $F''$ and $F'$ and by the splitting principle one obtains that the diagram associated to $F' \subset E$ commutes. In the same way we obtain a canonical trivialization
$\gamma^{n+k}(E - n) = 0$ in $F^{n+k}W(\mc X)$ for $k > 0$. We need to verify that its image in $W(\mc X)$ coincides with the trivialization
$t: \lambda^{k+n} (E + k - 1) = \wedge^{k+n}(E + k -1) = 0$. This follows by additivity and induction on $k$.

\end{proof}
\begin{Cor} \label{cor:nilpotentfiltration} Let $X$ be a regular scheme of dimension $d$.
Then for any virtual bundle $v$ in $W(X)$ of rank 0, $k > 1$, $$\gamma^{d+k}(v) \simeq 0$$
canonically.
\end{Cor}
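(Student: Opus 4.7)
The plan is to combine the two key results already established: Corollary \ref{Cor:gamma}, which places $\gamma^j$ of a rank zero virtual bundle inside the piece $F^jW$ of the filtration, and Theorem \ref{thm:virtualoperations}(g), which says that for a regular scheme $X$ of dimension $d$ the category $F^{d+2}W(X)$ is trivial, i.e. equivalent to the unit Picard category with a single object and only the identity morphism.

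First I would note that by Corollary \ref{Cor:gamma}, for any virtual bundle $v \in W(X)$ of rank $0$ and any integer $j \geq 1$, the object $\gamma^j(v)$ naturally lives in $F^jW(X)$, the lift given by rigidity. Next I would apply this with $j = d+k$ and observe that $k > 1$ forces $d + k \geq d + 2$, so via the faithful additive functor $F^{d+k}W(X) \to F^{d+2}W(X)$ of Theorem \ref{thm:virtualoperations}(a) the object $\gamma^{d+k}(v)$ determines an object of $F^{d+2}W(X)$.

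Finally, by Theorem \ref{thm:virtualoperations}(g), $F^{d+2}W(X)$ is the trivial Picard category, in which every object is canonically isomorphic to the zero object by the unique morphism. Hence $\gamma^{d+k}(v)$ is canonically isomorphic to $0$ there. Pulling this canonical isomorphism back through the faithful inclusion $F^{d+k}W(X) \hookrightarrow F^{d+2}W(X)$ (faithfulness ensures the isomorphism lifts uniquely once one identifies the source and target of the zero object, which exists in each $F^iW(X)$ since these are Picard categories) one obtains the desired canonical trivialization $\gamma^{d+k}(v) \simeq 0$ in $F^{d+k}W(X)$, and a fortiori in $W(X)$.

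There is really no hard step here: this is a formal consequence of the machinery already developed. The only mild subtlety is ensuring the trivialization is truly canonical at the level of $F^{d+k}W(X)$ and not merely at the level of $F^{d+2}W(X)$; this is handled by the faithfulness clause in Theorem \ref{thm:virtualoperations}(a), which guarantees that the trivialization produced in $F^{d+2}W(X)$ has a unique preimage in each $F^{i}W(X)$ for $i \leq d+2$ (applied iteratively starting from $i = d+k$).
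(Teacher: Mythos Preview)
Your approach is exactly the paper's: its proof is the single sentence ``This follows from Theorem \ref{thm:virtualoperations} and Corollary \ref{Cor:gamma}.'' You have correctly unpacked what that sentence means.

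One small imprecision worth flagging: in your last paragraph you invoke faithfulness of $F^{d+k}W(X) \to F^{d+2}W(X)$ to \emph{lift} the trivialization back up to $F^{d+k}W(X)$. Faithfulness of a functor gives uniqueness of a lift of a morphism, not existence; so as written this step does not go through. This is harmless for the corollary as stated, since the trivialization in $W(X)$ requires no lifting at all. If you do want the trivialization already in $F^{d+k}W(X)$, the clean fix is to note that $F^{d+k}W(X)$ is itself trivial: by the decomposition of Theorem \ref{thm:virtualoperations}(d) (or directly from the lemma in the proof of (g), which gives $F^{d+i+1}K_i(X)_{\bb Q}=0$ for $i=0,1$), $F^{j}W(X)$ is trivial for every $j \geq d+2$, not just for $j=d+2$.
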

\begin{proof} This follows from Theorem \ref{thm:virtualoperations} and Corollary \ref{Cor:gamma}.
\end{proof}

\begin{Prop} \label{Prop:adamsdecomposition} Let $X$ be a regular scheme of
dimension $d$ and $Z$ a closed subscheme of $X$. Then for any $k$, and a virtual bundle $v$, an isomorphism $\Psi^k(v) = k^i v$ in $W^Z(X)$ defines a projection of $v$ into $W^{Z, (k)}$. Thus it implies that there is a canonical isomorphism $\Psi^n (v) = k^n v$ for any $n$.
\end{Prop}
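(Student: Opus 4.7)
The strategy is to use the Adams eigenspace decomposition from Theorem \ref{thm:virtualoperations}(g),
$$W^Z(X) \;=\; \bigoplus_{j} W^Z(X)^{(j)},$$
and extract from the given isomorphism $\alpha\colon \Psi^k(v) \simeq k^i v$ canonical trivializations of all components of $v$ except the one in $W^Z(X)^{(i)}$. First I would write $v \simeq \bigoplus_j v_j$ with $v_j \in W^Z(X)^{(j)}$, using the idempotent projectors provided by the finite direct sum decomposition; this projection is canonical up to unique natural isomorphism by the previous corollaries and the rigidity theorem. Since $\Psi^k$ respects the decomposition and acts on $W^Z(X)^{(j)}$ by multiplication by $k^j$ (by definition of these eigensubcategories), the isomorphism $\alpha$ decomposes componentwise into isomorphisms
$$\alpha_j\colon k^j v_j \;\simeq\; k^i v_j \quad \text{in } W^Z(X)^{(j)}.$$

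Next, for each $j \neq i$ I would extract from $\alpha_j$ a canonical trivialization of $v_j$. Working in the Picard category $W^Z(X)^{(j)}$, the natural isomorphism $k^i v_j - k^j v_j \simeq (k^i - k^j)\, v_j$ combined with $\alpha_j$ produces a canonical isomorphism $(k^i - k^j)\, v_j \simeq 0$. Because $W^Z(X)$ is $\mathbf{Q}$-linear (its $\pi_0$ and $\pi_1$ are $\mathbf{Q}$-vector spaces by construction of the cohomological $K$-theory $W(-)$), and because $k^i - k^j$ is a nonzero integer for $k \geq 2$ and $j \neq i$, multiplication by $k^i - k^j$ is an equivalence of Picard categories on $W^Z(X)^{(j)}$. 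Inverting it yields a canonical isomorphism $v_j \simeq 0$.

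Putting the pieces together, the composition of the canonical splittings with the trivializations $v_j \simeq 0$ for $j \neq i$ gives a canonical isomorphism $v \simeq v_i$ in $W^Z(X)$, realizing the promised projection into $W^Z(X)^{(i)}$. On this summand $\Psi^n$ acts by multiplication by $n^i$ by definition, and transporting back along the canonical isomorphism $v \simeq v_i$ produces a canonical isomorphism $\Psi^n(v) \simeq n^i v$ for every integer $n$.

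\textbf{Main obstacle.} The only delicate point is the inversion step: passing from a canonical isomorphism $m \cdot v_j \simeq 0$ (with $m = k^i - k^j \neq 0$) to a canonical isomorphism $v_j \simeq 0$. This requires that multiplication by $m$ be an equivalence on the $\mathbf{Q}$-linearized Picard category $W^Z(X)^{(j)}$, together with a distinguished quasi-inverse so that the resulting trivialization of $v_j$ is unambiguous. This is precisely where the $\mathbf{Q}$-linearity built into the cohomological virtual category is essential; once it is invoked, the rest of the argument is formal.
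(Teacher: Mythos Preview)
Your proposal is correct and follows essentially the same approach as the paper: decompose $v$ via the equivalence $W^Z(X) \simeq \bigoplus_j W^Z(X)^{(j)}$ of Theorem \ref{thm:virtualoperations}(g), observe that the given isomorphism forces $(k^i - k^j)v_j \simeq 0$ for each $j$, and invoke $\bb Q$-linearity to trivialize $v_j$ for $j \neq i$. Your write-up is in fact more careful than the paper's about the canonicity of the inversion step and about correctly reading the final conclusion as $\Psi^n(v) \simeq n^i v$.
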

\begin{proof} By Theorem \ref{thm:virtualoperations} there is an equivalence $W^Z(X) = \bigoplus_{j=0}^{d+1} W^{Z}(X)^{(j)}$ so that $v$ is
equivalent to an object of the form $\sum \pi_i v_i$ with $\pi_i: W^{(i)} \to W(X)$. Applying $\Psi^j$ we obtain
an isomorphism $(k^j - k^i)v_i = 0$ in $W^{Z}(X)^{(i)}$ and so $v_j = 0$ for $j \neq 0$.
\end{proof}

The following is trivial but gives an idea of what the second step of the filtration looks like:

\begin{Cor} Let $X$ be a regular scheme. Any virtual bundle of rank 0 and trivialized determinant bundle defines an object of $F^2 W(X)$.
\end{Cor}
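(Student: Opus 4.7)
The plan is to combine the Adams eigenspace decomposition of Theorem \ref{thm:virtualoperations}(g) with the splittings $F^i W(X) = W(X)^{(i)} \oplus F^{i+1} W(X)$, reading the hypothesis ``virtual rank $0$ with trivialized determinant'' as successive trivializations of the zeroth and first weight components. Since $X$ is regular, Theorem \ref{thm:virtualoperations}(g) gives the decomposition $W(X) = \bigoplus_{i=0}^{d+1} W(X)^{(i)}$, and in particular the splitting $W(X) = W(X)^{(0)} \oplus F^1 W(X)$. The rank functor $W(X) \to W(X)^{(0)}$ is the projection of this splitting, so the given isomorphism $\op{rank}(v) \simeq 0$ produces canonically a lift $\tilde v \in F^1 W(X)$ of $v$.

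Next, the plan is to identify the projection $\pi_1 \colon F^1 W(X) \to W(X)^{(1)}$ (coming from the splitting $F^1 W(X) = W(X)^{(1)} \oplus F^2 W(X)$) with the restriction to $F^1 W(X)$ of the determinant functor $\det \colon W(X) \to \mathfrak{Pic}(X)_\bb Q$ of Corollary \ref{Cor:adams-rigid-compatibility}. By the rigidity theorem \ref{thm:rigidity}, functors between these Picard categories are determined up to unique natural isomorphism by the induced map on $K_0$, and on $K_0$ the weight-one piece $K_0(X)^{(1)}_\bb Q$ is identified with $\op{Pic}(X)_\bb Q$ via $c_1$: a rank-$0$ class maps to $c_1(\det v)$ under either recipe. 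Rigidity then supplies a canonical equivalence $W(X)^{(1)} \simeq \mathfrak{Pic}(X)_\bb Q$ intertwining $\pi_1$ and $\det$, so that $\pi_1(\tilde v)$ corresponds under this equivalence to $\det v$.

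Finally, the given trivialization $\tau \colon \det v \simeq \mc O_X$ of the determinant as an honest line bundle induces a trivialization of its image in $\mathfrak{Pic}(X)_\bb Q$, and hence, via the equivalence of the previous step, a trivialization of $\pi_1(\tilde v)$ in $W(X)^{(1)}$. Applying the splitting $F^1 W(X) = W(X)^{(1)} \oplus F^2 W(X)$ once more, this trivialization lifts $\tilde v$ canonically to an object of $F^2 W(X)$. The main obstacle is the middle step: matching the abstract weight-one Adams eigenspace $W(X)^{(1)}$ with the concrete Picard category $\mathfrak{Pic}(X)_\bb Q$ compatibly with the determinant. This is handled by combining rigidity with Corollary \ref{Cor:adams-rigid-compatibility} (which ensures $\det$ intertwines the Adams operations on both sides, so that $\det$ indeed factors through the weight-one piece), reducing everything to the known isomorphism $K_0^{(1)}(-)_\bb Q = \op{Pic}(-)_\bb Q$.
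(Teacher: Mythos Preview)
Your approach is correct in spirit and reaches the same conclusion, but it takes a more circuitous route than the paper and has one technical soft spot worth flagging.

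The paper argues directly: it writes down the single functor $R(u) = (u - \op{rk} u) - (\det u - 1)$ from $W(X)$ to $W(X)$, observes that on $K_0$ this lands in $F^2 K_0$, and since $F^2 W \to W$ is faithful with essential image determined by $\pi_0$, the functor $R$ factors through $F^2 W$. For $u$ of rank $0$ with trivialized determinant, the given trivializations produce a canonical isomorphism $R(u) \simeq u$, so $u$ lies in $F^2 W(X)$. No identification of $W^{(1)}$ with $\mathfrak{Pic}_\bb Q$ is ever needed. Your two-step lifting via the splittings $W = W^{(0)} \oplus F^1 W$ and $F^1 W = W^{(1)} \oplus F^2 W$ is morally the same subtraction, decomposed into pieces.

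The soft spot is your middle step. You invoke rigidity to produce an equivalence $W(X)^{(1)} \simeq \mathfrak{Pic}(X)_\bb Q$ intertwining $\pi_1$ and $\det$, but Theorem~\ref{thm:rigidity} explicitly excludes $\bb P^\infty$ as a \emph{target}, so you cannot directly manufacture a functor $W^{(1)} \to \mathfrak{Pic}_\bb Q$ this way. The fix is easy: rigidity \emph{does} allow $\bb P^\infty$ as a source, so you can build $\mathfrak{Pic}_\bb Q \to W^{(1)}$ by $L \mapsto c_1(L)$, check on $\pi_0$ and $\pi_1$ that it is an equivalence, and then compare the composite $F^1 W \stackrel{\det}{\to} \mathfrak{Pic}_\bb Q \to W^{(1)}$ with $\pi_1$ using rigidity (now both source and target are allowed). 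Alternatively, and this is what the paper effectively does, bypass the identification entirely: you only need that the object $\det u - 1$ lies in $F^1 W$ and that subtracting it from $u$ pushes you into $F^2 W$, which is a statement purely about classes in $K_0$ and requires no comparison of Picard categories.
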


\begin{proof} The functor $R: W(X) \to W(X)$ associating to a virtual bundle $u$ the virtual bundle
$$R(u) = (v - \op{rk} u) - (\det u - 1)$$
has essential image in $F^2W$ by rigidity. For $u$ and $v$ both virtual bundles on $W(X)$ there is a canonical isomorphism
$$R(u+v) = R(u) + R(v) - (\det u - 1)(\det(v - 1)$$
in $F^2W(X)$ where the product $(\det u -1)(\det v -1)$. These are stable by pullback of regular schemes. They moreover correspond to the isomorphisms defined by
$\op{rk}(u+v) = \op{rk} u + \op{rk} v$ and $$\det(u+v)-1 = (\det u - 1)(\det v - 1) + (\det u - 1) + (\det v - 1)$$ defined as in \cite{determinant}, (9.7.8). Thus, a virtual bundle of rank 0 and with trivialized determinant bundle defines an object of $F^2W(X)$ and addition of two such data defines addition in $F^2W(X)$. \end{proof}


\section{Comparison with constructions via algebraic cycles}

In \cite{ChowCT} and \cite{AAG} there is a careful outline of his notion of Chow categories and Chern functors. This section provides a comparison of these constructions to the above ones, at least in some special cases.
\subsection{Rational Chow categories}
\begin{spacing}{1}
Let $Z$ be a scheme which we suppose for simplicity is connected, the general construction is analogous. Suppose furthermore that $Z$ is separated and of finite type over a non-fixed regular scheme $S$ and admits an ample family of line bundles. By supposing that $Z$ has an ample family of line bundles, by Jouanolou-Thomason (cf. \cite{Weibel}, Proposition 4.4) $Z$ has the $\bb A^1$-homotopy type of an affine scheme and by Corollary \ref{Cor:homotopyinvariance} below we can suppose that $Z$ is affine.

\begin{D} Define the (rational) cohomological Chow category with support, denoted $\mc{CH}^i_Z(X)$, as the category $W^Z(X)^{(i)}$ considered in the last section.
\end{D}
Recall also the construction of Chow categories of J. Franke. For a scheme $X$, write $X^{(p)}$ for the codimension $p$-points of $X$. Suppose $X$ is regular and connected and that $Z$ is a closed subscheme of $X$. Consider the coniveau spectral sequence
$$E_1^{p,q} = \bigoplus_{x \in X^{p} \cap Z} K_{-p-q}(k(x)) \Longrightarrow K_{-p-q}^Z(X)$$
giving rise to
$$\bigoplus_{z \in X^{(p-1)} \cap Z} K_{1-p-q}(k(z)) \stackrel{d_1}\longrightarrow \bigoplus_{y \in X^{(p)} \cap Z} K_{-p-q}(k(y)) \stackrel{d_0}\longrightarrow \bigoplus_{x \in X^{(p+1)} \cap Z} K_{-p-q-1}(k(x)).$$
Notice that since $X$ is catenary, $Z \cap X^{(p)} = Z_{(\dim X - p)}$, where $Z_{(d)}$ denotes the dimension $d$-points of $Z$. Without the assumption that $X$ is regular and taking $Z$ empty, this is the classical coniveau spectral sequence converging to $G$-theory. We have the following from \cite{ChowCT}:
\begin{D}
The category $\hbox{\textbf{CH}}^i (Z)$ is defined as follows. The objects are given by codimension $i$-cycles on $Z$, and homomorphisms between two cycles $z$ and $z'$ are described by
 $$\Hom(z,z') := \{f \in E_1^{i-1, -i}(X), d_1(f) = z' - z  \}/d_1 E_1^{i-2, -i}(X). $$
 The category $\hbox{\textbf{CH}}^i (Z)_\bb Q$ is then the category $\hbox{\textbf{CH}}^i (Z)$ localized at $\bb Q$. They both have natural structures of Picard categories.
\end{D}
\begin{D} Let $X$ be regular and $Z$ be a closed subscheme of $Z$. The $\dim X - i$-th (homological) Chow category is the following: The objects are elements of $Z_{\dim X - i}(Z) = Z^i_Z(X)$ and whose homomorphisms are given as $\Hom(z,z') := \{f \in E_{1,Z}^{i-1, -i}(X), d_1(f) = z' - z  \}/d_1 E_{1,Z}^{i-2, -i}(X)$. Here $Z^i_Z(X)$ denotes the cycles of codimension $i$ on $X$ with support on $Z$. It is moreover clear from the description of the coniveau spectral sequence that the categories are independent of $X$, since it reduces to the niveau spectral sequence for $Z$ and $X^{(i)} \cap Z = Z_{(\dim X - i)}$ since $X$ is catenary. The category localized at $\bb Q$ is the rational (homological) Chow category and we denote it by $\mc {CH}_{\dim X - i}(Z)$. In view of that for a field $k$, $K_2(k) = k^* \otimes_\bb Z k^*$ modulo symbols of the form $(x, 1-x), x \in k \setminus \{0,1\}, K_1(k) = k^*$ and $K_0(k) = \bb Z$, it is thus a Picard category related to a complex
 $$\bigoplus_{z \in X^{(i-2)} \cap Z} k(z)^* \otimes_\bb Z k(z)^* \stackrel{d_1}\longrightarrow \bigoplus_{y \in X^{(i-1)} \cap Z} k(y)^* \stackrel{d_0}\longrightarrow \bigoplus_{x \in X^{(i)} \cap Z} \bb Z$$
defined elementary in terms of fields and where $d_0$ is the associated divisor to a rational function and $d_1$ is the tame symbol.
 \end{D}
We included $X$ to emphasize that it is clearly equivalent to the category $\hbox{\textbf{CH}}^i (Z)_\bb Q$, as noted in the next proposition, but it has an obvious definition in terms of the niveau filtration.
\begin{Prop} [Poincaré duality] \label{prop:ChowCatequiv} With these notations there is we have the following.

\begin{enumerate}
  \item The category $\mc {CH}^i_Z(X)$ be identified with the cycle-groupoid giving a Poincaré duality-type equivalence:
  $$\mc {CH}_{Z}^{i}(X) \simeq \mc {CH}_{\dim X - i}(Z).$$
  \item Suppose moreover $Z$ is regular, then there is a natural equivalence of Picard categories
  $$\Psi: \hbox{\textbf{CH}}^i (Z)_\bb Q \to \mc{CH}_{\dim X - i}(Z)$$.
\end{enumerate}

compatible with pushforward.
\end{Prop}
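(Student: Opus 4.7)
The plan is to identify both sides of the equivalence in (a) by going through Quillen's coniveau spectral sequence
\[
E_1^{p,q}(X,Z) = \bigoplus_{x \in X^{(p)} \cap Z} K_{-p-q}(k(x)) \Longrightarrow K_{-p-q}^Z(X),
\]
and to exploit the theorem of Soul\'e (already invoked in the proof of Theorem \ref{thm:virtualoperations}(g)) that the Adams operations $\Psi^k$ act on $E_r^{p,q}$ by multiplication by $k^p$, so that $K_0^Z(X)^{(i)}_{\bb Q} = E_2^{i,-i}$ and $K_1^Z(X)^{(i)}_{\bb Q} = E_2^{i-1,-i}$. Combined with the Adams weight decomposition $W^Z(X) = \bigoplus_j W^Z(X)^{(j)}$ in Theorem \ref{thm:virtualoperations}(g), this gives
\[
\pi_0(W^Z(X)^{(i)}) = E_2^{i,-i}, \qquad \pi_1(W^Z(X)^{(i)}) = E_2^{i-1,-i}.
\]
On the other side, an immediate computation from the definition of $\mc{CH}_{\dim X - i}(Z)$ gives $\pi_0 = \op{coker}(d_1\colon E_1^{i-1,-i}\to E_1^{i,-i}) = E_2^{i,-i}$ and, for any object $z$, $\op{Aut}(z) = \ker(d_1)/\op{im}(d_1) = E_2^{i-1,-i}$. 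Thus $\pi_0$ and $\pi_1$ match on both sides.

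To upgrade this to an equivalence of Picard categories, I would construct an explicit functor $\Phi: \mc{CH}_{\dim X-i}(Z) \to W^Z(X)^{(i)}$. On objects, send a codimension $i$ cycle $z = \sum n_\alpha [Y_\alpha]$ supported on $Z$ to the image in $W^Z(X)$ of the class $\sum n_\alpha [\mc O_{Y_\alpha}]$ (resolved by a Koszul-type complex, or more generally via a perfect resolution on $X$), then projected to the weight $i$ summand. By Corollary \ref{Cor:adamslocalring}, the Koszul class has pure weight one in the local case, and its $i$-fold tensor realization gives a class of pure weight $i$ in general; this shows the construction lands in $W^Z(X)^{(i)}$ and is additive. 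On morphisms, an element $f \in E_1^{i-1,-i}$ with $d_1 f = z' - z$ is a finite sum of elements of $K_1(k(y))$ for $y\in X^{(i-1)}\cap Z$; by the exact sequence coming from the Quillen localization, such an $f$ lifts canonically to an element of $K_1^Z(X)^{(i)}$, hence to a morphism $\Phi(z) \to \Phi(z')$ in $W^Z(X)^{(i)}$. Well-definedness modulo $d_1 E_1^{i-2,-i}$ follows from the spectral sequence differential.

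To see $\Phi$ is an equivalence, full faithfulness follows from the $\pi_1$-identification and essential surjectivity from the $\pi_0$-identification. The ambiguity in Picard categories lifting a given $(\pi_0,\pi_1)$ is classified by a k-invariant in $H^3(\pi_0; \pi_1)$; since both groups are $\bb Q$-vector spaces this invariant vanishes, and since our functor $\Phi$ is genuinely monoidal, it realizes the unique such lift. For (b), with $Z$ regular one takes $X = Z$ in (a) and observes that $\mathbf{CH}^i(Z)_{\bb Q}$ is, by inspection, defined by literally the same complex as $\mc{CH}_{\dim Z - i}(Z)$ (the coniveau and niveau spectral sequences coincide when $X=Z$ since $Z^{(p)} = Z_{(\dim Z - p)}$); thus $\Psi$ is the identity functor at the level of cycle data, and part (a) promotes it to an equivalence.

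The main obstacle I foresee is the compatibility with proper pushforward. On the cycle side, pushforward is the naive pushforward with multiplicities $[\kappa(y):\kappa(f(y))]$ in codimension-preserving cases and zero otherwise; on the $W^Z$-side, it is $Rf_*$ composed with projection to the weight $i$ eigenspace. Reconciling these reduces to the statement that $Rf_*$ commutes with the Adams weight projections up to lower weight terms (a virtual Grothendieck-Riemann-Roch statement modulo the filtration), which one can verify by the splitting principle and checking on line bundles via rigidity (Theorem \ref{thm:rigidity}), together with compatibility of the coniveau spectral sequence with proper pushforward. The verification that these reconcile at the level of morphisms (tame symbols vs.\ pushforward of $K_1$ classes) is what constitutes the bulk of the technical work.
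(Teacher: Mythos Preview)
Your proposal is correct and follows essentially the same route as the paper: build a functor from the cycle groupoid into $W^Z(X)^{(i)}$ via the edge of the coniveau spectral sequence (the paper phrases it as sending a cycle to ``the edge in $G$-theory'' and $f\in k(x)^*$ to the isomorphism $\mathcal O\simeq\mathcal O(\operatorname{div} f)$), then invoke Soul\'e's identification of Adams eigenspaces with $E_2$-terms to conclude that $\pi_0$ and $\pi_1$ match, hence the functor is an equivalence of Picard categories.

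Two minor remarks. First, your $k$-invariant digression is unnecessary: once you have an additive functor of Picard categories inducing isomorphisms on $\pi_0$ and $\pi_1$, it is automatically an equivalence; the $k$-invariant only enters when one has \emph{no} functor and must manufacture one from the bare data of $(\pi_0,\pi_1)$. Second, the paper does not attempt the pushforward compatibility inside this proof at all---it is deferred to the later discussion of $\operatorname{Fil}_k$ and the explicit cycle-level pushforward (Proposition~\ref{prop:pushforwardchow} and the corollary following Definition~\ref{defn:filbydimension}), so your invocation of a virtual GRR-type argument via rigidity is more machinery than is needed here.
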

\begin{proof} We already mentioned the second part is true. For the first, given a cycle $z$ in the latter category, associate to it the edge in $G$-theory. $E_2^{k-1,-k}(X)$ in turn admits a canonical map to the $G$-theory space by sending an element $f \in k(x)^*, x \in X^{(k-1)} \cap Z$ to the associated map of linebundles $\calo \simeq \calo(\op{div}  f)$ on $\overline{\{x\}}$. By \cite{IntviaAdams} the projection onto the Adams eigenspace this induces an isomorphism on $\pi_0$. The statement for $\pi_1$ is an analogous result which is not made explicit therein but which follows analogously. Thus the map induces an equivalence of categories. \\
\end{proof}
\end{spacing}

For a flat morphism there is the naive pullback $Lf^* : G(X) \to G(Y)$ on $G$-theory spaces and hence $Lf^*: C(X) \to C(Y)$ on the associated virtual categories. Composing it with the suitable projections and inclusions we obtain:
\begin{Prop} [Flat pullback] Suppose there is a flat morphism of schemes $f: X \to Y$ of relative dimension $d$. Then there is a natural functor $f^*: \mc{CH}_i(Y) \to \mc {CH}_{i+d}(X)$ compatible with composition. It is moreover compatible with the induced map on supported virtual categories in case of a compatible diagram. It maps an $i$-dimensional cycle $V$ to the closure of $f^{-1}V$ in $X$.
\end{Prop}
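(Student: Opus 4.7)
The plan is to construct $f^*$ as the restriction of the naive flat pullback on $G$-theory to the appropriate graded piece of the niveau filtration. Since $f$ is flat, the pullback functor $f^*$ on coherent sheaves is exact and induces a map of $G$-theory spectra $Lf^* = f^* \colon G(Y) \to G(X)$. Taking fundamental groupoids yields a functor $Lf^* \colon C(Y) \to C(X)$ on the virtual categories of coherent sheaves, compatible with composition by functoriality of pullback of modules for composable flat morphisms.

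Next, I would use that a flat morphism of relative dimension $d$ preserves codimension. Concretely, for any closed subscheme $W \subseteq Y$ of codimension $p$, every generic point of $f^{-1}(W) \subseteq X$ has codimension $p$ (by the dimension formula for flat morphisms with fibers of pure dimension $d$). Consequently $Lf^*$ respects the niveau filtration on $G$-theory used to define the Chow categories, and induces a map on each $E_1^{p,q}$ of the niveau spectral sequence. Passing to the $E_2$ term (and rationalizing) produces the desired functor of Picard categories $\mc{CH}^{\dim Y - i}(Y) \to \mc{CH}^{\dim Y - i}(X) = \mc{CH}_{i+d}(X)$. Setting $p = \dim Y - i$ and unwinding, this is exactly $f^* \colon \mc{CH}_i(Y) \to \mc{CH}_{i+d}(X)$. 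Compatibility with further composition of flat morphisms, and with the flat pullback on supported virtual categories in a compatible diagram, is inherited from the strict functoriality at the level of coherent sheaves and the naturality of the niveau spectral sequence. The explicit action on objects is immediate: the class of a codimension $\dim Y - i$ point $y$ with $k(y)$-coefficient is sent to $[f^{*} \mc O_{\overline{\{y\}}}] = \sum_{x} m_x [\mc O_{\overline{\{x\}}}]$, where $x$ runs over the generic points of $f^{-1}(\overline{\{y\}})$ and $m_x$ is the length of the local ring at $x$ of the scheme-theoretic preimage, i.e.\ the standard flat pullback of cycles sending $V$ to the closure of $f^{-1}V$ with multiplicities.

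The main technical obstacle is the bookkeeping on morphisms: one must verify that the induced map on $\bigoplus_{z \in Y^{(p-1)}} K_1(k(z))$ lands in $\bigoplus_{x \in X^{(p-1)}} K_1(k(x))$ in a manner compatible with the tame symbol differential $d_1$. This reduces to the standard fact that flat pullback induces a morphism of coniveau spectral sequences, together with the identification of the $K_1$-differential with the tame symbol; concretely, an element $g \in k(z)^*$ pulls back via the induced finite flat extension of residue fields (using the multiplicities appearing on structure sheaves), and the compatibility with $d_1$ is an instance of the projection/norm formula for tame symbols. Once this is in hand, the functoriality in $f$ and the explicit description on cycles both follow directly from the corresponding classical statements at the level of $K_0$.
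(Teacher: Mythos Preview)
Your approach is essentially the same as the paper's. The paper introduces the functor by first noting that a flat morphism gives the naive pullback $Lf^*$ on $G$-theory spaces and hence on the virtual categories of coherent sheaves, and then states that the proof ``is obvious from the niveau spectral sequence and the description of the map $Lf^*$ which is part of the definition of pullback of cycles.'' Your proposal is precisely a fleshed-out version of this one-line argument: you use exactness of flat pullback to get the map on $G$-theory, observe that flatness of relative dimension $d$ preserves codimension so that $Lf^*$ respects the niveau filtration and induces a morphism of coniveau/niveau spectral sequences, and then read off the functor on the graded pieces together with the classical formula on cycles. The extra bookkeeping you sketch on the $K_1$-level (compatibility with the tame-symbol differential) is exactly the content hidden in the paper's phrase ``obvious from the niveau spectral sequence''.
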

The proof is obvious from niveau spectral sequence and the description of the map $Lf^*$ which is part of the definition of pullback of cycles.
\begin{remark} \label{remark:fiberedPicard-flat} In the language of \cite{ChowCT}, section 3.6, this makes the association $Z \mapsto \mc {CH}^i(Z)= \mc {CH}_{\dim Z - i}(Z)$ with flat pullbacks into a fibered Picard category over the schemes for which these are defined, and moreover satisfy Poincaré duality. In other words, a contravariant assignment of Picard categories $Z \mapsto P_Z$ for appropriate $Z$ together with associativity and composition constraints for the class of pullback morphisms.\end{remark}
\begin{Cor} [Homotopy invariance] \label{Cor:homotopyinvariance} Let $Y \to Z$ be a torsor under a vector bundle on $Z$, and $Z$ of finite type over a regular scheme $S$. Then the flat pullback $f^*: \mc{CH}_i(Z) \to \mc {CH}_{i+d}(Y)$ induces an equivalence of categories.
\end{Cor}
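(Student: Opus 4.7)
The plan is to reduce to homotopy invariance of $K$-theory. A torsor $f\from Y \to Z$ under a rank-$d$ vector bundle is Zariski-locally isomorphic to $\mathbb{A}^d_Z \to Z$, so by Quillen's fundamental theorem combined with Zariski descent for $G$-theory the flat pullback $f^*\from G(Z) \to G(Y)$ is a weak equivalence; in particular $Y$ is regular whenever $Z$ is, and $\dim Y = \dim Z + d$.

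For the easy regular case, I would use Proposition \ref{prop:ChowCatequiv}(b) to identify $\mc{CH}_i(Z) \simeq W(Z)^{(\dim Z - i)}$ and $\mc{CH}_{i+d}(Y) \simeq W(Y)^{(\dim Z - i)}$, both with the same Adams weight $\dim Z - i$. By Lemma \ref{TL:phi-virtual} the assignment $W(-)^{(j)}$ is represented in $\mc H(\mathfrak R_S)_\bullet$ by the $\mathbb{A}^1$-local object $\bb H^{(j)}$; since a torsor under a vector bundle is an $\mathbb{A}^1$-weak equivalence, the induced functor $f^*\from W(Z)^{(\dim Z - i)} \to W(Y)^{(\dim Z - i)}$ is therefore an equivalence of Picard categories, and it agrees with the flat pullback of the preceding proposition by the explicit description of $\Psi$ in Proposition \ref{prop:ChowCatequiv} in terms of the edge map to $K$-theory.

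For general (possibly singular) $Z$ the Chow categories are intrinsic objects constructed from the row of the niveau spectral sequence for $G$-theory corresponding to cycles of the given dimension (cf.\ the remark after the definition), with $\pi_0$ the rational cycle group and $\pi_1$ the relevant quotient of $K_1$-classes of residue fields. The flat pullback $f^*$ is a morphism of niveau spectral sequences, shifted by $d$ on the dimension filtration, whose total map on the abutment is the weak equivalence $G(Z) \simeq G(Y)$ established above. I would conclude by Zariski-local comparison with the trivial case $\mathbb{A}^d_Z \to Z$ and Quillen's explicit description of the niveau filtration on $G(\mathbb{A}^1_Z)$, together with compatibility of flat pullback with the divisor map and the tame symbol. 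The hardest step is exactly this last one: at individual $E_1$-terms the map $K_*(k(z)) \to K_*(k(y))$ is far from an isomorphism (one has $k(y) = k(z)(t_1,\dots,t_d)$ once the torsor is trivialized), so the equivalence only emerges after taking homology with respect to the tame symbol and divisor differentials, where the excess contributions from the residue fields of $\mathbb{A}^d_{k(z)}$ cancel exactly as in Quillen's fundamental theorem.
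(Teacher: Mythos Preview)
Your regular case is correct and in the same spirit as the paper: both ultimately rest on the $\bb A^1$-invariance of the representing object for the Adams eigenspaces.

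For singular $Z$, however, your argument has a genuine gap precisely where you flag it. Knowing that $f^*$ is a map of niveau spectral sequences and an equivalence on the abutment $G(Z)\simeq G(Y)$ does not by itself force isomorphisms on the individual $E_2^{p,q}$-terms, which is what you need for $\pi_0$ and $\pi_1$ of the Chow category. Your proposed fix, ``Quillen's explicit description of the niveau filtration on $G(\bb A^1_Z)$'', is not a result that exists in the form you need; what is actually required is homotopy invariance of the Gersten complexes (i.e.\ a quasi-isomorphism $E_1^{\bullet,-q}(Z)\to E_1^{\bullet+d,-q-d}(\bb A^d_Z)$), and you have not supplied an argument for that.

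The paper sidesteps this entirely. Rather than comparing niveau spectral sequences, it uses Poincar\'e duality (Proposition \ref{prop:ChowCatequiv}) to identify $\mc{CH}_i(Z)$ with the Adams eigenspace $W^Z(X)^{(\dim X-i)}$ of $K$-theory with support in a regular ambient $X$. These eigenspaces inherit the localization/Mayer--Vietoris package from $K$-theory, so one may reduce to $Z$ affine. Over an affine base a torsor under a vector bundle is trivial and, after further localizing, the bundle itself is trivial; one is left with $\bb A^n_Z\to Z$ sitting inside $\bb A^n_X\to X$ with $X$ regular. Then $K_i^Z(X)\to K_i^{\bb A^n_Z}(\bb A^n_X)$ is identified, via $K^Z(X)=G(Z)$ for regular $X$, with $G_i(Z)\to G_i(\bb A^n_Z)$, which is an isomorphism by ordinary homotopy invariance of $G$-theory. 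Passing to rational coefficients and Adams eigenspaces finishes the proof. The point is that the embedding into a regular $X$ converts the problem into one about $G$-theory of $Z$ itself, where no pointwise $E_1$-comparison is needed.
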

\begin{proof} We need to show it induces an isomorphism on $\pi_0$ and $\pi_1$. These group however sit in a long exact homotopy sequence for localization. Thus if the induced map on all homotopygroups of the spaces giving Adams eigenspaces are isomorphisms we can localize. For this, by Mayer-Vietoris, we suppose that $Z$ is affine. In this case the torsor is trivial and equal to a vector bundle and localizing further we can suppose it is $\bb A^n_Z \to Z$. In this case the map fits into a square, for a regular $X$, $$\xymatrix{\bb A^n_Z \ar[r] \ar[d] & \bb A^n_X \ar[d] \\ Z \ar[r] & X}$$  and the induced map on $Lf^*: K_i^Z(X) \to K_i^{\bb A^n_Z}(\bb A^n_X)$ coincides with $Lf^*: G_i(Z) \to G_i(\bb A^n_Z)$ where there is homotopy invariance. The corollary follows after passing to rational coefficients.
\end{proof}
\begin{Prop} [Gysin-type morphism] \label{Prop:pullbackchow} Suppose that $Z \to X$ and $Z' \to X'$ are closed embeddings of schemes into regular schemes $X$ and $X'$ and that there is a commutative square (which we call a compatible diagram)
$$\xymatrix{Z \ar[r] \ar[d]^f & X \ar[d]^F\\
Z' \ar[r] & X'}$$
with $f$ and $F$ any two morphisms of schemes. Then there is a functor $$f^*: \mc {CH}^i_{Z'}(X') \to \mc {CH}^i_{Z}(X)$$ compatible with composition of these types of diagrams. If $f$ is a projective \lci morphism of relative dimension $d$, then this also induces a functor $f^!: \mc {CH}_i (Z) \to \mc {CH}_{i+d} (Z')$ independent of $X$ and $X'$.
\end{Prop}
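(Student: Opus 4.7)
The plan is to build $f^*$ by lifting the pullback of $K$-theory with supports to the virtual category and then projecting onto the Adams eigenspace of weight $i$. Concretely, the compatible square yields $F^{-1}(Z')\supseteq Z$, so the usual derived pullback of perfect complexes with support gives a functor $LF^{*}\colon V^{Z'}(X')\to V^{F^{-1}(Z')}(X)$, which composed with the change-of-support functor to $V^{Z}(X)$ and with the cohomological comparison functor $V^Z(-)\to W^Z(-)$ yields a functor
\[
F^{*}\colon W^{Z'}(X')\longrightarrow W^{Z}(X).
\]
By Corollary \ref{Cor:adams-rigid-compatibility} the Adams operations on $V^?(-)$ and $W^?(-)$ are intertwined by $LF^{*}$, and by rigidity (Theorem \ref{thm:rigidity}) any natural map between these categories that commutes with $\Psi^{k}$ respects the canonical Adams-eigenspace decomposition $W^{Z}(X)=\bigoplus_{j}W^{Z}(X)^{(j)}$ provided by Theorem \ref{thm:virtualoperations}(g). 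Restricting $F^{*}$ to the $k^{i}$-eigenspace therefore gives the desired
\[
f^{*}\colon \mc{CH}^{i}_{Z'}(X')=W^{Z'}(X')^{(i)}\longrightarrow W^{Z}(X)^{(i)}=\mc{CH}^{i}_{Z}(X).
\]

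Compatibility with composition of compatible squares reduces to the standard compatibility $L(F'F)^{*}\simeq LF^{*}LF'^{*}$ for derived pullback on perfect complexes with support; the coherence isomorphisms descend to $W^?$ by cohomological descent, and they automatically pass to the Adams eigenspaces. The verification of associativity of these coherence data is essentially formal once one has rigidity, so I would invoke Theorem \ref{thm:rigidity} to argue that any two such choices differ by a unique natural transformation, avoiding bookkeeping on homotopies.

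For the second part, I would combine the $f^{*}$ just built with the Poincaré duality equivalence of Proposition \ref{prop:ChowCatequiv}(a),
\[
\mc{CH}^{i}_{Z'}(X')\simeq \mc{CH}_{\dim X'-i}(Z'),\qquad \mc{CH}^{i}_{Z}(X)\simeq \mc{CH}_{\dim X-i}(Z),
\]
whose right-hand sides are, by the discussion preceding the proposition, defined independently of the chosen regular ambient schemes. When $f$ is projective lci of relative dimension $d$, the relation $\dim X-\dim X'=d$ (after reindexing) converts $f^{*}$ into a functor
\[
f^{!}\colon \mc{CH}_{i}(Z')\longrightarrow \mc{CH}_{i+d}(Z),
\]
and independence of $X,X'$ then follows from the intrinsic nature of the homological Chow categories together with the naturality of Poincaré duality.

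The main obstacle is precisely the last point: showing that the lift to the virtual category does not depend, up to canonical equivalence, on the ambient regular embeddings chosen for $Z$ and $Z'$. The approach I would take is to compare two choices by dominating them with a common third choice (using products and graphs of $F$), and then appealing to rigidity to identify the two induced functors via a unique natural transformation. Once that is in place, functoriality in compositions of projective lci morphisms is formal.
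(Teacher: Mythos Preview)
Your construction of $f^*$ in the first part contains a direction error: from $Z\subseteq F^{-1}(Z')$ you get a change-of-support functor $V^{Z}(X)\to V^{F^{-1}(Z')}(X)$ (enlarging support), not the reverse. There is no natural map $V^{F^{-1}(Z')}(X)\to V^{Z}(X)$ unless $F^{-1}(Z')=Z$. The paper sidesteps this by working directly with the representing object: $\mc{CH}^i_Z(X)=\phi'(\bb H^{(i)})(X\setminus Z\to X)$ is by construction a contravariant functor on the category of pairs, so the pullback comes for free once one has a morphism of pairs. You should either assume the square is Cartesian on underlying sets or phrase the pullback in terms of $\bb H^{(i)}$ rather than routing through $V^Z(X)$.

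For the second part your strategy diverges substantially from the paper's, and the gap is in the independence step. The paper does \emph{not} argue independence of $X,X'$ by dominating two choices and invoking rigidity. Instead it factors the projective lci $f$ as a projective-bundle projection (handled by the already-intrinsic flat pullback) composed with a regular closed immersion, and for the latter it uses the intrinsic functor $Lf^*\colon C(Z')\to C(Z)$ on virtual coherent sheaves, which exists because $f$ has finite Tor-dimension. A commutative diagram identifies this with the supported pullback under the comparison $\mc{CH}^i_{Z'}(X)\to V^{Z'}(X)\to C(Z')$, so that $Lf^*$ lands in the correct Adams piece. Since $Lf^*\colon C(Z')\to C(Z)$ makes no reference to $X,X'$ or $F$, independence is immediate; the Jouanolou--Thomason trick then covers the case where $Z'$ does not embed into a regular scheme. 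Your appeal to rigidity here is misplaced: Theorem~\ref{thm:rigidity} concerns endofunctors of the fibered categories $V_{\mc X}$ over all regular schemes, not comparisons of two specific functors between $\mc{CH}_*(Z')$ and $\mc{CH}_*(Z)$ for fixed $Z,Z'$. The domination argument you sketch could perhaps be made to work, but it would require building the comparison inside $C(-)$ anyway, at which point you have essentially rediscovered the paper's argument.
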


\begin{proof} Given the above data there is clearly a functor $Lf^*: \bb H_{Z'}(X')^{(i)} \to \bb H_{Z}(X)^{(i)}$ which induces the map in the first part of the proposition.  For the construction of the other map, one uses the flat map to define the pullback along projective bundle projections. Standard techniques reduces us to consider the case of a regular closed immersion $f$. Suppose first that $Z'$ is included in a regular scheme $X$. Then the diagram
$$\xymatrix{\mc {CH}^i_{Z'}(X) \ar[d]^{LF^*} \ar[r] & V^{Z'}(X) \ar[d]^{LF^*} \ar[r] & C(Z') \ar[d]^{Lf^*} \\
\mc {CH}_{Z}^i(X) \ar[r] & V^{Z} (X) \ar[r] & C(Z) }$$
commutes up to equivalence, where the morphisms are the obvious ones and the existence of $Lf^*$ follows from the \lci assumption. This shows in particular that the induced functor $Lf^*: \mc {CH}_i (Z') \to C(Z)$ has essential image in $\mc {CH}_{i-d} (Z)$ and the functor $Lf^*$ is independent of $F$. Whenever $Z'$ does not admit a closed immersion into a regular scheme $X$, we can find by Jouanolou-Thomason (cf. \cite{Weibel}, Proposition 4.4) an affine torsor under a vector bundle on $Z'$ an pullback along this torsor induces, by Corollary \ref{Cor:homotopyinvariance}.

assume that $X$ is moreover affine
This provides the definition for $f^!$. \mps{This proof is maybe not nonsense anymore, fix so that independent of $X$ and $X'$}
\end{proof}

The following follows from the correspond result in $G$-theory.
\begin{Prop} [Topological invariance] Let $Z$ be a closed subscheme of a regular scheme $X$. If $Z_{red}$ denotes the associated reduced scheme the natural map gives an equivalence of categories
$$\mc {CH}_i(Z) = \mc {CH}_i(Z_{red}).$$
\end{Prop}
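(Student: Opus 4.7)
The plan is to reduce the statement to Quillen's devissage theorem for $G$-theory, which says that the closed immersion $i \colon Z_{\mathrm{red}} \hookrightarrow Z$ induces a weak equivalence $i_* \colon G(Z_{\mathrm{red}}) \to G(Z)$, because every coherent sheaf on $Z$ admits a finite filtration whose successive quotients are annihilated by the nilradical and thus pushed forward from $Z_{\mathrm{red}}$. This equivalence is functorial with respect to the filtration by codimension of support in any ambient regular $X$, hence induces an isomorphism of the niveau spectral sequences computing $G_*(Z)$ and $G_*(Z_{\mathrm{red}})$.

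Concretely, I would use the fact that the category $\mathcal{CH}_{\dim X - i}(Z)$ has been described purely in terms of the elementary complex
\[
\bigoplus_{z \in X^{(i-2)} \cap Z} K_2(k(z)) \xrightarrow{d_1} \bigoplus_{y \in X^{(i-1)} \cap Z} k(y)^\times \xrightarrow{d_0} \bigoplus_{x \in X^{(i)} \cap Z} \mathbb{Z},
\]
where $d_0$ is the divisor map and $d_1$ the tame symbol, and where the formulation depends only on the set of points of $Z$, their residue fields, and the universal $K$-theoretic operations on fields. Since the underlying topological spaces $|Z|$ and $|Z_{\mathrm{red}}|$ coincide, and for each point $z$ the residue field $k(z)$ is canonically the same whether computed in $Z$ or in $Z_{\mathrm{red}}$, the two complexes (and therefore the associated Picard categories of objects and homomorphisms modulo tame-symbol boundaries) are literally identified via the closed immersion $i$.

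The second step is to identify the natural map in the proposition with this canonical identification. Pushforward along $i$ on $G$-theory preserves codimension of support (a closed subvariety of $Z_{\mathrm{red}}$ pushes to itself as a subvariety of $Z$), so the induced map on the niveau spectral sequence is the termwise identity on residue-field summands; by Proposition \ref{prop:ChowCatequiv} (Poincaré duality) this is exactly the natural functor $\mc{CH}_i(Z_{\mathrm{red}}) \to \mc{CH}_i(Z)$ of the statement. Hence it is an equivalence of Picard categories.

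The only real obstacle is bookkeeping: confirming that the equivalence of $E_1$-terms is compatible with the differentials $d_1$ in both spectral sequences, and that pushforward by $i$ realizes precisely this termwise identification. This is formal once one notes that both $d_0$ and $d_1$ are defined on residue fields alone and that Quillen's devissage identification is compatible with the coniveau filtration, so no new computation beyond invoking devissage plus the explicit description of $\mathcal{CH}_i$ is required.
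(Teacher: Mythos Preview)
Your proposal is correct and takes essentially the same approach as the paper, which simply states that the result ``follows from the corresponding result in $G$-theory'' (i.e., Quillen d\'evissage). You have spelled out in detail what the paper leaves implicit: that the niveau spectral sequence depends only on the points of $Z$ and their residue fields, hence is identical for $Z$ and $Z_{\mathrm{red}}$, and that the pushforward along $i$ realizes this identification.
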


\begin{Prop} [Proper pushforward]\label{prop:pushforwardchow} Suppose $f: Z \to Z'$ is a proper morphism of schemes. There is an induced pushforward $f_* : \mc {CH}_k (Z) \to \mc {CH}_k (Z')$.
\end{Prop}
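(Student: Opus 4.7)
The plan is to build $f_*$ directly from the niveau spectral sequence description of $\mc{CH}_k(Z)$, using proper covariance of that spectral sequence (which itself reflects the covariance of $G$-theory under proper morphisms).

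First, I would recall that $\mc{CH}_k(Z)$ is the Picard category associated (after $\otimes \bb Q$) to the truncated complex
$$\bigoplus_{y \in Z_{(k+1)}} k(y)^* \stackrel{d_1}\longrightarrow \bigoplus_{x \in Z_{(k)}} \bb Z$$
coming from the niveau spectral sequence for $Z$. Concretely, objects are $k$-dimensional cycles and morphisms from $z$ to $z'$ are rational equivalences, i.e. elements $g$ with $d_1(g) = z'-z$ modulo the image of the tame symbol from $K_2$ of residue fields of $(k+2)$-dimensional points. Any pushforward $f_*$ must be induced by degree-preserving maps on each summand, compatible with $d_1$ and modulo the tame symbol.

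Next I would define $f_*$ on generators in the classical fashion. For an integral closed subscheme $V \subset Z$ of dimension $k$, set $f_*[V] = [k(V):k(f(V))]\,[f(V)]$ if $f|_V$ is generically finite onto its image (equivalently $\dim f(V)=k$), and $f_*[V]=0$ otherwise; this defines the action on objects. For $y \in Z_{(k+1)}$ with $y' := f(y)$, set $f_*: k(y)^{\times} \to k(y')^{\times}$ to be the field-theoretic norm $N_{k(y)/k(y')}$ when $k(y)/k(y')$ is a finite extension (the case $\dim \overline{\{y'\}} = k+1$), and the trivial map otherwise; this defines the action on morphisms. The compatibility $d_1 \circ f_* = f_* \circ d_1$ is the classical formula for the divisor of a norm (degree formula for proper pushforward of Weil divisors on a curve via the generic fiber over the image), and may also be read off from the fact that proper pushforward is a morphism of niveau spectral sequences on $G$-theory: the arrow on $E_1^{\bullet,\bullet}$ is given exactly by the norm/zero prescription above, and by functoriality commutes with all differentials.

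It then remains to check that the construction descends modulo relations coming from tame symbols. This follows, with the same rigid/niveau-formal arguments, from Weil reciprocity and the projection formula for the tame symbol under finite field extensions (equivalently: $f_*$ extends to a morphism of the whole $E_1$-page and hence respects the $E_2$ relations used to define the $\Hom$-groups of $\mc{CH}_k$). Composition compatibility $(g\circ f)_* = g_* \circ f_*$ reduces to transitivity of the norm in towers of finite field extensions together with the multiplicativity of degrees of generic fibers. Finally, with the identification in Proposition \ref{prop:ChowCatequiv} this $f_*$ is easily checked to be the one induced by $Rf_*: G(Z) \to G(Z')$ on the weight $(\dim Z - k)$ Adams eigenspace, which gives both an alternative construction and the expected compatibilities with flat pullback (projection formula).

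The main obstacle is verifying the tame-symbol compatibility — the calculation that the norm on $K_1$ of residue fields, combined with multiplicities on cycles, commutes with the residue/tame symbol differentials. Rather than doing this by hand I would invoke the classical functoriality of the niveau spectral sequence under proper morphisms (as used by Quillen and, in the setting of cycle modules, by Rost): since $Rf_*$ is a map of $G$-theory spectra and the niveau filtration is preserved, the induced map on associated graded is necessarily compatible with $d_1$, and the explicit formulas above are forced on the $E_1$-terms by identifying $K_1$ and $K_0$ of residue fields with their standard descriptions.
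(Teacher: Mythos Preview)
Your proposal is correct and follows essentially the same approach as the paper: the paper's proof simply cites \cite{Gillet-Riemann-Roch}, Theorem 7.22 (iii) for the existence of a pushforward on the $E_1$-page of the niveau spectral sequence (given by the classical degree/norm formulas of \cite{intersection}, section 1.4), from which the functor on Chow categories follows immediately. You have spelled out the content of that citation explicitly (norm on $K_1$, degree on $K_0$, compatibility with $d_1$ and the tame symbol) and invoked Quillen/Rost rather than Gillet, but the underlying argument is the same.
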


\begin{proof} The proof of \cite{Gillet-Riemann-Roch}, Theorem 7.22 (iii) shows that there is a push-forward on the complexes given by the first page of the niveau spectral sequence. It follows that there is a pushforward $f_*: \mc {CH}_i(X) \to \mc {CH}_i(Y)$ on the cyclelevel, already without rational coefficients, given by the classical formulas on the level of objects (cf. \cite{intersection}, section 1.4).
\end{proof}
\begin{D} \label{defn:filbydimension} Define $\op{Fil}_k(Z) = \bigsqcup_{i \leq k} \mc {CH}_i(Z)$. By the proof of Proposition \ref{prop:ChowCatequiv}, if $X$ is a regular containing $Z$, this is equivalent to $F^{\dim X - k}_Z(X)$ considered in Theorem \ref{thm:virtualoperations}. Moreover, we have $$\op{Fil}_k(Z)/\op{Fil}_{k-1}(Z) \simeq \mc {CH}_k(Z)$$ where the quotient denotes cokernel of an additive functor of Picard categories (cf. Definition \ref{defn:determinantfunctorimage}).
\end{D}
Notice that if $f: Z \to Z'$ is a proper morphism of schemes, the functor $Rf_*: C(Z) \to C(Z')$ restricted to $\op{Fil}_k(Z)$ has essential image in $\op{Fil}_k(Z')$. Indeed, we know from the above that any object in $\op{Fil}_k(Z)$ is represented by sheaves of the form $\calo_V$ for closed subschemes $V$ of $Z$ of dimension at most $k$. It is obvious that $Rf_* \calo_V$ is a sum of sheaves with support on sheaves with support on points with dimension at most $k$ on $Z'$. We need to prove that the induced map on $Rf_*:G_1(Z)_\bb Q \to G_1(Y)_\bb Q$ has the same properties with respect to the niveau filtration. Now $Z'$ is defined over a regular scheme $S$, and by Jouanolou-Thomason we can assume first that $S$ and then that $Z'$ is affine. By Chow's lemma we can find a morphism $p: \widetilde{Z} \to Z$ such $p$ and $f p$ are projective and that $Rp_* (\calo_{\widetilde Z}) = \calo_Z$ and hence we can replace, if we can show that $Rp_*:G_1(\widetilde Z)_\bb Q \to G_1(Z)_\bb Q$ is surjective on every step of the niveau filtration, $Z$ by $\widetilde Z$ and thus that $f$ itself is projective. To prove this we can also replace $Z$ by an affine scheme so there is a closed embedding $Z \subseteq \bb A^n_S$ for suitable $n$ and since $p$ is projective it fits into a natural compatible diagram (cf. Proposition \ref{Prop:pullbackchow}). Then the statement follows from the already cited \cite{Gillet-Riemann-Roch}, Theorem 7.22 (iii) which gives the map on the first page of the niveau spectral sequence where surjectivity is clear, and \cite{Souleoperations}, Théorème 4, iv) from which it follows that the isomorphic coniveau spectral sequence with supports in question degenerates at the second page when passing to rational coefficients. The same argument also treats the case of general projective morphisms. With this pushforward we have the following corollary:
\begin{Cor} Let $f: Z \to Y$ be a proper morphism of schemes. The pushforward $Rf_*: C(X)_\bb Q \to C(Y)_\bb Q$ restricted to $\op{Fil}_k X$ has essential image in $\op{Fil}_k (Y).$ The induced functor $f_*: \op{Fil}_k(Z)/\op{Fil}_{k-1}(Z) \to \op{Fil}_k(Y)/\op{Fil}_{k-1}(Y)$ is equivalent to the pushforward on Chow categories.
\end{Cor}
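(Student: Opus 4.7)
The plan is to treat the two assertions separately, noting that the first has essentially been established in the paragraph preceding the corollary. The hard part is the identification on graded pieces.

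For the first assertion, the argument just above the statement already shows that $Rf_*$ sends $\op{Fil}_k(Z)$ to $\op{Fil}_k(Y)$ at the level of essential images. I would simply organize that discussion into a clean statement: using Jouanolou--Thomason to reduce to $Y$ affine, then Chow's lemma to reduce to the projective case, then invoking \cite{Gillet-Riemann-Roch}, Theorem 7.22 (iii) to get a pushforward on the $E_1$-pages of the niveau spectral sequences, and \cite{Souleoperations}, Théorème 4 iv), to conclude degeneration at $E_2$ rationally; this handles both the $\pi_0$ and $\pi_1$ control needed for the filtration to be preserved.

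For the second assertion, by Definition \ref{defn:filbydimension} we have a canonical equivalence $\op{Fil}_k(Z)/\op{Fil}_{k-1}(Z) \simeq \mc{CH}_k(Z)$, and similarly for $Y$. To identify the induced functor with the classical proper pushforward of Proposition \ref{prop:pushforwardchow}, I would proceed at the level of objects and then morphisms. On objects, a cycle class in $\mc{CH}_k(Z)$ is represented by $[\calo_V]$ for a $k$-dimensional integral closed subscheme $V \subset Z$; then $Rf_*\calo_V$ is a complex of sheaves supported on $f(V) \subset Y$, and by a generic-flatness argument, modulo lower-dimensional contributions (which vanish in the quotient $\op{Fil}_k(Y)/\op{Fil}_{k-1}(Y)$), it agrees with $[K(V):K(f(V))] \cdot \calo_{f(V)}$ if $\dim f(V) = k$ and with zero otherwise --- that is, exactly the classical formula (\cite{intersection}, Section 1.4). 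On morphisms, one uses again the comparison under Proposition \ref{prop:ChowCatequiv} between $\mc{CH}_k(Z)$ and the Picard category attached to the rows of the $E_1$-page of the niveau spectral sequence; the pushforward on these complexes produced by \cite{Gillet-Riemann-Roch}, Theorem 7.22 (iii), is precisely the one used to define the pushforward on Chow categories, so compatibility on morphisms is built in.

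The main obstacle I anticipate is the bookkeeping on objects: one must be careful that when $\dim f(V) < k$, the contributions of higher direct images $R^if_*\calo_V$ really do lie in $\op{Fil}_{k-1}(Y)$, and hence vanish in the quotient. This is where the already-established first part of the corollary does the essential work: since $R^if_*\calo_V$ is supported on $f(V)$, which has dimension strictly less than $k$, the full class $[Rf_*\calo_V]$ in $\op{Fil}_k(Y)$ reduces modulo $\op{Fil}_{k-1}(Y)$ to the expected multiplicity times $\calo_{f(V)}$. Once this is in place, naturality in $f$ (compatibility with composition) follows from the corresponding naturality of both $Rf_*$ on $C(-)_{\bb Q}$ and the pushforward on the niveau complexes, giving the claimed equivalence of functors.
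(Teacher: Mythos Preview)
Your outline is correct in its architecture and matches the paper's strategy closely: the first assertion is indeed the content of the preceding paragraph, and for the second you rightly reduce to comparing the two pushforwards on classes $[\calo_V]$ and then invoke the Gillet pushforward on the niveau $E_1$-page for morphisms. However, there is a genuine gap at the key step.

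The issue is your phrase ``by a generic-flatness argument, modulo lower-dimensional contributions, it agrees with $[K(V):K(f(V))]\cdot\calo_{f(V)}$''. In a Picard category this is not enough: you must produce an actual isomorphism $Rf_*\calo_V \simeq d\cdot\calo_{f(V)}$ in $\mc{CH}_k(Y)$, not merely check that the two objects have the same class in $\pi_0$. The paper does this explicitly. After reducing to $V=Z$, $Y=f(Z)$ with $f$ generically finite of degree $d$, it invokes \cite{Riemann-RochAlgebra}, Ch.~IV, Lemma~3.7 to find a coherent sheaf $\mc G$ on $Y$ together with surjections $\mc G\twoheadrightarrow \calo_Y^d$ and $\mc G\twoheadrightarrow R^0f_*\calo_Z$ which are isomorphisms over a dense open $U$ where $f$ is finite free. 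These two short exact sequences, with kernels supported in dimension $\leq k-1$, give the required isomorphism in $\op{Fil}_k(Y)/\op{Fil}_{k-1}(Y)$. One must then check independence of $\mc G$; the paper does this by observing that on a $k$-dimensional scheme the category $\mc{CH}_k$ is discrete (it has no nontrivial automorphisms, being realized by taking lengths along $k$-dimensional components), so any two such constructions coincide.

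Your generic-flatness idea is in the right direction and would, if developed, lead to essentially the same interpolating-sheaf argument. But as written, you have not explained where the comparison isomorphism comes from, nor why it is well-defined; filling this in is precisely what the Fulton--Lang lemma and the discreteness observation accomplish in the paper.
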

\begin{proof} For the second one, we note that the comparison is trivial in the case of close immersions and to compare the two pushforwards $f_* (\calo_V)$ and $Rf_* (\calo_V)$, we can then assume $V = Z$ and $Y = f(Z)$. If $f: Z \to Y$ is not generically finite, $f_* \calo_Z = 0$ and $Rf_* \calo_Z$ is in $\op{Fil}_{k-1}(Y)$. Generally, pick an open $U$ of $Y$ such that $f$ is finite of degree $d$ and $\calo_Z$ free over $\calo_Y$. In this case, the pushforward is described as $\calo_{U}^d$. In general, by \cite{Riemann-RochAlgebra}, Ch. IV, Lemma 3.7 there is a coherent sheaf $\mc G$ on $Y$, surjective morphisms $\mc G \to \calo_Y^d$ and $\mc G \to R^0 f_* \calo_Z$ such that the restriction to $U$ is an isomorphism and is compatible with the isomorphism $Rf_* \calo_Z|_U = R^0 f_* \calo_Z|_U \simeq \calo_U^d$. This defines an isomorphism of the two pushforwards, and it doesn't depend on $\mc G$. Indeed, we only need to prove that if we have a surjective morphism of coherent sheaves $g: \mc G' \to \mc G$ on a scheme $Z$ which is an isomorphism over some dense open $U$ whose complement is of dimension $k-1$ or less, then it defines an isomorphism in $\mc {CH}_k(Z)$. In this case $\op{Fil}_k(Z) = V(Z)_\bb Q$ and $\mc {CH}_k(Z)$ don't have any automorphisms and is equivalent to the group of zero-cycles of dimension $k$ on $Z$, so the natural determinant functor $V(Z) \to \op{Fil}_k(Z) \to \mc {CH}_k(Z)$ is realized by taking lengths of coherent sheaves along subschemes of dimension $k$ and any exact sequence defines an isomorphism of an object of dimension less than $k$ with zero.
\end{proof}

\begin{Prop} \label{prop:projectionformulafiltration}Let $f: Z \to Z'$ be a proper morphism. Suppose that $V$ is a virtual vector bundle of $Z$ \mps{whose class $[V]$ in $K_0(Z)_\bb Q$ lies in $F^i K_0(Z)$ where $F^i K_0(Z)$ denotes the Adams filtration of $K_0(Z)_\bb Q$. } Then there is a projection formula isomorphism $$V \otimes Rf_* (\mc F) \simeq Rf_*(Lf^* V \otimes \mc F)$$ in $\op{Fil}_{k}(Z')$ where $\mc F$ is an element of $\op{Fil}_k(Z)$. It is also stable under composition of proper morphisms in the naive way and if furthermore $V$ is of rank 0, this is an isomorphism in $\op{Fil}_{k-1}(Z')$.
\end{Prop}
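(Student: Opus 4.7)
The plan is to first invoke the ordinary projection formula isomorphism and then show separately that it automatically respects the filtration by dimension, with the rank-zero case following from the multiplicative structure on the filtration.

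First, by the projection formula for coherent sheaves (Lemma \ref{lemma:projectionformula} together with Remark \ref{remark:projectionformulacoherent}) we have a functorial and composition-stable isomorphism
$$V \otimes Rf_*(\mc F) \simeq Rf_*(Lf^* V \otimes \mc F)$$
in $C(Z')$. The problem therefore reduces to showing that both sides of this isomorphism can be viewed as objects (and isomorphisms) in $\op{Fil}_k(Z')$, and in $\op{Fil}_{k-1}(Z')$ when $V$ has rank zero. The composition-stability statement is then inherited from that of the ordinary projection formula.

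Next I would verify the filtration statement in the general case. Tensoring a coherent sheaf with a (virtual) vector bundle does not enlarge the dimension of support, so the functor $V \otimes -$ preserves $\op{Fil}_k$; combined with the fact, established in the discussion preceding the present proposition, that $Rf_*$ sends $\op{Fil}_k(Z)$ into $\op{Fil}_k(Z')$, both sides of the isomorphism therefore lie in $\op{Fil}_k(Z')$ and the isomorphism descends there.

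The essential point is the rank-zero refinement, and this is where the main obstacle lies. Embedding $Z$ and $Z'$ into regular schemes $X$ and $X'$ and using the Poincar\'e-duality identification of Definition \ref{defn:filbydimension} together with Proposition \ref{prop:ChowCatequiv}, the filtration $\op{Fil}_k$ is identified with the codimension filtration $F^{\dim X - k} W^Z(X)$. Under this identification a virtual vector bundle of rank $0$ defines an object of $F^1 W(Z)$. The multiplicative pairing from Theorem \ref{thm:virtualoperations} (b) then yields
$$F^1 W(Z) \otimes F^{\dim X - k} W^Z(X) \longrightarrow F^{\dim X - k + 1} W^Z(X),$$
that is, $V \otimes \op{Fil}_k(Z) \subseteq \op{Fil}_{k-1}(Z)$. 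Applying the filtration-preserving pushforward $Rf_*$ now gives the claim for $V$ of rank $0$.

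The subtlety, and the part I would verify most carefully, is that the tensor-product action of virtual vector bundles on coherent sheaves used in the projection formula coincides, under Poincar\'e duality, with the abstract multiplicative pairing on $W$ furnished by Theorem \ref{thm:virtualoperations} (b). This compatibility is forced by the uniqueness-up-to-unique-isomorphism clause in rigidity (Theorem \ref{thm:rigidity}): both pairings lift the standard product on $K_0$ and agree on line bundles (where both are just ordinary tensor product), so they agree up to canonical isomorphism on all of $W$ by the splitting principle, and in particular on the filtration pieces. Once this identification is in place, the filtration shift from $F^1$-times-$F^{\dim X - k}$ into $F^{\dim X - k +1}$ applies directly, completing the proof.
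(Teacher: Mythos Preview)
Your reduction of the general case is fine, but the rank-zero refinement has a genuine gap. The multiplicative pairing of Theorem \ref{thm:virtualoperations}(b) is a pairing
\[
F^i W^{\mc Z}(\mc X) \times F^j W^{\mc Z'}(\mc X) \to F^{i+j} W^{\mc Z \cap \mc Z'}(\mc X)
\]
for a \emph{regular} ambient stack $\mc X$, produced by rigidity on the category $\mathfrak R$ of regular schemes. Your virtual bundle $V$, however, lives on the possibly singular scheme $Z'$; it does not naturally determine a class in $F^1 W(X')$ for a regular $X'\supset Z'$, nor in $F^1 W^{Z'}(X')$. So the sentence ``a virtual vector bundle of rank $0$ defines an object of $F^1 W(Z)$'' is not justified by anything in the paper, and the pairing you invoke is not available in the needed generality. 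The compatibility argument you sketch via rigidity and the splitting principle has the same problem: rigidity (Theorem \ref{thm:rigidity}) controls natural transformations of presheaves on \emph{regular} schemes, and that is exactly the hypothesis you cannot meet here.

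The paper proceeds differently and more concretely. For the first statement it establishes the projection formula in the graded piece $\mc{CH}_k$ by reducing via the splitting principle to the case $V=L$ a line bundle, $Z$ integral of dimension $k$, $\mc F=\calo_Z$, $f$ finite of degree $d$, where the identity $f_*\op{div} f^* s = d\,\op{div} s$ already holds on the level of cycles; compatibility with the preceding corollary then lifts this to $\op{Fil}_k$. For the rank-zero refinement the paper again reduces to $V=L-1$ and then constructs by hand the functor $(L-1)\otimes -:\op{Fil}_k(Z)\to \op{Fil}_{k-1}(Z)$: on objects one chooses a rational section of $L$ and takes the associated divisor (checking independence of the choice), and on automorphisms one invokes the Gillet--Soul\'e pairing $F^1 K_0(X)\times F^{\dim}_k G_1(X)\to F^{\dim}_{k-1} G_1(X)$ between $K$- and $G$-theory, which is exactly the mixed cap-product pairing on a singular scheme that your argument was missing. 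If you want to salvage your approach, that Gillet--Soul\'e input (rather than Theorem \ref{thm:virtualoperations}(b)) is the ingredient you need to cite.
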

\begin{proof}
We prove the formula by noting it is true if we replace the filtration by the virtual category of coherent sheaves and then proving that $f_*(f^* V \otimes \mc F) \simeq V \otimes \mc F$ in $\mc {CH}_k$. The formula then follows from the above since the pushforward and pullbacks in question are compatible with the ones on the level of virtual filtration by dimension by the previous corollary and functoriality of the kernel of an additive morphism. By functoriality of pushforward we can suppose that $Z$ is a $k$-dimensional integral scheme, that $A$ is $\calo_Z$ and $f$ is finite of degree $d$ and by the splitting principle (or a slight modification thereof since the projective bundle formula for Chow groups is slightly different from that of $K$-theory in terms of indices) we can assume that $V$ is a line bundle. In this case the formula reduces to show that for a rational section $s$ of $L$, there is an isomorphism $f_* \op{div} f^* s = f_* f^* (\op{div} s) = d(\op{div} s)$ and the projection formula on the level of $G_1$. These are actual equalities that already holds on the level of cycles (cf. Example 1.7.4 of \cite{intersection}) so that the "isomorphism" is actuality the identity-map. For the second part, we can suppose that $V$ is an actual difference $E - e$ where $E$ is a vector bundle and $e = \op{rk} E$. By the splitting principle we can furthermore suppose $E$ is a sum of line bundles so we can suppose $E = L$ is a line bundle. By the previous argument, it is enough to verify that the two sides are naturally equivalent to the identity equivalence of the 0-functor on $\mc {CH}_k(Z') = \op{Fil}_k(Z')/\op{Fil}_{k-1}(Z')$ and to prove this we start by proving that for any line bundle $L$ of $Z$, the functor $(L-1) \cap - : \op{Fil}_k(Z) \to C(Z)$ has essential image in $\op{Fil}_{k-1}(Z)$. Any object in the source category $\op{Fil}_k (Z)$ can be written as a sum of objects $\sum i^y_* \mc F$ where $i^y_*$ is the pushforward associated to a point $y$ in $Z$ such that $Y :=  \overline {z}$ has $\dim Y \leq k$ and $\mc F$ is a coherent sheaf (with rational coefficients). By already established projection formula $(L-1) \otimes i^z_* (\mc F) \simeq i^z_* ({i^z}^* (L-1) \otimes \mc F)$ and so we see that we must show that ${i^z}^* (L-1) \otimes \mc F$ has essential image in $\op {Fil}_{k-1}(Z)$. We can thus assume $Z = Y$ and $\mc F = \calo_{Z}$. Given a rational section $s$ of $L$ the divisor defines a sheaf whose support is of dimension $k-1$ and thus an element of $\op {Fil}_{k-1}(Z)$. Another choice of rational section defines an isomorphism of the two sheaves and it is obvious the data glues together to an object ${i^y}^* (L-1) \otimes \mc F$ in $\op {Fil}_{k-1}(Z)$. Moreover, by the arguments of the proof of \cite{Filtrations-Gillet-Soule}, Theorem 4, ii), there natural pairing of $K-$ and $G$-theory restricts to a pairing $F^1 K_0(X) \times F^{\op{dim}}_k G_1(X) \to F^{\op{dim}}_{k-1} G_1(X)$ where $F^1 K_0(X)$ denotes the virtual bundles of $K_0(X)$ of rank 0 and $F^{\op{dim}}_{k-1} G_1(X)$ denotes the filtration by dimension of $G_1(X)$. Thus the automorphisms of $F^{\op{dim}}_{k} G_1(X)$ map into the automorphisms of $F^{\op{dim}}_{k-1} G_1(X)$. By passing to quotients we obtain the result.
\end{proof}
\begin{remark} Since the diagram $$\xymatrix{V' \otimes V \otimes Rf_* (\mc F) \ar[r] \ar[d] & V' \otimes Rf_*(Lf^* V \otimes \mc F) \ar[d] \\
Rf_*(Lf^* (V' \otimes V) \otimes \mc F)\ar[r] & Rf_* (Lf^* V' \otimes Lf^* V \otimes \mc F) }$$
is commutative any virtual bundle of the form $(E_1 - e_1)(E_2 - e_2) \ldots (E_m - e_m)$ where each $E_i$ is a vector bundle of rank $e_i$ defines by multiplication a functor $\op{Fil}_k(Z) \to \op{Fil}_{k-m}(Z)$. In the next chapter we establish further properties of this functor.
\end{remark}

\subsection{Chern intersection classes}
In this section we define Chern intersection functors on the rational Chow categories in two fashions; via rigidity and via using the filtration $\op{Fil}_k$ introduced in the previous section. Finally we compare it to the already constructed Chern intersection functors given by Franke in \cite{AAG}. We will suppose $Z$ has the same properties as the in previous section, \ie that $Z$ is a separated scheme of finite type over a regular scheme moreover admitting an ample family of line bundles. For example, any quasi-projective scheme over a regular scheme. \\

Recall that, by the projection formula the space $((\bb Z \times \op{Gr})_\bb Q)^n$ represents the functor sending a regular $X$ to $K_0(\bb P^n_X)_\bb Q$. Indeed, if $p: \bb P^n_X \to X$ is the projection there is the natural isomorphism $$K_0(\bb P^n_X) \simeq \bigoplus_{i=0}^{n-1} \calo(-i) \otimes Lp^* K_0(X).$$
Denote now by $c_1(\calo(j))_i \cap - = c_1(\calo(j)) \cap $ the homomorphism projecting multiplication by $\calo(j)$ on $K_0(X)^{(i)}$ to $K_0(X)^{(i+1)}$ and $c_1(\calo(j))^k$ the iterated homomorphism $k$ times. Then there is a decomposition of Adams-eigenspaces $$K_0(\bb P^n_X)^{(i)} = \bigoplus_{j=0}^{n-1} c_1(\calo(-1))^j \cap Lp^* K_0(X)^{(i-j)}.$$
This is a consequence of \cite{Riemann-RochAlgebra}, Chapter III, Theorem 1.2 and a comparison with the usual first Chern class is well-known. 
\mps{One moreover has the equivalence of functors $c_1(\calo(i+j)) \cap - = c_1(\calo(i)) \cap - + c_1(\calo(j)) \cap -$. In particular from the Segre embedding we deduce that \begin{equation}\label{Chern-additivity} \textbf{c}'_1(\calo_{\bb P^{nm-m+n}}(1)) \cap  = \textbf{c}'_1( p_1^* \calo_{\bb P^{n}}(1)) \cap  +  \textbf{c}'_1(p_2^* \calo_{\bb P^{m}}(1)) \cap \end{equation} canonically, where $p_1$ and $p_2$ are the two projections. } \begin{D} \label{defn:rigidfirstchern} Thus by rigidity we obtain an intersection functor $$\textbf{c}'_1(\calo_{\bb P^n}(1)) \cap -: \mc {CH}^i_{\bb P^n_Z}(\bb P^n_X) \to \mc {CH}^{i+1}_{\bb P^n_Z}(\bb P^n_X).$$ Given a scheme $Z$ in a regular scheme $X$ and a surjection $\calo^{n+1} \stackrel {\phi} \longrightarrow L$ where $L$ is a line bundle on $Z$, there is an induced functor, where we use that $\mc {CH}_{k+n}(\bb P^n_Z) = \mc {CH}^{\dim X - k}_{\bb P^n_Z}(\bb P^n_X)$ and $\mc {CH}_{k+n-1}(\bb P^n_Z) = \mc {CH}^{\dim X - k +1 }_{\bb P^n_Z}(\bb P^n_X)$,
$$\textbf{c}'_1(L,\phi) \cap -: \mc{CH}_k(Z) \stackrel{p^*} \longrightarrow \mc {CH}_{k+n}(\bb P^n_Z) \stackrel{\textbf{c}'_1(\calo_{\bb P^n}(1)) \cap -} \longrightarrow \mc {CH}_{k+n-1}(\bb P^n_Z) \stackrel{s^*} \longrightarrow \mc {CH}_{k-1}(Z) $$
where $s$ is the section to $p: \bb P^n_Z \to Z$ induced by $\phi$. We call it the rigid first Chern class-functor, induced by $\phi$. \end{D}

\begin{D} \label{defn:firstchernclass} The bi-additive functor $\textbf{c}_1(-) \cap -: \hbox{Vect}_Z \times \mc {CH}_i(Z) \to \mc {CH}_{i-1}(Z)$ is defined as follows. For a fixed vector bundle $E$, $\textbf{c}_1(E) \cap -$ is the functor $$\mc {CH}_i(Z) \stackrel{\pi_{n-i}} \longrightarrow  G(Z) \stackrel{E \otimes -} \longrightarrow G(Z) \stackrel{p_{n-i+1}}\longrightarrow \mc {CH}_{i-1}(Z)$$ where the non-trivial functors are given by the natural inclusions and projections. We call it the first Chern class-functor.
\end{D}
\begin{Prop} \label{Prop:Chernfunctorcomparison} The rigid first Chern class-functor does not depend on the particular choice of $X$ and $\phi$ and when $E$ is a line bundle $L$ both can be identified with the induced intersection functor $$(L-1) \cap - : \mc {CH}_i(Z) \to \mc {CH}_{i-1}(Z)$$
from the proof of Proposition \ref{prop:projectionformulafiltration}.
\end{Prop}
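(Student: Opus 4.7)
The plan is to first compare both constructions at the level of $K_0$ with rational coefficients, and then invoke rigidity (Theorem \ref{thm:rigidity}) to lift the comparison to the level of the virtual/Chow categories. First I would observe that the first Chern class functor of Definition \ref{defn:firstchernclass} applied to a line bundle $L$ acts on $K_0(Z)_\bb Q$ as follows: the composition factors as inclusion of the Adams eigenspace $K_0(Z)^{(n-i)}$, multiplication by $L = 1 + (L-1)$, and projection onto $K_0(Z)^{(n-i+1)}$; since $1 \cdot a = a$ stays in the original eigenspace and is killed by the projection, only the $(L-1)$ term survives, yielding multiplication by $L-1$, which lies in $F^1 K_0(Z)_\bb Q$ and shifts the dimension filtration by exactly one step by Theorem \ref{thm:virtualoperations}(b) and the analysis in Proposition \ref{prop:projectionformulafiltration}.

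Next I would compute the rigid first Chern functor of Definition \ref{defn:rigidfirstchern} on $K_0$. The composition $s^{*} \circ (c_1(\calo_{\bb P^n}(1)) \cap -) \circ p^{*}$ sends $a \in K_0(Z)^{(n-i)}$ to $s^{*}\bigl((\calo_{\bb P^n}(1) - 1) \cdot p^{*} a\bigr)$, using the identity $c_1(\calo(1)) = \calo(1) - 1$ modulo higher $\gamma$-filtration on $K_0(\bb P^n_X)$ that falls out of the projective bundle decomposition recalled just before Definition \ref{defn:rigidfirstchern}. Since $s^{*}\calo_{\bb P^n}(1) = L$ via $\phi$, functoriality of pullback in $K_0$ gives $(L-1) \cdot a$. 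In particular the answer is independent of $n$, of the choice of $\phi$ (different surjections $\calo^{n+1} \twoheadrightarrow L$ induce canonically isomorphic identifications $s^{*} \calo(1) \simeq L$), and of the ambient regular $X$ (which plays no role on the right-hand side).

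Having matched both functors with multiplication by $(L-1)$ at the level of $K_0(-)_\bb Q$, and observing that the resulting natural transformation factors through the projection $\op{Fil}_k/\op{Fil}_{k-1} \simeq \mc{CH}_k$ thanks to the second part of Proposition \ref{prop:projectionformulafiltration}, I would then appeal to Theorem \ref{thm:rigidity}: any two lifts of the same natural transformation on the $K_0$-presheaves to functors between the appropriate Picard categories are canonically isomorphic. This simultaneously yields the identification with $(L-1)\cap -$ and the independence of $X$ and $\phi$, since the limiting expression no longer mentions either.

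The main obstacle will be ensuring that rigidity applies in exactly the graded, supported form required, i.e.\ for operations between the Chow-categories $\mc{CH}^{i}_Z(X) = W^Z(X)^{(i)}$ and $\mc{CH}^{i+1}_Z(X) = W^Z(X)^{(i+1)}$. The relevant inputs are provided by Lemma \ref{TL:phi-virtual}, which identifies these categories with the presheaves $\phi(R\Omega^{0}\bb H^{(i)})$, and by Lemma \ref{Prop:presheaftohomotopy}, which governs lifts of natural transformations between products of the $\bb H^{(j)}$; once this translation is in place the comparison becomes formal, and the uniqueness clause in Theorem \ref{thm:rigidity} both rigidifies the isomorphism and forces the independence statement.
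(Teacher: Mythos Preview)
Your $K_0$-level computations are correct, and the identification of $\textbf{c}_1(L)\cap -$ with $(L-1)\cap -$ via $L = 1 + (L-1)$ matches the paper's opening step. The gap is in the appeal to rigidity. Theorem \ref{thm:rigidity} compares functors $V_{\mc X}\to V_{\mc Y}$ that are \emph{natural transformations of fibered categories over $\mathfrak R_S^c$}; it says nothing about a functor between two fixed fibres. The functors $\textbf{c}'_1(L,\phi)\cap -$ and $(L-1)\cap -$ are defined for a fixed triple $(Z,L,\phi)$ and are not values at $(X\setminus Z\to X)$ of natural transformations of the presheaves $\phi(\bb H^{(i)})$: multiplication by a specific line bundle on a specific $Z$ is not functorial in the site, and the section $s$ genuinely depends on $\phi$. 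The obstacle you flag --- whether rigidity covers the graded supported pieces $\bb H^{(i)}$ --- is not the real problem; Lemma \ref{Prop:presheaftohomotopy} already handles that.

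The paper's proof supplies exactly the missing reduction. Using the compatibility of $\textbf{c}_1$ with pullback together with $p\circ s=\op{id}$, one has $s^*\bigl(\textbf{c}_1(\calo(1))\cap p^*A\bigr)=\textbf{c}_1(L)\cap A$, so it suffices to compare $\textbf{c}'_1(\calo(1))$ and $\textbf{c}_1(\calo(1))$ on $\bb P^n_Z$. There the line bundle is universal, the operation is natural in the regular base, and rigidity legitimately applies (the paper checks the cycle-level effect $[\bb P^k_Z]\mapsto [\bb P^{k-1}_Z]$ and then restricts to the case $Z=X$ a Grassmannian). Your argument becomes correct once this reduction to the tautological bundle is inserted; without it the invocation of Theorem \ref{thm:rigidity} is unjustified, and in particular the independence of $\phi$ is not established.
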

\begin{proof}
The second part follows from the fact that $1$ acts trivially on the virtual category. To prove that the functor is not dependent on the choice of $X$ and $\phi$, it is enough to prove that the two functors coincide for $\calo(i)$ over $\bb P^n_Z$ since the first Chern class-functor does not involve either and is compatible with projection formula together with that  $p \circ s = \op{id}$ where $s$ is the section induced by $\phi$ in Definition \ref{defn:rigidfirstchern}. Now, in this case we can describe both intersection functors explicitly. The rigid functor is characterized by acting on the class $[\bb P^k_Z]$ where $\bb P^k_Z \subseteq \bb P^n_Z$ is a linear subspace by restricting to $\bb P^{k-1}_Z = \bb P^{k}_Z \setminus \bb A^k_Z$ on the level of Chow groups. But the same is true for multiplication by $(L-1)$ by the description in the proof of Proposition \ref{prop:projectionformulafiltration} and it is functorial with respect to basechange. This obviously also holds when restricting to the case $Z = X$ and $X$ a Grassmannian and thus by rigidity they must coincide in general.

\end{proof}

\begin{T} \label{thm:firstchernclassproperties} Let $Z$ be a scheme as in the beginning of this section and $A$ an object of $\mc{CH}_k(Z)$. The Chern intersection functors satisfy the following properties.
\begin{description}
  \item[(i)] [Projection formula] Suppose $f: Z \to Z'$ is a proper morphism of schemes embeddable into regular schemes, and $L$ is a line bundle on $Z'$, there is a functorial isomorphism of functors in $\mc{CH}_{k-1}(Z')$
      $$f_*(\textbf{c}_1(f^* L) \cap A) \simeq \textbf{c}_1(L) \cap f_* A.$$
  \item[(ii)] [Additivity] If $L$ and $M$ are line bundles on $Z$, there is a canonical isomorphism
  $$\textbf{c}_1(L \otimes M) \cap - \simeq \textbf{c}_1(L) \cap A + \textbf{c}_1(M) \cap A$$
  which is commutative.
  \item[(iii)] [Commutativity] If $L$ and $M$ are line bundles on $Z$, then there is a canonical isomorphism
  $$\textbf{c}_1(L) \cap (\textbf{c}_1(M) \cap -) \simeq \textbf{c}_1(M) \cap (\textbf{c}_1(L) \cap A) $$
  functorial in $A$.

\end{description}
\end{T}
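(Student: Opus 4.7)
The plan is to reduce all three statements to properties of the tensor product functor $(L-1)\otimes -$ acting on $\op{Fil}_\bullet(Z)$, using Proposition \ref{Prop:Chernfunctorcomparison} to identify $\mathbf{c}_1(L)\cap -$ with the functor $(L-1)\cap -$ coming from the proof of Proposition \ref{prop:projectionformulafiltration}, and then reading off the desired isomorphisms after passing to the quotient Picard category $\mc{CH}_{k-1}(Z) = \op{Fil}_{k-1}(Z)/\op{Fil}_{k-2}(Z)$ via Definition \ref{defn:filbydimension}.

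For (i), I would start in the virtual category of coherent sheaves where the projection formula of Proposition \ref{prop:projectionformulafiltration} directly provides an isomorphism $(L-1)\otimes Rf_*(\mc F)\simeq Rf_*(Lf^*(L-1)\otimes \mc F) = Rf_*((f^*L-1)\otimes \mc F)$ inside $\op{Fil}_{k-1}(Z')$ for any $\mc F\in\op{Fil}_k(Z)$, since $L-1$ has rank $0$. Passing to the quotient $\mc{CH}_{k-1}(Z')$ and using that $Rf_*$ on filtrations induces $f_*$ on Chow categories (the corollary preceding Proposition \ref{prop:projectionformulafiltration}), this is the sought-for natural isomorphism. Naturality and coherence follow from the second half of Proposition \ref{prop:projectionformulafiltration}, i.e.\ the explicit commutative diagram between composed projection formulas.

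For (ii), I would exploit the identity in the virtual category
\[
L\otimes M - 1 = (L-1) + (M-1) + (L-1)(M-1),
\]
which is an isomorphism in $\op{Fil}_1 W$ by the argument at the end of Section \ref{section:rigidity} (or, equivalently, by \cite{determinant}, (9.7.8)). Tensoring with $A\in\op{Fil}_k(Z)$ and applying the remark following Proposition \ref{prop:projectionformulafiltration} (iteration of the rank-$0$ case of the projection formula), the term $(L-1)(M-1)\otimes A$ lies in $\op{Fil}_{k-2}(Z)$. Hence its image in the quotient $\mc{CH}_{k-1}(Z) = \op{Fil}_{k-1}(Z)/\op{Fil}_{k-2}(Z)$ is canonically zero, yielding the additivity isomorphism. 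The commutativity constraint on this isomorphism is inherited from the symmetry of the virtual tensor product $L\otimes M \simeq M\otimes L$ and the symmetry of the decomposition above in $L$ and $M$.

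For (iii), I would simply invoke the symmetry isomorphism $(L-1)\otimes(M-1)\otimes A \simeq (M-1)\otimes(L-1)\otimes A$ in the virtual category of coherent sheaves, which is an isomorphism in $\op{Fil}_{k-2}(Z)$, and then project to $\mc{CH}_{k-2}(Z)$; this is the content of the commutativity statement once one has identified $\mathbf{c}_1(L)\cap(\mathbf{c}_1(M)\cap A)$ with the class of $(L-1)(M-1)\otimes A$ modulo $\op{Fil}_{k-3}$. The only delicate point — and the place I expect to have to be most careful — is verifying that these isomorphisms at the level of $\op{Fil}_\bullet$ descend canonically to the cokernel Picard categories $\mc{CH}_\bullet$ defined via Definition \ref{defn:determinantfunctorimage}, and that the coherence diagrams (associativity, and the Koszul-type sign compatibility at the end of Section \ref{section:splittingprinciple}) are satisfied; this is handled by choosing everything at the level of virtual bundles of rank $0$ where the pairing on Adams eigenspaces is strictly bilinear by the construction in Theorem \ref{thm:virtualoperations} (b), so no signs intervene.
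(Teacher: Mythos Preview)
Your arguments for (i) and (ii) are essentially the paper's own: reduce to the identification $\mathbf{c}_1(L)\cap - \simeq (L-1)\cap -$ via Proposition~\ref{Prop:Chernfunctorcomparison}, use the projection formula of Proposition~\ref{prop:projectionformulafiltration}, and for additivity expand $LM-1=(L-1)+(M-1)+(L-1)(M-1)$ with the last term landing in $\op{Fil}_{k-2}$. The paper also notes that inverting $2$ is what makes the commutativity constraint canonical.

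For (iii) the two proofs genuinely diverge. You argue abstractly: identify $\mathbf{c}_1(L)\cap(\mathbf{c}_1(M)\cap A)$ with the image of $(L-1)(M-1)\otimes A$ in $\mc{CH}_{k-2}(Z)$, and then invoke the symmetry isomorphism $(L-1)\otimes(M-1)\simeq(M-1)\otimes(L-1)$, which is canonical because the rational virtual category is strictly commutative. The paper instead first uses (i) to reduce to $A$ the unit, then picks rational sections $l,m$ of $L,M$ and appeals to the classical computation in Fulton's \emph{Intersection Theory}, Theorem~2.4, that $\op{div} l\cdot\op{div} m=\op{div} m\cdot\op{div} l$ as cycles when the divisors meet properly, together with independence of the chosen sections. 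Your route is shorter and stays entirely inside the categorical framework already built; the paper's route has the advantage of making the agreement with the classical cycle-level commutativity explicit, which feeds into the later comparison with Franke's Chern functors. Your identification of the iterated Chern class with $(L-1)(M-1)\otimes A$ modulo $\op{Fil}_{k-3}$ is correct, since $(L-1)\otimes -$ sends $\op{Fil}_{k-2}$ into $\op{Fil}_{k-3}$ (the remark after Proposition~\ref{prop:projectionformulafiltration}), so the intermediate passage through $\mc{CH}_{k-1}$ does not obstruct the composition.
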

\begin{proof} For $(i)$, it follows by Proposition \ref{prop:projectionformulafiltration} and Proposition \ref{Prop:Chernfunctorcomparison}. Thus by $(i)$we can assume that $A$ is the unit object for the other properties. For $(ii)$, the isomorphism follows from the the fact that $(L-1)\otimes (M-1) \cap -$ defines a functor $\op{Fil}_k(Z) \to \op{Fil}_{k-2}(Z)$ and the isomorphism $LM - L - M = (LM - 1) - (L-1) - (M-1) = (L-1)\otimes (M-1)$ which is canonical by the fact that we have inverted two (cf. \cite{determinant}, 9.7.4). The commutativity follows from the isomorphism $(L-1) \otimes (M-1) \simeq (M-1) \otimes (L-1)$ which is canonical in view of that we work with rational coefficients and in particular having inverted $2$. Finally for $(iii)$, suppose first that we are given two rational section $l$ (resp. $m$) of $L$ (resp. $M$) so that $L$ (resp. $M$) is isomorphic to $\calo(\op{div} l)$ (resp. $\calo(\op{div} m))$. If they are given by irreducible effective divisors which meet properly so that their scheme-theoretic intersection is of codimension 2 they define the same element in $\mc{CH}_{k-2}(Z)$ by the calculation in \cite{intersection}, Theorem 2.4 and the general case done in ibid. Given another set of rational sections $l'$ and $m'$ the two objects are isomorphic as is also easily verified and we conclude.  
\end{proof}

\begin{TL} \label{lemma:firstchernclass-det} There is a canonical isomorphism $\textbf{c}_1(E) \cap - \simeq \textbf{c}_1(\det E) \cap -$ compatible with, for an exact sequence of vector bundles $0 \to E' \to E \to E'' \to 0$,  $$\textbf{c}_1(\det E' \otimes \det E'') \cap - = \textbf{c}_1(\det E') \cap - \oplus \textbf{c}_1(\det E'') \cap -$$ induced by the canonical isomorphism $\det E \simeq \det E' \otimes \det E''$.
\end{TL}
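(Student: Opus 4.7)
The plan is to compare $\textbf{c}_1(E)\cap -$ and $\textbf{c}_1(\det E)\cap -$ by reducing both to multiplication by the same weight-one class. First I would unfold the definition of $\textbf{c}_1(E)\cap A$: lifting $A\in\mc{CH}_i(Z)$ to an object of $\op{Fil}_i(Z)\subset W(Z)_\bb Q$, the functor returns the image of $E\otimes A$ in $\mc{CH}_{i-1}(Z)=\op{Fil}_{i-1}(Z)/\op{Fil}_{i-2}(Z)$. Decomposing $E\otimes A=(\op{rk} E)\cdot A+(E-\op{rk} E)\otimes A$, the first summand remains in $\op{Fil}_i(Z)$ and is killed by the projection (by the pairing properties in Theorem \ref{thm:virtualoperations}(b) combined with the fact that $(E-\op{rk} E)\otimes -$ shifts filtration by one, as in Proposition \ref{prop:projectionformulafiltration}); hence $\textbf{c}_1(E)\cap A$ is canonically the class of $(E-\op{rk} E)\otimes A$, and similarly $\textbf{c}_1(\det E)\cap A$ is the class of $(\det E-1)\otimes A$.

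Next I would construct the comparison isomorphism via the splitting principle. Pulling back along the complete flag variety $p\from Y\to Z$ of $E$, the bundle $p^*E$ acquires a filtration with line-bundle quotients $L_1,\ldots,L_n$. By additivity of $\textbf{c}_1$ in short exact sequences (which is immediate from bi-additivity of the tensor product), one gets a canonical isomorphism $\textbf{c}_1(p^*E)\cap p^*A\simeq\bigoplus_{j=1}^n\textbf{c}_1(L_j)\cap p^*A$. On the other side, $\det(p^*E)\simeq L_1\otimes\cdots\otimes L_n$, and iterated application of Theorem \ref{thm:firstchernclassproperties}(ii) yields $\textbf{c}_1(\det(p^*E))\cap p^*A\simeq\bigoplus_{j=1}^n\textbf{c}_1(L_j)\cap p^*A$. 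Composing these gives the desired isomorphism on $Y$. To descend to $Z$ via Theorem \ref{thm:splittingprinciple}, I would verify the criterion $p_1^*=p_2^*$ on $Y\times_Z Y$, which amounts to showing independence of the ordering of the $L_j$: this follows from the commutativity statement of Theorem \ref{thm:firstchernclassproperties}(iii), together with the symmetry of the tensor product $L_1\otimes\cdots\otimes L_n$ through the associativity and commutativity of $\textbf{c}_1(L\otimes M)\simeq\textbf{c}_1(L)+\textbf{c}_1(M)$ from Theorem \ref{thm:firstchernclassproperties}(ii).

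For compatibility with a short exact sequence $0\to E'\to E\to E''\to 0$, I would refine the construction by taking a flag variety $p\from Y\to Z$ adapted to $E'\subset E$, so that the line-bundle quotients decompose as the flag of $E'$ followed by the flag of $E''$. After pullback to $Y$, both $\textbf{c}_1(E)\cap -$ and $\textbf{c}_1(\det E)\cap -$ break up as the ordered sum of the contributions from the two blocks, and the already established decompositions $\textbf{c}_1(E)\cap -\simeq\textbf{c}_1(E')\cap -\oplus\textbf{c}_1(E'')\cap -$ and $\textbf{c}_1(\det E)\cap -\simeq\textbf{c}_1(\det E')\cap -\oplus\textbf{c}_1(\det E'')\cap -$ make the required square commute by inspection on line bundles. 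The splitting principle then transports commutativity to $Z$.

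The main technical obstacle is the descent step: verifying that the isomorphism built on $Y$ is independent of the chosen flag, equivalently that the two pullbacks to $Y\times_Z Y$ agree. This reduces to reconciling two different orderings of the line-bundle factors via repeated application of Theorem \ref{thm:firstchernclassproperties}(ii) and (iii), which one does by induction on adjacent transpositions, using that both the sum $\bigoplus\textbf{c}_1(L_j)$ and the tensor product $L_1\otimes\cdots\otimes L_n$ are symmetric in a coherent fashion thanks to the rational coefficients (cf. the use of inverting $2$ in the proof of Theorem \ref{thm:firstchernclassproperties}).
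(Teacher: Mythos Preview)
Your overall strategy---pull back to the complete flag variety, write both functors as $\bigoplus_j \textbf{c}_1(L_j)\cap-$ using additivity, then descend via Theorem~\ref{thm:splittingprinciple}---matches the paper's. The gap is in the descent step. You claim that checking $p_1^*(f)=p_2^*(f)$ on $Y\times_Z Y$ ``reduces to reconciling two different orderings of the line-bundle factors,'' but this is not what is at stake: the two projections equip $r^*E$ with two \emph{different} complete flags whose graded quotients $L_1^{(1)},\dots,L_n^{(1)}$ and $L_1^{(2)},\dots,L_n^{(2)}$ are in general non-isomorphic line bundles, not a permutation of one another. So the commutativity isomorphisms of Theorem~\ref{thm:firstchernclassproperties}(ii)--(iii), which shuffle a fixed list of line bundles, do not by themselves compare the two composites. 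Your induction on adjacent transpositions never gets off the ground.

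The paper handles this differently. After using the projection formula to reduce to $A=\calo_Z$ with $Z$ equidimensional of dimension $k$, the discrepancy between the two flag-built isomorphisms is an automorphism in $\mc{CH}_{k-1}$, stable under base change. Restricting to the generic point (flat, fully faithful) reduces to $Z=\spec K$ for a field $K$, where the automorphism group is $K^*$. The point is then that the complete flag variety of $E$ is \emph{proper} over $\spec K$, so its global units are exactly $K^*$; hence the defect, a priori a unit on the flag variety, is already pulled back from $K$ and the isomorphism descends. This properness argument is what replaces your reordering claim, and it is what you are missing.
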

\begin{proof} The proof of the corresponding theorem in "Chern Functors" in \cite{AAG}, 1.13.2 carries over to this situation. To define the isomorphism we can use any complete flag on $E$ and Theorem \ref{thm:firstchernclassproperties}, $(ii)$. We need to verify that this is not dependent on the choice of flag. By the projection formula we can assume that the virtual coherent sheaf $A$ is the trivial sheaf. In this case the default of the required commutativity defines an element of $\op{Aut}(\mc{CH}_{k-1}(Z))$ for $Z$ equidimensional of dimension $k$, stable under basechange. Restricting to the generic point of $Z$ is flat and fully faithful and by pullback we can furthermore suppose $Z = \spec K$ for a field $K$ so that the element in question is an element of $K^*$. Since the complete flag variety is proper over $K$ the field of global invertible functions identifies with $K^*$ and thus the isomorphism necessarily descends.
\end{proof}
\begin{Prop} Suppose that $X$ is regular. The construction of the first Chern intersection functor here is naturally equivalent with that of \cite{ChowCT} under the the (trivial) equivalence of categories in Proposition \ref{prop:ChowCatequiv} (b).
\end{Prop}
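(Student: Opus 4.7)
The plan is to reduce the comparison to the case of line bundles via the splitting principle and then to a universal example where both constructions can be written down explicitly.

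First I would note that both the first Chern intersection functor constructed here (Definition \ref{defn:firstchernclass}) and Franke's construction in \cite{ChowCT} are bi-additive in the vector bundle argument (Theorem \ref{thm:firstchernclassproperties}(ii) and its analog in \loccit), compatible with projection formula (Theorem \ref{thm:firstchernclassproperties}(i)) and with flat pullback by construction. Moreover, by Lemma \ref{lemma:firstchernclass-det} applied to both sides, one has a canonical isomorphism $\textbf{c}_1(E) \cap - \simeq \textbf{c}_1(\det E) \cap -$ for each. Consequently, for every vector bundle $E$ the comparison reduces to comparing $\textbf{c}_1(L) \cap -$ for line bundles $L$ in both theories.

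Next I would invoke Proposition \ref{Prop:Chernfunctorcomparison}, which identifies our $\textbf{c}_1(L) \cap -$ with the rigid first Chern class functor of Definition \ref{defn:rigidfirstchern}, defined universally via rigidity (Theorem \ref{thm:rigidity}) applied to the projective bundle model of $K$-theory. By rigidity, any natural endo-transformation of $\mc {CH}^{i}(-) \to \mc {CH}^{i+1}(-)$ on regular schemes that lifts the usual first Chern class action on $K_0(-)_{\bb Q}^{(\bullet)}$ is determined up to unique natural isomorphism. Franke's first Chern functor in \cite{ChowCT} also lifts the usual $K_0$-level first Chern class and is natural with respect to flat pullback, so it corresponds under the equivalence $\Psi$ of Proposition \ref{prop:ChowCatequiv}(b) to such a lift. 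Hence by rigidity the two functors on $\mc{CH}^i_Z(X) = W^Z(X)^{(i)}$ are canonically naturally isomorphic.

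Finally I would verify that the canonical isomorphism thus produced is indeed the one induced by $\Psi$, i.e.\ that it intertwines the two Chern intersection operations rather than differs by an automorphism. For this I would restrict to the universal example of $\calo_{\bb P^n}(1)$ acting on $[\bb P^k_X] \in \mc {CH}_k(\bb P^n_X)$, where both constructions visibly send this class to $[\bb P^{k-1}_X]$ (cf.\ the description in the proof of Proposition \ref{Prop:Chernfunctorcomparison} and Example 1.7.4 of \cite{intersection}); then by rigidity and the projective bundle formula this determines the natural transformation uniquely.

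The main obstacle I expect is the last step, namely checking that at the level of morphisms (not only objects) the two constructions agree. Our morphisms in $\mc {CH}_k$ arise from the tame-symbol contribution in $K_1$ via the niveau spectral sequence, whereas Franke's are defined in similar but not literally identical terms; matching them requires carefully tracing the edge map of the coniveau spectral sequence used in Proposition \ref{prop:ChowCatequiv}(b) and confirming that multiplication by $L-1$ in the filtered virtual category corresponds, under $\Psi$, to Franke's $c_1(L) \cap -$ on $K_1$-classes. Once this compatibility is verified in one universal line-bundle case, the splitting principle together with rigidity and the already-established projection formula on both sides promote it to the general statement.
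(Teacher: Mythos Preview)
Your overall strategy---reduce to line bundles via Lemma \ref{lemma:firstchernclass-det}, then to a universal situation---matches the paper's. The difference is in how you close the argument, and there is a gap at that point.

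Your appeal to rigidity is not quite licensed. Theorem \ref{thm:rigidity} provides a \emph{section} of the map $\Hom_f(V_\mc X, V_\mc Y) \to \Hom_{\mathfrak R^{op}\op{Set}}(\phi\mc X,\phi\mc Y)$ and shows that automorphisms of a given functor are trivial; it does not assert that this map is injective on isomorphism classes. So knowing that Franke's functor (transported via $\Psi$) has the same $\pi_0$-image as the rigid lift does not by itself force an isomorphism between them. You correctly flag the $K_1$-level comparison as the obstacle, but your proposed fix (``verify in one universal line-bundle case, then promote'') is not an argument: you would still need to produce and check an explicit isomorphism of functors on morphisms, which is precisely what you have not done.

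The paper sidesteps this entirely. After reducing to $L$ a line bundle, it uses Jouanolou--Thomason and homotopy invariance to assume $X$ affine, then a surjection $\calo^{n+1}\twoheadrightarrow L$ to embed faithfully into $\bb P^n_X$ and reduce to $L=\calo(1)$; by the projection formula it suffices to compare on the trivial bundle. Since $X$ is integral it is flat over either $\spec\bb Z$ or $\spec\bb F_p$, so by flat pullback one is reduced to $\bb P^n_{\bb Z}$ or $\bb P^n_{\bb F_p}$. The point is that for these arithmetic bases the categories $\mc{CH}_k(\bb P^n)$ have \emph{no non-trivial automorphisms}, so agreement of the two functors on isomorphism classes---which is just the classical first Chern class and is known---is already agreement of functors. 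This kills the $K_1$-issue by choosing a base where $K_1$ contributes nothing, rather than by comparing $K_1$-actions directly.
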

\begin{proof} This is completely formal and we will only recall the main properties of the other construction of the first Chern intersection functors needed for the comparison. Without loss of generality we can assume $X$ is connected. By Lemma \ref{lemma:firstchernclass-det} we can suppose the vector bundle $E$ is given by a line bundle $L$. By Corollary \ref{Cor:homotopyinvariance} and Jouanolou-Thomason (cf. \cite{Weibel}, Proposition 4.4) we can assume that $X$ is moreover affine. Then we can suppose we are given a surjection $\calo^{n+1} \twoheadrightarrow L$ for some $n$ and the induced pullback to $\bb P^n_X$ is faithful so that we can suppose $L = \calo(1)$. Moreover by the projective bundle formula and the projection formula we only need to compare the two functors evaluated on the trivial bundle. Since $X$ is integral it is either flat over $\spec \bb F_p$ or $\spec \bb Z$. It thus pulls back from the arithmetic situation via the natural map (cf. Remark \ref{remark:fiberedPicard-flat}). In this case the equality is tautological since $\mc {CH}_k(\bb P^n_\bb Z)$ and $\mc {CH}_k(\bb P^n_{\bb F_p})$ have no non-trivial automorphisms and the two operations coincide with the usual Chern intersection classes on the level of isomorphism classes.
\end{proof}
This also proves that the constructions coincide with those of \cite{Elkik}, since they coincide by a remark in \cite{AAG}. The method of "Chern Functors" in \ibid, 1.13.2, reminiscent of Grothendieck's construction of Chern classes using the projective bundle formula for Chow groups, provides the construction of functors $\textbf{c}_j(E) \cap -:  \mc {CH}_i(Z) \to \mc {CH}_{i-j}(Z)$. By unicity of Chern functors (ibid.) we obtain the following:
\begin{Cor} There are natural Chern intersection-functors, for any vector bundle $E$, $$\textbf{c}_j(E) \cap -:  \mc {CH}_i(Z) \to \mc {CH}_{i-j}(Z)$$
satisfying, for a short exact sequence $0 \to E' \to E \to E'' \to 0$ we have an equivalence of functors
$$\textbf{c}_j(E) \cap - \simeq \oplus_{i = 0}^j \textbf{c}_i(E') \cap \textbf{c}_{j-i}(E'') \cap -  $$
and they are isomorphic to the functors in "Chern Functors" in \cite{AAG}, 1.13.2.
\end{Cor}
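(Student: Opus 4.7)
The plan is to imitate Grothendieck's construction of higher Chern classes via the projective bundle formula, transplanted into the categorical setting of rational Chow categories, and then to invoke the unicity result of Franke (``Chern Functors'' in \cite{AAG}, 1.13.2) to identify our construction with his.

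First I would fix a vector bundle $E$ of rank $r+1$ on $Z$ and consider the projective bundle $p : \bb P(E) \to Z$ together with the tautological line bundle $\calo_{\bb P(E)}(1)$. Using the rigid first Chern class functor $\textbf{c}_1(\calo(1)) \cap -$ from Definition \ref{defn:rigidfirstchern}, together with flat pullback $p^*$ and the projective bundle formula (which in the $K$-cohomological incarnation is packaged into Theorem \ref{thm:BGLfil} and its consequences), I would establish a categorical projective bundle equivalence
$$\bigoplus_{k=0}^{r} \mc{CH}_{i+k}(Z) \xrightarrow{\,\sim\,} \mc{CH}_{i+r}(\bb P(E)),\qquad (A_0,\ldots,A_r)\mapsto \bigoplus_{k=0}^{r}\textbf{c}_1(\calo(1))^{k}\cap p^*A_{r-k}.$$
This decomposition is the core geometric input; the hard part will be verifying that the canonical relation
$$\sum_{k=0}^{r+1}(-1)^{k}\,\textbf{c}_1(\calo(1))^{r+1-k}\cap p^*\bigl(\textbf{c}_k(E)\cap -\bigr)\;\simeq\;0$$
holds canonically in a way functorial under base change, since this is what lets us define $\textbf{c}_k(E)\cap -$ as the unique component extracted from $\textbf{c}_1(\calo(1))^{r+1}\cap p^*(-)$ against the above decomposition. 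Here $\textbf{c}_0(E)\cap - = \op{id}$ by convention.

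With the definition in place, the Whitney formula for a short exact sequence $0\to E'\to E\to E''\to 0$ is obtained along classical lines: one considers the flag variety parametrizing sub-line bundles of $E'$ extending to sub-line bundles of $E$, applies Theorem \ref{thm:firstchernclassproperties}(ii)(iii) to trivialize Chern classes of line bundles, and collates the contributions from $E'$ and $E''$. The faithfulness of flag-variety pullback (the splitting principle in the form used in the proof of Lemma \ref{lemma:firstchernclass-det}) reduces the statement to the case where $E$, $E'$, $E''$ are all sums of line bundles, where the identity
$$\prod_{j}(1+\textbf{c}_1(L_j)\cap -)=\prod_{j'}(1+\textbf{c}_1(L'_{j'})\cap -)\cdot\prod_{j''}(1+\textbf{c}_1(L''_{j''})\cap -)$$
is tautological by the additivity and commutativity of first Chern classes.

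Finally, for the comparison with the construction in ``Chern Functors'' in \cite{AAG}, 1.13.2, I would appeal to the unicity theorem there: both constructions are families of functors $\mc{CH}_i(Z)\to\mc{CH}_{i-j}(Z)$, natural with respect to flat pullback and proper pushforward, satisfying the projection formula (via Theorem \ref{thm:firstchernclassproperties}(i) extended from $\textbf{c}_1$ to $\textbf{c}_j$ by the defining polynomial relation), the Whitney sum formula just verified, and coinciding on line bundles (where both reduce to the first Chern functor, which was already compared in the preceding proposition). Unicity then supplies the canonical equivalence with Franke's construction. The main obstacle I anticipate is bookkeeping: making sure that the canonical isomorphisms produced by the projective bundle decomposition are strictly natural under base change and composition, rather than merely existing up to non-canonical isomorphism, but this is handled uniformly by the rigidity Theorem \ref{thm:rigidity} applied to the relevant homotopy-classes of maps.
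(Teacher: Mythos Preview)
Your proposal is correct and follows essentially the same route as the paper. The paper's argument is extremely terse: it simply observes that Franke's method in ``Chern Functors'' (\cite{AAG}, 1.13.2), which is exactly the Grothendieck projective-bundle construction you spell out, produces the functors $\textbf{c}_j(E)\cap -$ on the rational Chow categories, and then invokes Franke's unicity theorem to obtain both the Whitney formula and the identification with his functors. Your proposal just unpacks what that citation entails, including the projective-bundle decomposition, the splitting-principle reduction for Whitney, and the appeal to unicity once the first Chern functors have been matched (which was done in the preceding Proposition).
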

\begin{Cor} The Chern intersection functors coincide with the functors determined by $\gamma$-operations, \ie the functor
$$\textbf{c}_j(E) \cap -:  \mc {CH}_i(Z) \to \mc {CH}_{i-j}(Z)$$
is canonically equivalent to the functor
$$\gamma^j({E - \rk E}) \cap -: F_i(Z)/F_{i-1}(Z) \to F_{i-j}(Z)/F_{i-1-j}(Z)$$
determined for $j = 1$ or $j = \rk E$ in general and for other $j$ by Corollary \ref{Cor:gamma} whenever $Z$ is regular.
\end{Cor}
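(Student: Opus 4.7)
The plan is to apply the splitting principle to reduce to a sum of line bundles, compute both sides explicitly there, and conclude by the uniqueness of Whitney-additive intersection functors.

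First, I would check that both expressions are genuine functors on the associated gradeds of the filtration by dimension. The Chern side has already been constructed. For the $\gamma$-side, since $E - \op{rk} E$ has rank zero, Corollary~\ref{Cor:gamma} places $\gamma^j(E - \op{rk} E)$ in $F^j W$, and the pairing of Theorem~\ref{thm:virtualoperations} together with the projection-type formula of Proposition~\ref{prop:projectionformulafiltration} makes multiplication by this element a well-defined functor
$$F_i(Z)/F_{i-1}(Z) \longrightarrow F_{i-j}(Z)/F_{i-j-1}(Z).$$
For $j = 1$ no regularity is needed: $\gamma^1(E - \op{rk} E) = E - \op{rk} E$ matches $\textbf{c}_1(E) \cap -$ via Lemma~\ref{lemma:firstchernclass-det} and Proposition~\ref{Prop:Chernfunctorcomparison}. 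For $j = \op{rk} E = n$, Corollary~\ref{Cor:gamma} gives the canonical identification $\gamma^n(E - n) = (-1)^n \lambda_{-1}(E)$, from which the top Chern class identification follows, since the top Chern class is classically represented by $(-1)^n \lambda_{-1}(E)$ on any scheme.

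For general $j$ (which requires $Z$ regular so that Corollary~\ref{Cor:gamma} applies), I would use that both families are Whitney-additive: $\textbf{c}_j$ by the preceding corollary (following Franke), and $\gamma^j$ by the additivity equation~(\ref{eq:gamma-additivity}) of Corollary~\ref{Cor:gamma}. By the splitting principle (Theorem~\ref{thm:splittingprinciple}), both sides are then determined up to canonical isomorphism by their values on line bundles. On a line bundle $L$, $\gamma^j(L-1) = 0$ for $j \geq 2$ (Corollary~\ref{Cor:gamma}), matching $\textbf{c}_j(L) = 0$; and $\gamma^1(L-1) = L-1$ corresponds under Proposition~\ref{Prop:Chernfunctorcomparison} to $\textbf{c}_1(L) \cap -$. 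The splitting principle then propagates the comparison to general $E$: on a direct sum $L_1 \oplus \cdots \oplus L_n$ both sides collapse to the elementary symmetric expression $\sum_{i_1 < \cdots < i_j} (L_{i_1} - 1) \otimes \cdots \otimes (L_{i_j} - 1)$ acting on Chow classes, giving a canonical identification on the flag bundle that descends by Theorem~\ref{thm:splittingprinciple}.

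The main obstacle will be verifying that the induced natural transformation is independent of the choice of complete flag used to invoke splitting. This should be handled in the spirit of Lemma~\ref{lemma:firstchernclass-det}: any ambiguity between two such identifications defines an automorphism of the target Chow category which becomes trivial on the generic point of an equidimensional component of $Z$, and properness of the flag variety over the resulting field forces it to descend as the identity. A secondary technical point is the compatibility of the $\gamma$-operations with the various pullbacks implicit in the projection-formula reductions, which is guaranteed by the rigidity statement of Theorem~\ref{thm:rigidity} combined with the naturality in Theorem~\ref{thm:virtualoperations}.
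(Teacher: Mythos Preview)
Your proposal is correct and follows essentially the same approach as the paper, which gives only the one-line proof ``This follows from the splitting principle and the fact that they coincide for line bundles.'' You have simply unpacked that line: Whitney additivity on both sides, the explicit line-bundle computation, and descent via Theorem~\ref{thm:splittingprinciple} are exactly what the paper is invoking implicitly, and your discussion of flag-independence and pullback-compatibility spells out details the paper takes for granted.
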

\begin{proof} This follows from the splitting principle and the fact that that they coincide for line bundles.
\end{proof}
\appendix
\section{$\bb A^1$-homotopy theory of schemes}  \label{B}

This section is to recall some necessary results and and to fix some notation. In what follows we have but slight extensions of the theorems in the reference-list, and we hope the reader agrees that not spelling out the proofs does not cause any harm. One word of warning though, we have almost completely ignored issues related to smallness of categories. This can be amended by inserting the word "universe" at the appropriate places.

%
%
Denote by $\Delta$ the category of
totally ordered finite sets and monotonic maps. Hence, the objects
are the finite sets $[n] = \{ 0 < 1 < 2 < \ldots < n \}$ and the
morphisms of $\Delta$ are generated by the maps

$$\delta_i: [n-1] \to [n],  \hbox{ defined by } \delta_i(j) = \bigg \{
  \begin{array}{ll}
      j,  & \hbox{if } j < i \\
      j + 1, & \hbox{if } j \geq i
  \end{array}  $$ and
$$\sigma_i: [n] \to [n-1],  \hbox{ defined by }  \sigma_i(j) =  \bigg \{
  \begin{array}{ll}
      j,  & \hbox{if } j \leq i \\
      j - 1, & \hbox{if }  j > i
  \end{array} $$

These maps are the face resp. the degeneracy-maps, and
satisfy the usual simplicial relationships (\cite{SimplicialHT}, chapter 1). If $\mc C$ is any category, we denote by $s \mc C$ or
$\Delta^{\operatorname{op}} \mc C$ the category of simplicial objects of $\mc C$, \ie the category whose objects are functors
$\Delta^{\operatorname{op}} \rightarrow \mc C$, and morphisms
are natural transformations of functors. \\
Let $T$ be a site, and denote by $\operatorname{\mathbf{Shv}}(T)$
the category of sheaves of sets on $T$, and
$\Delta^{\operatorname{op}}\operatorname{\mathbf{Shv}}(T)$ the
category of simplicial sheaves. Note that if we are given a
simplicial set $E$, we can associate to it the constant simplicial
sheaf, which we also denote by $E$, and thus we obtain a functor
$$\Delta^{\operatorname{op}} \operatorname{Set} \stackrel{\small{\hbox{constant}}}{\longrightarrow} \Delta^{\operatorname{op}} \mathbf{Shv} (T).$$
The standard $n$-simplices $\Delta^n$ define thus by the Yoneda
lemma a cosimplicial object

$$\xymatrix{
\Delta & \stackrel{{\Delta^\bullet}}{\longrightarrow} & \Delta^{\operatorname{op}}\operatorname{\mathbf{Shv}}(T) \\
n & \mapsto &\Delta^n}$$

and we give the category
$\Delta^{\operatorname{op}}\operatorname{\mathbf{Shv}}(T)$ the
structure of a simplicial category with a simplicial function object
$\fhom(-,-)$ given by
$$\fhom(\mc X, \mc Y) := \Hom_{\Delta^{\operatorname{op}} \operatorname{\mathbf{Shv}}(T)}(\mc X \times \Delta^\bullet, \mc Y).$$
Before continuing, we recall the fundamental lemma of homotopical
algebra \begin{T} \label{thm:fundhomotopicalalgebra}[\cite{SimplicialHT}, II.3.10] Let $\mc C$ be a closed simplicial model category with
associated homotopy-category $\mc H$, and $\mc X, \mc Y \in
\op{ob}(\mc C)$. Suppose furthermore that $\mc X' \to X$ is a
trivial fibration with $\mc X'$ cofibrant and $\mc Y \to \mc Y'$ is
a trivial cofibration with $\mc Y'$ fibrant. Then we have a natural
identification
$$\hom_\mc H(\mc X, \mc Y) = \pi_0(\fhom(\mc X', \mc Y')).$$
\end{T}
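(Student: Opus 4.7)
The plan is to identify both sides with the set of simplicial homotopy classes of maps $\mc X' \to \mc Y'$, using axiom SM7 of a simplicial model category together with the standard description of morphisms in the homotopy category in terms of cylinder/path object homotopies.

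First I would invoke SM7 (the pushout-product axiom) to note that since $\mc X'$ is cofibrant and $\mc Y'$ is fibrant, the simplicial function complex $\fhom(\mc X', \mc Y')$ is a Kan complex; in particular $\pi_0 \fhom(\mc X', \mc Y')$ is the quotient of $\Hom_{\mc C}(\mc X', \mc Y') = \fhom(\mc X', \mc Y')_0$ by the relation generated by the existence of $H \in \fhom(\mc X', \mc Y')_1$, \ie by maps $H \colon \mc X' \otimes \Delta^1 \to \mc Y'$ (equivalently $\mc X' \to \fhom(\Delta^1, \mc Y')$) whose restrictions along the two vertices recover the two given maps. This is the standard notion of simplicial homotopy.

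Next I would recall Quillen's description of the homotopy category of a closed model category: $\hom_{\mc H}(\mc X, \mc Y)$ is computed as $\Hom_{\mc C}(\mc X', \mc Y')/\sim$, where $\sim$ is the homotopy relation defined via any cylinder object on $\mc X'$ (or dually any path object on $\mc Y'$), and this relation is an equivalence relation precisely because $\mc X'$ is cofibrant and $\mc Y'$ is fibrant. Composition with $\mc X' \to \mc X$ (a trivial fibration, hence inducing a bijection on $\hom_{\mc H}(-, \mc Y)$) and $\mc Y \to \mc Y'$ (a trivial cofibration, inducing a bijection on $\hom_{\mc H}(\mc X', -)$) gives the natural identification of $\hom_{\mc H}(\mc X, \mc Y)$ with this quotient.

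The core step, and the main technical point, is to identify the two notions of homotopy: model-theoretic (cylinder object) homotopy and simplicial homotopy. For this I would observe that $\mc X' \otimes \Delta^1$, together with the two inclusions $\mc X' \otimes \partial\Delta^1 \hookrightarrow \mc X' \otimes \Delta^1$ and the projection $\mc X' \otimes \Delta^1 \to \mc X' \otimes \Delta^0 = \mc X'$, is a cylinder object on $\mc X'$ in the model-category sense: SM7 applied to the cofibration $\partial\Delta^1 \hookrightarrow \Delta^1$ and $\emptyset \to \mc X'$ gives that $\mc X' \otimes \partial\Delta^1 \to \mc X' \otimes \Delta^1$ is a cofibration, and applied to $\Delta^1 \to \Delta^0$ (a simplicial weak equivalence between cofibrant objects) together with cofibrancy of $\mc X'$ yields that $\mc X' \otimes \Delta^1 \to \mc X'$ is a weak equivalence. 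Hence simplicial homotopy implies cylinder homotopy. Conversely, since $\mc X'$ is cofibrant and $\mc Y'$ is fibrant, all choices of cylinder (respectively path) object yield the same equivalence relation, so one can compare directly with the simplicial cylinder. This identifies the two quotients and completes the proof.

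The main obstacle is the last step: making precise the translation between the abstract model-category homotopy relation and the concrete simplicial homotopy coming from $\fhom(-,-)$, and verifying naturality of the bijection in $\mc X$ and $\mc Y$; once the simplicial cylinder is recognized as a genuine cylinder object via SM7, the rest is formal bookkeeping with the universal properties of cofibrant and fibrant replacements.
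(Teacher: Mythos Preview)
Your proposal is correct and is precisely the standard argument one finds in the cited reference. Note, however, that the paper does not give its own proof of this statement at all: it is quoted as \cite{SimplicialHT}, II.3.10 and used as a black box, so there is nothing in the paper to compare against beyond the citation itself.
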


An adjoint to the functor $\Delta^{\operatorname{op}}
\operatorname{Set} \to \Delta^{\operatorname{op}} \mathbf{Shv} (T)$
given by $X \mapsto \fhom(\ast, X),$ which we sometimes write as $X
\mapsto |X|$.

\D{Let $f: \mc X \to \mc Y$ be a morphism of simplicial pre-sheaves. Then \\
\begin{enumerate}
 \item $f$ is said to be a (simplicial) weak equivalence if, for any conservative
family $\{x: \operatorname{\mathbf{Shv}}(T) \to \operatorname{Set}
\}$ of points of $T$, $x(f): x(\mc X) \to x(\mc Y)$ is a
homotopy-equivalence of simplicial sets.
 \item $f$ is called a cofibration if it is a monomorphism.
 \item $f$ is called a fibration if it has the right lifting property
with respect to trivial cofibrations, \ie cofibrations which are
also weak equivalences. \end{enumerate} }

\begin{T} \label{thm:modelcategoryI}[\cite{VoevMorel}, Theorem 2.1.4] For any (small) site with enough points $T$, the above equips
$\Delta^{\operatorname{op}}\operatorname{\mathbf{Shv}}(T)$ with the
structure of a closed model category.
\end{T}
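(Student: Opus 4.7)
The plan is to verify the closed model category axioms: existence of finite limits and colimits, the two-out-of-three property for weak equivalences, closure of the three classes under retracts, and the two lifting/factorization axioms. The first three axioms are essentially formal: $\Delta^{op}\mathbf{Shv}(T)$ is complete and cocomplete because $\mathbf{Shv}(T)$ is; two-out-of-three and closure under retracts for weak equivalences follow because a conservative family of points reduces these assertions to the corresponding (known) statements for simplicial sets; monomorphisms are clearly closed under retracts, and fibrations are defined via a right lifting property and so are automatically closed under retracts.

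The real content lies in the lifting and factorization axioms. For lifting, one half is the definition of a fibration; the other half (that trivial fibrations have the RLP with respect to all cofibrations) is proved by reducing stalkwise to the analogous statement for Kan fibrations of simplicial sets, using that for a site with enough points one can detect the lifting property on stalks. The factorization axiom is the main obstacle: one must show that every morphism $f\colon\mathcal X\to\mathcal Y$ factors both as (cofibration)$\circ$(trivial fibration) and as (trivial cofibration)$\circ$(fibration). I would handle these by a transfinite small object argument applied to an appropriate set of generating (trivial) cofibrations. For the first factorization, the set of generating cofibrations can be built from the boundary inclusions $\partial\Delta^n \hookrightarrow \Delta^n$ smashed with representable sheaves (after sheafification), mirroring the Joyal construction; for the second, one takes the horn inclusions $\Lambda^n_k \hookrightarrow \Delta^n$ similarly extended. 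Smallness of the sheaves involved, which is where the small-object argument requires care, is ensured by the assumption that $T$ is a small site.

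The hard step, and the one requiring the assumption of enough points, is verifying that the pushouts (or transfinite compositions) of generating trivial cofibrations obtained from the small object argument are in fact weak equivalences. One argues stalkwise: since filtered colimits commute with taking stalks, and a stalk of a generating trivial cofibration is a pushout of a horn inclusion of simplicial sets (hence a trivial cofibration in the Kan model structure), the transfinite composition of pushouts of such maps becomes, stalkwise, a transfinite composition of pushouts of trivial cofibrations of simplicial sets, which is known to be a trivial cofibration. Applying this pointwise yields the required stalkwise weak equivalence.

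Finally, to confirm the characterization of fibrations and trivial fibrations is self-consistent, one checks that the three classes (weak equivalences, monomorphisms, and maps with the RLP with respect to trivial cofibrations) interact correctly under the factorizations; this amounts to the standard retract argument showing that a weak equivalence with the RLP with respect to cofibrations is a trivial fibration. Granting these reductions, the theorem follows exactly as in Morel--Voevodsky. Existence of enough points is used essentially only to define and manipulate weak equivalences and to reduce key combinatorial statements to the simplicial set case.
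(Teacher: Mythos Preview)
The paper does not prove this theorem; it is stated as a citation of \cite{VoevMorel}, Theorem~2.1.4, and used as background in the appendix without further argument. There is therefore no proof in the paper to compare your attempt against.

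On the substance of your sketch, there is a genuine gap. Your proposed generating cofibrations, the sheafified maps $\partial\Delta^n \times h_U \hookrightarrow \Delta^n \times h_U$ with $U$ representable, do not generate all monomorphisms of simplicial sheaves: the maps with the RLP against this set are the \emph{sectionwise} trivial Kan fibrations, whereas the trivial fibrations in the Joyal (injective) structure used here are the \emph{local} (stalkwise) trivial Kan fibrations, a strictly larger class. Obtaining a small generating set for all monomorphisms requires a boundedness/cardinality argument (as in Joyal's and Jardine's treatments), and this is where the actual work lies. Relatedly, your claim that the lifting property of trivial fibrations against cofibrations can be ``detected on stalks'' is not a valid direct argument: lifts constructed stalkwise need not glue. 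The standard route is the retract argument you mention at the end, but that presupposes the (cofibration, trivial fibration) factorization already in hand with the correct generating set, so the logical order of your sketch is inverted.
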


We denote by $\mc H_s(T)$ the corresponding homotopy-category
obtained by inverting the weak equivalences in
$\Delta^{\operatorname{op}} \operatorname{\mathbf{Shv}}(T)$. To fix ideas, unless explicitly mentioned, from here on $S$
will denote a regular scheme and $T$ a full
subsite with enough points of $Sch/S_{sm}$ the category of $S$-schemes
equipped with the smooth topology \footnote{\ie a full subcategory such that any open cover of $T$ is an open cover of $Sch/S_{Sm}$.}
, and denote the corresponding homotopy-category by $\mc H_s(T)$. Most often, we will be concerned with the category $\mathfrak R_S$
of regular $S$-schemes with the smooth topology. When $S = \spec \bb Z$, we write $\mathfrak R_\bb Z = \mathfrak R$. Since any smooth morphism locally for the étale topology has a section we can identify the various
topoi of sheaves of regular $S$-schemes with étale or smooth topology or of affine regular $S$-schemes with the étale or smooth topology with a "big regular
étale $S$"-topoi. They are given a conservative set of points by regular local strict henselian rings.
\D{\label{defn:Bousfield}
Suppose $T$ is such that for any $X \in ob(T)$, $\bb A^1_X$ is also
an object in $T$. We say that $\mc X \in \mc H_s(T)$ is $\bb
A^1$-local with respect to $T$, if for any $\mc Y \in
\operatorname{\mathbf{Shv}}(T)$ the map
$$\Hom_{\mc H_s(T)}(\mc Y \times \bb A^1, \mc X) \to \Hom_{\mc H_s(T)}(\mc Y, \mc X)$$
is bijective. We say a morphism $f: \mc X \to \mc Y$ in
$\Delta^{\operatorname{op}}\operatorname{\mathbf{Shv}}(T)$ is $\bb
A^1$-local if for any $\bb A^1$-local object $\mc Z$, the natural
map $$\Hom_{\mc H_s(S)}(\mc Y, \mc Z) \to \Hom_{\mc H_s(T)}(\mc X,
\mc Z)$$ is bijective.} Now equip
$\Delta^{\operatorname{op}}\operatorname{\mathbf{Shv}}(T)$
with $\bb A^1$-local weak equivalences, cofibrations and $\bb
A^1$-local fibrations. Then we have:
\begin{T} [\cite{VoevMorel}, Theorem 2.3.2] This equips
$\Delta^{\operatorname{op}}\operatorname{\mathbf{Shv}}(T)$ with the
structure of a closed model-category. \end{T}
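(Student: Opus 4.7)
The plan is to obtain this model structure as a left Bousfield localization of the simplicial model structure of Theorem \ref{thm:modelcategoryI} at the class of projections $\mc Y \times \bb A^1 \to \mc Y$ (where $\mc Y$ ranges over a suitable generating set, e.g.\ representables $\Phi X$). We retain the monomorphisms as cofibrations, declare the weak equivalences to be the $\bb A^1$-local ones (maps inducing bijections $\Hom_{\mc H_s(T)}(\mc Y,\mc Z)\to \Hom_{\mc H_s(T)}(\mc X,\mc Z)$ for every $\bb A^1$-local $\mc Z$), and define fibrations by the right lifting property against trivial cofibrations. Two out of three, the retract axiom and one half of the lifting axiom are inherited verbatim from the unlocalized structure.

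The technical heart is the construction of a functorial $\bb A^1$-fibrant replacement. Following Morel--Voevodsky, I would introduce the cosimplicial scheme $n\mapsto\bb A^n$ together with the associated singular functor $\op{Sing}^{\bb A^1}_\ast$ and define $L_{\bb A^1}\mc X$ as a transfinite iteration of $\op{Sing}^{\bb A^1}_\ast$ alternated with simplicial fibrant replacement $\op{Ex}^\infty$. The properties one must verify are: (i) $L_{\bb A^1}\mc X$ is $\bb A^1$-local, (ii) $\mc X\to L_{\bb A^1}\mc X$ is both a monomorphism and an $\bb A^1$-weak equivalence, and (iii) a map $f\colon\mc X\to\mc Y$ is an $\bb A^1$-weak equivalence if and only if $L_{\bb A^1}(f)$ is a simplicial weak equivalence. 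Step (iii) is the crucial recognition principle that lets us transport everything back to the unlocalized model structure.

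With $L_{\bb A^1}$ in hand, factorization of an arbitrary $f\colon\mc X\to\mc Y$ as a trivial cofibration followed by a fibration is produced by first factoring $\mc X\to \mc Y$ through $L_{\bb A^1}\mc X$ via the unlocalized factorization of $L_{\bb A^1}\mc X\to L_{\bb A^1}\mc Y$ and pulling back, while the other factorization (cofibration followed by trivial fibration) is obtained by the small object argument applied to a set $I\cup J'$, where $I$ is the generating set for monomorphisms and $J'$ augments the generating trivial cofibrations of Theorem \ref{thm:modelcategoryI} by the pushout-products of cofibrations with $i_0\colon\ast\injects\bb A^1$. The main obstacle is verifying that this augmented class has the correct saturation, i.e.\ that the maps built from $J'$ by transfinite composition of pushouts are simultaneously monomorphisms \emph{and} $\bb A^1$-weak equivalences; this amounts to showing that $\bb A^1$-trivial cofibrations are closed under pushout and transfinite composition, which reduces via (iii) to the analogous (and known) stability properties of simplicial weak equivalences after applying $L_{\bb A^1}$. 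Once this is established, the remaining lifting property between cofibrations and trivial fibrations is formal and the model structure is complete.
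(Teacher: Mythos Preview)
The paper does not give its own proof of this statement; it is simply cited as \cite{VoevMorel}, Theorem 2.3.2 and recalled as background in the appendix. Your sketch via left Bousfield localization of the simplicial model structure at the $\bb A^1$-projections, with the $\op{Sing}^{\bb A^1}$/fibrant-replacement iteration as the localization functor, is exactly the strategy carried out in the cited reference, so your proposal is correct and aligned with the source the paper defers to.
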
 \D{We denote the corresponding homotopy category by $\mc H(T)$. Whenever $T=
Sm/S_{Nis}$, the corresponding homotopy-category is the $\bb
A^1$-homotopy category of schemes over $S$ defined by loc.cit., but it will not directly play
a role in what we do.
When the site is $T = \mathfrak R_{sm}$, the category of regular schemes with the smooth topology,
the corresponding homotopy category is denoted by $\mc H(\mathfrak R)$. We also have natural pointed analogues.
Replacing in all previous definitions pointed versions, we obtain the $\bb A^1$-homotopy category of pointed
simplicial sheaves $\mc H_\bullet(T)$ as a localization of the
category of pointed simplicial sheaves;
$\Delta^{\operatorname{op}}\operatorname{\mathbf{Shv}}(T)_\bullet$.}
For two objects $(\mc X,x), (\mc Y,y) \in \mc H_\bullet(T)$ we
define $X \wedge Y$ in the usual way as the coequalizer of $$\mc X
\times y, x \times \mc Y \rightrightarrows \mc X \times \mc Y.$$ For
a simplicial sheaf $\mc X$, we denote by $\mc X_+$ the simplicial
presheaf with a disjoint point. The functor $\mc X \to \mc X_+$ is
left adjoint to the forgetful functor $\mc H_\bullet(T) \to \mc
H(T)$.

The stable homotopy-category of schemes is stabilized out of the
"unstable" one with the proper notion of a circle. As before, let $T$ denote
a (small) site with enough points.

\D{Let $\mathbf T \in \Delta^{\operatorname{op}}\operatorname{\mathbf{Shv}}(T)_\bullet$.
A $\mathbf T$-spectra is a set $\mathbf E = (d_n, \mathbf{E}_n)_{n
\in \bb N}$ of objects in
$\Delta^{\operatorname{op}}\operatorname{\mathbf{Shv}}(T)_\bullet$
 with morphisms
 $$d_n: \mathbf T \wedge \mathbf  E_n \to \mathbf E_{n + 1}.$$
 A morphism of $\mathbf T$-spectra $f:\mathbf E \to \mathbf F$ is a set of morphisms $f_n: \mathbf E_n \to \mathbf F_n$
 such that the diagram commutes
$$ \xymatrix{\mathbf T \wedge E_n \ar[r] \ar[d] & E_{n+1} \ar[d] \\
 \mathbf T \wedge F_n \ar[r] & F_{n+1} }.$$}

\D{Let $\mathbf E$ be a $\mathbf T$-spectra, and denote by $\Omega_\mathbf T(-) = \Omega(-) = R\Hom(\mathbf T, -)$ the total derived functor
(in $\mc H_\bullet(S)$) of the right adjoint to $\mathbf T \wedge
-$. We say that $\mathbf E$ is a $\Omega$-spectra if for any $n$ the induced
morphism
$$\mathbf E_n \to \Omega (\mathbf E_{n + 1}) $$ is in fact
an isomorphism. } We can naively construct "a" stable homotopy-theory
by taking the category of $\Omega$-spectras with respect to
$\mathbf{T}= (\bb P^1, \infty)$, and denote it by $\mc
{SH}_{naive}(T)$, and giving morphisms $E \to F$ by morphisms $E_n \to F_n$ in $\mc H_\bullet(S)$ for any $n$ such that the obvious diagram commutes (cf. \cite{Riou}, Définition I.124).

\D{Let be a morphism $f: \mathbf E \to \mathbf F$ of $\mathbf T$-spectras.
Then $f$ is a projective cofibration if $f_0$ is a monomorphism and
for any $n > 0$, $$\mathbf T \wedge \mathbf F_n \bigvee_{\mathbf T
\wedge \mathbf E_n} \mathbf E_{n +1} \to \mathbf F_{n+1}$$ is also a
monomorphism. Its an $\bb A^1$-projective fibration (resp. $\bb
A^1$-projective equivalence) if every map $f_n$ is a $\bb
A^1$-fibration (resp. $\bb A^1$-weak equivalence).}

\begin{T} [\cite{Riou}, Première partie] Let $\mathbf T = (\bb P^1, \infty)$. The category of $\mathbf T$-spectras equipped with projective cofibrations as cofibrations, $\bb A^1$-projective fibrations
as fibrations and $\bb A^1$-projective equivalences as weak
equivalences is a closed model-category.
\end{T}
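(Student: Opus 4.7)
The plan is to invoke the general projective (levelwise) model structure on a category of spectra built from a suitable left Quillen endofunctor, applied here with base category $\Delta^{\op} \mathbf{Shv}(T)_\bullet$ equipped with the $\bb A^1$-local structure from the preceding theorem, and endofunctor $\mathbf T \wedge -$ for $\mathbf T = (\bb P^1, \infty)$. Since the classes in the statement are defined levelwise (weak equivalences and fibrations) or by the standard pushout-product/Reedy-type condition (cofibrations), this is not yet a stabilization: it is the projective model structure on the sequence category, and only the axioms for a closed model structure need to be verified.

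First I would record that $\Delta^{\op}\mathbf{Shv}(T)_\bullet$ with the $\bb A^1$-local structure is cofibrantly generated, with generating cofibrations $I$ the monomorphisms of the form $(\partial\Delta^n \hookrightarrow \Delta^n)\wedge \mc X_+$ (for $\mc X$ a representable) and generating trivial cofibrations $J$ obtained from horn inclusions together with the $\bb A^1$-local Bousfield-style generators (smash with $\mc X \times \bb A^1 \to \mc X$). Both sets consist of maps with small domains, so the small object argument is available. The free spectrum functors $F_n \colon \Delta^{\op}\mathbf{Shv}(T)_\bullet \to \op{Spt}_{\mathbf T}(T)$, left adjoint to $\op{ev}_n$, then produce generating sets $\bigcup_n F_n(I)$ and $\bigcup_n F_n(J)$ for the putative model structure on $\mathbf T$-spectra.

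Next I would verify the axioms. Bicompleteness of $\op{Spt}_{\mathbf T}(T)$ is immediate since limits and colimits are computed levelwise. The two-out-of-three and retract axioms for weak equivalences, as well as closure of fibrations under retract and pullback, descend from the base category level by level. The lifting axiom (trivial cofibration against fibration, and cofibration against trivial fibration) is formal given the characterization of projective cofibrations: the pushout-product condition in the definition is precisely what is needed for the standard inductive argument showing that projective cofibrations have the left lifting property against levelwise trivial fibrations, and the converse follows from the small object argument.

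The factorization axioms are the main content and are produced by the small object argument applied to $\bigcup_n F_n(I)$ and $\bigcup_n F_n(J)$; here the key verification is that a relative $\bigcup_n F_n(J)$-cell complex is a levelwise trivial cofibration, which reduces by the adjunction $F_n \dashv \op{ev}_n$ to the statement that $\op{ev}_n$ preserves trivial cofibrations built out of $J$, itself a consequence of the pushout-product definition. The delicate point, which I expect to be the main obstacle, is controlling smallness in the $\bb A^1$-local setting: one must check that the domains of the generators remain small with respect to transfinite compositions of levelwise $\bb A^1$-local trivial cofibrations, so that the small object argument terminates. Once this is in place the remaining axioms are formal, and one obtains the desired closed model category structure.
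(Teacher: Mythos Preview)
The paper does not prove this statement; it simply records it as a citation of \cite{Riou}, Premi\`ere partie, with no argument given. So there is no ``paper's own proof'' to compare against. Your outline is the standard route to the projective (levelwise) model structure on $\mathbf T$-spectra and is essentially what one finds in Hovey or in Riou's thesis: transport cofibrant generation from the base along the free/evaluation adjunctions $F_n \dashv \op{ev}_n$, then run the small object argument.

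One point deserves care. The underlying model structure on $\Delta^{\op}\mathbf{Shv}(T)_\bullet$ used here is the \emph{injective} one (cofibrations $=$ monomorphisms, as stated a few lines above in the paper). Your proposed generating set $I = \{(\partial\Delta^n \hookrightarrow \Delta^n)\wedge \mc X_+\}$ generates the \emph{projective} model structure on simplicial presheaves, not the injective one; in the injective structure every monomorphism is a cofibration and the generating sets are more delicate (one typically appeals to Jeff Smith's combinatoriality theorem or to Jardine's explicit bounded-cofibration argument for simplicial sheaves on a Grothendieck site). This does not break your strategy, but you should either (i) replace your $I$ and $J$ by the actual generating sets for the injective $\bb A^1$-local structure, or (ii) note that Riou's argument in fact passes through establishing combinatoriality of the base and then invokes the general machinery. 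With that correction your sketch is sound.
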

We define the following as the stable homotopy-category of $T$.

\D{ Let $\mathbf T = (\bb P^1, \infty)$. Then the stable homotopy-category $\mc {SH}(T)$ is
the full subcategory, of the corresponding homotopy-category, of
$\Omega$-spectras.}

\D{\label{Grassmanniandefn} For a fixed scheme $S$, let
$\op{Gr}_{d,r}$ be the Grassmannian of locally free quotients of
rank $r$ of $\calo_S^{d+r}$ viewed as an object of $\op{Shv}(T)$. Notice that $\op{Gr}_{d,r} \simeq \op{Gr}_{r,d}$. Let $\mc F$ be a locally free sheaf of rank $r$. We have natural morphisms $\op{Gr}_{d,r} \to \op{Gr}_{d+1,r}$ and
$\op{Gr}_{d,r} \to \op{Gr}_{d,r+1}$ by sending $\phi: \calo^{d+r} \twoheadrightarrow \mc F$ to $\calo^{d+r+1} \stackrel{(\phi,0)}\twoheadrightarrow \mc F$ and
$\calo^{d+r+1} \stackrel{\phi, \op{id}}\twoheadrightarrow \mc F \oplus \calo$ respectively. We denote by $\op{Gr}_d = \lim_{\rightarrow} \op{Gr}_{d,r}$ and $\op{Gr} = \lim_{\rightarrow}
\op{Gr}_d$ for these maps. Here the direct limits are taken in $\operatorname{\mathbf {Shv}}(T)$. Since all things here naturally pointed (by $\op{Gr_{d,0}}$ for any $d$),
we also obtain a pointed element $\op{Gr} \in \mc H_\bullet(T)$. Notice that $\bb P^d = \op{Gr}_{d,1} \simeq \op{Gr}_{1,d} $ and denote by $\bb P^\infty = \op{Gr_1}$.}

By the method of \cite{Riou}, Définition III.101, it is possible to define a
sheaf $(\bb Z \times \op{Gr})[\frac{1}{n}]$ and $(\bb Z \times \op{Gr})_\bb Q$ with a natural morphism $\bb Z \times \op{Gr} \to (\bb Z \times \op{Gr})[\frac{1}{n}]$
and $\bb Z \times \op{Gr} \to (\bb Z \times \op{Gr})_\bb Q$. In a similar fashion to \loccit, to lax notation first put $\op{Gr}_{d,r} = \op{Gr}^{d+r,r}$ so that $\bb P^d = \op{Gr}^{d+1, 1}$ and define a morphism $m_{a,d}: \op{Gr}^{d,1} \to \op{Gr}^{d^a,1} $
by sending a surjection $p: \calo^{d} \twoheadrightarrow \mc L$ to $p^{\otimes a}: (\calo^{d})^{\otimes a} \twoheadrightarrow \mc L^{\otimes a}$. One verifies the relation
$$\xymatrix{\op{Gr}^{d,1}\ar[rrr]^{m_{a,d}} \ar[d] & & & \op{Gr}^{d^a,1} \ar[d] \\
\op{Gr}^{d+1,1} \ar[rrr]^{m_{a,d+1}} & & & \op{Gr}^{(d+1)^a,1} = \op{Gr}^{(d^a + [(d+1)^a - d^a],1}}$$
and define $m_a: \bb P^\infty \to \bb P^\infty$ to be the induced morphism. The relation $m_{ab} = m_a m_b$ is easy. \footnote{To make the above a proper definition and make the diagram commute on the nose, one needs to define a natural isomorphism $\delta_{a,d}: (\calo^d)^{\otimes a} \to \calo^{d^a}$. It can be done as follows. We define a strict total order on $\{ e_1^1 , e_2^1 , \ldots e_d^1 \} \times \ldots \times \{ e_1^a , e_2^a , \ldots e_d^a \}$, \ie the structure of the category $[n^a-1]$ inductively as follows. First $e_{i_1}^1 \times e_{i_2}^2 \times \ldots \times e_{i_a}^a < e_{j_1}^1 \times e_{j_2}^2 \times \ldots \times e_{j_a}^a $: If  $\op{max} i_k< \op{max} j_l$. If there is equality $i_m = \op{max} i_k = \op{max} j_l = j_n$ , then if $\op{max} i_k \setminus i_n < \op{max} j_l \setminus j_m$. Repeatedly removing such $m,n$'s we obtain an order on all objects except when the $i_k$ are a permutation of the $j_l$'s. With these, pick the lexicographic order. We then define an isomorphism $\delta_{a,d}: (\calo^d)^{\otimes a} \to \calo^{d^a}$ by sending a basis-element $e_{i_1}^1 \times e_{i_2}^2 \times \ldots \times e_{i_a}^a$ to the basis $f_i$ with $i \in [n^a-1]$ via the ordering just constructed. As such, the element $e_{1}^1 \times e_{1}^2 \times \ldots \times e_{1}^a$ is the smallest element, $e_{1}^1 \times e_{2}^2 \times e_{1}^3 \times \ldots \times e_{1}^a < e_{2}^1 \times e_{1}^2 \times e_{1}^1 \times \ldots \times e_{1}^a$ and $e_{d}^1 \times e_{1}^2 \times \ldots \times e_{1}^a$ is generally a big element.}
\D{\label{rationalgrassmannian} One defines $\bb P^\infty[\frac{1}{n}]$ (resp. $\bb P^\infty_\bb Q$) as the inductive limit over $m_a$'s ordered by division for $a = n^k, k \in \bb N$ (resp. $m_a$'s ordered by division for all $a \in \bb N$).}

One of the main observations of \cite{VoevMorel} is the following theorem, which states that algebraic $K$-theory is
represented by an infinite Grassmannian. The version presented below is proven in exactly the same way as in the article in question, with the exception of
using smooth descent for rational $K$-theory instead of Nisnevich descent. Note that since any smooth morphism locally for the étale topology has a section \mps{"Note that since any smooth morphism locally for the étale topology has a section" was added}we have étale
descent whenever we have smooth descent, and in the former case the statement we are looking for is \cite{Thomason2}, Theorem 11.11;

\begin{T}[\cite{VoevMorel}, Theorem 4.3.13] \label{thm:voevmorelGr} Let $S$ be a regular scheme. Then we have
canonical functorial isomorphisms
$$\Hom_{\mc H_\bullet(\mathfrak R_{S,sm})}(S^n \wedge X_+, (\bb Z \times \op{Gr})_\bb Q) =
\Hom_{\mc H(\mathfrak R_{S,sm})}(X, \Omega^n \left( \bb Z \times \op{Gr}\right)_\bb Q) \simeq K_n(X)_\bb Q$$
for $X$ a regular $S$-scheme, where $K_n$ refers to Quillen's $K$-theory defined
as above. In particular, we have an isomorphism
$$\Hom_{\mc H(\mathfrak R_{S, sm})}(X, (\bb Z \times \op{Gr})_\bb Q) \simeq K_0(X)_\bb Q.$$
\end{T}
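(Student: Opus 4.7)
The plan is to follow the proof of Morel and Voevodsky (\loccit, Theorem 4.3.13) essentially verbatim, the only needed substitutions being: (i) the smooth site $\mathfrak R_{S,sm}$ in place of the Nisnevich site on smooth $S$-schemes, and (ii) rational coefficients throughout. Both modifications are benign because, on the category of regular schemes, every smooth cover admits étale-local sections, so smooth and étale descent coincide, and rational $K$-theory satisfies étale (hence smooth) hyperdescent by Thomason, Theorem 11.11 of \cite{Thomason2} — this is precisely the rôle played by Nisnevich descent in the original integral argument.

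First I would dispense with the leftmost equality: it is the adjunction between $S^n \wedge -$ and $\Omega^n$ in $\mc H_\bullet(\mathfrak R_{S,sm})$ combined with the adjunction between $X \mapsto X_+$ and the forgetful functor $\mc H_\bullet \to \mc H$, applied to the pointed object $(\bb Z \times \op{Gr})_\bb Q$. The genuine content is the second isomorphism
$$\Hom_{\mc H(\mathfrak R_{S,sm})}(X, \Omega^n (\bb Z \times \op{Gr})_\bb Q) \simeq K_n(X)_\bb Q,$$
which I would establish by exhibiting the simplicial sheaf $(\bb Z \times \op{Gr})_\bb Q$ as an $\bb A^1$-locally fibrant representative of rational $K$-theory on $\mathfrak R_{S,sm}$ and then reading off its higher loops.

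For $n = 0$ the presheaf $U \mapsto \Hom_{\Delta^{\op{op}} \op{Shv}(\mathfrak R_{S,sm})}(U, \bb Z \times \op{Gr})$ agrees with $K_0(-)$ on regular affines, since a map into the Grassmannian classifies a locally free quotient and the $\bb Z$-factor records virtual rank; the Jouanolou trick extends this to all regular $S$-schemes. Passing to $\bb Q$ on the Grassmannian side using the colimit construction of Definition \ref{rationalgrassmannian}, and on the $K$-theory side by localization, one obtains compatibility on naive morphism sets. To upgrade this identification to morphism sets in the $\bb A^1$-local homotopy category $\mc H(\mathfrak R_{S,sm})$ one must verify that $(\bb Z \times \op{Gr})_\bb Q$ is $\bb A^1$-locally fibrant, for which the two ingredients are Quillen's $\bb A^1$-invariance of $K$-theory on regular schemes and the smooth (equivalently étale) descent of rational $K$-theory on $\mathfrak R_S$ supplied by Thomason's theorem 11.11. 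By Theorem \ref{thm:fundhomotopicalalgebra} the morphism set is then computed as $\pi_0$ of $\fhom(X, (\bb Z \times \op{Gr})_\bb Q)$, which is $K_0(X)_\bb Q$.

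For general $n$, the same fibrancy input together with the adjunction $S^n \wedge - \dashv \Omega^n$ identifies $\Hom_{\mc H}(X, \Omega^n(\bb Z \times \op{Gr})_\bb Q)$ with $\pi_n$ of the same derived simplicial mapping space, which is $K_n(X)_\bb Q$ by Waldhausen's definition applied to the presheaf represented by $(\bb Z \times \op{Gr})_\bb Q$. The main technical obstacle, as in the original reference, is the descent step: pinning down that the sheafified $\bb A^1$-local fibrant replacement of $\bb Z \times \op{Gr}$ in the smooth topology, after rationalization, has the homotopy sheaves $X \mapsto K_n(X)_\bb Q$. Once Thomason's smooth-descent theorem is invoked, the remaining Postnikov-tower argument of Morel and Voevodsky carries through without change, and rationalization commuting with the relevant finite homotopy limits (since we are only probing connective $K$-theory) completes the proof.
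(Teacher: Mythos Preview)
Your proposal is correct and matches the paper's approach exactly: the paper does not give a standalone proof but remarks, immediately before the statement, that the argument of \cite{VoevMorel} Theorem 4.3.13 carries over verbatim with smooth descent for rational $K$-theory replacing Nisnevich descent, the former being available via \cite{Thomason2} Theorem 11.11 since smooth covers have \'etale-local sections. You have identified precisely the same two substitutions and the same justification; the additional detail you supply (adjunctions, the $n=0$ case, fibrancy) is finer than what the paper records but entirely consistent with the cited argument.
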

Proceeding as in \cite{Riou}, Chapitre III, one constructs a product $$(\bb Z \times \op{Gr})_\bb Q \wedge (\bb Z \times \op{Gr})_\bb Q \to
(\bb Z \times \op{Gr})_\bb Q$$ in $\mc H_\bullet(\mathfrak R_{S,sm}).$
\begin{Prop} Consider the natural map $t: \bb P^1 \to \{ 0 \} \times \op{Gr} \to (\bb Z \times \op{Gr})_\bb Q$. Then the data $\mathbf E = (\mathbf  E_i, d_i)$
defined by $\mathbf E_i = \bb Z \times \op{Gr}$ and the product
$$d_i: \bb P^1 \wedge (\bb Z \times \op{Gr})_\bb Q \stackrel{t \wedge \op{id}}\to (\bb Z \times \op{Gr})_\bb Q \wedge (\bb Z \times \op{Gr})_\bb Q \to
(\bb Z \times \op{Gr})_\bb Q$$ is a naive spectrum, which we denote by $K_{naive}$.
\end{Prop}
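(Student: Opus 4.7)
The plan is to verify that the data $\mathbf E = (\mathbf E_i, d_i)$ satisfies the definition of a $\mathbf T$-spectrum for $\mathbf T = (\bb P^1, \infty)$, namely a sequence of pointed objects of $\Delta^{\op{op}} \mathbf{Shv}(\mathfrak R_{S,sm})_\bullet$ together with structure morphisms $d_i: \mathbf T \wedge \mathbf E_i \to \mathbf E_{i+1}$. The definition imposes no further compatibility conditions between the various $d_i$, and morphisms of naive spectra (cf.\ \cite{Riou}, D\'efinition I.124) are taken in $\mc H_\bullet(\mathfrak R_{S,sm})$; hence the verification reduces to checking that each piece of the construction is a well-defined pointed morphism, and that their composition assembles the required structure map.

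There are three ingredients to assemble. First, each $\mathbf E_i = (\bb Z \times \op{Gr})_\bb Q$ is a pointed simplicial sheaf: the rationalization procedure is the one of Definition \ref{rationalgrassmannian} (applied componentwise to each factor in the colimit defining $\bb Z \times \op{Gr}$), and the pointing is supplied by $\{0\} \times \op{Gr}_{d,0}$ as recorded in Definition \ref{Grassmanniandefn}. Second, the map $t$ is the composition $\bb P^1 = \op{Gr}_{1,1} \to \op{Gr} \to \{0\} \times \op{Gr} \to (\bb Z \times \op{Gr})_\bb Q$, where the first arrow is the canonical inclusion into the colimit and the final arrow is the rationalization morphism; it sends the basepoint $\infty \in \bb P^1$ to the common basepoint of the target, so $t$ is pointed. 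Third, the multiplicative pairing $\mu: (\bb Z \times \op{Gr})_\bb Q \wedge (\bb Z \times \op{Gr})_\bb Q \to (\bb Z \times \op{Gr})_\bb Q$ has been constructed in the sentence immediately preceding the proposition, following \cite{Riou}, Chapitre III.

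Given these, one simply sets $d_i := \mu \circ (t \wedge \op{id}_{\mathbf E_i})$, which is a well-defined pointed morphism $\bb P^1 \wedge (\bb Z \times \op{Gr})_\bb Q \to (\bb Z \times \op{Gr})_\bb Q$, and the collection $(\mathbf E_i, d_i)_{i \in \bb N}$ then fits the definition of a $\mathbf T$-spectrum verbatim. The only mildly delicate point is that $t$ and $\mu$, which a priori are given in the pointed $\bb A^1$-homotopy category, should be represented by honest morphisms of pointed simplicial sheaves so that the composite makes sense at that level; this is handled by the standard fibrant-cofibrant replacement afforded by Theorem \ref{thm:modelcategoryI}, or alternatively one works throughout in $\mc H_\bullet(\mathfrak R_{S,sm})$, which is the ambient category in which morphisms of naive spectra are defined anyway. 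No further structural statement is claimed or needed here; in particular we make no assertion that $K_{naive}$ is an $\Omega$-spectrum (which would require the separate, deeper input of rational Bott periodicity), and this is indeed the main thing to keep in mind so as not to overshoot the statement being proved.
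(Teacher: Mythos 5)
There is a genuine gap, and it comes from a misreading of what ``naive spectrum'' means in this paper. The category $\mc{SH}_{naive}(T)$ is defined a few lines above the proposition as \emph{the category of $\Omega$-spectra} with respect to $\mathbf T = (\bb P^1,\infty)$, with morphisms given levelwise in $\mc H_\bullet$; the word ``naive'' refers to the naive notion of morphism, not to dropping the $\Omega$-condition. So the assertion that $\mathbf E = (\mathbf E_i, d_i)$ ``is a naive spectrum'' includes, as its entire nontrivial content, the statement that the adjoint maps
$$(\bb Z \times \op{Gr})_\bb Q \to \Omega\bigl(\bb Z \times \op{Gr}\bigr)_\bb Q = R\Hom_\bullet\bigl((\bb P^1,\infty), (\bb Z \times \op{Gr})_\bb Q\bigr)$$
are isomorphisms in $\mc H_\bullet(\mathfrak R_{S,sm})$. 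This is also forced by how the proposition is used: the next definition declares a stable model for rational $K$-theory to be an object $\mc K \in \mc{SH}(T)$ with an isomorphism $forget(\mc K) \simeq (K_{naive})_\bb Q$, and the forgetful functor lands in $\mc{SH}_{naive}(T)$, so $K_{naive}$ must satisfy the $\Omega$-spectrum condition for that definition to parse. Your closing remark --- that no assertion that $K_{naive}$ is an $\Omega$-spectrum is ``claimed or needed,'' and that proving it would be overshooting --- excludes precisely the step the proposition is about, and indeed the only step the paper proves.

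The paper's argument for that step is short but essential: evaluating on a (regular) $S$-scheme $X$ and using Theorem \ref{thm:voevmorelGr}, the adjoint map being an isomorphism amounts to the statement that
$$K_n(X) \longrightarrow \{\, y \in K_n(\bb P^1_X) \;:\; \infty^* y = 0 \,\}, \qquad x \mapsto x \boxtimes u, \quad u = \calo(1) - 1,$$
is bijective, which follows from the projective bundle formula for $K$-theory (rational Bott periodicity in this guise). By contrast, the points you do verify --- that the terms are pointed simplicial sheaves, that $t$ is pointed, that $d_i = \mu \circ (t \wedge \op{id})$ is a well-defined composite, and the fibrant/cofibrant replacement caveat --- are essentially contentless here, since morphisms of naive spectra are taken in $\mc H_\bullet$ anyway; the paper does not even record them. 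To repair the proof, replace the final paragraph by the verification of the $\Omega$-condition via the projective bundle formula as above.
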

\begin{proof} We need to show that the natural map $$(\bb Z \times
\op{Gr})_\bb Q \to R\Hom_\bullet ((\bb P^1, \infty), (\bb Z \times \op{Gr})_\bb Q)$$
is an isomorphism. However, this follows from the fact that for any
$S$-scheme $X$, the map $$K_n(X) \to \{ y \in K_n(\bb P^1_X), \infty^*
y = 0 \in K_n(X)\}$$ given by $x \mapsto x \boxtimes u$, where $u = \calo(1) - 1$, is bijective, which in turn is a consequence of
the projective-bundle-formula for $K$-theory.
\end{proof}

Notice there is an obvious forgetful functor
$$\mc {SH}(T) \to \mc {SH}_{naive}(T).$$ There are a priori several
liftings of the naive spectrum constructed above representing
algebraic $K$-theory, and we make the following definition:

\label{defn:stablemodel} \D{A stable model for rational algebraic
$K$-theory is an object $\mc {K} \in \mc {SH}(T)$, together with an
isomorphism $$\omega: forget(\mc K) \simeq (K_{naive})_\bb Q.$$ }

\begin{T}[\cite{Riou}, Chapitre V] When $S = \spec \bb Z$ and $T = \mathfrak R_{sm}$ there is a unique, up to
unique isomorphism, stable model for rational algebraic $K$-theory, denoted
by $\mathbf{BGL}_\bb Q$, thus defining a canonical rational stable model $\mc SH(\mathfrak R_{S,sm})$ for any
regular scheme $S$ via $\mathbf{BGL}_{S,\bb Q} := f^* \mathbf{BGL}_\bb Q$ where $f: S \to \spec \bb Z$ is the natural morphism.
\end{T}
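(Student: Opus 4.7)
The plan is to construct $\mathbf{BGL}_\bb Q$ by a standard fibrant-replacement procedure applied to the naive spectrum $K_{naive,\bb Q}$, and then establish uniqueness by converting it into a question of computing $\Hom$-groups in $\mc H_\bullet(\mathfrak R_{sm})$ which, by representability of rational $K$-theory (Theorem \ref{thm:voevmorelGr}), can be controlled via vanishing of $K_i(\spec \bb Z)_\bb Q$ in low degrees.

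For existence, I would first note that in the closed model structure on $\bb P^1$-spectra a level-wise fibrant replacement functor exists. Applying it to $K_{naive,\bb Q}$ produces an object $\mathbf{BGL}_\bb Q = (\mathbf{E}_n, d_n)_{n\in\bb N}$ together with a stable weak equivalence $K_{naive,\bb Q} \to \mathbf{BGL}_\bb Q$; the structural maps induce isomorphisms $\mathbf{E}_n \simeq \Omega_{\bb P^1} \mathbf{E}_{n+1}$ so that $\mathbf{BGL}_\bb Q$ is indeed an $\Omega$-spectrum. The underlying naive spectrum is canonically identified with $K_{naive,\bb Q}$, supplying the datum $\omega$. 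The existence of the multiplicative $\bb P^1$-delooping on the $n$-th floor reduces by the projective bundle formula in $K$-theory to the assertion that for every regular $X$, the map $K_n(X)_\bb Q \to \{y \in K_n(\bb P^1_X)_\bb Q \mid \infty^* y = 0\}$ sending $x \mapsto x\boxtimes(\calo(1)-1)$ is an isomorphism, exactly as in the analogous proposition preceding the theorem.

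For uniqueness, let $\mc K$ and $\mc K'$ be two stable models with specified isomorphisms $\omega, \omega'$ of their underlying naive spectra with $K_{naive,\bb Q}$. I would use Lemma \ref{Prop:presheaftohomotopy} and Theorem \ref{thm:voevmorelGr} to compute, for each $n$, the abelian group
\[
\Hom_{\mc H_\bullet(\mathfrak R_{sm})}(\mathbf{E}_n, \mathbf{E}'_n) \subseteq \Hom_{\mathfrak R^{op}\op{Set}}\bigl(K_0((-)\times\bb A^n)_\bb Q,\ K_0((-)\times\bb A^n)_\bb Q\bigr),
\]
and similarly show that the obstruction to upgrading a compatible family of such $\mc H_\bullet$-maps to a map of $\bb P^1$-spectra lives in $\lim^1$ of groups of the form $K_1(\spec \bb Z)_\bb Q$ and $K_{-i}(\spec \bb Z)_\bb Q$ for $i\geq 1$. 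The crucial rational vanishings $K_1(\bb Z)_\bb Q = 0$ (since $K_1(\bb Z) = \{\pm 1\}$ is torsion) and $K_{-i}(\spec \bb Z)_\bb Q = 0$ for $i\geq 1$ kill these obstructions, so the tautological endomorphism $\op{id}$ of $K_{naive,\bb Q}$ lifts uniquely to an isomorphism $\mc K \simeq \mc K'$ in $\mc{SH}(\mathfrak R_{sm})$ compatible with the structure isomorphisms $\omega$ and $\omega'$. The same $\lim^1$ argument applied to endomorphisms of a single stable model shows that the lift is unique up to unique automorphism.

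Once $\mathbf{BGL}_\bb Q$ is constructed over $\spec \bb Z$, for any regular scheme $S$ with structural morphism $f \from S \to \spec \bb Z$ the definition $\mathbf{BGL}_{S,\bb Q} := Lf^* \mathbf{BGL}_\bb Q$ yields an object of $\mc{SH}(\mathfrak R_{S,sm})$; it remains a stable model for rational algebraic $K$-theory because $Lf^*$ preserves $\Omega_{\bb P^1}$-spectra (it is left Quillen for the projective model structure) and because rational Quillen $K$-theory is compatible with base change along $f$ on the full subsite of regular $S$-schemes. The main obstacle is the uniqueness step: controlling precisely the $\lim^1$-type obstruction coming from the tower of $\Omega$-structure compatibilities, and here the rational vanishing of low $K$-theory of $\spec \bb Z$ is what makes the argument go through. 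Full details of this computation are in \cite{Riou}, Chapitre V.
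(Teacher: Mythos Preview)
The paper does not supply its own proof of this theorem; it is stated as a citation from \cite{Riou}, Chapitre V, with no argument given. So there is nothing in the paper to compare your sketch against directly.

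Your overall strategy --- construct a stable model by fibrant replacement, then establish uniqueness by a $\lim/\lim^1$-type analysis whose obstruction terms vanish because $K_1(\bb Z)_\bb Q = 0$ --- is in the right spirit and is indeed the shape of Riou's argument. One detail is garbled, though: the displayed inclusion involving $K_0((-)\times\bb A^n)_\bb Q$ does not make sense. Every level $\mathbf{E}_n$ of $K_{naive,\bb Q}$ is $(\bb Z \times \op{Gr})_\bb Q$, and it represents $K_0(-)_\bb Q$; the index $n$ records the number of $\bb P^1$-deloopings (Bott periodicity), not any product with affine space. The transition maps in the inverse system are therefore not the identity on $K_0(-)_\bb Q$ but are twisted by the Bott element, and this is exactly what has to be tracked.

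For comparison, the paper does sketch (just after Theorem \ref{thm:BGLdecomposition}) how Riou actually organises this computation: one has a bijection $\Hom_{\mc{SH}(S)}(\mathbf{BGL}, \mathbf{BGL}_\bb Q) \simeq \lim (K_0(S)_\bb Q)^\Omega$, where the inverse system is copies of $K_0(S)_\bb Q[[U]]$ under the operator $\Omega(f) = (1+U)\,df/dU$. This operator encodes precisely the effect of the $\bb P^1$-bonding maps, and the identification $\bb Q^{\bb Z} \simeq \lim \bb Q^\Omega$ is what produces both existence of the required endomorphisms and their uniqueness. Your $\lim^1$ heuristic is a coarser version of the same idea; to make it a proof you would need to replace the erroneous $\bb A^n$ formula by the correct description of the transition maps.
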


Let $S$ be a Noetherian, regular scheme. By the Yoneda lemma, we
have a functor
$$\Phi: T \to \operatorname{\mathbf{Shv}}(T) \to \Delta^{\operatorname{op}} \operatorname{\mathbf{Shv}}(T) \to \mc H(T). $$
If $G$ is any object of $\mc H(T)$, we denote by $\phi G$ the
presheaf on $T$ defined by
$$T \ni U \mapsto \Hom_{\mc H(T)}(\Phi U, G).$$
In particular, we have an isomorphism $$\phi(\bb Z \times \op{Gr})_\bb Q
\simeq K_0(-)_\bb Q.$$
\begin{T}[Théorème III.29 in \cite{Riou}] Let $S$ be a regular scheme. Given two (pointed) presheaves $\mc F, \mc G$
on $\mathfrak R_S$ denote by $\Hom_{\mathfrak R_S^{\operatorname{op}} \op{Set} }(\mc F, \mc G)$ (resp.
$\Hom_{\bullet, \mathfrak R_S^{\operatorname{op}} \op{Set}}(\mc F, \mc G)$) the set of (pointed) natural transformations
from $\mc F \to \mc G$. Then the natural morphism
$$\Hom_{\mc H(\mathfrak R_{S, sm})}((\bb Z \times \operatorname{Gr})_\bb Q, (\bb Z \times \operatorname{Gr})_\bb Q) \to \Hom_{\mathfrak R_S^{\operatorname{op}} \op{Set}}(K_0(-)_\bb Q, K_0(-)_\bb Q)$$
(resp.
$$\Hom_{\mc H_\bullet(\mathfrak R_{S, sm})}((\bb Z \times \operatorname{Gr})_\bb Q, (\bb Z \times \operatorname{Gr})_\bb Q) \to \Hom_{\bullet, \mathfrak R_S^{\operatorname{op}} \op{Set}}(K_0(-)_\bb Q, K_0(-)_\bb Q))$$
is bijective.
\end{T}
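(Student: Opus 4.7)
The plan is to exploit the representability theorem of Voevodsky--Morel to reduce the statement to a statement about the universal classes on Grassmannians. Write $\mc G = (\bb Z \times \op{Gr})_\bb Q$. By construction (see Definition \ref{Grassmanniandefn} and the discussion around Definition \ref{rationalgrassmannian}), $\mc G$ is obtained as a homotopy colimit, in $\mc H(\mathfrak R_{S,sm})$, of (rationalized) finite Grassmannians $\op{Gr}_{d,r}$ indexed over the filtered system of inclusions plus the multiplication maps $m_a$ used to rationalize. By Theorem \ref{thm:voevmorelGr}, for each $d,r$ one has $\Hom_{\mc H(\mathfrak R_{S,sm})}(\op{Gr}_{d,r}, \mc G) = K_0(\op{Gr}_{d,r})_\bb Q$. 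In particular the identity of $\mc G$ restricted to $\op{Gr}_{d,r}$ corresponds to a universal class $u_{d,r} \in K_0(\op{Gr}_{d,r})_\bb Q$ (coming from the universal quotient bundle minus its rank).

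For injectivity, suppose $f,g: \mc G \to \mc G$ induce the same natural transformation of presheaves. Evaluating the associated transformations on the universal classes $u_{d,r}$ shows that $f$ and $g$ have the same restriction to each $\op{Gr}_{d,r}$ in the homotopy category. Since $\mc G$ is a (homotopy) filtered colimit of the $\op{Gr}_{d,r}$ (after inverting $\bb Q$), the universal property of this colimit forces $f = g$ in $\mc H(\mathfrak R_{S,sm})$. This uses that, filtered colimits being exact, no higher $\lim^1$ term obstructs the gluing; here one must be careful about passing to the rationalization, but this is controlled because $\bb Q$-localization commutes with filtered colimits and kills precisely the torsion obstructions.

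For surjectivity, let $\eta: K_0(-)_\bb Q \to K_0(-)_\bb Q$ be a natural transformation of presheaves on $\mathfrak R_S$. Applying $\eta$ to each $u_{d,r}$ yields classes $\eta(u_{d,r}) \in K_0(\op{Gr}_{d,r})_\bb Q$, which by Theorem \ref{thm:voevmorelGr} correspond to morphisms $f_{d,r}: \op{Gr}_{d,r} \to \mc G$. The naturality of $\eta$ applied to the transition maps $\op{Gr}_{d,r} \to \op{Gr}_{d+1,r}$, $\op{Gr}_{d,r} \to \op{Gr}_{d,r+1}$ and to the rationalization maps $m_a$ shows that the $f_{d,r}$ form a compatible system; by the universal property of (homotopy) colimits they assemble into a single morphism $f: \mc G \to \mc G$ in $\mc H(\mathfrak R_{S,sm})$. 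To see that $\phi(f) = \eta$, one checks agreement on an arbitrary regular $X$: any class in $K_0(X)_\bb Q$ is, by the representability theorem, represented by some morphism $X \to \mc G$, which in turn factors (up to $\bb A^1$-homotopy) through some $\op{Gr}_{d,r}$ and hence pulls back $u_{d,r}$; naturality of $\eta$ and of the construction of $f$ then gives the desired equality.

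\textbf{Main obstacle.} The hard part is ensuring that the filtered system defining $\mc G$, together with the rationalization, behaves as a genuine homotopy colimit in $\mc H(\mathfrak R_{S,sm})$ so that both the gluing argument for surjectivity and the uniqueness in injectivity actually go through; equivalently, that no $\lim^1$-type obstruction survives after inverting $\bb Q$. The pointed version is handled by the same argument together with the elementary observation (used in Lemma \ref{Prop:presheaftohomotopy}) that a morphism of $H$-groups is pointed if and only if the underlying non-pointed morphism sends the basepoint of the source to the basepoint of the target, a condition which passes through the equivalence.
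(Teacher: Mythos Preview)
The paper does not prove this theorem; it is stated in the appendix as a citation of Riou's Th\'eor\`eme III.29, with no argument given beyond the brief remarks in the proof of Lemma~\ref{Prop:presheaftohomotopy}. So there is no ``paper's own proof'' to compare your proposal against. That said, your outline is indeed the strategy Riou uses: present $(\bb Z\times\op{Gr})_\bb Q$ as an ind-object built from finite Grassmannians, use representability to identify maps out of each stage with $K_0$-classes, and analyse the resulting inverse system.

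However, there is a genuine gap in your treatment of the $\lim^1$ issue. You write that ``filtered colimits being exact, no higher $\lim^1$ term obstructs the gluing,'' but this is a non-sequitur: exactness of filtered \emph{colimits} says nothing about the behaviour of the inverse \emph{limits} that arise when you map \emph{out} of a homotopy colimit. Concretely, for a sequential homotopy colimit one has a Milnor-type exact sequence
\[
0 \to {\textstyle\lim^1}\,\Hom_{\mc H}(\op{Gr}_{d,r},\Omega\mc G) \to \Hom_{\mc H}(\mc G,\mc G) \to \lim \Hom_{\mc H}(\op{Gr}_{d,r},\mc G) \to 0,
\]
and the left-hand term involves $K_1(\op{Gr}_{d,r})_\bb Q$. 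These are $\bb Q$-vector spaces, but $\lim^1$ of an inverse system of $\bb Q$-vector spaces need not vanish; rationalization kills torsion, not $\lim^1$. Likewise, your surjectivity argument assumes that a compatible family of homotopy classes $f_{d,r}$ assembles to a map on the colimit, which is exactly what fails when $\lim^1\neq 0$.

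What Riou actually does is to verify a Mittag--Leffler--type condition on the tower by computing the transition maps explicitly: the $K$-theory of Grassmannians is known (free of finite rank, generated by Schur polynomials in the tautological bundle), and one checks that the induced maps on both $K_0$ and $K_1$ behave well enough along the system (in particular the relevant transition maps are surjective). This is the substantive content you are missing. Your identification of the obstacle is correct, but you must resolve it rather than wave it away.
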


\begin{T} \label{thm:BGLdecomposition}[Théorème IV.72 in \cite{Riou}] We have a natural decomposition in terms of
"Adams eigenspaces",
$$\mathbf{BGL}_\bb Q \simeq \bigoplus_{i \in \bb Z} \bb H^{(i)}$$
In $\mc SH(\mathfrak R_S)$.
\end{T}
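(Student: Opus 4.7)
The plan is to build the decomposition as a splitting of $\mathbf{BGL}_\bb Q$ into simultaneous eigenspaces for the family of Adams operations, proceeding in four steps.

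First, I would use the rigidity-type result (Théorème III.29 of Riou, recalled in the excerpt) to produce endomorphisms $\Psi^k \colon \mathbf{BGL}_\bb Q \to \mathbf{BGL}_\bb Q$ in $\mc{SH}(\mathfrak R_S)$ lifting the classical Adams operations on $K_0(-)_\bb Q$. Rigidity, applied to the self-maps of $(\bb Z \times \op{Gr})_\bb Q$ and then promoted to the spectrum level by the uniqueness of the stable model, automatically upgrades the identities $\Psi^k \Psi^l = \Psi^{kl}$ and the additive/multiplicative compatibilities to honest equalities in $\mc{SH}(\mathfrak R_S)$, since both sides coincide as natural transformations of $K_0(-)_\bb Q$. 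In particular the $\Psi^k$ commute with each other.

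Second, I would construct, for each integer $i$, an idempotent $e_i \colon \mathbf{BGL}_\bb Q \to \mathbf{BGL}_\bb Q$ cutting out the weight-$i$ part. On a fixed regular scheme $X$ of finite Krull dimension $d$, the Adams eigenvalues appearing in $K_n(X)_\bb Q$ belong to the finite set $\{k^0, k^1, \dots, k^{d+n+1}\}$ (this is the content of Corollary \ref{Cor:adams-weight-relation} and the nilpotence results cited in the last section), so a Lagrange-interpolation polynomial in $\Psi^k$ projects onto the $k^i$-eigenspace. To get a single endomorphism of the spectrum (working uniformly across $X$), I would define $e_i$ at the level of natural transformations of $K_0(-)_\bb Q$ as the projector onto the $i$-th Adams eigenspace — a well-defined natural transformation because the eigenspace decomposition on $K_0$ is functorial — and then lift $e_i$ to $\mc{SH}(\mathfrak R_S)$ by rigidity. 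The relations $e_i^2 = e_i$, $e_i e_j = 0$ for $i \neq j$ follow from the corresponding relations on $K_0(-)_\bb Q$ via rigidity.

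Third, define $\bb H^{(i)}$ as the image spectrum of $e_i$, obtained as the homotopy colimit of $e_i$ (or equivalently as a retract of $\mathbf{BGL}_\bb Q$, which exists in $\mc{SH}$ since the category is additive and idempotents split rationally). The canonical map
\[
\bigoplus_{i \in \bb Z} \bb H^{(i)} \longrightarrow \mathbf{BGL}_\bb Q
\]
is then built from the inclusions of the summands.

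The main obstacle, and the step that requires genuine work, is proving that this map is an isomorphism in $\mc{SH}(\mathfrak R_S)$. On homotopy sheaves over a given regular $X$ of finite dimension the sum $\sum_i e_i$ is the identity because only finitely many weights contribute; the issue is that no single polynomial in $\Psi^k$ realizes this globally. I would handle this by testing against the presheaf of rational $K$-groups: by rigidity, to check that $\bigoplus e_i$ is the identity on $\mathbf{BGL}_\bb Q$ it suffices to check it on $\phi(\mathbf{BGL}_\bb Q) = K_0(-)_\bb Q$ (and, using the $\Omega$-spectrum structure, on each $K_n(-)_\bb Q$), where the eigenspace decomposition is a classical consequence of the $\lambda$-ring structure together with the boundedness of weights on any fixed regular Noetherian scheme. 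Once pointwise verified, rigidity forces the corresponding map of spectra to be an isomorphism, completing the proof.
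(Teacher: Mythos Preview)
Your outline is in the right spirit but glosses over exactly the point where the real work lies: the passage from unstable to stable operations. Th\'eor\`eme III.29 of \cite{Riou}, which you invoke, identifies endomorphisms of $(\bb Z \times \op{Gr})_\bb Q$ in $\mc H(\mathfrak R_S)$ with natural transformations of $K_0(-)_\bb Q$; it says nothing about endomorphisms of the spectrum $\mathbf{BGL}_\bb Q$ in $\mc{SH}(\mathfrak R_S)$. A morphism of $\bb P^1$-spectra is a compatible family of level-wise maps, and compatibility with the Bott structure map (multiplication by $u=[\calo(1)]-1$, which has Adams weight $1$) shifts weights: a stable weight-$i$ projector must act as the \emph{unstable} weight-$(i+n)$ projector at level $n$. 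Your sentence ``lift $e_i$ to $\mc{SH}(\mathfrak R_S)$ by rigidity'' hides precisely this coherence problem, and ``uniqueness of the stable model'' does not produce stable operations from unstable ones.

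The paper's argument (following Riou) confronts this directly. One first computes the stable hom-set $\Hom_{\mc{SH}(S)}(\mathbf{BGL},\mathbf{BGL}_\bb Q)$ as an inverse limit $\lim(K_0(S)_\bb Q)^\Omega$ under the operator $\Omega(f)=(1+U)\,df/dU$; this is Riou's Th\'eor\`eme IV.55 and is the stable analogue of the rigidity you quote. One then exhibits a ring map $\bb Q^{\bb Z}\to \End_{\mc{SH}}(\mathbf{BGL}_\bb Q)$ by sending the characteristic function of $\{i\}$ to the explicit compatible family $(p_{i+n})_{n\ge 0}$, where $p_n=\tfrac{1}{n!}\log^n(1+U)$. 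These are the sought-for orthogonal idempotents $\pi_i$ (note they are genuine power series, not Lagrange polynomials), and $\bb H^{(i)}$ is the image of $\pi_i$ in the pseudo-abelian category $\mc{SH}(\mathfrak R_S)$. The relation $\Psi^k=\sum_n k^n p_n$ then forces $\Psi^k$ to act by $k^i$ on $\bb H^{(i)}$. Your Lagrange-interpolation idea cannot produce a single stable endomorphism for each $i$, and your fallback of lifting the abstract projector needs exactly the inverse-limit description to make sense.
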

Since the above theorem will play a crucial role, we will mention
the highlights of the proof. \\
First, put
$$p_n = \frac{1}{n!}\log^n(1 + U) \in \bb Q[[U]],
$$ and
$$\sum k^n p_n = (1 + U)^k = \Psi^k.$$
This element will play the role of the $k$-th Adams operator. Let $A$ be an abelian group and define the operator $\Omega$ on $A[[U]]$ by the formula
$$\Omega(f) = (1+U)\frac{df}{dU}$$
and by $A^\Omega$ the following inverse limit:
$$\ldots \stackrel{\Omega}\to  A[[U]] \stackrel{\Omega}\to A[[U]] \stackrel{\Omega}\to A[[U]].$$
 Now, by \cite{Riou}, Theorem IV. 55, then we have
bijections
$$\Hom_{\mc{SH}(S)}(\mathbf{BGL}, \mathbf{BGL}_\bb Q) \simeq \lim (K_0(S)_\bb Q)^\Omega,$$
and there is a natural map $$\bb Q^\bb Z \to \lim (K_0(S)_\bb
Q)^\Omega$$ induced by a commutative diagram (whenever $A$ is a $\bb
Q$-vector space, see loc.cit. Lemme IV. 66):
$$\xymatrix{A^\bb N \ar[r]^\sigma \ar[d]^s &  A[[U]] \ar[d]^\Omega \\
A^\bb N \ar[r]^\sigma & A[[U]]}$$ where $s((a_n)_{n \in \bb N}) =
(a_{n+1})_{\bb N}$ and $\sigma((a_n)_{n \in \bb N)}) = \sum_{n \in
\bb N} a_n p_n.$ Here this induces an isomorphism $\Sigma: A^\bb Z
\simeq \lim A^\Omega$ defined by $[(a_i)_{i \in \bb Z}]_{i \in \bb
Z} \mapsto \sum a_{i+n} p_n \in A[[U]]$(loc.cit. Corollaire IV.67),
and this is the map alluded to above. \\
Composing the above two maps, we obtain a map
$$\bb Q^\bb Z \to \End_{\mc {SH}(\mathfrak R_S)}(\mathbf{BGL}_\bb Q).$$
Consider the characteristic function $\pi_i$ associated to $\{i\}$
viewed as an element $\bb Q^\bb Z$. It corresponds naturally to the
element $(p_{i + n})_{n \in \bb N}$ in $\lim \bb Q^\Omega$ (where we
put $p_k = 0$ for $k < 0$). They naturally define an orthogonal
family of idempotents in $\End_{\mc{SH}(S)}(\mathbf{BGL}_\bb Q).$
$\mc {SH}(\mathfrak R_S)$ being an pseudo-abelian category, we can consider the image of $\pi_i$, and
we denote it by $\bb H^{(i)}$. It is formal that $\Psi^k$ above acts
as multiplication by $k^i$ on $\bb H^{(i)}$.

\begin{Cor} [\cite{Riou}, Corollaire IV.75] We have a decomposition $$K_i(X)_\bb Q = \bigoplus_{j \in \bb N} K_i(X)^{(j)}.$$
\end{Cor}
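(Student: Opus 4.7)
The plan is to deduce the splitting of rational $K$-theory directly from the splitting $\mathbf{BGL}_\bb Q \simeq \bigoplus_{j \in \bb Z} \bb H^{(j)}$ in $\mc{SH}(\mathfrak R_S)$ provided by Theorem \ref{thm:BGLdecomposition}. The definition of a stable model and Theorem \ref{thm:voevmorelGr} together identify $K_i(X)_\bb Q$ with the abelian group of stable maps from $\Sigma^\infty_+ X[i]$ to $\mathbf{BGL}_\bb Q$, so it suffices to apply $\op{Hom}_{\mc{SH}(\mathfrak R_S)}(\Sigma^\infty_+ X[i],-)$ to the decomposition and define $K_i(X)^{(j)}$ as the $j$-th summand.

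First I would recall the orthogonal family of idempotents $\pi_j \in \End_{\mc{SH}(\mathfrak R_S)}(\mathbf{BGL}_\bb Q)$ constructed just before Theorem \ref{thm:BGLdecomposition}, whose images are precisely $\bb H^{(j)}$ and on which the characters $\Psi^k$ act by multiplication by $k^j$. Post-composition with $\pi_j$ gives a family of orthogonal idempotents on $K_i(X)_\bb Q = \op{Hom}_{\mc{SH}}(\Sigma^\infty_+ X[i], \mathbf{BGL}_\bb Q)$, and their image is what we define to be $K_i(X)^{(j)}$. That $\Psi^k$ acts on $K_i(X)^{(j)}$ by $k^j$ follows by functoriality from the same property on $\bb H^{(j)}$.

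To conclude that $K_i(X)_\bb Q$ is the direct sum (rather than merely the product) of the $K_i(X)^{(j)}$, I would argue that any class $x \in K_i(X)_\bb Q$ is annihilated by all but finitely many of the $\pi_j$. In the case $i = 0$ for $X$ a regular scheme of finite Krull dimension $d$ this is a consequence of the termination of the $\gamma$-filtration (Theorem \ref{thm:virtualoperations}(g) and Corollary \ref{cor:nilpotentfiltration}). In the general case one invokes the analogous result on the Quillen coniveau spectral sequence, namely that it degenerates rationally after finitely many differentials and that its rows have bounded support; this is the argument sketched in the lemma in the proof of Theorem \ref{thm:virtualoperations} invoking \cite{Souleoperations}.

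The main obstacle I expect is purely a matter of bookkeeping: verifying that $\Sigma^\infty_+ X[i]$ is compact enough in $\mc{SH}(\mathfrak R_S)$ that representable functors commute with the (a priori infinite) direct sum $\bigoplus_{j} \bb H^{(j)}$, and that the resulting sum coincides with the sum of images of the $\pi_j$. Once this is set up, the statement is formal from Theorem \ref{thm:BGLdecomposition}, and no further input beyond the material in this appendix is required.
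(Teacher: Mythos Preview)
Your overall strategy---apply $\Hom_{\mc{SH}(\mathfrak R_S)}(\Sigma^\infty_+ X[i],-)$ to the decomposition of Theorem \ref{thm:BGLdecomposition}---is exactly what the paper intends: the corollary is stated with a bare citation to \cite{Riou} and no proof, so it is meant to be read as an immediate consequence of the spectrum-level splitting together with the representability statement of Theorem \ref{thm:voevmorelGr}.

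There is, however, a circularity in your finiteness argument. You invoke Theorem \ref{thm:virtualoperations}(g) and Corollary \ref{cor:nilpotentfiltration} to show that only finitely many $\pi_j$ are nonzero on a given class. But those results live in Section \ref{section:rigidity}, which is built on Theorem \ref{thm:BGLfil}, which in turn rests on Theorem \ref{thm:BGLdecomposition} and the present corollary (the filtration $F^j K_i(X)_\bb Q$ in the lemma inside the proof of Theorem \ref{thm:virtualoperations} is \emph{defined} via the eigenspace decomposition you are trying to establish). You do salvage this by pointing to the underlying Soul\'e argument on the coniveau spectral sequence from \cite{Souleoperations} and \cite{IntviaAdams}, which is logically independent; that is indeed the correct input, and in Riou's original proof the finiteness is handled directly at the level of the idempotents (one shows that on $K_0$ of any regular scheme the sum $\sum_j \pi_j$ acts as the identity with only finitely many nonzero terms, which is enough to conclude the spectrum-level direct sum). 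So drop the forward references to Section \ref{section:rigidity} and cite \cite{Souleoperations} or \cite{Riou} directly for the finiteness; the rest of your outline is fine. You should also remark that the index set is $\bb N$ rather than $\bb Z$: the vanishing of $K_i(X)^{(j)}$ for $j<0$ is part of the same finiteness input.
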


\begin{D} \label{defn:V_X}Let $\mc C$ be a closed simplicial model category, and suppose that $\mc X$ is an object of $\mc C$. Given a fibrant replacement $\mc X \to \mc X'$, consider the functor $V_\mc X$ taking an object $X$ of $\mc C$ to the fundamental groupoid of $\fhom(X, \mc X')$. This is independent up to unique isomorphism of the choice of fibrant replacement by abstract nonsense.
We call $V_\mc X$ the associated category fibered in groupoids over $\mc C$. A 1- and 2-morphism of categories fibered in
groupoids over $\mc C$ is the standard one and we denote by $\Hom_f(V_\mc X, V_\mc Y)$ the set of 1-morphisms
$V_\mc X \to V_\mc Y$ strictly functorial with respect to pullback.  \end{D}
Very often, these groupoids have the structure of Picard categories and they form Picard categories fibered in $\mc C$. Recall from \cite{AAG}, 3.6 that a Picard category fibered over $\mc C$, $P$, is, for every object $X$ of $\mc C$, a Picard category $P_X$ and for every morphism $X \to Y$ an additive functor $P_Y \to P_X$ compatible with composition in the obvious sense. \\
The following proposition is formal, and is surely known in more generality:
\begin{Prop} \label{Prop:virtualhomotopy} [Pre-rigidity, proof of \cite{Riou}, Chapitre III, section 10] Let $T$ be as above and consider the
category of pointed or unpointed simplicial (pre-)sheaves on $T$. Suppose $\mc X$ and $\mc Y$ are
objects thereof, with $\mc X$ cofibrant and $\mc Y$ fibrant, with associated fibered categories in groupoids $V_\mc X, V_\mc Y$, and suppose that
\begin{itemize}
  \item ${\Hom}_{\mc H(\mc C)}(\mc X, \Omega \mc Y) = 0$.
  \item $\fhom(\mc X, \mc Y)$ is an $H$-group.
\end{itemize}
 Then we have a canonical map
$$\Hom_{\mc H(\mc C)}(\mc X, \mc Y) \to \Hom_{f}(V_\mc X, V_\mc Y)$$
which associates to an element of the left a functor of fibered
categories $\phi: V_\mc X \to V_\mc Y$, canonical up to unique
isomorphism.
\end{Prop}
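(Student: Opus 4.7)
The plan is to construct $\phi_f: V_\mc X \to V_\mc Y$ from a class $f \in \Hom_{\mc H(\mc C)}(\mc X, \mc Y)$ by strictifying $f$ and then lifting it along the fibrant replacement used to define $V_\mc X$. Fix such a fibrant replacement $\iota: \mc X \hookrightarrow \mc X'$; it is a trivial cofibration, and $\mc X'$ inherits cofibrancy from $\mc X$. Since $\mc X$ is cofibrant and $\mc Y$ is fibrant, Theorem \ref{thm:fundhomotopicalalgebra} identifies $\Hom_{\mc H(\mc C)}(\mc X, \mc Y)$ with $\pi_0 \fhom(\mc X, \mc Y)$, so $f$ is represented by an actual morphism $f_0: \mc X \to \mc Y$ in $\mc C$. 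Applying the lifting axiom to the trivial cofibration $\iota$ and the fibration $\mc Y \to \ast$ produces a lift $\tilde f: \mc X' \to \mc Y$ with $\tilde f \circ \iota = f_0$. For each $X \in \mc C$, postcomposition with $\tilde f$ yields a morphism of simplicial sets $\fhom(X, \mc X') \to \fhom(X, \mc Y)$, and applying the fundamental groupoid functor $\pi_f$ produces the functor $\phi_X: V_\mc X(X) \to V_\mc Y(X)$; this construction is visibly strictly natural in $X$, hence defines an element of $\Hom_f(V_\mc X, V_\mc Y)$.

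Next I verify that $\phi$ is independent of the choices of $f_0$ and $\tilde f$ up to a natural isomorphism of functors. Two strict representatives of $f$ are connected by a 1-simplex $H: \mc X \otimes \Delta^1 \to \mc Y$ in $\fhom(\mc X, \mc Y)$; and for a fixed representative $f_0$, the set of lifts $\tilde f$ is a fiber of $\fhom(\mc X', \mc Y) \to \fhom(\mc X, \mc Y)$, which is a trivial fibration of simplicial sets since $\iota$ is a trivial cofibration and $\mc Y$ is fibrant, so this fiber is in particular connected. In either case, an appropriate homotopy may be lifted against the trivial cofibration $\iota \otimes \op{id}_{\Delta^1}$ to a homotopy $\tilde H: \mc X' \otimes \Delta^1 \to \mc Y$ connecting any two candidate $\tilde f$'s; postcomposition followed by $\pi_f$ then delivers a natural isomorphism between the associated functors.

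The main obstacle is showing this natural isomorphism is unique, i.e.\ that $\tilde H$ is determined up to a 2-homotopy which becomes trivial after applying $\pi_f$. Two such homotopies $\tilde H_1, \tilde H_2$ differ by a loop in $\fhom(\mc X', \mc Y)$, and since $\iota$ is a weak equivalence this ambiguity lies in $\pi_1 \fhom(\mc X, \mc Y)$ based at the lift $\tilde f$. The $H$-group hypothesis guarantees that $\fhom(\mc X, \mc Y)$ has all components of equivalent homotopy type and that $\pi_1$ is independent of basepoint, and the simplicial adjunction $(-) \otimes S^1 \dashv \Omega$ identifies $\pi_1 \fhom(\mc X, \mc Y)$ with $\Hom_{\mc H(\mc C)}(\mc X, \Omega \mc Y)$, which vanishes by hypothesis. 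Consequently the natural isomorphism between two realizations of $\phi_f$ is unique, and $f \mapsto \phi_f$ gives the desired canonical map; strict functoriality with respect to pullback in $X$ was built into the construction at the level of $\fhom(-, \mc X') \to \fhom(-, \mc Y)$, and transports through $\pi_f$.
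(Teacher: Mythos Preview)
Your proof is correct and follows essentially the same approach as the paper's own argument: choose a representative of the homotopy class, observe that different choices are related by a homotopy inducing an isomorphism of functors, and use $\pi_1 \fhom(\mc X, \mc Y) \cong \Hom_{\mc H(\mc C)}(\mc X, \Omega \mc Y) = 0$ to conclude that this isomorphism is unique. Your treatment is in fact more careful than the paper's in one respect: you explicitly handle the lift $\tilde f: \mc X' \to \mc Y$ through the fibrant replacement $\mc X'$ entering the definition of $V_{\mc X}$, a step the paper suppresses.
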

\begin{proof}
If $\Phi$ is in ${\Hom}_{\mc H(\mc C)}(\mc X, \mc Y) = \pi_0(\fhom(\mc X, \mc Y))$, it induces for any $X \in \mc C$ a map $\phi_X: \mc V_\mc X(X) \to \mc V_\mc Y(X)$,
functorial in $X$, by choice of a representative $\phi$ of $\Phi$ in $\fhom(\mc X, \mc Y)$ . If
$\phi$ and $\phi'$ induce the same homotopy-class, there is a
homotopy $h: \Delta^1 \times \mc X \to \mc Y$ from $\phi$ to
$\phi'$ which gives an isomorphism
$\op{iso}_{h,X}: \phi_X \to \phi'_{X}$. Moreover, it is easy to see
that if there are two homotopies $h$ and $h'$ which are homotopic,
they induce the same isomorphism of functors. The obstruction for $\op{iso}_{h,X}$ to be canonical lies in the fundamental group of $\fhom(\mc X, \mc Y)$ which can be identified with ${\Hom}_{\mc H(\mc C)}(\mc X, \Omega \mc Y)$ which is 0 by assumption.
\end{proof}

\section{Algebraic stacks} \label{appendix:algebraicstacks}
In this section we recall the necessary facts about algebraic stacks that will be needed. It is neither self-contained nor complete, and we refer the reader to for example \cite{MB} or \cite{GIT} for more exhaustive treatments. We refer to Définition 3.1 and
Définition 4.1, \cite{MB} for the necessary definitions of algebraic stacks. In particular an $S$-stack is a sheaf in groupoids on $(Aff/S)_{et}$.
Furthermore, let $T$ be a full subsite of $(Aff/S)_{et}$ (\ie a full subcategory with a Grothendieck topology such that a cover in the former is one in the latter).
By abuse of language, we say that that a category fibered in groupoids over
$T$ is resp. a stack, an algebraic stack or Deligne-Mumford stack if
it is the restriction of a stack, algebraic stack or Deligne-Mumford
stack.


Again to fix notation we recall \cite{MB}, Application 14.3.4:
\begin{D} We say that a representable 1-morphism $F: X \to Y$ of algebraic stacks is projective (or quasi-projective) if there is a coherent locally free sheaf $\mc E$ on $Y$ and 2-commutative diagram $$\xymatrix{X \ar[r]^I \ar[dr] & \bb P(\mc E) \ar[d]^P \\ & Y}$$ with $I$ a (representable) closed immersion (resp. quasi-compact immersion) and $P$ is the canonical projection.\footnote{This is what many authors call a "strongly projective" (or "strongly quasi-projective") 1-morphism.}
\end{D}
By \cite{Thomason3} there are many examples of when an equivariant scheme which is projective as a scheme is also equivariantly projective.
Given an algebraic stack $\mc X$, an algebraic space $X$ and a fppf-morphism $X \to \mc X$, the associated groupoid $[X_1 \rightrightarrows X_0]$ is an fppf-presentation of $\mc X$ (cf. \cite{MB}, Corollaire 10.6). We now recast the above in a setting which will make it more natural to apply various auxiliary results, which is that of a simplicial setting.

\begin{D} Let $T$ be a site. The category of presheaves and
sheaves on this site is denoted by $pShv(T)$ and $Shv(T)$
respectively. \\\\ The category of simplicial objects of a category
$\mc C$, \ie functors $\Delta^{op} \to \mc C$, is denoted by
$\Delta^{op} \mc C$ or $s \mc C$.
\end{D}
Recall that whenever $T$ has enough points a morphism of simplicial presheaves in $T$ is said to be a local equivalence if
it induces weak equivalences of simplicial sets on all stalks. \\
Let $U \to X$ be a morphism of an object $X$ in
$T$. The nerve of this morphism is the simplicial object $\mc
N(U/X)$ whose $n$-simplices are given by the product $U \times_X U
\times_X U \ldots \times_X U$ ($n$ times). Given a presheaf of
simplicial sets on $T$ we have an associated cosimplicial functor
$\Delta \to \op{Set}$, $[n] \mapsto \mc F(\mc N(U/X)_n)$. The Cech
cohomology with respect to the covering $U \to X$ is the simplicial
set $$\mathbf{H}(U/X, \mc F) := \holim_{\Delta} \mc F(\mc N(U/X)_n).$$
We say that $\mc F$ satisfies descent if for any $X$ and any
covering $U \to X$ in $T$, the map $$\mc F(X) \to \mathbf{H}(U/X,
\mc F)$$ is a weak equivalence.

\begin{D} A presheaf $\mc F$ of simplicial sets on a site $T$ is said to be flabby, if
for any (and thus each) simplicially fibrant replacement $\mc F \to \mc F'$, and
any $X \in T$, the map  $$\mc F(X) \to \mc F'(X)$$ is a weak
equivalence of simplicial sets.
\end{D}

\begin{T} [\cite{Toen}, Theoreme 1.2] $\mc F$ is flabby if and only if it satisfies descent.
\end{T}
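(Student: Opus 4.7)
The plan is to recognize this as the standard characterization of fibrant objects in the local (Jardine) model structure on simplicial (pre)sheaves: a simplicial presheaf is fibrant if and only if it is sectionwise fibrant and satisfies (hyper)descent. Once this is in place, flabbiness — which says that the canonical map to any fibrant replacement is a sectionwise weak equivalence — becomes essentially a reformulation of descent, modulo the observation that ordinary objectwise fibrant replacement neither creates nor destroys descent.

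First I would address the direction \emph{flabby implies descent}. Fix a fibrant replacement $\mc F \to \mc F'$ in the local model structure. Because $\mc F'$ is in particular sectionwise Kan and its restriction to any nerve $\mc N(U/X)_\bullet$ is a Reedy-fibrant cosimplicial diagram, the Bousfield--Kan spectral sequence / standard homotopy-descent argument shows that $\mc F'(X) \to \mathbf H(U/X,\mc F')$ is a weak equivalence for every cover $U \to X$. Flabbiness gives sectionwise equivalences $\mc F(\mc N(U/X)_n) \to \mc F'(\mc N(U/X)_n)$ for each $n$, so the map on $\holim_\Delta$ is a weak equivalence, and a two-out-of-three chase in the commutative square produces descent for $\mc F$.

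For the converse \emph{descent implies flabby}, I would proceed in two steps. First replace $\mc F$ by an objectwise Kan fibrant presheaf $\widetilde{\mc F}$ via the sectionwise small-object construction; this is a sectionwise equivalence $\mc F \to \widetilde{\mc F}$ and, since $\holim_\Delta$ preserves sectionwise equivalences between Reedy-fibrant cosimplicial diagrams, descent is transferred from $\mc F$ to $\widetilde{\mc F}$. Now $\widetilde{\mc F}$ is sectionwise fibrant and satisfies descent, so by the Jardine characterization it is fibrant in the local model structure. Consequently $\mc F \to \widetilde{\mc F}$ is \emph{already} a fibrant replacement; comparing with any other fibrant replacement $\mc F \to \mc F'$ via a lift $\widetilde{\mc F} \to \mc F'$ (which is a local weak equivalence between fibrant objects, hence a sectionwise equivalence), we conclude that $\mc F \to \mc F'$ is a sectionwise equivalence, i.e.\ $\mc F$ is flabby.

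The main obstacle is the descent-characterization of fibrant objects, which is where the nontrivial content lies and where one must really use that $T$ has enough points (so that local equivalences are detected on stalks) to reduce fibrancy to sectionwise Kan plus the descent spectral sequence collapsing. Once this structural theorem is invoked, the rest is diagram chasing with $\holim_\Delta$ and the standard compatibility of homotopy limits with sectionwise equivalences of Reedy-fibrant diagrams.
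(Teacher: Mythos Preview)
The paper does not supply a proof of this statement: it is quoted directly from \cite{Toen}, Th\'eor\`eme~1.2, and used as a black box. So there is nothing in the paper to compare your argument against; your proposal is not a reconstruction of the paper's proof but an independent sketch of why the cited result is true.

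As to the content of your sketch: the outline is the standard one and is essentially how the result is proved in the literature (Toen, and more systematically Dugger--Hollander--Isaksen). One point deserves care. The paper's definition of ``descent'' is \v{C}ech descent with respect to single covering maps $U\to X$, whereas the characterization of fibrant objects in the Jardine local model structure is in terms of \emph{hyper}descent. Your converse direction (``sectionwise Kan $+$ descent $\Rightarrow$ locally fibrant'') therefore implicitly uses that \v{C}ech descent suffices; this is exactly the nontrivial input you flag as ``the main obstacle,'' and it is where the hypothesis of enough points (or an argument specific to the topology at hand) is genuinely doing work. If you want a self-contained write-up rather than a pointer, that step should be made precise --- either by citing the Dugger--Hollander--Isaksen theorem that \v{C}ech descent already forces hyperdescent, or by restricting to hypercovers from the outset.
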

Thus any simplicially fibrant simplicial presheaf satisfies descent. It follows from the definition that a groupoid
is flabby if and only if it is a stack.\\
If $\mc X$ is an $S$-stack, there is sheaf of simplicial sets
defined as follows: Let $U$ be an object in $(Aff/S)$, and let
$\overline {\mc X}$ be the associated fibered category over $(Aff/S)$. The
category $F_{\mc X}(U) :=  Hom_{Cat/S}(\overline U, \mc X)$ is a
groupoid, and its nerve is a simplicial set $BF_{\mc X}$.

\begin{D} \label{defn:extyoneda} Let $T$ be a site, and consider the category of simplicial presheaves on $T$, $\Delta^{op}p\op{\mathbf{Shv}}(T)$. The full subcategory of simplicial sheaves is denoted by $\Delta^{op}\op{\mathbf{Shv}}(T)$. If $\mathfrak{Ch}$ is the category of stacks on $T$, we call the functor $B: \mathfrak{Ch}(T) \to \Delta^{op}\op{\mathbf{Shv}}(T)$ constructed above the extended Yoneda functor.
\end{D}

Furthermore, a (cartesian) quasi-coherent $\calo_{\mc X}$-module on an algebraic stack $\mc X$ viewed as a simplicial set is an assignment of a
quasi-coherent (resp. coherent, locally free, etc)  $\mc F_n$ on each $\mc X_n$ such that for any $\phi: [n] \to [m]$ we have an isomorphism $\phi^*: \phi^* \mc F_n \to \mc F_m$
compatible with compositions $[n] \to [n'] \to [n'']$. Coherent and locally free sheaves are defined analogously. \\
As an example (cf. \cite{HodgeIII}, 6.1.2), let $G$ be a group scheme, finitely presented, separated and
faithfully flat over a scheme $S$. Let $X$ be an algebraic space over $S$. We say that $G$ acts on $X$ if there is a
morphism $\mu:G \times_S X \to X$ satisfying the usual associativity and unit-constraints. If $\mc F$ is a $\calo_X$-module,
we say that $G$ acts on $\mc F$, or that $\mc F$ is $G$-equivariant,
if there is an isomorphism of $\calo_{G \times_S X}$-modules $$\phi:
\mu^* \mc F = p_2^* \mc F$$ satisfying the associativity
constraint, on $G \times_S G\times_S X$:
$$p_{23}^* (1 \times \mu)^* \phi = (\mu \times 1)^* \phi.$$ We employ the analogous definition for complexes of quasi-coherent $\calo_X$-modules. To an algebraic space  $X$ with a group action $G$, we can form the following
simplicial algebraic space:
$$\xymatrix{[X/G/S] := X \ar@<1ex>[r]  & \ar@<1ex>[l] \ar@<0ex>[l] G \times_S X \ar@<0.5ex>[r] \ar@<1.5ex>[r]& \ar@<1.5ex>[l] \ar@<0.5ex>[l] \ar@<2.5ex>[l] G \times_S G \times_S X \ldots}$$
Here the maps are either projection or multiplication-maps, and the non-written arrows in the other directions are given by repeated applications of the unit-map $e$. The above condition that
$\mc F$ is $G$-equivariant can equivalently be rephrased as that
$\mc F$ is the degree 0-part of a cartesian $\calo_{[X/G/S]}$-module on
$[X/G/S]$ with descent-data. \\

Yet another way of defining a quasi-coherent $\calo_{\mc X}$ on an algebraic stack $\mc X$, is in the following way: Given an algebraic space
$U$ and a 1-morphism with $U$ an algebraic space, $s: U \to \mc X$, we have an quasi-coherent $\calo_U$-module $\mc F_s$ on $U$. Given two 1-morphisms of algebraic spaces
$s: U \to \mc X, t: V \to \mc X$, a morphism $f: U \to V$, and a 2-isomorphism $h: t \circ f \Rrightarrow s$, an isomorphism
$$\phi_{f,t, h}: f^* \mc F_t \simeq \mc F_s.$$
Given morphisms of algebraic spaces $U \stackrel{f}\to V \stackrel{g}\to W$, and 1-morphisms $s: U \to \mc X, t: V \to \mc X, w: W \to \mc X$, and 2-isomorphisms
$h: t \circ f \Rrightarrow s$ and $j: w \circ g \Rrightarrow t$ an equality
$$\phi_{f,t,h} \circ f^* \phi_{g,w,j}= \phi_{f \circ g, w, h \circ j}.$$
Given two quasi-coherent $\calo_{\mc X}$-modules $\mc F$ and $\mc E$, a morphism between them is morphism $\mc F_s \to \mc E_s$ for every morphism $s: U \to \mc X$ with $U$ an algebraic space compatible with the
isomorphism $\phi$ in the obvious way.

\begin{D} The Quillen $K$-theory space of an algebraic stack $\mc X$, $K(\mc X)$ is defined to be the space $\Omega BQC$, with $C$ being the exact category of (coherent)
vector bundles on $X$. The $K$-theory groups $K_i(\mc X)$ are defined to be $\pi_i$ of the corresponding loops-space $\Omega BQC$. Similarly, one defines the $G$-theory space and $G$-theory of an algebraic stack
$\mc X$, $G_i(\mc X)$, as the corresponding object considering the category of coherent $\calo_\mc X$-modules instead.
\end{D}

The main standard properties of $K-$ and $G$-theory are summarized in the following theorem (compare with \cite{Toen}, Proposition 2.2, note however that it does not seem to be true that most of the results in this proposition automatically generalize from the case of schemes. Indeed, this is the main point of the article \cite{Thomason4} where the equivariant versions of non-cohomological $K$ and $G$-theory are studied):

\begin{T} \label{thm:K-propertiesstack} Fix a separated algebraic stack $\mc X$. Then we have

\begin{itemize}
  \item  $K(-)$ is contravariantly functorial with respect to 1-morphisms of algebraic stacks, and is covariantly functorial with respect
  to representable projective morphisms between algebraic stacks with the resolution property.
  \item $G(-)$ is covariantly functorial with respect to proper representable 1-morphisms.
  \item Let $\mc E$ be a vector bundle of rank $n$ on $X$, and consider the canonical bundle $\calo(1)$ on
  $\pi: Proj_X(Sym^\bullet \mc E) = \bb P(\mc E) \to \mc X$. Then we have a homotopy equivalence
  $$\bigvee_{j=0}^{n-1} K(X) \to K(\bb P(\mc E))$$
  induced by $(f_j)_{j=0}^{n-1} \mapsto \sum_{j=0}^{n-1} \pi^* f_j \otimes \calo(-j)$. Same formula holds for $G$.
  \item  Let $\mc E$ be a vector bundle on $\mc X$, and $T$ a torsor of $\mc E$ over $\mc X$. Then $G(\mc X) \to G(T)$ is a homotopy equivalence.

\end{itemize}
\end{T}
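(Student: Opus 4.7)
The plan is to establish each item by reducing to the case of schemes (or algebraic spaces) via an fppf presentation $X \to \mc X$ and using the already cited results of Thomason--Trobaugh \cite{Thomason2} and \cite{SGA6}. Throughout, we describe all constructions at the level of the Waldhausen $K$-theory of an appropriate exact or Waldhausen category of (complexes of) vector bundles / coherent sheaves and then apply the universal property of the virtual category (Theorem \ref{thm: waldhausen-determinant}) when needed.

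For the contravariance of $K$, a $1$-morphism $F:\mc X\to\mc Y$ induces an exact pullback functor $F^{*}:\mathbf{P}(\mc Y)\to\mathbf{P}(\mc X)$ between the exact categories of vector bundles, and hence a map of $K$-theory spaces; functoriality in $F$ is the standard pseudofunctoriality of pullback. For the covariance with respect to a (representable) projective morphism $F$ when $\mc Y$ has the resolution property, the idea is the classical one (cf.\ Proposition \ref{Prop:K-pushforward}): factor $F$ as a closed immersion $i$ into some $\bb P(\mc E)$ followed by the canonical projection $p$. For the projection one simply takes $Rp_{*}$, which on a vector bundle of the form $p^{*}E\otimes\calo(j)$ is a bounded complex of vector bundles by the projective bundle formula ingredient. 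For the closed immersion $i$, the resolution property yields a finite resolution of $i_{*}E$ by vector bundles on $\bb P(\mc E)$, giving a pushforward at the level of the Waldhausen $K$-theory of bounded complexes of vector bundles; independence of choices and compatibility with composition (via Grothendieck's spectral sequence) is standard.

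For the covariance of $G$ under a proper representable $1$-morphism $F$, one uses that $R^{i}F_{*}$ preserves coherence (this is \cite{Olsson-propercovering}, Theorem 1.2, quoted earlier in the article) and vanishes for $i$ large (bounded cohomological dimension follows from representability plus the properness assumption, by working smooth-locally on $\mc Y$). Then $RF_{*}=\sum_{i}(-1)^{i}R^{i}F_{*}$ defines a morphism on $K_{0}$, and on the level of spectra one upgrades this to an exact functor of Waldhausen categories of bounded complexes of coherent sheaves, invoking the standard comparison of $G$-theory as the Waldhausen $K$-theory of such complexes.

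The projective bundle formula for $\bb P(\mc E)\to\mc X$ is then reduced to the scheme case by smooth descent: pick a presentation $X\to\mc X$, form the simplicial object $\mc N(X/\mc X)$, and use that the statement on $\mc N(X/\mc X)_{n}$ (each a scheme) holds by Quillen's Koszul computation; the statement for $\mc X$ follows because both sides satisfy descent (this is where flabbiness of the $K^{sm}$-construction, respectively of its naive analogue under the resolution hypothesis, is essential). Finally, homotopy invariance for a torsor $T\to\mc X$ under a vector bundle $\mc E$ reduces to the case of the trivial bundle after smooth-localizing on $\mc X$ (torsors under vector bundles are smooth-locally trivial), and one concludes again by descent from Quillen's homotopy invariance for $G$-theory of schemes. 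The main subtle point in all this is to verify that smooth descent actually holds for the spectra involved so that the scheme-theoretic input can be glued up the presentation; this is ultimately guaranteed by \cite{Toen}, Th\'eor\`eme 1.2, applied to the flabby $K^{sm}$ and $G^{sm}$ spectra, together with the comparison $G^{sm}(\mc N(X/\mc X))\simeq G^{sm}(\mc X)$ already recalled from \cite{Toen}, Th\'eor\`eme 2.9.
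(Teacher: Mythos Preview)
Your treatment of the functoriality items is fine and matches what the paper gestures at. The real issue is with the last two items. The theorem concerns the \emph{naive} Quillen spaces $K(\mc X)=\Omega BQ\mathbf{P}(\mc X)$ and $G(\mc X)$, not the cohomological spectra $K^{sm}$ and $G^{sm}$; these are different for a general algebraic stack, and the naive theories do \emph{not} satisfy smooth descent. Your argument for the projective bundle formula and for homotopy invariance is precisely a descent argument: you pass to a presentation $X\to\mc X$, invoke the scheme case on each $\mc N(X/\mc X)_n$, and then glue back using flabbiness of $K^{sm}$, $G^{sm}$ and To\"en's Th\'eor\`eme 2.9. But that only computes $K^{sm}(\mc X)$ and $G^{sm}(\mc X)$, not $K(\mc X)$ or $G(\mc X)$. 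Indeed, the paper itself warns about exactly this pitfall in the parenthetical preceding the theorem: ``it does not seem to be true that most of the results in this proposition automatically generalize from the case of schemes.~Indeed, this is the main point of the article \cite{Thomason4} where the equivariant versions of non-cohomological $K$ and $G$-theory are studied.'' Note also that the paper explicitly records later that To\"en's $G$-cohomology does not have smooth descent in general, so even the $G^{sm}$ side of your gluing is not as automatic as you suggest.

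The paper's own ``proof'' is in fact no more than a pointer: it cites \cite{Thomason4}, Theorem~3.1 for the first item and says the rest follows by the classical arguments adapted as in that reference, explicitly declining to spell anything out. The point of that citation is that Thomason proves the projective bundle theorem and homotopy invariance \emph{directly} in the equivariant (hence stacky) setting, by running Quillen's d\'evissage/resolution/localization arguments inside the category of equivariant coherent sheaves, without ever invoking descent from the non-equivariant case. If you want to turn your sketch into an actual proof of the stated theorem, you should replace the descent step by an appeal to (or an imitation of) those arguments from \cite{Thomason4}; alternatively, you could prove the result for $K^{sm}$ and $G^{sm}$ only, which is what your outline actually establishes, and which is the content of the subsequent Proposition~\ref{Poincare-duality}.
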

\begin{proof} The first result is proven as in $\cite{Thomason4}$, Theorem 3.1. and most of the results are proven using the classical techniques
or modifying the same using loc.cit. As we shall only need the above theorems in the special cases of their
associated virtual categories we will contend ourselves with the above statements without proofs.
\end{proof}

An additional object will enter onto our stage, $K$-cohomology, which in this form is borrowed from \cite{Toen}.

\begin{D} Let $T = Aff/S_{sm}$, the category of affine $S$-schemes with the smooth topology. Denote by $K^{TT}_\bb Q$ a $T$-simplicially
fibrant model of the simplicial presheaf on $T$ that represents rational Thomason algebraic $K$-theory and let $X$ be a simplicial $T$-sheaf.
The $K$-cohomology $K^{sm}$ is the simplicial presheaf (automatically flabby) $X \mapsto K^{sm}(X) := \fhom(X,K^{TT}_\bb Q)$. We define the $K$-cohomology groups $K^{sm}_i(X)$ to be $\pi_i(\fhom(X,K^{TT}_\bb Q))$. Also define $G^{sm}_\bb Q$ to be the $G$-cohomology of \cite{Toen}.
\end{D}

The definition of $K^{sm}(X)$ of \cite{Toen} is different, and exhibits $K^{sm}(\mc X)$ more properly as a $S^1$-spectrum. But by \ibid  \hbox{ } Proposition 2.2, the given spectrum is flabby when restricted to the small smooth site on the algebraic stack (\ie a smooth presentation is a cover) and equal to ordinary
(rational) Thomason $K$-theory for a regular Noetherian finite dimensional algebraic space or scheme. Because $\op{holim}$ preserves weak equivalences,
for a regular stack with smooth presentation $X \to \mc X$, we have weak equivalences $K^{sm}(\mc X) = \mathbf{H}(X/\mc X), K^{sm}) = \mathbf{H}(X/\mc X,K^{TT}_\bb Q) = \fhom(\mc X, K^{TT}_\bb Q)$ so Toen's $K$-cohomology necessarily coincides with our $K$-cohomology in this case. Also recall that for a scheme in addition to being finite dimensional Noetherian admit an ample family of line bundles $K^{TT}_\bb Q(X)$ represents rational Quillen $K$-theory.
By \cite{Thomason6}, Theorem 2.15 rational $G$-theory has étale descent for separated Noetherian schemes of finite Krull-dimension and thus rational $G$-theory has descent for algebraic spaces. It should be noted that Toen's corresponding $G$-cohomology theory does not have smooth descent in general so cannot be defined as values of an algebraic stack in some simplicial sheaf representing $G$-theory in $\bb A^1$-homotopy theory.
\\ By \cite{Toen}, Proposition 1.6, there is a natural transformation $K \to K^{TT}_\bb Q$ that
can be realized as, for a smooth presentation $X \to \mc X$, the augmentations $K(\mc X) \to \mathbf{H}(X/\mc X, K_\bb Q)$. \\
With these remarks it follows from Theorem \ref{thm:voevmorelGr} that we have the following proposition:
\begin{Prop} Let $T = \mathfrak R_{S, sm}$ be the category of regular $S$-schemes with the smooth topology. Then for any regular algebraic $S$-stack $\mc X$
there is an $\bb A^1$-weak equivalence $K^{TT}_\bb Q \to (\bb Z \times \op{Gr})_\bb Q$ so that
$$K^{sm}_i(\mc X) = \Hom_{\mc H(T)}(\mc X, R\Omega^i (\bb Z \times \op{Gr})_\bb Q).$$
\end{Prop}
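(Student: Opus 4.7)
The plan is to deduce the result in two moves: first, produce an $\bb A^1$-weak equivalence $K^{TT}_\bb Q \simeq (\bb Z \times \op{Gr})_\bb Q$ in $\mc H(T)$, and second, rewrite $K^{sm}_i(\mc X)$ by the standard loops/adjunction formalism. The map is not arbitrary: by Theorem \ref{thm:voevmorelGr}, $(\bb Z \times \op{Gr})_\bb Q$ represents rational $K_0$ on $\mathfrak{R}_{S,sm}$, so the natural transformation $K_0(-)_\bb Q = \pi_0 K^{TT}_\bb Q \to \pi_0(\bb Z \times \op{Gr})_\bb Q$ (the identity on $K_0(-)_\bb Q$) corresponds, via the cited representability together with the rigidity/representability results of Riou used above, to a canonical morphism of simplicial presheaves (resp.\ $\bb P^1$-spectra) $K^{TT}_\bb Q \to (\bb Z \times \op{Gr})_\bb Q$ in $\mc H(T)$.

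Next I would verify that this morphism is an $\bb A^1$-weak equivalence by reducing to stalks. Both sides are $\bb A^1$-local on $\mathfrak R_{S,sm}$: for $(\bb Z \times \op{Gr})_\bb Q$ this holds by construction, and for $K^{TT}_\bb Q$ it is the statement that rational Thomason (= Quillen) $K$-theory is homotopy-invariant on regular schemes. Moreover both objects satisfy smooth descent (the former after fibrant replacement, the latter by definition), and since any smooth cover admits étale-local sections we may test weak equivalence at points of the smooth topos, that is, at strict Henselian local rings of regular $S$-schemes, which remain regular. On such a regular affine $U$, both $K^{TT}_\bb Q(U)$ and $R\Omega^\infty((\bb Z \times \op{Gr})_\bb Q)(U)$ compute $K_*(U)_\bb Q$ by Quillen's theorem and Theorem \ref{thm:voevmorelGr} respectively, and under these identifications the constructed comparison map is the identity, hence a weak equivalence.

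The main obstacle here is ensuring the identification holds on all simplicial degrees simultaneously, not merely on $\pi_0$. The delicate point is compatibility of the Voevodsky-Morel identification with the $\bb P^1$-delooping of Thomason $K$-theory, so that the map one builds from $K_0$ actually underlies a map of $\bb P^1$-spectra (a naive spectrum morphism in the sense of Definition \ref{defn:stablemodel}) and therefore induces the expected map on each $R\Omega^n$. This is precisely what is codified in the uniqueness of the stable model $\mathbf{BGL}_\bb Q$ (from \cite{Riou}, Chapitre V, invoked just before the proposition) together with the representability Theorem \ref{thm:voevmorelGr}; taken together they force the spectrum-level comparison once the $\pi_0$-level comparison is fixed.

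With the equivalence in hand, the remaining identity is formal. By definition $K^{sm}_i(\mc X) = \pi_i \fhom(\mc X, K^{TT}_\bb Q)$; applying the equivalence of the first step gives $\pi_i \fhom(\mc X, (\bb Z \times \op{Gr})_\bb Q)$. Using the derived adjunction $S^i \wedge (-) \dashv R\Omega^i$ in $\mc H_\bullet(T)$ together with the fundamental lemma of homotopical algebra (Theorem \ref{thm:fundhomotopicalalgebra}), this becomes $\pi_0 \fhom(\mc X, R\Omega^i(\bb Z \times \op{Gr})_\bb Q) = \Hom_{\mc H(T)}(\mc X, R\Omega^i (\bb Z \times \op{Gr})_\bb Q)$, as required.
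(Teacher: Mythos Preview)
Your proposal is correct and follows essentially the same approach as the paper, which in fact gives no detailed proof at all: it merely states that the proposition ``follows from Theorem \ref{thm:voevmorelGr}'' together with the preceding remarks identifying $K^{sm}(\mc X)$ with $\fhom(\mc X, K^{TT}_\bb Q)$. Your argument simply unpacks this, and the only comment is that your detour through the stable uniqueness of $\mathbf{BGL}_\bb Q$ is more than what is strictly needed for the unstable $\bb A^1$-weak equivalence asserted here.
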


\begin{Prop} [\cite{Toen}, Proposition 2.2] \label{Poincare-duality} The conclusions of Theorem $\ref{thm:K-propertiesstack}$ hold with $K$ (resp. $G$) replaced by $K^{sm}$ (resp. $G^{sm}$), at least whenever restricted to the category of regular stacks. Moreover, for a regular algebraic stack there is Poincaré duality; the natural map $K^{sm}(\mc X) \to G^{sm}(\mc X)$ is a weak equivalence.
\end{Prop}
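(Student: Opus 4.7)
The plan is to exploit that both $K^{sm}$ and $G^{sm}$ are defined as flabby simplicial (pre)sheaves, and hence are computed by homotopy limits over any smooth presentation. Concretely, for a regular algebraic stack $\mc X$ with a smooth presentation $X\to \mc X$ by a regular scheme (which exists since $\mc X$ is regular and we may refine a presentation to a regular one), one has
$$K^{sm}(\mc X) \simeq \holim_{\Delta}\, K^{TT}_{\bb Q}(\mc N(X/\mc X)_\bullet),\qquad G^{sm}(\mc X) \simeq \holim_{\Delta}\, G^{TT}_{\bb Q}(\mc N(X/\mc X)_\bullet),$$
and each term $\mc N(X/\mc X)_n = X\times_{\mc X}\cdots\times_{\mc X} X$ is a regular Noetherian algebraic space (here one uses that regularity is preserved under smooth base change), for which $K^{TT}_{\bb Q}$ computes ordinary rational Thomason (equivalently Quillen) $K$-theory. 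The strategy is thus: reduce each assertion to the analogous statement for the schemes (or algebraic spaces) $\mc N(X/\mc X)_n$, then apply $\holim_\Delta$, which preserves weak equivalences of pointwise-equivalent cosimplicial diagrams.

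For Poincaré duality one invokes the classical Quillen theorem: on a regular Noetherian scheme/algebraic space admitting an ample family of line bundles (which can be arranged at the level of the presentation, e.g.\ by taking $X$ quasi-projective by Olsson's theorem), the natural map $K^{TT}_{\bb Q}\to G^{TT}_{\bb Q}$ is a weak equivalence. Applying this level-wise to the simplicial object $\mc N(X/\mc X)_\bullet$ and taking $\holim$ yields the desired $K^{sm}(\mc X)\xrightarrow{\sim} G^{sm}(\mc X)$. This transfer also handles contravariant functoriality of $K^{sm}$ (which is automatic since $K^{sm}$ is a presheaf on the smooth site) and, combined with descent, covariant functoriality for $G^{sm}$ along proper representable morphisms: given $f\colon \mc X\to \mc Y$ proper, choose compatible smooth presentations fitting into a proper square at each simplicial level and use that proper pushforward on each $G^{TT}_{\bb Q}(\mc N(X/\mc X)_n)\to G^{TT}_{\bb Q}(\mc N(Y/\mc Y)_n)$ is defined and commutes with smooth base change, so the maps glue under $\holim$.

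The projective bundle and torsor formulas are then obtained in the same way. Given a vector bundle $\mc E$ on $\mc X$, its pullback to each $\mc N(X/\mc X)_n$ is a vector bundle and $\bb P(\mc E)$ pulls back to the corresponding projective bundle on each level, so the classical scheme-theoretic projective bundle formula of Quillen and Thomason applies levelwise; homotopy invariance for torsors under $\mc E$ is established identically, level by level. Since the equivalences in question are natural and compatible with smooth base change, they assemble into equivalences on $\holim$, and hence on $K^{sm}$ and $G^{sm}$.

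The main subtlety, and essentially the only real obstacle, is checking that covariant functoriality along proper representable morphisms is well-posed at the stack level: one must verify that pushforward constructed via a choice of compatible simplicial presentations is independent (up to canonical homotopy) of that choice, and that composition is respected. This is exactly where the flabbiness of $K^{sm}$ and $G^{sm}$ is essential, since it guarantees that different descent data compute the same value up to canonical weak equivalence; together with base-change compatibility of proper pushforward in $K^{TT}_{\bb Q}$ and $G^{TT}_{\bb Q}$ on the schemes appearing in the presentation, this handles the coherence, as carried out in the cited Proposition~2.2 of \cite{Toen}.
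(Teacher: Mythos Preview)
The paper does not supply its own proof of this proposition: it is stated with attribution to \cite{Toen}, Proposition~2.2, and no argument is given in the body of the text. Your sketch is therefore not competing with anything in the paper beyond the bare citation, and in spirit it follows exactly the route Toen takes---reduce via smooth descent to the levels of a presentation by regular algebraic spaces, where the classical Quillen/Thomason results are available, and then take $\holim$.

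One point deserves care. You write that ``both $K^{sm}$ and $G^{sm}$ are defined as flabby simplicial (pre)sheaves'', but the paper explicitly cautions just before this proposition that Toen's $G$-cohomology does \emph{not} have smooth descent in general and hence cannot be recovered as values of a flabby simplicial sheaf on the smooth site. So the displayed formula $G^{sm}(\mc X)\simeq \holim_\Delta G^{TT}_{\bb Q}(\mc N(X/\mc X)_\bullet)$ is not something you can invoke a priori. The way around this, implicit in the reference, is to argue on the $K$-side (which \emph{is} flabby), use that on each regular level $K^{TT}_{\bb Q}\simeq G^{TT}_{\bb Q}$, and only then identify the result with $G^{sm}(\mc X)$; equivalently, one first proves Poincar\'e duality and then transports the descent formula to $G^{sm}$ for regular stacks, rather than assuming it from the outset. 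With that reordering your argument is fine.
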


\mps{DONE. fix this so that $G$-cohomology is really $G$-cohomology in the sense of Toen. The definition above is wrong. Toen shows $\underline{G}(X)=\underline{K}(X)$ for regular stacks, $\underline{K}$ has descent, and $\underline{G}(X)=G(X)$ for algebraic spaces, so for a regular stacks one obtains $\underline{K}(\mc X) = \mathbf{H}(X/\mc X, \underline{K}) = \mathbf{H}(X/\mc X, \underline{G}) = \mathbf{H}(X/\mc X, G) =
\mathbf{H}(X/\mc X, K)$. }

\bibliographystyle{amsalpha}
\bibliography{stuff}
\end{document}